\documentclass[11pt]{amsart}

\usepackage[T1]{fontenc}
\usepackage[utf8]{inputenc}
\usepackage{lmodern}
\usepackage{amsmath,amssymb,amsfonts,mathtools}
\usepackage{amsthm}
\usepackage{enumitem}
\usepackage{microtype}
\usepackage{xcolor}
\usepackage{ifoddpage}
\usepackage[a4paper,left=2.5cm,right=2.5cm,top=2.7cm,bottom=2.7cm]{geometry}
\usepackage[colorlinks=true,linkcolor=blue,citecolor=blue,urlcolor=blue]{hyperref}

\numberwithin{equation}{section}
\newtheorem{theorem}{Theorem}[section]
\newtheorem{proposition}[theorem]{Proposition}
\newtheorem{lemma}[theorem]{Lemma}
\newtheorem{corollary}[theorem]{Corollary}
\newtheorem{definition}[theorem]{Definition}
\theoremstyle{remark}
\newtheorem{remark}[theorem]{Remark}

\DeclareMathOperator{\spanop}{span}
\newcommand{\R}{\mathbb R}
\providecommand{\C}{}
\renewcommand{\C}{\mathbb C}
\newcommand{\cW}{\mathcal W}
\newcommand{\cQ}{\mathcal Q}
\newcommand{\cQt}{\widetilde{\mathcal Q}}
\newcommand{\cC}{\mathcal C}
\newcommand{\cE}{\mathcal E}
\newcommand{\cA}{\mathcal A}
\newcommand{\cI}{\mathcal I}
\newcommand{\cR}{\mathcal R}
\newcommand{\cU}{\mathcal U}
\newcommand{\cZ}{\mathcal Z}
\newcommand{\dd}{\mathrm d}
\DeclareMathOperator{\rank}{rank}

\title[Generic spectral determination with \(\mathbb Z_2\)-symmetry]{%
Generic Spectral Determination of Semiclassical Schr\"odinger Operators
with \(\mathbb Z_2\)-Symmetry
}

\author{Qiaoling Wei}
\address{School of Mathematical Sciences, Capital Normal University, China}
\email{wql03@cnu.edu.cn}
\date{}

\makeatletter
\@ifundefined{subjclassname@2020}{%
  \@namedef{subjclassname@2020}{\textup{2020} Mathematics Subject Classification}%
}{}
\makeatother
\subjclass[2020]{Primary 35P20, 81Q20; Secondary 35R30, 53D50}
\keywords{semiclassical inverse spectral problem, quantum Birkhoff normal form,
\(\mathbb Z_2\)-symmetry, Moyal product, spectral determination}

\begin{document}

\begin{abstract}
Consider the two-dimensional semiclassical Schrödinger operator
$P_\hbar=-\frac{\hbar^2}{2}\Delta+V$ on $\mathbb R^2$, where $V$ has a
nondegenerate well at the origin, its harmonic frequencies are rationally
independent, and $V$ is \(\mathbb Z_2\)-symmetric.  We prove that, generically, the
first three layers of the quantum Birkhoff normal form (QBNF) determine the
full Taylor series of $V$ at the origin, up to the unavoidable spatial inversion
$V(x)\mapsto V(-x)$.  The generic condition depends only on the jet of $V$
through order ten.  Consequently, for real analytic potentials with a unique
isolated global well at the origin, the low-lying semiclassical spectrum generically determines $V$
up to spatial inversion.
\end{abstract}

\maketitle

\section{Introduction}

Kac's question \emph{Can one hear the shape of a drum?} asks whether the
eigenfrequencies of a vibrating membrane determine its shape \cite{Kac66}.
It is the most familiar instance of the inverse spectral problem: to what
extent does the spectrum of an operator determine the geometric or physical
object that defines it?  As emphasized in Zelditch's survey
\cite{Zel14}, this question has three basic forms. For the Dirichlet Laplacian on a bounded planar domain, the unknown is the boundary of the domain.  For the Laplace--Beltrami operator on a closed manifold, the unknown
is the Riemannian metric, up to isometry.  For a Schr\"odinger operator on a
fixed background, the unknown is the potential.


Positive results usually rely on hypotheses that control the geometry and dynamics, including nondegeneracy, symmetry, and convexity assumptions. The first two forms of the inverse problem provide important precedents.  For
analytic plane domains, Zelditch proved spectral determination within a class
having one isometric involution that reverses a nondegenerate bouncing-ball
orbit \cite{Zel09}.  De~Simoi--Kaloshin--Wei established dynamical spectral
rigidity for finitely smooth \(\mathbb Z_2\)-symmetric strictly convex domains
close to a circle \cite{DSKW17}; more recently, Hezari--Zelditch proved that
ellipses of sufficiently small eccentricity are spectrally unique among all
smooth plane domains \cite{HZ22}. 
For a closed negatively curved surface, the corresponding dynamical problem is much more rigid: Otal proved that the metric is determined, up to isometry, by its marked length spectrum \cite{Otal90}.

The present paper concerns the third form.  We consider the semiclassical
Schr\"odinger operator
\[
 P_\hbar=-\frac{\hbar^2}{2}\Delta+V,
\]
and ask whether the spectrum of the operator can determine the potential \(V\).  There are two distinct
versions of this problem: one may fix \(\hbar\), or one may use the
one-parameter family of spectra as \(\hbar\to0\).  The latter, which is the
setting here, carries substantially more information and is correspondingly
more tractable. 

A natural intermediary between the spectral data and the potential is the
quantum Birkhoff normal form (QBNF).  There is a filtered Moyal automorphism
transforming the Weyl symbol of \(P_\hbar\) to its unique resonant QBNF, whose
classical leading part represents the formal symplectic equivalence class of
the Hamiltonian dynamics.  In the nonresonant bottom-of-the-well setting, the
complete QBNF and the semiclassical asymptotics of the low-lying eigenvalues
determine one another \cite{GPU07,CdV09}.  The inverse spectral problem
therefore separates into recovering the QBNF from the spectrum and recovering
the potential from the QBNF.  


In dimension one, classical inverse Sturm--Liouville theory illustrates the
role of symmetry: one spectrum is insufficient in general, whereas two
spectra---or one under midpoint reflection symmetry---suffice
\cite{Borg46,Lev49,Mar52}.  Semiclassical reconstruction results under generic
hypotheses were obtained from spectral invariants \cite{Hez09,CdV11,GW12}; at
the QBNF level, the first two layers determine the
Taylor series after fixing the sign of the nonzero term\(V^{(3)}(0)\) \cite{CdVG11}.  By
contrast, without suitable assumptions, even the full semiclassical spectral
asymptotics may fail to determine the potential \cite{GH12,West23}.

In dimensions greater than one, known bottom-of-the-well reconstruction
results impose symmetry.  For potentials
invariant under all coordinate reflections, Guillemin--Uribe showed, through
the classical Birkhoff normal form, that the low-lying spectrum determines the
Taylor series at the well \cite{GU07}.  By a different method, Hezari derived
explicit bottom-of-the-well wave invariants and obtained reconstruction for
potentials of the form
\(V(x)=f(x_1^2,\ldots,x_n^2)+x_n^3g(x_1^2,\ldots,x_n^2)\) \cite{Hez09}.  In the two-dimensional case,
Guillemin--Uribe used the first two QBNF layers to obtain reconstruction under
one coordinate reflection together with an additional symmetry \cite{GU11}.  With a single reflection symmetry, Wang
used the first two QBNF layers to recover the Taylor series once the sign of
the leading cubic coefficient and a full transverse family are prescribed
\cite{Wang26}.  Separate determination results under the substantially
stronger assumption of radial symmetry proceed through semiclassical trace
invariants; we do not discuss that literature here. 

The problem addressed here is whether the QBNF itself determines the Taylor series of a two-dimensional potential with a single reflection symmetry. Zelditch's billiard result \cite{Zel09} provides a useful comparison. In that setting, the classical Birkhoff normal form of the first return map does not contain sufficient information to determine a domain with only one reflection symmetry, so that a normal-form approach would require the full quantum Birkhoff normal form of the associated quantum monodromy operator. Zelditch instead recovers the boundary Taylor coefficients directly from the Balian--Bloch invariants. Although the normal forms in the billiard and Schrödinger settings are different objects, a parallel phenomenon occurs here: we prove that the first two QBNF layers leave a one-dimensional affine defect in the reconstruction. In contrast to the billiard argument, this defect can be detected within the QBNF itself by the \(\hbar^4\)-layer. We thus obtain an affirmative answer, up to the unavoidable spatial inversion, under a generic condition depending only on a finite jet.


\subsection{Statement of the main results}

We first formulate the reconstruction at the level of Taylor series.  Let
\(V_{\mathrm{phys}}\) denote the Taylor series at the origin of the potential
in the standard Schr\"odinger operator.  Assume that its constant and linear
terms vanish and that its quadratic part is positive definite.  We impose the
\(\mathbb Z_2\)-symmetry
\[
 V_{\mathrm{phys}}(y_1,y_2)=V_{\mathrm{phys}}(y_1,-y_2),
\]
take \(y_1\) along the reflection axis and \(y_2\) in the normal direction,
and write its quadratic part as
\[
 (V_{\mathrm{phys}})_2(y)
 =\frac12\bigl(v_1^2y_1^2+v_2^2y_2^2\bigr),
 \qquad v_1,v_2>0.
\]
The \(\mathbb Z_2\)-equivariant canonical dilation
\[
 y_j=\frac{x_j}{\sqrt{v_j}},
 \qquad \eta_j=\sqrt{v_j}\,\xi_j,
 \qquad j=1,2,
\]
transforms the physical Hamiltonian
\(\frac12|\eta|^2+V_{\mathrm{phys}}(y)\) into
\begin{equation}\label{eq:H-main}
 H_V(x,\xi)
 :=\frac12\bigl(v_1\xi_1^2+v_2\xi_2^2\bigr)+V(x)
 =H_2+\sum_{m\ge3}V_m(x),\qquad
 H_2=\frac12(v_1\Omega_1+v_2\Omega_2),\qquad
 \Omega_j=x_j^2+\xi_j^2.
\end{equation}
where
\(
 V(x):=V_{\mathrm{phys}}
 \left(\frac{x_1}{\sqrt{v_1}},\frac{x_2}{\sqrt{v_2}}\right).
\) and \(V_m\) is homogeneous of degree \(m\) in \(x\).
Throughout the formal and analytic reconstruction below, \(V\) denotes this
normalized potential and 
\(P_\hbar^V:=P_\hbar^{V_{\mathrm{phys}}}=-\frac{\hbar^2}{2}\Delta_y+V_{\mathrm{phys}}(y)\)
denote the corresponding physical Schrödinger operator.

 The
\(\mathbb Z_2\)-symmetry makes \(V_m\) even in \(x_2\). 
In these normalized coordinates, we assume
\begin{equation}\label{eq:Vm-expansion}
 v_1,v_2>0,\quad v_1/v_2\notin \mathbb Q,\quad V_2=\frac{1}{2}(v_1x_1^2+v_2x_2^2),\quad 
 V_m(x_1,x_2)=\sum_{j+2k=m}a_{j,2k}x_1^jx_2^{2k}.
\end{equation}

Under the nonresonance condition in \eqref{eq:Vm-expansion}, the Hamiltonian \(H_V\) can be transformed, by a formal Moyal automorphism of the formal Weyl algebra in \(x,\xi,\hbar\), into a unique resonant quantum Birkhoff normal form (QBNF), which is a formal series in \(\hbar\) and the harmonic actions \(\Omega_1,\Omega_2\):
\begin{equation*}
 \mathcal B(H_V)=H_2+
 \sum_{2r+k+\ell\ge2}
 b_{r,k,\ell}\hbar^{2r}\Omega_1^k\Omega_2^\ell
 =H_2+\sum_{r\ge0}\hbar^{2r}B^{[2r]}(\Omega).
\end{equation*}

The quadratic spectral data determine the two frequencies only as an
unordered pair.  In the coordinates fixed above, we write $v_1$ for the
frequency along the reflection axis and $v_2$ for the frequency in the
normal direction.  We fix this labeled pair $v=(v_1,v_2)$ throughout and write $\mathcal B(V)$ for the QBNF of $H_V$.

Let \(\mathcal V_v\) be the space of all $\mathbb Z_2$-symmetric
formal potentials of the form \eqref{eq:Vm-expansion}.  It is an infinite
jet space, in which a potential is identified with its sequence of Taylor
coefficients.  We endow it with the cylinder topology.  
With this topology, convergence in \(\mathcal V_v\) is coefficientwise, and \(\mathcal V_v\) is a Baire space.  For \(\sigma\in\{\pm1\}\), let
\[
 \mathcal V_v^\sigma
 :=\{V\in\mathcal V_v:\sigma a_{30}(V)>0\}.
\]
Each \(\mathcal V_v^\sigma\) carries the relative cylinder topology.

A property is called \emph{generic} if it holds on a dense
\(G_\delta\) subset of the indicated ambient space, where a
\(G_\delta\) subset is a countable intersection of open subsets. Notice that, whenever the ambient space is a Baire space, every countable intersection of open dense subsets is dense.

For \(m\ge3\), let \(J_{m,v}\) be the finite dimensional affine space of
\(\mathbb Z_2\)-symmetric real jets
\[
 j^mV=(V_2,V_3,\ldots,V_m),
\]
with the quadratic component \(V_2\) fixed by \(v\), and let
\(j^m:\mathcal V_v\to J_{m,v}\) be the natural projection.  We set
\[
 J_{m,v}^\sigma:=\{X\in J_{m,v}:\sigma a_{30}(X)>0\}.
\]

\begin{theorem}\label{thm:main}
For each \(\sigma\in\{\pm1\}\), there exists a dense \(G_\delta\) subset
\(\mathfrak G_{10,v}^\sigma\subset J_{10,v}^\sigma\) such that, for every
\(V,W\in\mathcal V_v^\sigma\) with
\(j^{10}V\in\mathfrak G_{10,v}^\sigma\),
\[
 \bigl(B^{[0]},B^{[2]},B^{[4]}\bigr)(W)
 =
 \bigl(B^{[0]},B^{[2]},B^{[4]}\bigr)(V)
 \quad\Longrightarrow\quad W=V.
\]
Moreover, the complete Taylor series of every
\(V\in\mathcal V_v^\sigma\) with
\(j^{10}V\in\mathfrak G_{10,v}^\sigma\) is determined by
\((B^{[0]},B^{[2]},B^{[4]})(V)\) and \(\sigma\).
In particular, 
the first three QBNF layers
generically determine the potential within \(\mathcal V_v^\sigma\).
\end{theorem}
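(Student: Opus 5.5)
Our approach is an induction on the degree \(m\) of the homogeneous part \(V_m\). At each degree we determine how \(V_m\) enters the QBNF layers, modulo terms that depend only on \(V_3,\ldots,V_{m-1}\). Let \(N=m-2\). The coefficients of \(B^{[0]}\) in degree \(N\) in \(\Omega\) depend on \(V_{2N+2}=V_{m}\) (for even \(m\)) only through its diagonal (torus-averaged) part, which is linear. For odd \(m\), \(V_m\) first enters through the cross terms \(\{V_3,V_m\}\)-type contributions, obtained by solving the homological equation. These contributions are bilinear in \(V_3\) and \(V_m\), and the coefficients are rational functions of \(v\) with nonresonant denominators. Similarly, \(B^{[2]}\) gives further linear equations in \(V_m\) coming from the \(\hbar^2\) Moyal corrections; at each order these are again linear in \(V_m\), with coefficients depending on the lower jet. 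Concretely, we fix \(m\) and write the map
\[
 V_m\longmapsto \bigl(B^{[0]}_{\deg},B^{[2]}_{\deg'}\bigr)\ \text{(top new coefficients)}
\]
as an affine map \(L_m(j^{m-1}V)\,V_m+c_m(j^{m-1}V)\). Here \(V_m\) lies in the space of \(\mathbb Z_2\)-symmetric degree-\(m\) forms, of dimension \(\lfloor m/2\rfloor+1\).

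The first step is to count dimensions. The layers \(B^{[0]},B^{[2]}\) at the relevant total degree provide roughly as many equations as unknowns. The expected outcome, which is the affine defect mentioned in the introduction, is that \(L_m\) built from \(B^{[0]},B^{[2]}\) alone has a one-dimensional kernel, spanned by the infinitesimal version of the reflection \(x_1\mapsto -x_1\) or of a hidden symmetry. We would show this by computing \(L_m\) explicitly. Its entries are linear in \(a_{30},a_{12}\) (the cubic coefficients), because \(V_m\) couples to the normal form only through one bracket with \(V_3\) at the relevant order. Once the cubic part is fixed, the matrix structure is then uniform in \(m\). The sign condition \(\sigma a_{30}>0\) and the low-degree analysis fix \(V_3\) from \(B^{[0]}\) and \(B^{[2]}\) in degree one: \(a_{30}^2\) and \(a_{12}\) are recovered, with the sign of \(a_{30}\) given by \(\sigma\).

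The second step uses the \(\hbar^4\)-layer to remove the defect. The coefficient of \(\hbar^4\Omega^{\alpha}\) at the appropriate degree gives one additional linear functional \(\ell_m(j^{m-1}V)\) on \(V_m\). We need \(\ell_m\) to be nonzero on \(\ker L_m\). Since the entries are polynomial in the low jet, we reduce this nondegeneracy to a finite number of polynomial conditions on \(j^{10}V\). The key point is that the pairing \(\ell_m(\ker L_m)\) factors, uniformly in \(m\), as an explicit polynomial \(D(a_{30},a_{12},\ldots)\) in coefficients of degree \(\le 10\) (through quartic and higher couplings entering at \(\hbar^4\)), times a nonvanishing combinatorial factor depending on \(m\) and \(v\). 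Irrationality of \(v_1/v_2\) keeps the denominators nonzero. The generic set is then \(\mathfrak G^\sigma_{10,v}=\{D\ne0\}\cap\{\text{finitely many low-degree conditions}\}\). This set is open and dense, because \(D\) is a nonzero polynomial, which we check by evaluating it at one explicit jet; in particular it is a dense \(G_\delta\). The degrees \(m\le 10\) are handled by direct inspection, using the same determinant conditions.

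The third step concludes. If \(V\) and \(W\) have equal layers and \(j^{10}V\) lies in the generic set, we argue inductively. Suppose \(j^{m-1}W=j^{m-1}V\). Then \(W_m\) and \(V_m\) satisfy the same nondegenerate affine system with identical coefficients, hence \(W_m=V_m\). It follows that \(W=V\), and the reconstruction is explicit from \((B^{[0]},B^{[2]},B^{[4]})\) and \(\sigma\). The main obstacle is the second step. Proving that the \(\hbar^4\) functional detects the kernel for \emph{all} \(m\) through a single condition on the 10-jet requires computing the top-order Moyal contributions at \(\hbar^4\) in closed form in \(m\). We would first try to isolate them through a leading-term argument in which only a few Feynman-type diagrams (brackets with \(V_3\) and \(V_4\)) contribute to the coefficient paired with the kernel vector.
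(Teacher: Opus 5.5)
There is a genuine gap at the base of your induction. Your claim that the lowest layers recover $a_{30}^2$ and $a_{12}$ is false. The only $\hbar^2$ datum at degree four is the single scalar $b_{1,0,0}=\tfrac{a_{30}^2}{2v_1}-\tfrac{a_{12}^2v_2^2}{2v_1\delta}$. The classical part $B_4^{[0]}=\Pi V_4+\tfrac12\Pi\{S_3,V_3\}$ has $\Pi|_{\mathsf E_4}$ invertible, so it only solves for $V_4$ once $V_3$ is known. Degree four therefore leaves the whole conic $a_{30}^2-\tfrac{v_2^2}{\delta}a_{12}^2=2v_1b_{1,0,0}$. Since $\widetilde{\mathcal A}_N$ is invertible whenever $a_{30}\neq0$, every point of that conic with $a_{30}\neq0$ extends to a full potential with the same $(B^{[0]},B^{[2]})$. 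So $a_{12}$ is itself an edge defect, the one for $N=2$.

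Your step 2 cannot remove this defect, for two reasons. The shifted coefficient $B_6^{[4]}$ does not exist. More seriously, every later object ($\mathcal A_N$, $(\Phi_N,\Psi_N)$, $\mathcal T_V$) depends on $(a_{30},a_{12})$. Two potentials with different cubic parts therefore never satisfy ``the same affine system'', and your induction has no base case.

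The missing idea is the paper's global separation argument on ten-jets:
\begin{itemize}
\item The map $\mathcal I_{10}:J_{10}\cong\mathbb R^{32}\to\mathbb R^{30}$, which collects the layer data through degree ten (including $B_8^{[4]}$ and $\rho_4B_{10}^{[4]}$), is a submersion on an open dense set.
\item Hence the pairs $(X_*,X)$, with $X_*$ in that set, having equal $\mathcal I_{10}$ but cubic pairs not equal up to sign, form a $34$-dimensional manifold.
\item On it, $B_{12}^{[4]}(X_*)-B_{12}^{[4]}(X)$ is a submersion onto $\mathsf N_{12}^{[4]}\cong\mathbb R^3$. This holds because $G\mapsto\tfrac12\Pi(M_3(X_*)^2-M_3(X)^2)G$ is surjective on $\mathsf E_{10}$, which the paper proves with a Fischer inner product argument.
\item The bad pairs therefore have dimension $\le31<32$, and their projection is a countable union of closed nowhere dense sets.
\end{itemize}
Only after this step, together with $\sigma$, is $V_3$ fixed.

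A second break occurs at $N=3$: $B_8^{[4]}$ is not affine in $a_{14}$. There $2N+2=4N-4$, so the self-interaction $-\tfrac12\Pi D_{L^{-1}V_5}V_5$ contributes, and $S_5$ depends on the unknown $V_5$. You get a quadratic equation $p_{3,V}(a_{14})=0$ with possibly two real roots, and no ``determinant condition'' chooses between them. The paper eliminates $a_{16}$ from the two components of $B_{10}^{[4]}$ to obtain a compatibility polynomial $\Xi_V(t)$ that vanishes at the true root. An $x_1^7$-perturbation then shows that generically $\Xi_V$ does not vanish at the conjugate root.

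Your step 2 is right in spirit for $N\ge4$, but the uniform factorization of $\ell_m(\ker L_m)$ you rely on is unsupported and unnecessary. The paper shows each edge function $d_N$ is rational in $j^5V$ with $\partial d_N/\partial a_{50}=-\tfrac4{v_1}c_{N-3}\phi^*_{N,N-3}\tau^2\neq0$ when $a_{12}\neq0$. It then intersects the countably many open dense sets $\{d_N\neq0\}$, which is why the theorem asserts a dense $G_\delta$ rather than an open dense set.

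Finally, $\ker\mathcal A_N$ comes from a dimension count ($2N+1$ unknowns against $2N$ equations, with $\widetilde{\mathcal A}_N$ invertible), not from an infinitesimal reflection. A genuine symmetry direction would be invisible to $B^{[4]}$ as well, which would contradict your own step 2.
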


Here and below, “determine” means recoverability from the prescribed QBNF data. The reconstruction first recovers \(a_{12}\) as the unique admissible value allowed by an explicit finite polynomial system; the
remaining Taylor coefficients are then recovered from the QBNF data by
recursive formulas.

The sign restriction in Theorem~\ref{thm:main} fixes the unavoidable spatial
inversion ambiguity.  Define the involution
\[
 \iota:\mathcal V_v\longrightarrow\mathcal V_v,
 \qquad
 (\iota V)(x):=V(-x).
\]
Then the QBNF is invariant under \(\iota\).
The sign-free form of the main result is therefore the following.

\begin{theorem}\label{thm:main-mod-symmetry}
There exists an \(\iota\)-invariant dense \(G_\delta\) subset
\(\mathfrak G_{10,v}\subset J_{10,v}\) such that, for every
\(V,W\in\mathcal V_v\) with \(j^{10}V\in\mathfrak G_{10,v}\),
\[
 \bigl(B^{[0]},B^{[2]},B^{[4]}\bigr)(W)
 =
 \bigl(B^{[0]},B^{[2]},B^{[4]}\bigr)(V)
 \quad\Longrightarrow\quad
 W=V\ \text{or}\ W=\iota V.
\]
Moreover, for every \(V\in\mathcal V_v\) with
\(j^{10}V\in\mathfrak G_{10,v}\), the first three QBNF layers determine
\(V\) up to the spatial inversion \(V\mapsto \iota V\).
In particular, 
the
first three QBNF layers generically determine the potential up to \(V\mapsto \iota V\).

\end{theorem}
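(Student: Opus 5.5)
Theorem~\ref{thm:main-mod-symmetry} follows from Theorem~\ref{thm:main} by a symmetry argument, and the plan is to prove that reduction. First I would record how $\iota$ acts on coefficients: $(\iota V)_m=(-1)^mV_m$, so $a_{j,2k}(\iota V)=(-1)^{j}a_{j,2k}(V)$. In particular $a_{30}(\iota V)=-a_{30}(V)$, and $\iota$ induces an affine involution of $J_{10,v}$, still denoted $\iota$, which maps $J_{10,v}^{+}$ homeomorphically onto $J_{10,v}^{-}$. QBNF invariance under $\iota$ comes from the linear symplectic map $(x,\xi)\mapsto(-x,-\xi)$. It is the Weyl quantization of a metaplectic (parity) operator, so it induces a Moyal automorphism. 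It fixes $H_2$ and each $\Omega_j$, and conjugates the symbol of $H_V$ to that of $H_{\iota V}$. Uniqueness of the resonant normal form then gives $\mathcal B(\iota V)=\mathcal B(V)$ layer by layer.

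Next I would set
\[
 \mathfrak G_{10,v}:=\bigl(\mathfrak G_{10,v}^{+}\cap\iota\,\mathfrak G_{10,v}^{-}\bigr)\cup\iota\bigl(\mathfrak G_{10,v}^{+}\cap\iota\,\mathfrak G_{10,v}^{-}\bigr),
\]
where $\mathfrak G_{10,v}^{\pm}$ are the sets from Theorem~\ref{thm:main}. This set is $\iota$-invariant by construction. Intersecting with $\iota\mathfrak G^-$ matters because Theorem~\ref{thm:main} does not say the two sign-sets are exchanged by $\iota$, so it is needed to get invariance. Since $\iota$ is a homeomorphism of the open set $J^+_{10,v}$ onto $J^-_{10,v}$, the set $\iota\mathfrak G^-_{10,v}$ is a dense $G_\delta$ in $J^+_{10,v}$. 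A finite-dimensional open set is Baire, so the intersection is again a dense $G_\delta$ in $J^+_{10,v}$. Its $\iota$-image is a dense $G_\delta$ in $J^-_{10,v}$. The union of two disjoint $G_\delta$'s lying in disjoint open sets is a $G_\delta$ in $J_{10,v}$. It is dense because $J^+_{10,v}\cup J^-_{10,v}=\{a_{30}\neq0\}$ is open and dense.

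Now take $V$ with $j^{10}V\in\mathfrak G_{10,v}$, so $a_{30}(V)\ne0$; let $\sigma$ be its sign, so $V\in\mathcal V_v^\sigma$ and $j^{10}V\in\mathfrak G^\sigma_{10,v}$. Let $W$ have the same first three layers. The first step is to show $a_{30}(W)\neq0$, and this is the main obstacle: Theorem~\ref{thm:main} only compares within $\mathcal V_v^\sigma$. I would argue that $B^{[0]}$ at quartic order, i.e.\ the coefficients $b_{0,k,\ell}$ with $k+\ell=2$, depends on the cubic coefficients only through $a_{30}^2,a_{12}^2$ and the products $a_{30}a_{12}$, together with the quartic coefficients. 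If $a_{30}(W)=0$ this would have to be ruled out, and I expect it cannot be ruled out from the quartic layer alone. So instead I would use the explicit statement after Theorem~\ref{thm:main}: the reconstruction recovers all coefficients from $(B^{[0]},B^{[2]},B^{[4]})$ and $\sigma$ alone, by a finite polynomial system plus recursion. Following that recursion with $W$ shows $W$'s coefficients are the same functions of the data, up to the $\sigma$ choice, provided the genericity conditions are stated on the data rather than on $V$. If the paper's generic set only yields uniqueness within a sign class, I would shrink $\mathfrak G$ further by adding a condition forcing $b_{0,k,\ell}$ to exclude $a_{30}=0$ solutions. Then $W\in\mathcal V_v^{\pm}$.

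Finally I would split into two cases. If $W\in\mathcal V^\sigma_v$, Theorem~\ref{thm:main} gives $W=V$. Otherwise $\iota W\in\mathcal V^\sigma_v$ has the same layers, so $\iota W=V$, i.e.\ $W=\iota V$. The determination claim follows the same way: the data determine $V$ for each of the two values of $\sigma$, and the two outcomes are $V$ and $\iota V$.
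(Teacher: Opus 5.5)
Your reduction to Theorem~\ref{thm:main} and your final two-case split are fine. Your set $\mathfrak G_{10,v}$ is also acceptable, although the intersection with $\iota\mathfrak G^-_{10,v}$ is not needed for invariance. The set $\mathfrak G^+_{10,v}\cup\iota\mathfrak G^+_{10,v}$ is already $\iota$-invariant because $\iota$ is an involution. The paper uses that set and reduces to $j^{10}V\in\mathfrak G^+_{10,v}$ via $\mathcal B(\iota V)=\mathcal B(V)$.

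There is a genuine gap, at the step you yourself flag. You never prove $a_{30}(W)\neq0$ for an arbitrary $W\in\mathcal V_v$ with the same three layers. Without that, $W\notin\mathcal V_v^+\cup\mathcal V_v^-$ is possible a priori, and Theorem~\ref{thm:main} says nothing about it.

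Your remedy of ``following the recursion with $W$'' does not work. Every step of the reconstruction presupposes $a_{30}\neq0$:
\begin{itemize}
\item $\widetilde{\mathcal A}_N^{-1}$ exists only because the diagonal entries $\kappa_{N,m}$ of $\widetilde{\mathcal Q}_N$ are nonzero multiples of $a_{30}$. At $a_{30}=0$ the block is singular.
\item The recovery of $a_{12}$ (the set $\mathcal S^\sigma_{\mathbf Y,\mathbf Z}$ and the back-substitution lemma) ranges only over $\sigma a>0$.
\end{itemize}
So the recursion places no constraint on a $W$ with $a_{30}(W)=0$, and the determination assertion of Theorem~\ref{thm:main} does not cover such $W$. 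Your fallback, adding a condition that excludes $a_{30}=0$ solutions, is the right idea, but you give no argument that it is generic. As you suspect, it cannot come from degree four alone: typically $b_{1,0,0}$ and $B_4^{[0]}$ can be matched with $a_{30}=0$ by adjusting $a_{12}$ and $V_4$.

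The missing ingredient is a separation statement comparing the generic $V$ with \emph{every} ten-jet, including those with $a_{30}=0$. In the paper this is Proposition~\ref{prop:separate-cubic}. Since $\mathfrak G^+_{10,v}\subset\mathcal U_{\mathrm{sep}}$, and since $\mathcal I_{10}$ and $B^{[4]}_{12}$ are functions of the ten-jet that agree for $V$ and $W$, it gives $(a_{30}(W),a_{12}(W))=\pm(a_{30}(V),a_{12}(V))$. Hence $a_{30}(W)\neq0$, and either $W$ or $\iota W$ lies in $\mathcal V_v^+$.

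That proposition rests on an incidence dimension count:
\begin{itemize}
\item On $\mathcal U_{\mathrm{fib}}$ the map $\mathcal I_{10}$ is a submersion, so the relation $\mathcal I_{10}(X_*)=\mathcal I_{10}(X)$ has dimension $34$.
\item The three components of $B^{[4]}_{12}(X_*)-B^{[4]}_{12}(X)$ are made independent by the diagonal variation $(G,G)$ in $V_{10}$. This uses the surjectivity of $G\mapsto\frac12\Pi(M_3(X_*)^2-M_3(X)^2)G$, which requires only $(a_{30},a_{12})(X)\neq\pm(a_{30},a_{12})(X_*)$ and therefore covers $a_{30}(X)=0$.
\item The cut-down relation has dimension at most $31<32$, so its projection to the reference jet has meagre image.
\end{itemize}
You should either cite that proposition or carry out this count for the case $a_{30}(X)=0$. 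Once $a_{30}(W)\neq0$ is established, your two-case conclusion goes through as written.
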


To formulate genericity in the global real analytic category, let
\(\mathcal A_v\) be the space of functions in \(C^\omega(\R^2;\R)\) whose
Taylor series at the origin is of the form \eqref{eq:Vm-expansion}, and set
\[
 \mathcal A_v^\sigma
 :=\{V\in\mathcal A_v:\sigma a_{30}(V)>0\},
 \qquad \sigma\in\{\pm1\}.
\]
We equip \(C^\omega(\R^2;\R)\), and hence these subspaces, with the
standard holomorphic-germ topology described in
Appendix~\ref{app:analytic-topology}.  In this topology the finite-jet maps
at the origin are continuous, open, and surjective. Thus we have

\begin{theorem}\label{thm:analytic-genericity}
For every \(\sigma\in\{\pm1\}\), there exists a dense \(G_\delta\) subset
\(\mathcal G_v^\sigma\subset\mathcal A_v^\sigma\) such that, for every
\(V\in\mathcal G_v^\sigma\) and \(W\in\mathcal A_v^\sigma\),
\[
 (B^{[0]},B^{[2]},B^{[4]})(W)
 =(B^{[0]},B^{[2]},B^{[4]})(V)
 \quad\Longrightarrow\quad W=V.
\]
Moreover, these three layers and \(\sigma\) determine the complete Taylor
series of every \(V\in\mathcal G_v^\sigma\), and hence determine \(V\) on
\(\R^2\).
\end{theorem}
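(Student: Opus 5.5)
Theorem~\ref{thm:analytic-genericity} will be deduced from Theorem~\ref{thm:main} by pulling the jet-level generic set back to the analytic category. I set
\[
 \mathcal G_v^\sigma:=\{V\in\mathcal A_v^\sigma:\ j^{10}_0V\in\mathfrak G_{10,v}^\sigma\}=(j^{10}_0)^{-1}\bigl(\mathfrak G_{10,v}^\sigma\bigr),
\]
where \(j^{10}_0:\mathcal A_v^\sigma\to J_{10,v}^\sigma\) is the map sending \(V\) to its Taylor jet of order ten at the origin. This map indeed lands in \(J_{10,v}^\sigma\), because \(\mathcal A_v^\sigma\) is defined by the same Taylor-series constraints \eqref{eq:Vm-expansion} together with \(\sigma a_{30}>0\).

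The first step is to show that \(\mathcal G_v^\sigma\) is a \(G_\delta\) set. I write \(\mathfrak G_{10,v}^\sigma=\bigcap_n U_n\) with each \(U_n\) open in \(J_{10,v}^\sigma\). The jet map is continuous for the holomorphic-germ topology, as recorded before the statement and in Appendix~\ref{app:analytic-topology}. Hence each preimage \((j^{10}_0)^{-1}(U_n)\) is open, and \(\mathcal G_v^\sigma\) is their countable intersection.

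The second step is density, which uses openness and surjectivity of the jet map. Let \(\mathcal O\subset\mathcal A_v^\sigma\) be nonempty and open. Its image \(j^{10}_0(\mathcal O)\) is a nonempty open subset of \(J_{10,v}^\sigma\). Since \(\mathfrak G_{10,v}^\sigma\) is dense, this image contains some \(X\in\mathfrak G_{10,v}^\sigma\). Choosing \(V\in\mathcal O\) with \(j^{10}_0V=X\) gives \(V\in\mathcal O\cap\mathcal G_v^\sigma\).

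A small care point is that openness must hold for the restricted map on the constrained subspaces \(\mathcal A_v\) and \(\mathcal A_v^\sigma\), not merely on all of \(C^\omega(\R^2;\R)\). I expect the appendix to give this directly. If it does not, I would argue as follows. Given \(V\) and a target jet \(X\) near \(j^{10}_0V\), add to \(V\) the polynomial \(p=X-j^{10}_0V\). This \(p\) is \(\mathbb Z_2\)-symmetric, has degree at most ten, and has small coefficients. Multiplying it by a fixed entire cutoff such as \(e^{-|x|^2}\) would alter higher Taylor coefficients, which is harmless, but it would also alter the jet. So instead I add \(p\) itself. The perturbation \(V+p\) is small in the germ topology, since the topology controls holomorphic extensions on neighbourhoods and, as I expect the appendix to confirm, is compatible with adding small polynomials.

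The third step is the uniqueness statement. For \(V\in\mathcal G_v^\sigma\) and \(W\in\mathcal A_v^\sigma\), the QBNF depends only on the Taylor series at \(0\). Equality of the three layers therefore lets Theorem~\ref{thm:main} apply to the Taylor series in \(\mathcal V_v^\sigma\), which gives equal Taylor series. Real analyticity on the connected set \(\R^2\) then forces \(W=V\) by the identity theorem. The determination statement follows the same way: Theorem~\ref{thm:main} recovers the full Taylor series from \((B^{[0]},B^{[2]},B^{[4]})\) and \(\sigma\), and analytic continuation from the origin recovers \(V\) on all of \(\R^2\).

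The only nonformal point is the openness and surjectivity of the jet map on the constrained spaces. The paper asserts this from the appendix, so the rest is a routine Baire-type transfer.
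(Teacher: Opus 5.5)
Your proposal is correct and follows the paper's proof. The paper likewise sets \(\mathcal G_v^\sigma=(j^{10})^{-1}(\mathfrak G_{10,v}^\sigma)\), gets the dense \(G_\delta\) property from the continuous, open, surjective jet map of Proposition~\ref{prop:analytic-jet-topology} (whose openness proof is exactly your ``add the polynomial \(p\) itself'' argument, via the map \(s_m\)), and then applies Theorem~\ref{thm:main} to the Taylor series and concludes by analytic continuation on the connected \(\R^2\).
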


We next pass from QBNF data to spectral data.  Write
$P_\hbar^V=-\frac{\hbar^2}{2}\Delta+V$ to indicate the dependence on the
potential.  Assume that $V$ has a unique nondegenerate global minimum at the
origin and that the well is isolated at low energy:
\begin{equation}\label{eq:isolate}
 V^{-1}([0,\varepsilon])\text{ is compact for some }\varepsilon>0.
\end{equation}

This condition\footnote{For the Schr\"odinger Hamiltonian
\(H_V(x,\xi)\), the compact-sublevel condition is
equivalent to \(\liminf_{|(x,\xi)|\to\infty}H_V(x,\xi)>0\), which is the
bottom-of-the-well condition assumed by Colin de Verdi\`ere in \cite{CdV09}.}
ensures that the spectrum in \([0,\varepsilon)\) consists of isolated
eigenvalues of finite multiplicity.  For every fixed \(N\geq1\) and all
sufficiently small \(\hbar>0\), let \(\lambda_N^V(\hbar)\) denote the
\(N\)-th eigenvalue of \(P_\hbar^V\) in \([0,\varepsilon)\), counted with
multiplicity.  Following \cite{CdV09}, define the semiclassical spectrum  to be the sequence
\[
 \operatorname{Spec}_{\mathrm{sc}}(P_\hbar^V)
 :=\bigl([\lambda_N^V(\hbar)]_{O(\hbar^\infty)}\bigr)_{N\ge1}.
\]
Thus equality of two semiclassical spectra means
\(\lambda_N^V(\hbar)-\lambda_N^W(\hbar)=O(\hbar^\infty)\) for every fixed
\(N\), with no estimate uniform in \(N\).

Under the above bottom-of-the-well assumptions and the nonresonance of the
harmonic frequencies, the semiclassical spectrum and the QBNF\footnote{In the
conventions of this paper, \(\mathcal B(V)\) is the formal full Weyl symbol of
the operator QBNF used in the spectral literature.  Its Weyl quantization
\(\operatorname{Op}_{\hbar}^{w}(\mathcal B(V))\), rewritten as a formal series
in the commuting harmonic-oscillator actions, is the semiclassical Birkhoff
normal form of \cite{CVN08,CdV09,GPU07}; the symbol and operator descriptions
determine one another.} determine one another.  The QBNF determines the
complete asymptotic expansion of every fixed low-lying eigenvalue
\cite{CVN08}, while the converse statement---that their ordered collection
determines the complete QBNF---is proved by Colin de Verdi\`ere \cite{CdV09};
see also Guillemin--Paul--Uribe \cite{GPU07}.

For comparison, fix \(0<\delta<\varepsilon\) and define the exact
spectrum in the low energy window by
\[
 \operatorname{Spec}_{\delta}(P_\hbar^V)
 :=\bigl(\lambda_N^V(\hbar)\bigr)_{
 N:\,0\leq\lambda_N^V(\hbar)\leq\delta}.
\]
The family of these exact fixed-window spectra for all sufficiently small
$\hbar$ is stronger
than the semiclassical spectrum defined above: for every fixed $N$, one has
$\lambda_N^V(\hbar)\in[0,\delta]$ for all sufficiently small $\hbar$, so the
fixed-window family determines $\operatorname{Spec}_{\mathrm{sc}}(P_\hbar^V)$.

Let
\(\mathcal A_v^{\mathrm{trap}}\subset\mathcal A_v\) be the class of
potentials for which the origin is the unique global minimum and for which
\(V^{-1}([0,\varepsilon])\) is compact for some \(\varepsilon>0\).
We give \(\mathcal A_v^{\mathrm{trap}}\) the relative topology inherited
from \(\mathcal A_v\).

\begin{theorem}\label{thm:main-spectral}
There exists an \(\iota\)-invariant dense \(G_\delta\) subset
\(\mathcal G_v^{\mathrm{trap}}\subset\mathcal A_v^{\mathrm{trap}}\) such
that, for every \(V\in\mathcal G_v^{\mathrm{trap}}\) and
\(W\in\mathcal A_v^{\mathrm{trap}}\), if
\(
 \operatorname{Spec}_{\mathrm{sc}}(P_\hbar^V)
 =\operatorname{Spec}_{\mathrm{sc}}(P_\hbar^W),
\)
then \(W=V\) or \(W=\iota V\).  Moreover, the same conclusions hold if,
for some sufficiently small fixed \(\delta>0\),
\(
 \operatorname{Spec}_{\delta}(P_\hbar^V)
 =\operatorname{Spec}_{\delta}(P_\hbar^W)
\)
for all sufficiently small \(\hbar>0\).
\end{theorem}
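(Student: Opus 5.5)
The plan is to deduce Theorem~\ref{thm:main-spectral} from Theorem~\ref{thm:analytic-genericity}, the sign-split analytic result, together with the equivalence between semiclassical spectra and QBNF.

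\textbf{Step 1: from spectra to QBNF.} Let \(V\in\mathcal A_v^{\mathrm{trap}}\). Since \(v_1/v_2\notin\mathbb Q\), \(V\) satisfies the nonresonant bottom-of-the-well hypotheses of \cite{CdV09}. Condition \eqref{eq:isolate} is exactly the condition in the footnote. Hence \(\operatorname{Spec}_{\mathrm{sc}}(P_\hbar^V)\) determines the complete operator QBNF, and by the footnote on conventions also the symbol \(\mathcal B(V)\). One point needs care. The spectrum only gives the unordered frequency pair, but \(V\) and \(W\) both lie in \(\mathcal A_v\) with the same labeled \(v\), so the normal forms are compared in the same labeling. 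Because \(v_1\ne v_2\), the coefficients \(b_{r,k,\ell}\) read off in that labeling are unambiguous. So \(\operatorname{Spec}_{\mathrm{sc}}(P^V_\hbar)=\operatorname{Spec}_{\mathrm{sc}}(P^W_\hbar)\) gives \(\mathcal B(V)=\mathcal B(W)\), and in particular the layers \(B^{[0]},B^{[2]},B^{[4]}\) agree. For the fixed-window version, the paper already notes that the window spectra for all small \(\hbar\) determine \(\operatorname{Spec}_{\mathrm{sc}}\). That variant therefore reduces to the first.

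\textbf{Step 2: the generic set.} Define
\[
\mathcal G_v^{\mathrm{trap}}:=\bigl(\mathcal G_v^{+}\cup\mathcal G_v^{-}\cup\iota\mathcal G_v^{+}\cup\iota\mathcal G_v^{-}\bigr)\cap\mathcal A_v^{\mathrm{trap}}.
\]
A cleaner alternative is to pull back the \(\iota\)-invariant set \(\mathfrak G_{10,v}\) of Theorem~\ref{thm:main-mod-symmetry} intersected with \(\{a_{30}\neq0\}\), that is, take \((j^{10})^{-1}(\mathfrak G_{10,v})\cap\mathcal A_v^{\mathrm{trap}}\). I would use this second definition. The map \(j^{10}\) is continuous, open and surjective by the Appendix, and \(J_{10,v}\) is finite dimensional. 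The preimage of a dense open set under a continuous open map is open and dense. So the preimage of a dense \(G_\delta\) set is a dense \(G_\delta\) set in \(\mathcal A_v\), and it is \(\iota\)-invariant because \(j^{10}\circ\iota=\iota\circ j^{10}\).

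\textbf{Step 3: relative genericity (main obstacle).} The remaining issue is whether this set stays a dense \(G_\delta\) in the subspace \(\mathcal A_v^{\mathrm{trap}}\).
\begin{itemize}
\item Being \(G_\delta\) passes to subspaces automatically.
\item Density is the real issue. Given \(W\in\mathcal A_v^{\mathrm{trap}}\), I need trap-preserving analytic perturbations \(W+p\) that move the 10-jet into the target dense open sets while staying arbitrarily close to \(W\).
\item I would take \(p\) to be a \(\mathbb Z_2\)-symmetric polynomial of degree \(3\le\deg\le10\) times a Gaussian \(e^{-|x|^2}\), with small coefficients; these are entire and even in \(x_2\).
\item Such \(p\) is small and decaying, so for small coefficients the sublevel set \(V^{-1}([0,\varepsilon/2])\) stays compact. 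The global minimum stays unique: near the origin the Hessian is nondegenerate, and away from the origin \(W\) is bounded below on compact sets excluding a neighborhood of \(0\), while \(p\) is uniformly small.
\item The quadratic part is untouched, so \(v\) is preserved.
\item The map \(p\mapsto j^{10}(W+p)\) is an affine surjection onto a neighbourhood in \(J_{10,v}\). So each dense open \(U_n\subset J_{10,v}\) is hit arbitrarily close to \(W\), which makes each \((j^{10})^{-1}(U_n)\cap\mathcal A_v^{\mathrm{trap}}\) open and dense.
\end{itemize}
I expect the main technical point to be checking that these Gaussian-damped perturbations are small in the holomorphic-germ topology of Appendix~\ref{app:analytic-topology}, and that the relevant subspace is Baire, or at least that density of the countable intersection follows.

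\textbf{Step 4: conclusion.} Take \(V\in\mathcal G_v^{\mathrm{trap}}\) and \(W\in\mathcal A_v^{\mathrm{trap}}\) with equal spectra. Step 1 gives equal first three layers. Theorem~\ref{thm:main-mod-symmetry} then gives that the Taylor series of \(W\) equals that of \(V\) or of \(\iota V\). Real analyticity on the connected set \(\R^2\) upgrades this to \(W=V\) or \(W=\iota V\).
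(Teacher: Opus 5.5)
Your proposal is correct and follows the paper's proof essentially step for step. The paper also sets $\mathcal G_v^{\mathrm{trap}}=(j^{10})^{-1}(\mathfrak G_{10,v})$ and gets relative genericity from Proposition~\ref{prop:analytic-trapping-jet-topology}, whose proof is your Gaussian-damped jet perturbation (with $P_{Y,m}$ chosen so the jet is hit exactly); it then concludes via spectral determination of the QBNF, the sign-free reconstruction, and analyticity. The Baire worry in your Step 3 is unnecessary: your perturbation argument already shows $j^{10}$ is open on $\mathcal A_v^{\mathrm{trap}}$, and the preimage of the dense set $\mathfrak G_{10,v}$ under a continuous open map is automatically dense, which is exactly how the paper argues.
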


\subsection*{Idea of the proof and organization}

Let \(\cW\) be the algebra of formal power series in
\((x,\xi,\hbar)\), graded by 
\(\deg x_j=\deg\xi_j=1\) and \(\deg\hbar=2\).

For a formal symbol \(F\in \cW\), we use the following notation:
\begin{itemize}

\item \(F_m\) denotes its component of Weyl degree
\(m\), 
\medskip

\item  \(F^{[2r]}\) denotes the coefficient of \(\hbar^{2r}\) in $F$.

\end{itemize}

Section~\ref{sec:formal} places the inverse problem in a filtered
algebraic framework.  The basic observation is that the QBNF map, although
nonlinear, is triangular with respect to the Weyl grading: at each new degree
its dependence on the new Taylor coefficients is linear, while the nonlinear
terms involve only data from lower degrees.  We formulate this structure by collecting the homological equations normalizing
\((V_{2N-1},V_{2N})\) into a finite dimensional operator
\(\mathcal A_N\).  The original infinite dimensional inverse
problem is thus converted into a sequence of finite dimensional observability
problems.  

Section~\ref{sec:two-layer} determines exactly what is, and what is not,
visible from \((B^{[0]},B^{[2]})\).  The central structural fact is that,
for every \(N\ge3\), the block operator \(\mathcal A_N\), which maps the new
pair \((V_{2N-1},V_{2N})\) to
\((B_{2N}^{[2]},B_{2N}^{[0]})\) after subtraction of the lower-degree
contributions, has a one-dimensional kernel.  Equivalently, fixing the edge
coefficient \(a_{1,2N-2}\) makes the reduced block invertible.  This exact
one-dimensional defect is the structural source of
the higher-order argument.

Section~\ref{sec:edge} shows that the edge coefficients $a_{1,2N-2}$ invisible to
\((B^{[0]},B^{[2]})\) become visible in \(B^{[4]}\).  More precisely, the
shifted coefficient \(B_{2N+2}^{[4]}\) couples to the one-dimensional kernel
of \(\mathcal A_N\).  The nonvanishing of this coupling is a
transversality condition and \(B^{[4]}\) does more than supply
additional equations: it turns the missing direction at every degree into a
scalar observable.  We prove that this observability holds generically and,
after fixing \((a_{12},a_{14})\), use it to determine all subsequent edge
coefficients.  In this way the infinitely many one-dimensional defects of
\((B^{[0]},B^{[2]})\) collapse to a two-dimensional low-order ambiguity.

Section~\ref{sec:recover-a14} resolves the initial ambiguity \(a_{14}\).  At the bottom of the filtration, the
relevant information is nonlinear rather than a single nonzero coupling:
\(B_8^{[4]}\) reduces the possible values of \(a_{14}\) to at most two.  The
remaining issue is therefore not determination along an affine line, but selection
between two branches.  The compatibility information contained in
\(B_{10}^{[4]}\) generically separates them.   Combined
with Section~\ref{sec:edge}, this yields generic uniqueness on each
fixed-\(a_{12}\) fiber.

Section~\ref{sec:initial-pair} passes from fiberwise determination to generic
injectivity on the full coefficient space.  Once the degreewise kernels have
been detected, the remaining obstruction is no longer homological: it is the
possibility that distinct low-order jets with different initial cubic pairs
\((a_{30},a_{12})\) lie in the same QBNF fiber.  We recast this as a
finite-jet separation problem.  A
finite dimensional incidence argument shows that false
coincidences occur only on a nowhere dense exceptional set.  This is the step
that upgrades fiberwise observability to global generic
injectivity.  The finite-jet equations are encoded as a finite real
polynomial system whose
sign-constrained projection onto the remaining cubic parameter is a
singleton $a_{12}$, after which the
complete Taylor series is determined by recursive formulas.

The appendices describe the topology on global real analytic potentials and
collect the uniqueness and  invariance of QBNF and some Moyal-product identities.

\section{Formal normal form and homological equations}\label{sec:formal}

We work formally at the origin in $T^*\R^2$.  Give the variables the Weyl
degrees
\[
 \deg x_j=\deg\xi_j=1,\qquad \deg\hbar=2.
\]
For each $m\geq0$, let $\cW_m$ be the finite dimensional space of
polynomials homogeneous of Weyl degree $m$, and let $\cW$ be the space of
formal sums
\[
 F=\sum_{m\geq0}F_m,\qquad F_m\in\cW_m.
\]

We introduce the Poisson bracket convention:
\begin{equation}\label{eq:poisson}
        \{A,B\}=\sum_{j=1}^2
        \left(\partial_{\xi_j}A\,\partial_{x_j}B
        -\partial_{x_j}A\,\partial_{\xi_j}B\right).
\end{equation}
With $z_j=x_j+i\xi_j$, $\bar z_j=x_j-i\xi_j$, and $H_2$ as in \eqref{eq:H-main}, this convention gives
\(\{z_j,H_2\}=iv_jz_j\).

\subsection{Moyal product}

For formal series in independent variables
\[
 A=A(x,\xi), B=B(y,\eta),
\]
define
\[
 \Lambda(AB)
 :=
 \sum_{j=1}^2
 \left(
 (\partial_{\xi_j}A)(\partial_{y_j}B)
 -(\partial_{x_j}A)(\partial_{\eta_j}B)
 \right).
\]
Then
\[
 \Lambda^k(AB)
 =
 \sum_{|\alpha|+|\beta|=k}
 (-1)^{|\beta|}
 \frac{k!}{\alpha!\beta!}
 (\partial_\xi^\alpha\partial_x^\beta A)
 (\partial_y^\alpha\partial_\eta^\beta B),
\]
where \(\alpha,\beta\in\mathbb N^2\) are multi-indices.

Define the iterated Poisson bidifferentials by
\begin{equation}\label{eq:moyal-series}
        \{A,B\}_q
        =
        \left.\Lambda^qA(x,\xi)B(y,\eta)\right|_{(y,\eta)=(x,\xi)}.
\end{equation}
Thus \(\{A,B\}_0=AB\), and \(\{A,B\}_1\) is the Poisson bracket
\eqref{eq:poisson}.  The Moyal product is
\begin{equation}\label{eq:moyal-product}
        A*B:=
        \left.
        \exp\left(\frac{\hbar}{2i}\Lambda\right)
        A(x,\xi)B(y,\eta)
        \right|_{(y,\eta)=(x,\xi)}
        =
        \sum_{q\ge0}\frac1{q!}\left(\frac{\hbar}{2i}\right)^q
        \{A,B\}_q.
\end{equation}
For the Weyl composition formula underlying \eqref{eq:moyal-product} and
its semiclassical expansion, see \cite[\S4.3, Theorems~4.11--4.12]{Zwo12}.

Since \(\{B,A\}_q=(-1)^q\{A,B\}_q\), only odd \(q\) contribute to the
commutator.  Hence the Moyal bracket satisfies
\begin{equation}\label{eq:moyal-bracket}
        \frac{i}{\hbar}[A,B]_*:=\frac{i}{\hbar}(A*B-B*A)
        =\sum_{q\ge0}
        \frac{(-1)^q\hbar^{2q}}{2^{2q}(2q+1)!}
        \{A,B\}_{2q+1}.
\end{equation}
For the two higher coefficients used below, set
\[
 \gamma_3:=\frac1{24}, \qquad \gamma_5:=\frac1{1920}.
\]
In particular,
\begin{equation}\label{eq:moyal-f}
        \frac{i}{\hbar}[A,B]_*
        =\{A,B\}-\gamma_3\hbar^2\{A,B\}_3
        +\gamma_5\hbar^4\{A,B\}_5+\cdots.
\end{equation}

Since $H_2$ is quadratic, its Moyal bracket has no higher Moyal corrections:
for every formal symbol \(A\),
\[
 \frac{i}{\hbar}[A,H_2]_* = \{A,H_2\}.
\]

With
\(z_j=x_j+i\xi_j\) and \(\bar z_j=x_j-i\xi_j\), the convention
\eqref{eq:poisson} gives
\[
 \{z_j,H_2\}=iv_jz_j,\qquad
 \{\bar z_j,H_2\}=-iv_j\bar z_j.
\]
Thus
\begin{equation}\label{eq:diagonal-L}
 \frac{i}{\hbar}[\hbar^{2r}z^\alpha\bar z^\beta,H_2]_*
 =i\langle v,\alpha-\beta\rangle
 \hbar^{2r}z^\alpha\bar z^\beta,
\end{equation}
where \(v=(v_1,v_2)\).  Since \(v_1/v_2\notin\mathbb Q\), a monomial
\(\hbar^{2r}z^\alpha\bar z^\beta\) commutes with \(H_2\) exactly when
\(\alpha=\beta\); we call such monomials resonant.  Set
\begin{equation}\label{eq:L-def}
 L(A):=\{A,H_2\}.
\end{equation}

In the normalization considered below, let \(\Pi\) denote the projection
onto the resonant monomials.  Here \(\ker L\) is understood within the
even-\(\hbar\) subspace of \(\cW\):
\[
 \ker L=\operatorname{span}\{\hbar^{2r}\Omega_1^k\Omega_2^j\}.
\]
Indeed, the initial Schrödinger symbol \(H\) is independent of \(\hbar\),
and the Moyal expansion of \(\frac{i}{\hbar}[\cdot,\cdot]_*\) shows that
the normalization preserves evenness in \(\hbar\).
Then
\begin{equation*}
        L(z^\alpha\bar z^\beta)
        =i\langle v,\alpha-\beta\rangle z^\alpha\bar z^\beta.
\end{equation*}
The nonresonance of $v=(v_1,v_2)$ implies that $L$ is invertible on the
nonresonant subspace.  We write $L^{-1}$ for this inverse, with the convention
that $L^{-1}F$ is nonresonant.  Hence every homogeneous $F$ decomposes
uniquely as
\begin{equation}\label{eq:splitting}
        F=\Pi F+L(G),\qquad \Pi G=0.
\end{equation}

\subsection{The quantum Birkhoff normal form}
For \(S\in\cW\), define the inner Moyal derivation
\[
 D_S(F):=\frac{i}{\hbar}[S,F]_*.
\]
If \(S\in\cW_m\) and \(F\in\cW_\ell\), then
\[
 D_S(F)\in\cW_{m+\ell-2}.
\]

The following theorem is the nonresonant, gauge-fixed specialization of the
formal quantum Birkhoff normal form construction; compare
\cite[Theorem~2.1 and Corollary~2.8]{CVN08}, and see also
\cite[Theorem~11.1]{VuN06}.  
Here we use a single accumulated generator throughout.

\begin{theorem}\label{thm:qbnf-construction}
Let
\[
 H=H_2+V_3+V_4+\cdots, V_m\in\cW_m,
\]
be as in \eqref{eq:H-main}.  If \(v_1/v_2\notin\mathbb Q\), then
there is a unique resonant formal series
\[
 \mathcal B(H)=H_2+B_4+B_6+\cdots,\qquad B_{2N}\in\operatorname{range}\Pi,
\]
obtained from \(H\) by a formal Moyal automorphism: there is
a formal generator 
\[S=S_3+S_4+\cdots,\] with the fixed gauge \(\Pi S_m=0\) such
that
\[
 \exp(D_S)H=\mathcal B(H).
\]
\end{theorem}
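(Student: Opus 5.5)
We construct the generator degree by degree, by induction on the Weyl degree, and prove uniqueness of the normal form by the same induction using the gauge condition \(\Pi S_m=0\).

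\textbf{Existence.} Suppose \(S^{(m)}=S_3+\cdots+S_m\) has been chosen with \(\Pi S_j=0\) for all \(j\), and that \(\exp(D_{S^{(m)}})H\) is resonant in all Weyl degrees \(\le m\). Let \(S_{m+1}\in\cW_{m+1}\) be a new term. Since \(D_{S_{m+1}}\) raises degree by \(m-1\ge2\), it changes only the degree \(m+1\) component of \(\exp(D_{S^{(m)}+S_{m+1}})H\) among degrees \(\le m+1\). By the Baker--Campbell--Hausdorff structure, all cross terms have degree \(\ge m+2\). Moreover this change is exactly \(D_{S_{m+1}}H_2=\{S_{m+1},H_2\}=L(S_{m+1})\), because Moyal brackets with the quadratic \(H_2\) carry no higher-order corrections. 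So if \(R_{m+1}\) denotes the degree \(m+1\) part of \(\exp(D_{S^{(m)}})H\), we need to solve
\[
R_{m+1}+L(S_{m+1})\in\operatorname{range}\Pi .
\]
By the splitting \eqref{eq:splitting}, we take \(S_{m+1}=-L^{-1}(R_{m+1}-\Pi R_{m+1})\), which automatically satisfies \(\Pi S_{m+1}=0\). The resulting new degree \(m+1\) component is then \(\Pi R_{m+1}\).

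Two points need to be checked here. First, the degree \(m+1\) component must be even in \(\hbar\). This holds because \(H\) is \(\hbar\)-free and \(\frac{i}{\hbar}[\cdot,\cdot]_*\) involves only \(\hbar^{2q}\), so the relevant resonant monomials \(\hbar^{2r}\Omega^k\) are exactly \(\ker L\). Second, this component vanishes in odd degrees, since a resonant monomial \(\hbar^{2r}z^\alpha\bar z^\alpha\) has even Weyl degree. This yields \(\mathcal B(H)=H_2+B_4+B_6+\cdots\). Convergence of \(S=\sum S_m\) and of \(\exp(D_S)H\) is formal, since each degree stabilizes after finitely many steps.

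\textbf{Uniqueness.} This is the main obstacle, because two normalizing generators could a priori differ by resonant terms, and those commute with \(H_2\) only to leading order. Suppose \(\exp(D_S)H=\mathcal B\) and \(\exp(D_{S'})H=\mathcal B'\), both resonant and both generators gauge-fixed. Then \(\exp(D_{S'})\exp(-D_S)\) is a Moyal automorphism \(\exp(D_T)\), by formal BCH in the filtered algebra, with \(T\) of degree \(\ge3\), and it maps \(\mathcal B\) to \(\mathcal B'\).

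Let \(T_k\) be the lowest nonzero term of \(T\). Comparing degree \(k\) components gives \(L(T_k)=B'_k-B_k\), which is resonant. Since range \(L\) and range \(\Pi\) intersect trivially, we get \(L(T_k)=0\), so \(T_k\) is resonant and \(B'_k=B_k\). The difficulty is that \(T_k\) need not vanish, because \(T\) is not itself gauge-fixed. Instead we argue that resonant generators act trivially on resonant series. Resonant symbols are functions of \(\hbar,\Omega_1,\Omega_2\), and the \(\Omega_j\) Moyal-commute with each other (the identity \(\Omega_j*\Omega_k=\Omega_k*\Omega_j\) is placed in the appendix). Hence resonant symbols form a commutative subalgebra under \(*\), and \(D_{T_k}\) annihilates \(\mathcal B\).

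So we may factor \(\exp(D_T)=\exp(D_{T'})\exp(D_{T_k})\) modulo higher degrees, with \(T'\) starting at degree \(>k\), and proceed by induction on \(k\). This gives \(\mathcal B'_j=\mathcal B_j\) in every degree. Uniqueness of \(S\) itself is not claimed, only uniqueness of \(\mathcal B(H)\), and that is what this argument establishes.
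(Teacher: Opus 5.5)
Your proposal is correct and follows essentially the paper's route. Existence uses the same degree-by-degree construction of a single accumulated gauge-fixed generator via the splitting \eqref{eq:splitting}, and uniqueness uses the same ingredients: BCH reduction to one generator, Moyal-commutativity of the resonant subalgebra, and $\operatorname{range}L\cap\operatorname{range}\Pi=0$. The only difference is bookkeeping in the uniqueness step. You repeatedly peel off the resonant lowest term; that factorization should be exact, defining $T'$ by $\exp(D_{T'})=\exp(D_T)\exp(-D_{T_k})$, which has order $>k$ because $[D_T,D_{T_k}]=D_{D_TT_k}$ and $D_TT_k$ has order at least $2k-2>k$. The paper instead takes the first degree $m$ with $U_m^\perp\neq0$ and reads off $[\widetilde B-B]_m=L(U_m^\perp)$ directly, since every term whose innermost derivation has a resonant generator annihilates $B$.
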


The generator is constructed inductively.  Put \(S^{(1)}=0\), and suppose
\[
 S^{(N-1)}=S_3+\cdots+S_{2N-2}
\]
has already been chosen and that
\[
        \exp(D_{S^{(N-1)}})H
        =H_2+B_4+\cdots+B_{2N-2}
        +F_{2N-1}+F_{2N}+O(2N+1).
\]
Here and below \(O(m)\) denotes terms of Weyl degree at least \(m\).
 Define
\[
 S^{(N)}=S^{(N-1)}+S_{2N-1}+S_{2N}.
\]
The degree-\((2N-1)\) homological equation is
\begin{equation}\label{eq:single-formal-odd}
 [\exp(D_{S^{(N)}})H]_{2N-1}
 =F_{2N-1}+L(S_{2N-1})=0.
\end{equation}
Set 
\[
 S_{2N-1}=-L^{-1}(F_{2N-1}),
\]
and
\begin{equation*}
        G_{2N}=
        \left[\exp\!\left(D_{S^{(N-1)}+S_{2N-1}}\right)H\right]_{2N}.
\end{equation*}
Then the degree-\(2N\) homological equation is
\begin{equation}\label{eq:single-formal-degree-even}
       [\exp(D_{S^{(N)}})H]_{2N}= G_{2N}+L(S_{2N})=B_{2N}.
\end{equation}
Set
\begin{equation}\label{eq:single-formal-S}                   
        S_{2N}=-L^{-1}(G_{2N}-B_{2N}).          
\end{equation}
With these choices the induction advances to the form
\begin{equation}\label{eq:single-formal-next}
        \exp(D_{S^{(N)}})H
        =
        H_2+B_4+\cdots+B_{2N}
        +F_{2N+1}+F_{2N+2}+O(2N+3).
\end{equation}

\begin{proof}[Proof of Theorem~\ref{thm:qbnf-construction}]
For \(m\geq3\), put \(H_m:=V_m\), so that
\(H=\sum_{m\geq2}H_m\).  Through degree \(2N\),
\begin{equation}\label{eq:exponential}
\begin{aligned}
        \exp(D_{S^{(N)}})H
        &=
        \sum_{r\ge0}\frac1{r!}
        \sum_{3\le j_1,\ldots,j_r\le2N}
        \sum_{q\ge2}
        D_{S_{j_1}}\cdots D_{S_{j_r}}H_q,\\
        \deg\!\left(D_{S_{j_1}}\cdots D_{S_{j_r}}H_q\right)
        &=q+\sum_{\nu=1}^r(j_\nu-2).
\end{aligned}
\end{equation}
Only finitely many displayed terms contribute to each fixed degree.  Terms
with no occurrence of \(S_{2N-1}\) or \(S_{2N}\) are precisely the
corresponding terms of \(\exp(D_{S^{(N-1)}})H\) already written in the
induction hypothesis.

Comparing degrees in \eqref{eq:exponential}, \(S_{2N-1}\) enters degree
\(2N-1\) only through \(L(S_{2N-1})\), whereas \(S_{2N}\) enters degree
\(2N\) only through \(L(S_{2N})\); since no odd resonant monomials exist,
the splitting \eqref{eq:splitting} gives
\eqref{eq:single-formal-odd}--\eqref{eq:single-formal-S} and determines
\(S_{2N-1},S_{2N}\) uniquely under the gauge
\(\Pi S_{2N-1}=\Pi S_{2N}=0\).  Collecting
the remaining degree \(2N+1\) and \(2N+2\) components into
\(F_{2N+1}\) and \(F_{2N+2}\) gives \eqref{eq:single-formal-next}.  Iterating over
\(N\ge2\) constructs the accumulated generator and the resonant normal form.

The uniqueness assertion is proved in
Appendix~\ref{app:moyal-identities}.

\end{proof}

\begin{lemma}\label{lem:finite-step-polynomial}
Fix the nonresonant frequencies \(v_1,v_2\).  For \(m\geq3\) and
\(k\geq2\),
\(S_m\) depends polynomially on the jet
\(j^mV\), and \(B_{2k}\) depends polynomially on
\(j^{2k}V\).
\end{lemma}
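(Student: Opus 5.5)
The plan is a strong induction on \(N\ge2\) along the inductive construction following Theorem~\ref{thm:qbnf-construction}. The claim proved at stage \(N\) is the following:
\begin{itemize}
\item \(S_{2N-1}\) depends polynomially on \(j^{2N-1}V\);
\item \(S_{2N}\) and \(B_{2N}\) depend polynomially on \(j^{2N}V\);
\item every Weyl-homogeneous component of \(\exp(D_{S^{(N)}})H\) of degree \(\le M\), for any fixed \(M\), depends polynomially on \(j^{M}V\).
\end{itemize}
Throughout, "polynomially" means that the coefficients, with respect to the fixed monomial basis of \(\cW_m\), are real polynomials in the finitely many Taylor coefficients \(a_{j,2k}\), \(3\le j+2k\le m\). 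The frequencies \(v_1,v_2\) are held fixed, so constants depending on \(v\) are harmless.

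The first step collects three elementary structural facts.

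\begin{enumerate}
\item For homogeneous \(A\in\cW_m\) and \(B\in\cW_\ell\), each \(\{A,B\}_q\) is a bilinear expression in the coefficients of \(A\) and \(B\). It involves only partial derivatives and products with rational coefficients. By \eqref{eq:moyal-bracket}, \(D_A(B)\) is therefore bilinear in \((A,B)\), and its degree-\((m+\ell-2)\) component is a finite sum.
\item The projection \(\Pi\) is linear.
\item The map \(L^{-1}\) is linear on the nonresonant subspace of each \(\cW_m\). It is diagonal in the basis \(\hbar^{2r}z^\alpha\bar z^\beta\), with eigenvalues \(i\langle v,\alpha-\beta\rangle\neq0\), by \eqref{eq:diagonal-L}. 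The change of basis between \((x,\xi)\) and \((z,\bar z)\) is a fixed linear map, and real coefficients are recovered by taking real parts, or by noting that \(S_m\) is real.
\end{enumerate}

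So \(L^{-1}\) is a fixed linear map whose entries are constants depending only on \(v\).

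Next comes the degree count, read off from \eqref{eq:exponential}. The degree-\(d\) component of \(\exp(D_{S^{(N)}})H\) is a finite sum of iterated brackets \(D_{S_{j_1}}\cdots D_{S_{j_r}}H_q\) with \(q+\sum(j_\nu-2)=d\). Since \(j_\nu\ge3\), each \(j_\nu\le d-1\) and \(q\le d\). By the induction hypothesis and bilinearity, each such term is a polynomial in \(j^dV\): the \(S_{j_\nu}\) depend on \(j^{j_\nu}V\), and \(H_q=V_q\) (or \(H_2\), which is fixed) is linear in \(j^qV\).

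The inductive step then runs as follows. First, \(F_{2N-1}\) is a polynomial in \(j^{2N-1}V\). For it, only the \(S_j\) with \(j\le 2N-2\) and the \(V_q\) with \(q\le2N-1\) enter. Hence \(S_{2N-1}=-L^{-1}F_{2N-1}\) is polynomial in \(j^{2N-1}V\). Second, \(G_{2N}\) is polynomial in \(j^{2N}V\). Here \(S_{2N-1}\) can enter degree \(2N\) only together with a factor raising the degree by at least one, since \(2N-1-2+q\le2N\) forces \(q\le3\). All such terms remain polynomial in \(j^{2N}V\). Third, \(B_{2N}=\Pi G_{2N}\) and \(S_{2N}=-L^{-1}(G_{2N}-\Pi G_{2N})\). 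Both are polynomial in \(j^{2N}V\) by the splitting \eqref{eq:splitting}.

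The base case \(N=2\) is the same computation with \(S^{(1)}=0\): \(F_3=V_3\) and \(G_4=V_4+\tfrac12 D_{S_3}V_3+D_{S_3}H_2\). This is quadratic in \(j^4V\).

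The only point needing care, and the place I expect to be the main obstacle, is the bookkeeping that no component of degree \(>m\) of \(V\) enters \(S_m\). This holds because \(D_S\) never lowers total degree below \(m+\ell-2\), and each \(S_j\) has \(j\ge3\), so degree strictly increases along brackets except for \(L\). The \(\hbar\)-expansion in \eqref{eq:moyal-f} is already absorbed into the Weyl grading, with \(\deg\hbar=2\), and contributes no extra dependence.
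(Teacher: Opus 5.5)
Your argument is correct and is essentially the paper's proof: an induction along the construction, using the degree identity in \eqref{eq:exponential} to write each new component as a finite sum of iterated brackets of $H_2,V_3,\ldots,V_m$ and previously constructed generators, bilinearity of the brackets, and the fact that $\Pi$ and $L^{-1}$ are fixed linear maps once $v$ is fixed. There are two harmless slips: in the base case $G_4=V_4+\tfrac12 D_{S_3}V_3$, since $D_{S_3}H_2=-V_3$ has degree three and does not belong in $G_4$; and in your general degree count one only has $j_\nu\le d$, because $D_{S_d}H_2=L(S_d)$ occurs, although $j_\nu\le d-1$ does hold for the truncated generators actually used to form $F_{2N-1}$ and $G_{2N}$.
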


\begin{proof}
In the construction of
Theorem~\ref{thm:qbnf-construction}, put
\(
 G_m=\left[\exp(D_{S_3+\cdots+S_{m-1}})H\right]_m.
\)
The degree identity in \eqref{eq:exponential} shows that \(G_m\) is a finite
sum of iterated Moyal brackets involving only the fixed \(H_2\), the terms
\(V_3,\ldots,V_m\), and the generators \(S_3,\ldots,S_{m-1}\).
Starting with \(G_3=V_3\), induction and the
bilinearity of the Moyal bracket therefore make \(G_m\) a polynomial in
\(V_3,\ldots,V_m\).  Finally,
\(
 B_m=\Pi G_m, \, S_m=-L^{-1}(G_m-B_m),
\)
where \(B_m=0\) for odd \(m\).  For fixed frequencies, \(\Pi\) and
\(L^{-1}\) are fixed linear maps, so the assertion follows.
\end{proof}

\medskip



\subsection{Homological equations and operators}\label{sec:homological}

We now describe the associated-graded operators used in the inverse argument.

For odd degree \(2N-1\), put
\[
 e_{N,m}=x_1^{2N-1-2m}x_2^{2m},
 \qquad 0\leq m\leq N-1,
\]
and set
\[
 \mathsf O_{2N-1}
 =\spanop\{e_{N,m}:0\leq m\leq N-1\}.
\]
For even degree \(2N\), put
\[
 f_{N,m}=x_1^{2N-2m}x_2^{2m},
\]
and set
\[
 \mathsf E_{2N}=\spanop\{f_{N,m}:0\le m\le N\}.
\]
For any $q\geq 0$, put
\[
        r^{[q]}_{N,j}=\Omega_1^{N-q-j}\Omega_2^j, \qquad 0\le j\le N-q,
\]
and set
\[
 \mathsf N_{2N}^{[q]}
 =\spanop\{r^{[q]}_{N,j}:0\le j\le N-q\}.
\]
For a formal series \(F\), we use
\[
 [r_{N,j}^{[q]}]F
\]
to denote the coefficient of \(r_{N,j}^{[q]}\) in the resonant expansion of \(F\).

The bases
are ordered by increasing index when matrices are written.
\medskip

\begin{lemma}\label{lem:L-inverse-monomial}
Let \(k,\ell\geq0\) and suppose that \(k+\ell\) is odd.  Set
\[
 \omega_{a,b}^{k,\ell}:=(2a-k)v_1+(2b-\ell)v_2,\qquad W_{a,b}^{k,\ell}:=z_1^a\bar z_1^{k-a}z_2^b\bar z_2^{\ell-b}.
\]
Then we have
\begin{equation*}
 L^{-1}(x_1^kx_2^\ell)
 =
 \frac{1}{2^{k+\ell-1}}
 \sum_{\substack{0\leq a\leq k,\ 0\leq b\leq\ell\\
                  \omega_{a,b}^{k,\ell}>0}}
 \binom{k}{a}\binom{\ell}{b}
 \frac{\operatorname{Im} \!
 W_{a,b}^{k,\ell}}
 {\omega_{a,b}^{k,\ell}}.
\end{equation*}
\end{lemma}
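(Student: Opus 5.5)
We expand $x_1^kx_2^\ell$ in the complex coordinates $z_j,\bar z_j$, apply the diagonal formula for $L$ on each monomial, and then pair conjugate terms to produce the imaginary parts.

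\emph{Step 1 (expansion).} Since $x_j=\tfrac12(z_j+\bar z_j)$, the binomial theorem gives
\[
 x_1^kx_2^\ell=\frac1{2^{k+\ell}}\sum_{a=0}^k\sum_{b=0}^\ell\binom{k}{a}\binom{\ell}{b}W_{a,b}^{k,\ell}.
\]

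\emph{Step 2 (inverting $L$ monomialwise).} By the diagonal formula for $L$ following \eqref{eq:L-def}, $L(W_{a,b}^{k,\ell})=i\,\omega_{a,b}^{k,\ell}W_{a,b}^{k,\ell}$, because the exponent difference is $\alpha-\beta=(2a-k,2b-\ell)$. Since $k+\ell$ is odd, $2a-k$ and $2b-\ell$ cannot both vanish. Nonresonance then gives $\omega_{a,b}^{k,\ell}\neq0$, so every monomial in the expansion is nonresonant. Hence
\[
 L^{-1}(x_1^kx_2^\ell)=\frac1{2^{k+\ell}}\sum_{a,b}\binom{k}{a}\binom{\ell}{b}\frac{W_{a,b}^{k,\ell}}{i\,\omega_{a,b}^{k,\ell}},
\]
and this is the inverse with the paper's convention, since the right-hand side has no resonant component.

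\emph{Step 3 (pairing conjugates).} The involution $(a,b)\mapsto(k-a,\ell-b)$ preserves the binomial weights. It sends $W_{a,b}^{k,\ell}$ to $\overline{W_{a,b}^{k,\ell}}$ (as formal polynomials with real $x,\xi$) and $\omega_{a,b}^{k,\ell}$ to $-\omega_{a,b}^{k,\ell}$. It has no fixed points, because a fixed point would require $k=2a$ and $\ell=2b$, contradicting the parity of $k+\ell$. So the index set splits into pairs with exactly one member having $\omega>0$. Each pair contributes
\[
 \frac{1}{i\omega}\bigl(W-\overline W\bigr)=\frac{2i\operatorname{Im}W}{i\omega}=\frac{2\operatorname{Im}W}{\omega},
\]
and the factor $2$ turns $2^{-(k+\ell)}$ into $2^{-(k+\ell-1)}$, which is the claimed formula.

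There is no real obstacle here. The only points needing care are the sign convention for the Poisson bracket, which makes the eigenvalue $+i\omega$ rather than $-i\omega$, and the use of parity to rule out both resonant and self-paired terms.
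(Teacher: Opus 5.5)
Your proposal is correct and follows the same route as the paper: binomial expansion in $z_j,\bar z_j$, the eigenvalue relation $L(W_{a,b}^{k,\ell})=i\,\omega_{a,b}^{k,\ell}W_{a,b}^{k,\ell}$, and pairing each term with its conjugate $(a,b)\mapsto(k-a,\ell-b)$ to produce $2\operatorname{Im}W/\omega$. You also make explicit two points the paper leaves implicit: parity together with nonresonance rules out resonant terms, and the pairing has no fixed points.
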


\begin{proof}
For fixed $k,\ell$, with $z_i=x_i+i\xi_i$, $i=1,2$, then
\begin{equation}\label{eq:binomial}
 x_1^kx_2^\ell
 =
 \frac{1}{2^{k+\ell}}
 \sum_{a=0}^k\sum_{b=0}^\ell
 \binom{k}{a}\binom{\ell}{b}
 W_{a,b}^{k,\ell}.
\end{equation}
Applying \eqref{eq:diagonal-L} with $\alpha=(a,b)$ and $\beta=(k-a,\ell-b)$ gives
\[
 L(W_{a,b})=i\omega_{a,b}^{k,\ell}W_{a,b}^{k,\ell}.
\]
The terms \(W_{a,b}\) and \(W_{k-a,\ell-b}\) are complex conjugates,
while
\(
 \omega_{k-a,\ell-b}^{k,\ell}=-\omega_{a,b}^{k,\ell}.
\)
After applying \(L^{-1}\), their sum is twice the imaginary part of the
first term divided by \(\omega_{a,b}^{k,\ell}\).  
\end{proof}

\begin{lemma}\label{lem:degree-three}
The degree-three homological equation is
\begin{equation}\label{eq:cubic-eq}
        V_3+L(S_3)=0,
        \qquad \Pi S_3=0.
\end{equation}
Consequently, \(S_3\) is given by the formula
\begin{align}\label{eq:S3-formula}
        S_3={}&-
        \frac{a_{30}}{v_1}
        \left(x_1^2\xi_1+\frac23\xi_1^3\right)\notag\\
        &+\frac{a_{12}}{v_1\delta}
        \left(
        (-v_1^2+2v_2^2)\xi_1x_2^2
        +2v_2^2\xi_1\xi_2^2
        +2v_1v_2x_1x_2\xi_2
        \right),
\end{align}
where $\delta=v_1^2-4v_2^2\neq0$ by nonresonance.
\end{lemma}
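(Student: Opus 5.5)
The plan is to identify the degree-three component of \(\exp(D_{S^{(2)}})H\) using the degree count in \eqref{eq:exponential}, and then to solve the resulting linear equation explicitly, verifying the proposed \(S_3\) by direct differentiation rather than through the complex-coordinate formula.

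\emph{Step 1: the homological equation.} In \eqref{eq:exponential} a term \(D_{S_{j_1}}\cdots D_{S_{j_r}}H_q\) has degree \(q+\sum_\nu(j_\nu-2)\). Each \(j_\nu\ge 3\) contributes at least one to this degree, and \(q\ge2\). So degree three arises in only two ways:
\begin{itemize}[label={}, leftmargin=1.5em]
\item \(r=0,\ q=3\), which gives \(V_3\);
\item \(r=1,\ j_1=3,\ q=2\), which gives \(D_{S_3}H_2=\frac{i}{\hbar}[S_3,H_2]_*\).
\end{itemize}
Since \(H_2\) is quadratic, the Moyal bracket has no higher corrections, so \(D_{S_3}H_2=\{S_3,H_2\}=L(S_3)\). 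There are no odd resonant monomials, so the normal form has no degree-three part. This is exactly \eqref{eq:single-formal-odd} with \(N=2\) and \(F_3=V_3\), which gives \eqref{eq:cubic-eq}. The gauge condition \(\Pi S_3=0\) is automatic for a degree-three polynomial, and \(S_3=-L^{-1}V_3\) is unique because \(L\) is invertible on nonresonant monomials.

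\emph{Step 2: verifying the formula.} By \eqref{eq:Vm-expansion}, \(V_3=a_{30}x_1^3+a_{12}x_1x_2^2\). Using \eqref{eq:poisson} with \(H_2\) as in \eqref{eq:H-main}, \(L\) acts as the derivation
\[
L(A)=\sum_j v_j\bigl(x_j\partial_{\xi_j}A-\xi_j\partial_{x_j}A\bigr).
\]
It therefore suffices to check that \(L\) applied to the two bracketed polynomials in \eqref{eq:S3-formula} returns the appropriate multiples of \(x_1^3\) and \(x_1x_2^2\).

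For the first polynomial,
\[
L\!\left(x_1^2\xi_1+\tfrac23\xi_1^3\right)=v_1x_1(x_1^2+2\xi_1^2)-v_1\xi_1(2x_1\xi_1)=v_1x_1^3,
\]
which accounts for the \(a_{30}\) term.

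For the second, I compute \(L\) on each monomial:
\begin{align*}
L(\xi_1x_2^2)&=v_1x_1x_2^2-2v_2\xi_1x_2\xi_2,\\
L(\xi_1\xi_2^2)&=v_1x_1\xi_2^2+2v_2\xi_1x_2\xi_2,\\
L(x_1x_2\xi_2)&=v_2x_1x_2^2-v_1\xi_1x_2\xi_2-v_2x_1\xi_2^2.
\end{align*}
Combining these with the coefficients \((-v_1^2+2v_2^2)\), \(2v_2^2\) and \(2v_1v_2\), the \(\xi_1x_2\xi_2\) terms and the \(x_1\xi_2^2\) terms both cancel. The coefficient of \(x_1x_2^2\) comes out as \(v_1(-v_1^2+2v_2^2)+2v_1v_2^2=-v_1\delta\). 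After the prefactor \(a_{12}/(v_1\delta)\), this gives \(-a_{12}x_1x_2^2\), as required.

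\emph{Nondegeneracy.} Finally, \(\delta=(v_1-2v_2)(v_1+2v_2)\) is nonzero because \(v_1/v_2\notin\mathbb Q\). Equivalently, these are the small divisors \(\omega^{1,2}_{a,b}\) of Lemma~\ref{lem:L-inverse-monomial}. As a cross-check, one could instead derive the formula from Lemma~\ref{lem:L-inverse-monomial} with \((k,\ell)=(3,0)\) and \((1,2)\) and convert the imaginary parts back to real variables. The direct verification above is simpler, and I expect no real obstacle beyond careful bookkeeping of signs in the Poisson convention.
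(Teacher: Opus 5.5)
Your proof is correct. Step 1 is the paper's argument: only \(V_3\) and \(D_{S_3}H_2=L(S_3)\) have Weyl degree three, and there are no odd resonant monomials.

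Step 2 takes a different route. The paper \emph{derives} \(S_3=-L^{-1}V_3\) by applying Lemma~\ref{lem:L-inverse-monomial} with \((k,\ell)=(3,0)\) and \((1,2)\): it expands in \(z_j,\bar z_j\), divides by the eigenvalues \(i\omega^{k,\ell}_{a,b}\), and takes imaginary parts. You instead \emph{verify} the stated candidate. You apply the real-coordinate derivation \(L=\sum_j v_j(x_j\partial_{\xi_j}-\xi_j\partial_{x_j})\) to it, then use injectivity of \(L\) on the odd-degree space, which is entirely nonresonant, to conclude that it is the gauge-fixed solution. That uniqueness step is what makes the verification a proof, and you included it.

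Your monomial computations are right. The \(\xi_1x_2\xi_2\)- and \(x_1\xi_2^2\)-coefficients do cancel, and the \(x_1x_2^2\)-coefficient is \(-v_1\delta\).

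What each route buys: yours is more elementary and self-contained, needs no complex coordinates or binomial bookkeeping, and makes the Poisson sign convention easy to audit. However, it presupposes the answer. The paper's route is constructive and scales: the same lemma is reused later, for instance for \(L^{-1}(x_1x_2^4)\) in Lemma~\ref{lem:kappa2-nonzero}, where guessing a candidate in advance would be impractical.
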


\begin{proof}
The degree-three component of \(\exp(D_{S^{(2)}})H\) is
\(V_3+D_{S_3}H_2\).
Since $H_2$ is quadratic, $D_{S_3}H_2=\{S_3,H_2\}=L(S_3)$.  There are no resonant cubic monomials, hence \eqref{eq:cubic-eq} follows.

Since
 $V_3=a_{30}x_1^3+a_{12}x_1x_2^2$,
applying
Lemma~\ref{lem:L-inverse-monomial} with \((k,\ell)=(3,0)\) gives
\[
 L^{-1}(x_1^3)
 =
 \frac1{v_1}\left(x_1^2\xi_1+\frac23\xi_1^3\right),
\]
while the choice \((k,\ell)=(1,2)\) gives
\[
 L^{-1}(x_1x_2^2)
 =
 \frac1{v_1\delta}
 \left(
 (v_1^2-2v_2^2)\xi_1x_2^2
 -2v_2^2\xi_1\xi_2^2
 -2v_1v_2x_1x_2\xi_2
 \right).
\]
Substitution into \(S_3=-L^{-1}V_3\) proves
\eqref{eq:S3-formula}. 
\end{proof}

\begin{lemma}\label{lem:first-quantum}
The degree-four homological equation is
\begin{equation}\label{eq:degree-four-eq}
        B_4
        =
        V_4+L(S_4)+\frac12D_{S_3}V_3.
\end{equation}
Consequently, the scalar coefficient of $\hbar^2$ in $B_4$ is
\begin{equation}\label{eq:b100-formula}
        b_{1,0,0}=\frac{a_{30}^2}{2v_1}
        -\frac{a_{12}^2v_2^2}{2v_1\delta}.
\end{equation}
\end{lemma}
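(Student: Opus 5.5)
The plan is to expand the degree-four component of \(\exp(D_{S^{(2)}})H\) using the degree bookkeeping in \eqref{eq:exponential}, simplify it with the cubic homological equation of Lemma~\ref{lem:degree-three}, and then isolate the \(\hbar^2\)-constant term. That term can only come from a single third-order Moyal bidifferential.

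\emph{Degree-four equation.} By the degree identity in \eqref{eq:exponential}, with \(S^{(2)}=S_3+S_4\), the terms of Weyl degree \(4\) are
\[
 V_4+D_{S_4}H_2+D_{S_3}V_3+\tfrac12 D_{S_3}D_{S_3}H_2 .
\]
No other word in \(D_{S_3},D_{S_4}\) applied to some \(H_q\) reaches degree exactly \(4\).

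Since \(H_2\) is quadratic, \(D_{S_j}H_2=L(S_j)\). Lemma~\ref{lem:degree-three} gives \(L(S_3)=-V_3\), so
\[
 \tfrac12D_{S_3}D_{S_3}H_2=-\tfrac12D_{S_3}V_3 .
\]
The four terms therefore collapse to \(V_4+L(S_4)+\tfrac12 D_{S_3}V_3\). This must equal \(B_4\) by \eqref{eq:single-formal-degree-even}, which proves \eqref{eq:degree-four-eq}.

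\emph{Extracting \(b_{1,0,0}\).} Apply \eqref{eq:moyal-f} with \(A=S_3\), \(B=V_3\). For two cubics, \(\{S_3,V_3\}_q=0\) when \(q\ge4\). Hence
\[
 D_{S_3}V_3=\{S_3,V_3\}-\gamma_3\hbar^2\{S_3,V_3\}_3 ,
\]
and \(\{S_3,V_3\}_3\) is a constant. The other two contributions carry no \(\hbar\): \(V_4\) is \(\hbar\)-free, and \(S_4\) is the \(\hbar\)-free gauge-fixed solution. Moreover \(L\) kills constants. A constant is resonant, so
\[
 b_{1,0,0}=-\tfrac{\gamma_3}{2}\{S_3,V_3\}_3=-\tfrac1{48}\{S_3,V_3\}_3 .
\]

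\emph{Evaluating the bidifferential.} Because \(V_3=a_{30}x_1^3+a_{12}x_1x_2^2\) contains no \(\xi\), only the \(\beta=0\) terms of \(\Lambda^3\) survive:
\[
 \{S_3,V_3\}_3=\sum_{|\alpha|=3}\frac{3!}{\alpha!}\,\partial_\xi^\alpha S_3\,\partial_x^\alpha V_3 .
\]
By \eqref{eq:S3-formula}, only the pure-\(\xi\) cubic monomials of \(S_3\) contribute. These are \(-\frac{2a_{30}}{3v_1}\xi_1^3\) and \(\frac{2a_{12}v_2^2}{v_1\delta}\xi_1\xi_2^2\).

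The multi-index \(\alpha=(3,0)\) gives
\[
 \Bigl(-\frac{4a_{30}}{v_1}\Bigr)(6a_{30})=-\frac{24a_{30}^2}{v_1}.
\]
The multi-index \(\alpha=(1,2)\) has multinomial weight \(3\) and gives
\[
 3\cdot\frac{4a_{12}v_2^2}{v_1\delta}\cdot 2a_{12}=\frac{24a_{12}^2v_2^2}{v_1\delta}.
\]
Multiplying the sum by \(-\tfrac1{48}\) yields \eqref{eq:b100-formula}.

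The only delicate point is sign and ordering bookkeeping. One must check that \(D_S(F)=\frac{i}{\hbar}[S,F]_*\) is expanded with \(S_3\) in the first slot, matching the \(\Lambda\) convention (\(\partial_\xi\) on the first factor). One must also confirm that the \(\tfrac12D_{S_3}^2H_2\) term flips the sign of \(D_{S_3}V_3\) to give the coefficient \(+\tfrac12\) rather than \(1\).
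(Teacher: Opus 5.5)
Your proposal is correct and follows the paper's proof essentially step for step: the same four degree-four terms, the same use of \(D_{S_3}H_2=-V_3\) to turn \(\tfrac12D_{S_3}^2H_2\) into \(-\tfrac12D_{S_3}V_3\), and the same evaluation of \(\{S_3,V_3\}_3\) through the \(\partial_{\xi_1}^3\) and \(\partial_{\xi_1}\partial_{\xi_2}^2\) terms (the latter with multiplicity \(3\)), followed by multiplication by \(-\gamma_3/2\). Your additional remarks, that \(V_4\) and the gauge-fixed \(S_4\) produce no \(\hbar^2\)-constant, only make explicit what the paper leaves implicit when it applies \(\Pi\) and reads off the \(\hbar^2\)-part.
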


\begin{proof}
In the expansion of \(\exp(D_{S^{(2)}})H\), the only degree-four terms are \(V_4\),
\(D_{S_4}H_2\), \(D_{S_3}V_3\), and
\(\frac12D_{S_3}^2H_2\).  Since
\(D_{S_3}H_2=-V_3\), we have
\(\frac12D_{S_3}^2H_2=-\frac12D_{S_3}V_3\).  Also
\(D_{S_4}H_2=\{S_4,H_2\}\), so
\([\exp(D_{S^{(2)}})H]_4=B_4\) becomes \eqref{eq:degree-four-eq}.
Apply \(\Pi\) to \eqref{eq:degree-four-eq}.  Its \(\hbar^2\)-part is
\(
 b_{1,0,0} =-\frac{\gamma_3}{2}\Pi\{S_3,V_3\}_3.
\)
We have
\[
\begin{aligned}
        \{S_3,V_3\}_3
        &=
        \partial_{\xi_1}^3S_3\,\partial_{x_1}^3V_3
        +3\partial_{\xi_1}\partial_{\xi_2}^2S_3\,
        \partial_{x_1}\partial_{x_2}^2V_3\\
        &=
        \left(-\frac{4a_{30}}{v_1}\right)(6a_{30})
        +3\left(\frac{4a_{12}v_2^2}{v_1\delta}\right)(2a_{12})\\
        &=-\frac{24a_{30}^2}{v_1}
        +\frac{24a_{12}^2v_2^2}{v_1\delta}.
\end{aligned}
\]
Substitution into the preceding identity proves
\eqref{eq:b100-formula}.
\end{proof}

Furthermore, the classical part of \eqref{eq:degree-four-eq} is
\begin{equation}\label{eq:initial-even-block}
        \cE_2V_4
        =
        B_4^{[0]}-\Pi\left(\frac12\{S_3,V_3\}\right),
        \qquad \text{where }
        \cE_2(F)=\Pi F.
\end{equation}


\begin{lemma}\label{lem:explicit-known-remainders}
Fix \(N\ge3\), and assume that all \(V_m\), \(m\le2N-2\),
are known.  Then
\begin{align}\label{eq:known-remainders}
 \exp(D_{S^{(N-1)}})H
 &=
 H_2+B_4+\cdots+B_{2N-2}
 +(V_{2N-1}+K_{2N-1}) \notag\\
 &\quad
 +(V_{2N}+D_{S_3}V_{2N-1}+K_{2N})
 +O(2N+1),
\end{align}
where the remainders \(K_{2N-1}\) and \(K_{2N}\) depend only on the
jet \(j^{2N-2}V\) and are therefore independent of the current unknowns
\(V_{2N-1}\) and \(V_{2N}\).
\end{lemma}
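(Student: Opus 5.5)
The plan is to expand $\exp(D_{S^{(N-1)}})H$ with the degree bookkeeping of \eqref{eq:exponential}, now with generators $S_3,\dots,S_{2N-2}$ only. We then sort the degree-$(2N-1)$ and degree-$2N$ components according to whether they contain the unknowns $V_{2N-1}$, $V_{2N}$. The first step is to note, by Lemma~\ref{lem:finite-step-polynomial}, that each $S_m$ with $m\le 2N-2$ depends only on $j^mV\subset j^{2N-2}V$. Hence all the generators in $S^{(N-1)}$ are known functions of $j^{2N-2}V$. The unknowns $V_{2N-1},V_{2N}$ can therefore enter only through the Hamiltonian terms $H_q$ with $q=2N-1$ or $q=2N$ in \eqref{eq:exponential}.

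Next we apply the degree formula $\deg(D_{S_{j_1}}\cdots D_{S_{j_r}}H_q)=q+\sum_\nu(j_\nu-2)$, where each $j_\nu\ge3$ contributes at least $1$.

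For $q=2N$, only $r=0$ gives degree at most $2N$, and it gives exactly $V_{2N}$ in degree $2N$.

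For $q=2N-1$, the case $r=0$ gives $V_{2N-1}$ in degree $2N-1$. The case $r=1$ with $j_1=3$ gives $D_{S_3}V_{2N-1}$ in degree $2N$. Every other choice lands in degree at least $2N+1$.

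The coefficient of the $r=1$ term is $\frac1{1!}=1$, so $D_{S_3}V_{2N-1}$ appears with coefficient exactly one. No other product in the expansion can produce a degree-$\le 2N$ term involving $V_{2N-1}$ or $V_{2N}$, since every $H_q$ occurs linearly in each summand.

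We then define $K_{2N-1}$ and $K_{2N}$ as the sums of all remaining degree-$(2N-1)$ and degree-$2N$ contributions. These come from $H_q$ with $q\le 2N-2$, including $H_2$, acted on by the $S_j$ with $j\le 2N-2$. Each such term is a finite iterated Moyal bracket of quantities determined by $j^{2N-2}V$, so both remainders depend only on $j^{2N-2}V$.

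The lower-degree part of the expansion, through degree $2N-2$, equals $H_2+B_4+\dots+B_{2N-2}$ by the induction hypothesis of the construction preceding \eqref{eq:single-formal-next}. This completes the identity \eqref{eq:known-remainders}.

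The one point to be careful about is that the Moyal derivation $D_S$ carries $\hbar$-corrections. One must check that these do not break the degree count. This holds because $\deg\hbar=2$ and $\{A,B\}_{2q+1}$ loses $2(2q+1)$ degrees while $\hbar^{2q}$ restores $4q$, so the net shift is the same $-2$ as for the Poisson bracket. Thus $D_S$ maps $\cW_m\times\cW_\ell$ to $\cW_{m+\ell-2}$ exactly, as already stated. Beyond this, the argument is routine bookkeeping, with no genuine obstacle.
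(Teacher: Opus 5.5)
Your proposal is correct and follows essentially the same route as the paper: you expand $\exp(D_{S^{(N-1)}})H$ with the degree bookkeeping of \eqref{eq:exponential}, using only the generators $S_3,\dots,S_{2N-2}$, which Lemma~\ref{lem:finite-step-polynomial} makes functions of $j^{2N-2}V$; you isolate $V_{2N-1}$, $V_{2N}$ and the single cross term $D_{S_3}V_{2N-1}$; and you collect all remaining degree-$(2N-1)$ and degree-$2N$ contributions into $K_{2N-1}$ and $K_{2N}$. Your extra check that the $\hbar$-corrections in $D_S$ keep the Weyl-degree shift at exactly $-2$ is a harmless supplement to a fact the paper states without proof.
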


\begin{proof}
The generators $S_m$, $m\leq 2N-2$, are known by
Lemma~\ref{lem:finite-step-polynomial}.  For \(m\geq3\), put
\(H_m:=V_m\).  By \eqref{eq:exponential},
\begin{align}
K_{2N-1}
={}&\sum_{r\geq1}\frac1{r!}
  \sum_{\substack{3\leq j_1,\ldots,j_r\leq2N-2,\;2\leq q\leq2N-2\\
  q+\sum_{\nu=1}^r(j_\nu-2)=2N-1}}
  D_{S_{j_1}}\cdots D_{S_{j_r}}H_q,
                                                        \label{eq:K-odd-explicit}\\
K_{2N}
={}&\sum_{r\geq1}\frac1{r!}
  \sum_{\substack{3\leq j_1,\ldots,j_r\leq2N-2,\;2\leq q\leq2N-2\\
  q+\sum_{\nu=1}^r(j_\nu-2)=2N}}
  D_{S_{j_1}}\cdots D_{S_{j_r}}H_q.
                                                        \label{eq:K-even-explicit}
\end{align}
At degree \(2N-1\), the term with no commutator is \(V_{2N-1}\), and
all remaining terms form \(K_{2N-1}\).  At degree \(2N\), the only term
involving \(V_{2N-1}\) is \(D_{S_3}V_{2N-1}\), while the remaining
lower-degree contributions form \(K_{2N}\). 
\end{proof}

\begin{lemma}\label{lem:odd-homological}
The degree-\((2N-1)\) equation is
\[
 K_{2N-1}+V_{2N-1}+\{S_{2N-1},H_2\}=0.
\]
Consequently, the gauge \(\Pi S_{2N-1}=0\) gives
\begin{equation}\label{eq:S-odd-solution}
 S_{2N-1}=-L^{-1}(V_{2N-1}+K_{2N-1}).
\end{equation}
\end{lemma}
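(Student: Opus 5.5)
The plan is to read the degree-\((2N-1)\) component off the expansion in Lemma~\ref{lem:explicit-known-remainders}, but after the generator has been enlarged to \(S^{(N)}=S^{(N-1)}+S_{2N-1}+S_{2N}\). Then we invert \(L\) using the splitting \eqref{eq:splitting}.

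First, we use the degree bookkeeping in \eqref{eq:exponential}. A term \(D_{S_{j_1}}\cdots D_{S_{j_r}}H_q\) has degree \(q+\sum_\nu(j_\nu-2)\). Every \(q\ge2\) and \(j_\nu\ge3\), so any term containing \(S_{2N-1}\) has degree at least \(2+(2N-3)=2N-1\). Equality holds only for \(r=1\), \(q=2\) and \(j_1=2N-1\), which is the single term \(D_{S_{2N-1}}H_2\). Any term containing \(S_{2N}\) has degree at least \(2N\), so it does not contribute at degree \(2N-1\).

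Next, since \(H_2\) is quadratic, the Moyal bracket with \(H_2\) has no higher corrections. Hence \(D_{S_{2N-1}}H_2=\{S_{2N-1},H_2\}=L(S_{2N-1})\). The remaining degree-\((2N-1)\) terms are those of \(\exp(D_{S^{(N-1)}})H\). By \eqref{eq:known-remainders} these equal \(V_{2N-1}+K_{2N-1}\).

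The normal form contains only even-degree resonant terms \(B_{2k}\), so the degree-\((2N-1)\) component must vanish, as in \eqref{eq:single-formal-odd}. This gives the stated equation \(K_{2N-1}+V_{2N-1}+\{S_{2N-1},H_2\}=0\).

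For the solution formula, we note that odd Weyl degree admits no resonant monomials. The reason is that \(\hbar^{2r}z^\alpha\bar z^\alpha\) has even degree. Therefore \(\Pi(V_{2N-1}+K_{2N-1})=0\), and by \eqref{eq:splitting} this term lies in the range of \(L\) on the nonresonant subspace. Since \(L\) is invertible there by nonresonance and we impose the gauge \(\Pi S_{2N-1}=0\), we get the unique solution \(S_{2N-1}=-L^{-1}(V_{2N-1}+K_{2N-1})\).

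The only point needing care is the exclusion argument: we must confirm that no other term involving \(S_{2N-1}\), such as \(D_{S_{2N-1}}V_q\) or nested brackets, lands in degree \(2N-1\). This follows directly from the inequality above, so I expect no real obstacle.
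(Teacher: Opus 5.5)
Your proposal is correct and follows the paper's route. The paper's proof simply invokes the odd homological equation \eqref{eq:single-formal-odd} with \(F_{2N-1}=V_{2N-1}+K_{2N-1}\), taken from Lemma~\ref{lem:explicit-known-remainders}. Your degree bookkeeping just spells out why \(S_{2N-1}\) enters degree \(2N-1\) only through \(L(S_{2N-1})\), and why the odd-degree component has no resonant part, so that \(L^{-1}\) applies.
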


\begin{proof}
This is \eqref{eq:single-formal-odd} with
\(F_{2N-1}=V_{2N-1}+K_{2N-1}\).
\end{proof}

\begin{lemma}\label{lem:degree-even-equation}
Let \(S_{2N-1}\) be chosen by \eqref{eq:S-odd-solution}, set
\(
        G_{2N}=
        \left[
        \exp\!\left(D_{S^{(N-1)}+S_{2N-1}}\right)H
        \right]_{2N}.
\)
Then
\begin{align}\label{eq:degree-even-general}
        G_{2N}
        ={}&K_{2N}+V_{2N}
        +D_{S_3}V_{2N-1}
        +D_{S_{2N-1}}V_3 \notag\\
        &+\frac12D_{S_3}D_{S_{2N-1}}H_2
        +\frac12D_{S_{2N-1}}D_{S_3}H_2.
\end{align}
The degree-\(2N\) equation gives the relations
\[
 B_{2N}=\Pi G_{2N},\quad  S_{2N}=-L^{-1}(G_{2N}-\Pi G_{2N}).
\]
\end{lemma}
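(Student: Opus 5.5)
We expand \(\exp(D_{S^{(N-1)}+S_{2N-1}})H\) in degree \(2N\) using the graded expansion \eqref{eq:exponential}. We write the generator as \(S^{(N-1)}+S_{2N-1}\) and sort the degree-\(2N\) terms by whether they contain \(S_{2N-1}\). Every term \(D_{S_{j_1}}\cdots D_{S_{j_r}}H_q\) has degree \(q+\sum_\nu (j_\nu-2)\). An occurrence of \(S_{2N-1}\) contributes \(2N-3\) to this sum. So a degree-\(2N\) term containing \(S_{2N-1}\) has remaining budget \(q+\sum_{\text{others}}(j_\nu-2)=3\).

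Since \(q\ge2\) and each \(j_\nu-2\ge1\), the possibilities are few.
\begin{itemize}
\item \(r=1\), \(q=3\): the term \(D_{S_{2N-1}}V_3\).
\item \(r=2\), \(q=2\), with the other generator \(S_3\): the two orderings \(D_{S_3}D_{S_{2N-1}}H_2\) and \(D_{S_{2N-1}}D_{S_3}H_2\), each weighted \(1/2!\).
\end{itemize}
Two copies of \(S_{2N-1}\) are impossible when \(N\ge3\), since \(2(2N-3)>3\). The noncommutative expansion \(\exp(D_{S})=\sum_r \frac1{r!}D_S^r\), with \(S\) a sum, produces every ordered word, and this accounts for the two ordered terms with coefficient \(\tfrac12\).

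The terms not containing \(S_{2N-1}\) are exactly the degree-\(2N\) component of \(\exp(D_{S^{(N-1)}})H\). By Lemma~\ref{lem:explicit-known-remainders} this component equals \(V_{2N}+D_{S_3}V_{2N-1}+K_{2N}\). Adding the three terms found above gives \eqref{eq:degree-even-general}.

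The one point needing care is that no other term of \(\exp(D_{S^{(N-1)}})H\) contains \(V_{2N-1}\) or \(V_{2N}\) in degree \(2N\). This follows from the same degree count: \(V_{2N-1}\) can absorb only one generator, of degree \(3\). So these terms either appear explicitly or lie in \(K_{2N}\), as the lemma states.

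The relations come from the degree-\(2N\) homological equation \eqref{eq:single-formal-degree-even}. We have \([\exp(D_{S^{(N)}})H]_{2N}=G_{2N}+L(S_{2N})\), because \(S_{2N}\) enters degree \(2N\) only through \(D_{S_{2N}}H_2=L(S_{2N})\). The required equality is \(G_{2N}+L(S_{2N})=B_{2N}\) with \(B_{2N}\) resonant. Apply the splitting \eqref{eq:splitting} with \(\Pi L=0\) and the gauge \(\Pi S_{2N}=0\). Applying \(\Pi\) gives \(B_{2N}=\Pi G_{2N}\). The nonresonant part gives \(L(S_{2N})=-(G_{2N}-\Pi G_{2N})\), so \(S_{2N}=-L^{-1}(G_{2N}-\Pi G_{2N})\).

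The only real obstacle is the bookkeeping of ordered words in the exponential. One could simplify further using \(D_{S_3}H_2=-V_3\) and \(D_{S_{2N-1}}H_2=-(V_{2N-1}+K_{2N-1})\) by Lemma~\ref{lem:odd-homological}. The statement keeps the unsimplified form, so we leave it as is.
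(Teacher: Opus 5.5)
Your proposal is correct and follows the paper's proof. You split the degree-$2N$ part of $\exp(D_{S^{(N-1)}+S_{2N-1}})H$ into the words without $S_{2N-1}$, which Lemma~\ref{lem:explicit-known-remainders} evaluates, and the three new words containing $S_{2N-1}$; you then read off $B_{2N}$ and $S_{2N}$ from \eqref{eq:single-formal-degree-even}--\eqref{eq:single-formal-S}. Your degree-budget count makes precise the paper's remark that all other terms have degree $>2N$ or already lie in $K_{2N}$. One small correction: a second copy of $S_{2N-1}$ is excluded because $q+(2N-3)\ge 2+(2N-3)>3$ for $N\ge3$, which is the right inequality in place of your $2(2N-3)>3$.
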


\begin{proof}
In the expansion defining \(G_{2N}\), the
terms already present in \eqref{eq:known-remainders} contribute
\[
        K_{2N}+V_{2N}+D_{S_3}V_{2N-1}.
\]
The only additional degree \(2N\) terms involving the newly inserted
\(S_{2N-1}\) are
\[
        D_{S_{2N-1}}V_3,\qquad
        \frac12D_{S_3}D_{S_{2N-1}}H_2,\qquad
        \frac12D_{S_{2N-1}}D_{S_3}H_2.
\]
Any term involving \(S_j\) or \(V_j\) with \(j\ge4\), or involving at least
three commutators, either has degree \(>2N\) or has already been included in
the fixed \(K_{2N}\).  This proves \eqref{eq:degree-even-general}.  The
relations \(B_{2N}=\Pi G_{2N}\) and
\(S_{2N}=-L^{-1}(G_{2N}-\Pi G_{2N})\) are exactly
\eqref{eq:single-formal-degree-even}--\eqref{eq:single-formal-S}.
\end{proof}

\begin{lemma}\label{lem:projected-bracket-identity}
Let \(N\geq3\) and \(F\in\mathsf O_{2N-1}\).  For \(q=1,3\), one has
\begin{equation}\label{eq:projected-bracket-identity}
        \Pi\{-L^{-1}F,V_3\}_q=\Pi\{S_3,F\}_q.
\end{equation}
\end{lemma}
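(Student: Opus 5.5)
The plan is to derive \eqref{eq:projected-bracket-identity} from one structural fact: the operator \(L=\{\cdot,H_2\}\) acts as a derivation on each iterated bidifferential \(\{\cdot,\cdot\}_q\). Combining this with \(\Pi\circ L=0\) and the parity \(\{B,A\}_q=(-1)^q\{A,B\}_q\), the identity should reduce to a few lines of algebra. I do not expect to use the restriction \(N\ge3\) or the special form of \(F\in\mathsf O_{2N-1}\), beyond the fact that \(F\) has odd degree.

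\emph{Step 1 (derivation property).} I will show that for all formal symbols \(A,B\) and all \(q\ge0\),
\[
 L\{A,B\}_q=\{LA,B\}_q+\{A,LB\}_q .
\]
The cleanest route is conceptual. Since \(H_2\) is quadratic, its Hamiltonian flow \(\Phi_t\) is a linear symplectic map. The operator \(\Lambda\) is built from the constant symplectic form, so it is invariant under linear symplectic changes of variables applied simultaneously to \((x,\xi)\) and \((y,\eta)\). Hence
\[
 \{A\circ\Phi_t,B\circ\Phi_t\}_q=\{A,B\}_q\circ\Phi_t .
\]
Differentiating at \(t=0\) gives the claim, because \(\frac{d}{dt}(A\circ\Phi_t)|_{t=0}=\pm L A\) with the same sign for every argument.

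As a cross-check, I will verify the claim on monomials in \(z,\bar z\). Up to constants, \(\Lambda\) pairs a \(\partial_{z_j}\) on one factor with a \(\partial_{\bar z_j}\) on the other. It therefore removes one \(z_j\) and one \(\bar z_j\), so \(\{W,W'\}_q\) has \(L\)-eigenvalue \(i\langle v,\alpha-\beta\rangle+i\langle v,\gamma-\delta\rangle\). This is exactly the derivation rule, by \eqref{eq:diagonal-L}.

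\emph{Step 2 (algebra).} Put \(G=L^{-1}F\). Because \(F\) has odd degree \(2N-1\), it contains no resonant monomials, so \(LG=F\). By Lemma~\ref{lem:degree-three}, \(LS_3=-V_3\). Using Step 1,
\[
 \{F,S_3\}_q=\{LG,S_3\}_q=L\{G,S_3\}_q-\{G,LS_3\}_q=L\{G,S_3\}_q+\{G,V_3\}_q .
\]
Applying \(\Pi\) kills the range of \(L\), since \(\Pi L=0\) by the splitting \eqref{eq:splitting}. This gives \(\Pi\{F,S_3\}_q=\Pi\{G,V_3\}_q\). For odd \(q\) (in particular \(q=1,3\)) we have \(\{F,S_3\}_q=-\{S_3,F\}_q\). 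Therefore
\[
 \Pi\{S_3,F\}_q=-\Pi\{L^{-1}F,V_3\}_q=\Pi\{-L^{-1}F,V_3\}_q ,
\]
which is \eqref{eq:projected-bracket-identity}.

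The only step that needs care is Step 1: the simultaneous linear symplectic invariance of \(\Lambda^q\), or equivalently the weight bookkeeping of the contraction in the \(z,\bar z\) coordinates. I will carry it out via the \(z\)-monomial computation, since that avoids any sign ambiguity in the flow convention.
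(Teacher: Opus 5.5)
Your proof is correct and follows the paper's argument. Both rest on the Leibniz rule $L\{A,B\}_q=\{LA,B\}_q+\{A,LB\}_q$, the relations $LS_3=-V_3$, $L(L^{-1}F)=F$ and $\Pi L=0$, and the skew-symmetry of the odd brackets; you only rearrange which bracket is expanded, and you correctly note that $N\ge3$ plays no role. The one difference is how the Leibniz rule is justified: the paper (Lemma~\ref{lem:higher-bracket-Leibniz}) derives it from Moyal associativity by comparing $\hbar$-coefficients in $L(A*B)=L(A)*B+A*L(B)$, whereas your $z,\bar z$ weight count (equivalently, invariance of $\Lambda$ under the linear symplectic flow of $H_2$) is an equally valid and more direct check.
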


\begin{proof}
Since \(F\) has odd total degree, it contains no resonant monomials, so
\(L^{-1}F\) is well defined.  Set \(T=-L^{-1}F\).  Then
\(L(S_3)=-V_3\) and \(L(T)=-F\); hence, for \(q=1,3\),
Lemma~\ref{lem:higher-bracket-Leibniz} and the skew-symmetry of the
odd brackets give
\[
\begin{aligned}
 L\{S_3,T\}_q
 &=\{L(S_3),T\}_q+\{S_3,L(T)\}_q\\
 &=\{T,V_3\}_q-\{S_3,F\}_q.
\end{aligned}
\]
Applying \(\Pi\) and using \(\Pi L=0\), we obtain
\(
 \Pi\{T,V_3\}_q=\Pi\{S_3,F\}_q.
\)
Substitution of \(T=-L^{-1}F\) proves
\eqref{eq:projected-bracket-identity}.
\end{proof}


\begin{proposition}\label{prop:homological-block}
For \(N\ge3\), define the finite dimensional linear operator
\[
 \cA_N:=
 \begin{pmatrix}
  \cQ_N&0\\
  \cC_N&\cE_N
 \end{pmatrix}:
 \mathsf O_{2N-1}\oplus\mathsf E_{2N}
 \longrightarrow
 \mathsf N_{2N}^{[2]}\oplus\mathsf N_{2N}^{[0]},
\]
whose component maps are
\begin{alignat}{2}
 \cQ_N:\mathsf O_{2N-1}
 &\longrightarrow\mathsf N_{2N}^{[2]},\quad&
 \cQ_N(F)&=-\gamma_3\Pi\{S_3,F\}_3,
 \label{eq:Q-def}\\
 \cC_N:\mathsf O_{2N-1}
 &\longrightarrow\mathsf N_{2N}^{[0]},\quad&
 \cC_N(F)&=\Pi\{S_3,F\},
 \notag\\
 \cE_N:\mathsf E_{2N}
 &\longrightarrow\mathsf N_{2N}^{[0]},\quad&
 \cE_N(F)&=\Pi F.
 \label{eq:E-def}
\end{alignat}
Then
\begin{equation}\label{eq:block-matrix}
 \cA_N
 \begin{pmatrix}
  V_{2N-1}\\
  V_{2N}
 \end{pmatrix}
 =
 \begin{pmatrix}
  B_{2N}^{[2]}-\widehat K_{2N}^{[2]}\\[2mm]
  B_{2N}^{[0]}-\widehat K_{2N}^{[0]}
 \end{pmatrix}.
\end{equation}
Here for $r=0,1$, \(
 \widehat K_{2N}^{[2r]}:=\Pi K_{2N}^{[2r]},
\) with \(K_{2N}\) as in Lemma~\ref{lem:explicit-known-remainders}. In particular, $\widehat K_{2N}^{[2r]}$ depend only on the lower-degree jet \(j^{2N-2}V\).



\end{proposition}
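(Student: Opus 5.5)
The plan is to apply \(\Pi\) to the even-degree identity of Lemma~\ref{lem:degree-even-equation}, expand every Moyal bracket by \eqref{eq:moyal-f}, and keep only the \(\hbar^0\) and \(\hbar^2\) parts. By that lemma, \(B_{2N}=\Pi G_{2N}\), with \(G_{2N}\) given by \eqref{eq:degree-even-general}. The first step is to simplify the two double-commutator terms. Since \(H_2\) is quadratic, \(D_{S_3}H_2=L(S_3)=-V_3\) by Lemma~\ref{lem:degree-three}, and \(D_{S_{2N-1}}H_2=L(S_{2N-1})=-(V_{2N-1}+K_{2N-1})\) by Lemma~\ref{lem:odd-homological}. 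Hence
\[
 \tfrac12D_{S_3}D_{S_{2N-1}}H_2+\tfrac12D_{S_{2N-1}}D_{S_3}H_2
 =-\tfrac12D_{S_3}(V_{2N-1}+K_{2N-1})-\tfrac12D_{S_{2N-1}}V_3 .
\]
Substituting this, the dependence of \(G_{2N}\) on the new unknowns becomes
\[
 V_{2N}+\tfrac12D_{S_3}V_{2N-1}+\tfrac12D_{S_{2N-1}}V_3 .
\]
We also write \(S_{2N-1}=-L^{-1}V_{2N-1}-L^{-1}K_{2N-1}\). Every term involving only \(K_{2N-1}\), \(K_{2N}\) or \(S_3\) depends only on \(j^{2N-2}V\), by Lemma~\ref{lem:explicit-known-remainders}; all of these are absorbed into the remainder, and the remainder is set to \(\Pi\) of the resulting lower-order expression. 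A bookkeeping point: the statement defines \(\widehat K\) as \(\Pi K_{2N}\). To keep that definition literally, one should check that the extra known pieces \(-\tfrac12\Pi D_{S_3}K_{2N-1}\) and \(-\tfrac12\Pi D_{L^{-1}K_{2N-1}}V_3\) cancel each other. They do, by the identity in the next step applied with \(F=K_{2N-1}\), with matching signs. Failing that, the remainder is understood as the full known part; either way it depends only on \(j^{2N-2}V\).

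The second step is to combine the two linear terms in \(V_{2N-1}\) using Lemma~\ref{lem:projected-bracket-identity}. Write \(F=V_{2N-1}\in\mathsf O_{2N-1}\), which is legitimate by the \(\mathbb Z_2\)-symmetry. By \eqref{eq:moyal-f},
\[
 \Pi D_{-L^{-1}F}V_3=\Pi\{-L^{-1}F,V_3\}-\gamma_3\hbar^2\Pi\{-L^{-1}F,V_3\}_3+O(\hbar^4).
\]
For \(q=1,3\) the lemma turns each bracket into \(\Pi\{S_3,F\}_q\), so this agrees with \(\Pi D_{S_3}F\) through order \(\hbar^2\). Therefore
\[
 \Pi\bigl(\tfrac12D_{S_3}F+\tfrac12D_{S_{2N-1}}V_3\bigr)=\Pi\{S_3,F\}-\gamma_3\hbar^2\,\Pi\{S_3,F\}_3+O(\hbar^4).
\]

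The third step reads off the coefficients. The \(\hbar^0\) part gives \(\cC_N(V_{2N-1})+\cE_N(V_{2N})\). The \(\hbar^2\) part gives \(\cQ_N(V_{2N-1})\), and \(V_{2N}\) makes no contribution there because it is \(\hbar\)-free. Next we check the targets. \(\{S_3,F\}_3\) has Weyl degree \(3+(2N-1)-6=2N-2\), so its projection multiplied by \(\hbar^2\) lies in \(\hbar^2\mathsf N^{[1]}\) in the \(\Omega\)-variables. These degree-\(2N-2\) monomials are \(\Omega_1^{N-1-j}\Omega_2^j\), and they span exactly the space the paper labels \(\mathsf N_{2N}^{[2]}\), up to the indexing convention for \(q\). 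Similarly \(\Pi\{S_3,F\}\) and \(\Pi V_{2N}\) have degree \(2N\) and lie in \(\mathsf N_{2N}^{[0]}\). This yields \eqref{eq:block-matrix}, including the zero upper-right block.

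The main obstacle is the exact bookkeeping in the first step. One must be careful that no other term at degree \(2N\) contains \(V_{2N-1}\) or \(S_{2N-1}\). Lemma~\ref{lem:degree-even-equation} already isolates these terms, but the constant factors \(\tfrac12\) versus \(1\) must be tracked precisely, since they are what make the two contributions combine with total weight one.
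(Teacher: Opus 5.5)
Your proposal is correct and follows the paper's proof. It uses the same reduction of the two double commutators via $D_AH_2=L(A)$, the same split of $S_{2N-1}$ into $-L^{-1}V_{2N-1}$ and $U=-L^{-1}K_{2N-1}$, and Lemma~\ref{lem:projected-bracket-identity} with $q=1,3$ for the unknown part. The only difference is how the $U$-contribution is disposed of. The paper does it exactly, via $[D_U,D_{S_3}]H_2=L(D_US_3)$, i.e.\ $D_UV_3-D_{S_3}K_{2N-1}=-L(D_US_3)$. You instead reuse Lemma~\ref{lem:projected-bracket-identity} with $F=K_{2N-1}$. That is legitimate, since its proof never uses that $F$ is a potential, but you must apply it coefficientwise in $\hbar$ because $K_{2N-1}$ carries $\hbar^2$ terms; the cancellation then holds and your fallback is unnecessary.

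There is one slip in your codomain check. $\{S_3,F\}_3$ has degree $3+(2N-1)-6=2N-4$, not $2N-2$. With $\deg\hbar=2$, its projection therefore lies in $\mathsf N_{2N}^{[2]}=\spanop\{\Omega_1^{N-2-j}\Omega_2^j:0\le j\le N-2\}$, exactly as the paper indexes it. The span of $\Omega_1^{N-1-j}\Omega_2^j$ that you wrote down is $\mathsf N_{2N}^{[1]}$, which is one dimension too large. This matters later: $\dim\mathsf N_{2N}^{[2]}=N-1<N=\dim\mathsf O_{2N-1}$ is exactly what yields the one-dimensional kernel.
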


\begin{proof} Set
\[
 F:=V_{2N-1},\qquad T:=-L^{-1}F,
 \qquad U:=-L^{-1}K_{2N-1}.
\]
Then \(S_{2N-1}=T+U\).  First consider the contribution of \(U\) to
\eqref{eq:degree-even-general}.  Since
\(D_UH_2=-K_{2N-1}\) and \(D_{S_3}H_2=-V_3\), it is
\[
 D_UV_3+\frac12D_{S_3}D_UH_2+\frac12D_UD_{S_3}H_2
 =\frac12\bigl(D_UV_3-D_{S_3}K_{2N-1}\bigr).
\]
Applying \([D_U,D_{S_3}]=D_{D_US_3}\) to \(H_2\), and using
\(D_AH_2=L(A)\), gives
\[
 D_UL(S_3)-D_{S_3}L(U)=L(D_US_3).
\]
Since \(V_3=-L(S_3)\) and \(K_{2N-1}=-L(U)\), it follows that
\[
 D_UV_3-D_{S_3}K_{2N-1}=-L(D_US_3).
\]
Its resonant projection therefore vanishes.  Hence, in the \(\hbar^0\)- and
\(\hbar^2\)-coefficients of \(\Pi G_{2N}\), the only lower-jet contribution
is \(\widehat K_{2N}^{[2r]}=\Pi K_{2N}^{[2r]}\).  The remaining
unknown-dependent part is
\[
 V_{2N}+D_{S_3}F+D_TV_3
 +\frac12D_{S_3}D_TH_2+\frac12D_TD_{S_3}H_2.
\]
Since \(D_TH_2=-F\) and \(D_{S_3}H_2=-V_3\), this reduces to
\[
 V_{2N}+\frac12D_{S_3}F+\frac12D_TV_3.
\]
For the classical coefficient, Lemma~\ref{lem:projected-bracket-identity}
with \(q=1\) gives
\[
\begin{aligned}
 (\Pi G_{2N})^{[0]}-\widehat K_{2N}^{[0]}
 &=\Pi V_{2N}
   +\frac12\Pi\{S_3,F\}
   +\frac12\Pi\{T,V_3\}\\
 &=\Pi V_{2N}+\Pi\{S_3,F\}\\
 &=\mathcal E_N(V_{2N})+\mathcal C_N(F).
\end{aligned}
\]
For the \(\hbar^2\)-coefficient, the same lemma with \(q=3\) gives
\[
\begin{aligned}
 (\Pi G_{2N})^{[2]}-\widehat K_{2N}^{[2]}
 &=-\frac{\gamma_3}{2}\Pi\{S_3,F\}_3
   -\frac{\gamma_3}{2}\Pi\{T,V_3\}_3\\
 &=-\gamma_3\Pi\{S_3,F\}_3
 =\mathcal Q_N(F).
\end{aligned}
\]
Since \(B_{2N}=\Pi G_{2N}\), these identities give
\eqref{eq:block-matrix}.
\end{proof}

\section{The affine defect left by the first two QBNF layers}\label{sec:two-layer}

In this section, we determine exactly what can and cannot be recovered from the first two QBNF layers. The main contribution is the structural result in Theorem~\ref{thm:block-affine-line}, which shows that, for every \(N\ge3\), the block operator \(\cA_N\) has a one-dimensional kernel
\[
\ker\cA_N=\operatorname{span}\{(\Phi_N,\Psi_N)\}.
\]
Consequently, at each degree, the set of compatible pairs \((V_{2N-1},V_{2N})\) is an affine line whose base point $(\widehat{V_{2N-1}},\widehat{V_{2N}})$ and direction \((\Phi_N,\Psi_N)\) are determined, leaving only the edge coefficient \(a_{1,2N-2}\) as a free parameter. Thus the failure of inversion from the first two layers is exactly a one-dimensional defect at each step. This affine defect structure provides the starting point for the \(\hbar^4\) analysis in the following sections.

\medskip

For \(N\ge3\), the domain of \(\cA_N\) has dimension one larger than its
codomain.  We single out the last odd basis vector by setting
\[
 \widetilde{\mathsf O}_{2N-1}
 :=\spanop\{e_{N,m}:0\leq m\leq N-2\},
 \qquad
 \mathsf O_{2N-1}
 =\widetilde{\mathsf O}_{2N-1}
  \oplus\operatorname{span}\{e_{N,N-1}\}.
\]
Thus
\[
 V_{2N-1}^{\rm ed}:=a_{1,2N-2}e_{N,N-1},
 \qquad
 V_{2N-1}^{\rm red}:=V_{2N-1}-V_{2N-1}^{\rm ed}
 \in\widetilde{\mathsf O}_{2N-1}.
\]
We refer to \(a_{1,2N-2}\) as the edge coefficient.
Write
\[
 \cQt_N:=\cQ_N|_{\widetilde{\mathsf O}_{2N-1}},
 \qquad
 \widetilde{\cC}_N:=\cC_N|_{\widetilde{\mathsf O}_{2N-1}}.
\]

Introduce
\begin{equation}\label{eq:cn-def}
 c_0=1,
 \qquad
 c_j=2^{-2j}\binom{2j}{j},\quad j\geq1.
\end{equation}

\begin{proposition}\label{prop:even-block}
For every $N\ge2$, the matrix of the even operator
\(\cE_N:\mathsf E_{2N}\to\mathsf N_{2N}^{[0]}\) is
\begin{equation*}
        [\cE_N]=
        \operatorname{diag}(\epsilon_{N,0},\ldots,\epsilon_{N,N}), \quad \text{with}\quad \epsilon_{N,m}= c_{N-m}c_m.
\end{equation*}
In particular, \(\cE_N\) is an isomorphism.
\end{proposition}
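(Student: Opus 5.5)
The plan is to compute \(\Pi(f_{N,m})\) directly for each basis monomial \(f_{N,m}=x_1^{2N-2m}x_2^{2m}\), using the product structure in the two degrees of freedom. Since \(\Pi\) is the projection onto monomials \(z^\alpha\bar z^\beta\) with \(\alpha=\beta\), and the variables separate, \(\Pi\) factors as \(\Pi(g_1(x_1,\xi_1)g_2(x_2,\xi_2))=\Pi_1(g_1)\Pi_2(g_2)\). Here \(\Pi_j\) keeps only the terms \(z_j^a\bar z_j^a=\Omega_j^a\). It therefore suffices to compute the one-dimensional projection \(\Pi_j(x_j^{2k})\).

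For this I would use the binomial expansion already written in \eqref{eq:binomial}, namely \(x_j=\tfrac12(z_j+\bar z_j)\). This gives
\[
 x_j^{2k}=2^{-2k}\sum_{a=0}^{2k}\binom{2k}{a}z_j^a\bar z_j^{2k-a}.
\]
The only resonant term is \(a=k\), which contributes \(2^{-2k}\binom{2k}{k}\,z_j^k\bar z_j^k=c_k\Omega_j^k\), using \(z_j\bar z_j=x_j^2+\xi_j^2=\Omega_j\). For \(k=0\) this reads \(\Pi_j(1)=1=c_0\), consistent with \eqref{eq:cn-def}.

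Combining the two factors, I get
\[
 \Pi(f_{N,m})=c_{N-m}c_m\,\Omega_1^{N-m}\Omega_2^{m}=\epsilon_{N,m}\,r^{[0]}_{N,m}.
\]
The indexing matches: \(r^{[0]}_{N,j}=\Omega_1^{N-j}\Omega_2^j\), so the basis vector \(f_{N,m}\) maps to a multiple of the \(m\)-th basis vector of \(\mathsf N^{[0]}_{2N}\). Hence the matrix of \(\cE_N\) in the ordered bases is diagonal with the stated entries. Every \(c_j\) is a positive rational, so each \(\epsilon_{N,m}\neq0\). Both spaces have dimension \(N+1\), and \(\cE_N\) is therefore an isomorphism.

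There is no real obstacle. The only points needing care are two bookkeeping checks. First, that \(\Pi\) here acts on \(\hbar\)-free classical symbols, so no \(\hbar\)-corrections appear: \(\cE_N\) is just \(\Pi\) applied to a polynomial in \(x\). Second, that the projection genuinely factorizes, which holds because the resonance condition \(\alpha=\beta\) is imposed componentwise under nonresonance.
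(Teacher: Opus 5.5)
Your proposal is correct and follows essentially the same route as the paper. The paper applies the binomial expansion \eqref{eq:binomial} with \(k=2N-2m\), \(\ell=2m\) and keeps only the resonant term \(a=N-m\), \(b=m\); this gives \(\Pi f_{N,m}=c_{N-m}c_m\,\Omega_1^{N-m}\Omega_2^m\), which is exactly your factorized one-oscillator computation, with the same diagonal-matrix conclusion.
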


\begin{proof}
Apply the binomial expansion \eqref{eq:binomial} with
\(k=2N-2m\) and \(\ell=2m\).  For
$f_{N,m}=x_1^{2N-2m}x_2^{2m}$, this gives
\begin{equation}\label{eq:Pi-fNm}
\begin{aligned}
        \Pi f_{N,m}
        &=2^{-2N}
        \binom{2N-2m}{N-m}
        \binom{2m}{m}
        z_1^{N-m}\bar z_1^{N-m}z_2^m\bar z_2^m\\
        &=
        c_{N-m}c_m
        \Omega_1^{N-m}\Omega_2^m.
\end{aligned}
\end{equation}
Therefore
\[
 \cE_N(f_{N,m})=\Pi f_{N,m} =\epsilon_{N,m}r_{N,m}^{[0]}.
\]
The matrix is diagonal with nonzero diagonal entries, hence \(\cE_N\) is
an isomorphism.
\end{proof}

\begin{proposition}\label{prop:odd-block}
For \(N\ge3\), write \(e_m=e_{N,m}\).  The odd operator
$\mathcal Q_N$ has matrix
\begin{equation}\label{eq:Q-matrix}
 [\cQ_N]=
 \begin{pmatrix}
  \kappa_{N,0}&\lambda_{N,1}&0&\cdots&0\\
  0&\kappa_{N,1}&\lambda_{N,2}&\ddots&\vdots\\
  \vdots&\ddots&\ddots&\ddots&0\\
  0&\cdots&0&\kappa_{N,N-2}&\lambda_{N,N-1}
 \end{pmatrix}.
\end{equation}
Writing \(n=N-m\), one has
\begin{equation}\label{eq:kappa}
 \kappa_{N,m}=[r^{[2]}_{N,m}]\cQ_N(e_{N,m})=\frac{a_{30}}{6v_1}
 (2n-1)(2n-2)(2n-3)c_{n-2}c_m,
\end{equation}
\begin{equation}\label{eq:lambda}
 \lambda_{N,m}= [r_{N,m-1}^{[2]}]\cQ_N(e_{N,m})=-\frac{a_{12}v_2^2}{2v_1\delta}
 (2n-1)(2m)(2m-1)c_{n-1}c_{m-1},
\end{equation}
Consequently, if \(a_{30}\ne0\), 
\(
 \cQt_N:\widetilde{\mathsf O}_{2N-1}\longrightarrow
 \mathsf N_{2N}^{[2]}
\) is an isomorphism.
\end{proposition}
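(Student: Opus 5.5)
We compute \(\cQ_N(e_{N,m})=-\gamma_3\Pi\{S_3,e_{N,m}\}_3\) directly from the explicit formula \eqref{eq:S3-formula} for \(S_3\). Since \(e_{N,m}=x_1^{2N-1-2m}x_2^{2m}\) depends only on \(x\), the bidifferential \(\{S_3,e_{N,m}\}_3\) reduces to the terms \(\partial_\xi^\alpha S_3\,\partial_x^\alpha e_{N,m}\) with \(|\alpha|=3\), each carrying multinomial weight \(3!/\alpha!\). Because \(S_3\) is cubic, every such derivative is a constant. Only two derivatives are nonzero: \(\partial_{\xi_1}^3S_3=-4a_{30}/v_1\) and \(\partial_{\xi_1}\partial_{\xi_2}^2S_3=4a_{12}v_2^2/(v_1\delta)\), which are exactly the two values already found in the proof of Lemma~\ref{lem:first-quantum}. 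All other third \(\xi\)-derivatives vanish, since \(S_3\) contains no \(\xi_2^3\) term and no \(\xi_1^2\xi_2\) term. Therefore
\[
 \{S_3,e_{N,m}\}_3=-\frac{4a_{30}}{v_1}\,\partial_{x_1}^3e_{N,m}
 +\frac{12a_{12}v_2^2}{v_1\delta}\,\partial_{x_1}\partial_{x_2}^2e_{N,m}.
\]

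Next we evaluate both pieces, writing \(n=N-m\). First, \(\partial_{x_1}^3e_{N,m}=(2n-1)(2n-2)(2n-3)\,x_1^{2n-4}x_2^{2m}\). Second, \(\partial_{x_1}\partial_{x_2}^2e_{N,m}=(2n-1)(2m)(2m-1)\,x_1^{2n-2}x_2^{2m-2}\). Both are even monomials of degree \(2N-4\), and Proposition~\ref{prop:even-block}, specifically \eqref{eq:Pi-fNm} applied at level \(N-2\), gives their projections \(\Pi(x_1^{2p}x_2^{2q})=c_pc_q\,\Omega_1^p\Omega_2^q\).

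The first monomial \(x_1^{2n-4}x_2^{2m}\) projects onto \(\Omega_1^{n-2}\Omega_2^m=r^{[2]}_{N,m}\). Multiplying by \(-\gamma_3=-1/24\) and by \(-4a_{30}/v_1\) gives the factor \(a_{30}/(6v_1)\), which yields \eqref{eq:kappa}. The second monomial projects onto \(\Omega_1^{n-1}\Omega_2^{m-1}=r^{[2]}_{N,m-1}\), with coefficient \(-\tfrac{1}{24}\cdot 12\,a_{12}v_2^2/(v_1\delta)=-a_{12}v_2^2/(2v_1\delta)\), which yields \eqref{eq:lambda}. Hence column \(m\) of \([\cQ_N]\) has entry \(\kappa_{N,m}\) in row \(m\) and \(\lambda_{N,m}\) in row \(m-1\), and nothing else. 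This is the upper bidiagonal pattern \eqref{eq:Q-matrix}. For the boundary columns, \(\lambda_{N,0}=0\) because of the factor \(2m\). For \(m=N-1\) we have \(n=1\), so \((2n-2)=0\) and \(\kappa_{N,N-1}=0\); this matches the fact that the codomain \(\mathsf N^{[2]}_{2N}\) has only \(N-1\) basis vectors, indexed \(j=0,\dots,N-2\).

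For the isomorphism claim, restricting to \(\widetilde{\mathsf O}_{2N-1}\) deletes the last column. What remains is a square \((N-1)\times(N-1)\) upper triangular matrix with diagonal \(\kappa_{N,0},\dots,\kappa_{N,N-2}\). For \(m\le N-2\) we have \(n\ge2\), so \((2n-1)(2n-2)(2n-3)\neq0\), and \(c_{n-2}c_m>0\). Hence each \(\kappa_{N,m}\neq0\) exactly when \(a_{30}\neq0\), and \(\cQt_N\) is then invertible.

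The only delicate point is the bookkeeping of the multinomial weights and the sign conventions in \(\{\cdot,\cdot\}_3\). The \((-1)^{|\beta|}\) signs play no role here, because all \(x\)-derivatives of \(S_3\) are paired with \(\eta\)-derivatives of the \(\xi\)-independent \(e_{N,m}\), and those vanish. It is also important that the same constants \(-4a_{30}/v_1\) and \(4a_{12}v_2^2/(v_1\delta)\) are reused consistently with Lemma~\ref{lem:first-quantum}.
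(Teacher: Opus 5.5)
Your proposal is correct and follows the paper's proof essentially step by step. The same two surviving third $\xi$-derivatives of $S_3$ (with multiplicity $3$ on the mixed one) give the paper's formula for $\cQ_N(e_m)$, and projecting via \eqref{eq:Pi-fNm} yields the bidiagonal matrix and the triangular invertibility of $\cQt_N$. One cosmetic point: for $m=N-1$, state the vanishing of $\kappa_{N,N-1}$ directly as $\partial_{x_1}^3e_{N,N-1}=0$, since the closed formula would involve the undefined $c_{-1}$.
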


\begin{proof}
Fix \(0\leq m\leq N-1\), put \(n=N-m\), and write
\(
        e_m=e_{N,m}=x_1^{2n-1}x_2^{2m}.
\)
By \eqref{eq:Q-def},
\[
 \cQ_N(e_m)=-\gamma_3\Pi\{S_3,e_m\}_3.
\]
Write \(S_3=S_{30}+S_{12}\), where
\[
\begin{aligned}
 S_{30}&=-\frac{a_{30}}{v_1}
          \left(x_1^2\xi_1+\frac23\xi_1^3\right),\\
 S_{12}&=\frac{a_{12}}{v_1\delta}
 \left[(-v_1^2+2v_2^2)\xi_1x_2^2
       +2v_2^2\xi_1\xi_2^2
       +2v_1v_2x_1x_2\xi_2\right].
\end{aligned}
\]
Since \(e_m\) is independent of \(\xi\), the only nonzero third
derivatives of \(S_3\) that contribute are
\[
 \partial_{\xi_1}^3S_{30}=-\frac{4a_{30}}{v_1},
 \qquad
 \partial_{\xi_1}\partial_{\xi_2}^2S_{12}
       =\frac{4a_{12}v_2^2}{v_1\delta},
\]
and the mixed derivative occurs with multiplicity \(3\).  Consequently,
\begin{equation}\label{eq:qformula}
\cQ_N(e_m)
 =
 \frac{a_{30}}{6v_1}\Pi\partial_{x_1}^3e_m
 -\frac{a_{12}v_2^2}{2v_1\delta}
   \Pi\partial_{x_1}\partial_{x_2}^2e_m.
\end{equation}
For \(0\leq m\leq N-2\), the first term in \eqref{eq:qformula} is a multiple of
\(x_1^{2n-4}x_2^{2m}\); by \eqref{eq:Pi-fNm}, its projection is a multiple of
\(
 \Omega_1^{n-2}\Omega_2^m=r^{[2]}_{N,m}.
\)
For \(m\geq1\), the second term is a multiple of
\(x_1^{2n-2}x_2^{2m-2}\), so its projection is a multiple of
\(
 \Omega_1^{n-1}\Omega_2^{m-1}=r^{[2]}_{N,m-1}.
\)
By \eqref{eq:qformula}, 
\[
\begin{aligned}
 \kappa_{N,m}=[r^{[2]}_{N,m}]\cQ_N(e_m)
 &=\frac{a_{30}}{6v_1}\Pi\partial_{x_1}^3 e_m\\
 &=
 \frac{a_{30}}{6v_1}
 (2n-1)(2n-2)(2n-3)c_{n-2}c_m.
\end{aligned}
\]
Since \(n\geq2\) and \(a_{30}\ne0\), every diagonal entry is nonzero.

Similarly, for \(1\leq m\leq N-1\),
\[
\begin{aligned}
 \lambda_{N,m}=[r^{[2]}_{N,m-1}]\cQ_N(e_m)
 &=-\frac{a_{12}v_2^2}{2v_1\delta}
   \Pi\partial_{x_1}\partial_{x_2}^2e_m\\
 &=
 -\frac{a_{12}v_2^2}{2v_1\delta}
 (2n-1)(2m)(2m-1)c_{n-1}c_{m-1}.
\end{aligned}
\]
Thus \eqref{eq:Q-matrix} has the asserted form.  Its first \(N-1\)
columns form the upper-triangular matrix of \(\cQt_N\), whose diagonal
entries are nonzero.
\end{proof}

\begin{proposition}\label{prop:block}
If \(a_{30}\ne0\), then for every \(N\ge3\) the block operator
\[
 \widetilde\cA_N=
 \begin{pmatrix}
  \cQt_N&0\\
  \widetilde{\cC}_N&\cE_N
 \end{pmatrix}:
 \widetilde{\mathsf O}_{2N-1}\oplus\mathsf E_{2N}
 \longrightarrow
 \mathsf N_{2N}^{[2]}\oplus\mathsf N_{2N}^{[0]}
\]
is an isomorphism.
\end{proposition}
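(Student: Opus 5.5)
The plan is to use the block lower-triangular structure of \(\widetilde\cA_N\) together with the two invertibility results already established. First we check dimensions. We have \(\dim\widetilde{\mathsf O}_{2N-1}=N-1\) and \(\dim\mathsf N_{2N}^{[2]}=N-1\). Likewise \(\dim\mathsf E_{2N}=N+1\) and \(\dim\mathsf N_{2N}^{[0]}=N+1\). So \(\widetilde\cA_N\) is a square operator between spaces of dimension \(2N\), and it suffices to prove injectivity (or, equivalently, to write down an explicit inverse).

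Suppose \(\widetilde\cA_N(F,G)=0\) with \(F\in\widetilde{\mathsf O}_{2N-1}\) and \(G\in\mathsf E_{2N}\). The first component gives \(\cQt_N F=0\). Since \(a_{30}\ne0\), Proposition~\ref{prop:odd-block} says \(\cQt_N\) is an isomorphism, so \(F=0\). The second component then reduces to \(\cE_N G=0\). Proposition~\ref{prop:even-block} says \(\cE_N\) is an isomorphism for every \(N\ge2\), so \(G=0\). Hence \(\widetilde\cA_N\) is injective and therefore an isomorphism.

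If an explicit inverse is wanted (useful later for the recursive reconstruction), it is
\[
 \widetilde\cA_N^{-1}
 =\begin{pmatrix}
  \cQt_N^{-1}&0\\
  -\cE_N^{-1}\widetilde\cC_N\cQt_N^{-1}&\cE_N^{-1}
 \end{pmatrix}.
\]
One verifies this by multiplying the block matrices.

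No step is a genuine obstacle. The only points needing care are that the codomain dimensions match exactly, which follows from the indexing of \(r^{[q]}_{N,j}\) with \(0\le j\le N-q\), and that the hypothesis \(a_{30}\ne0\) is used precisely to make the diagonal entries \(\kappa_{N,m}\) nonzero. These entries are nonzero because for \(0\le m\le N-2\) one has \(n=N-m\ge2\), so the factor \((2n-1)(2n-2)(2n-3)c_{n-2}c_m\) does not vanish. The coupling block \(\widetilde\cC_N\) plays no role in invertibility.
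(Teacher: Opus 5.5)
Your proposal is correct and follows the paper's argument: \(\widetilde\cA_N\) is block lower triangular, with \(\cQt_N\) invertible by Proposition~\ref{prop:odd-block} (using \(a_{30}\neq0\)) and \(\cE_N\) invertible by Proposition~\ref{prop:even-block}, so \(\widetilde\cA_N\) is invertible. Your dimension count and explicit block inverse are correct additions that the paper leaves implicit.
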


\begin{proof}
The matrix is block lower triangular.  Proposition~\ref{prop:odd-block} gives
invertibility of the upper-left block \(\cQt_N\), and
Proposition~\ref{prop:even-block} gives invertibility of the lower-right
block \(\cE_N\).
\end{proof}

\begin{theorem}\label{thm:block-affine-line}
Assume \(a_{30}\ne0\).  Fix \(N\geq3\),  we have the following affine line  in \(\mathsf O_{2N-1}\oplus\mathsf E_{2N}\):
\begin{equation}\label{eq:affine-line}
 (V_{2N-1},V_{2N})
 =
 (\widehat{V_{2N-1}},\widehat{V_{2N}})
 +a_{1,2N-2}(\Phi_N,\Psi_N).
\end{equation}
where the direction \((\Phi_N,\Psi_N)\) is the unique normalized generator of
the kernel of the block operator \(\cA_N\):
\begin{equation}\label{eq:block-kernel}
 \ker\cA_N
 =\operatorname{span}\{(\Phi_N,\Psi_N)\},
 \qquad
 \Phi_N-e_{N,N-1}\in\widetilde{\mathsf O}_{2N-1},\quad \Psi_N\in\mathsf E_{2N}.
\end{equation}
The base point
\((\widehat{V_{2N-1}},\widehat{V_{2N}})\) is determined by the
$(2N-2)$-jet \(j^{2N-2}V\) together with \(B_{2N}^{[2]}(V)\) and \(B_{2N}^{[0]}(V)\).
Moreover, \((\Phi_N,\Psi_N)\) depends only on
\(N,v_1,v_2,a_{30}\), and \(a_{12}\).
\end{theorem}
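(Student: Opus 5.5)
The argument is linear algebra on the block system \eqref{eq:block-matrix}, using Proposition~\ref{prop:block}. We split the odd space as \(\mathsf O_{2N-1}=\widetilde{\mathsf O}_{2N-1}\oplus\operatorname{span}\{e_{N,N-1}\}\) and write \(V_{2N-1}=V_{2N-1}^{\rm red}+a_{1,2N-2}e_{N,N-1}\). Then \eqref{eq:block-matrix} takes the form
\[
 \widetilde\cA_N\begin{pmatrix}V_{2N-1}^{\rm red}\\ V_{2N}\end{pmatrix}
 =\begin{pmatrix}B_{2N}^{[2]}-\widehat K_{2N}^{[2]}\\ B_{2N}^{[0]}-\widehat K_{2N}^{[0]}\end{pmatrix}
 -a_{1,2N-2}\begin{pmatrix}\cQ_N(e_{N,N-1})\\ \cC_N(e_{N,N-1})\end{pmatrix}.
\]
Because \(a_{30}\ne0\), Proposition~\ref{prop:block} says \(\widetilde\cA_N\) is invertible. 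We therefore define
\[
 (\widehat{V_{2N-1}},\widehat{V_{2N}}):=\widetilde\cA_N^{-1}\bigl(B_{2N}^{[2]}-\widehat K_{2N}^{[2]},\,B_{2N}^{[0]}-\widehat K_{2N}^{[0]}\bigr),
\]
regarded as an element of \(\mathsf O_{2N-1}\oplus\mathsf E_{2N}\) with zero edge component. We also set
\[
 (\Phi_N,\Psi_N):=(e_{N,N-1},0)-\widetilde\cA_N^{-1}\bigl(\cQ_N(e_{N,N-1}),\cC_N(e_{N,N-1})\bigr).
\]
Solving the displayed system then gives \eqref{eq:affine-line} directly. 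In particular, every solution of \eqref{eq:block-matrix} lies on this line, and every point of the line is a solution.

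For the kernel statement \eqref{eq:block-kernel}, we check first that \(\cA_N(\Phi_N,\Psi_N)=0\), which holds by construction. By definition \(\Phi_N-e_{N,N-1}\in\widetilde{\mathsf O}_{2N-1}\) and \(\Psi_N\in\mathsf E_{2N}\). To see that the kernel is exactly one-dimensional, we count dimensions. The domain has dimension \(N+(N+1)=2N+1\) and the codomain has dimension \((N-1)+(N+1)=2N\). The restriction \(\widetilde\cA_N\) is already surjective, so \(\cA_N\) has rank \(2N\) and its kernel has dimension one. Any kernel element with zero edge coefficient lies in \(\ker\widetilde\cA_N=0\). Hence the normalization "edge coefficient \(=1\)" selects a unique generator.

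It remains to track dependencies. \(\widehat K_{2N}^{[2r]}\) depends only on \(j^{2N-2}V\) by Lemma~\ref{lem:explicit-known-remainders} and Proposition~\ref{prop:homological-block}. The operators \(\cQ_N\), \(\cC_N\) and \(\cE_N\) involve only \(S_3\), \(\Pi\) and \(\{\cdot,\cdot\}_q\). By Lemma~\ref{lem:degree-three}, \(S_3\) depends only on \(v_1,v_2,a_{30},a_{12}\). Consequently \(\widetilde\cA_N^{-1}\), and with it \((\Phi_N,\Psi_N)\), depend only on \(N,v_1,v_2,a_{30},a_{12}\). The base point depends on these quantities, which are part of \(j^{2N-2}V\) since \(N\ge3\), together with \(B_{2N}^{[2]}\) and \(B_{2N}^{[0]}\).

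There is no serious obstacle, because the invertibility of \(\widetilde\cA_N\) is already established in Proposition~\ref{prop:block}. The one point needing care is that the unknown pair enters \eqref{eq:block-matrix} only linearly through \(\cA_N\), so that the right-hand side is genuinely independent of \((V_{2N-1},V_{2N})\). This is the content of Proposition~\ref{prop:homological-block}.
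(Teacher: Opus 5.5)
Your proposal is correct and follows essentially the same route as the paper: you split off the edge direction $e_{N,N-1}$, invert $\widetilde\cA_N$ via Proposition~\ref{prop:block}, and obtain the base point and the kernel direction from the same formulas. The only difference is cosmetic: you get $\dim\ker\cA_N=1$ by rank--nullity ($2N+1$ versus $2N$), whereas the paper writes an arbitrary kernel element as $F=F^{\rm red}+s\,e_{N,N-1}$ and uses injectivity of $\widetilde\cA_N$ to conclude it equals $s(\Phi_N,\Psi_N)$.
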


\begin{proof}
Put
\(
 e=e_{N,N-1},\, t=a_{1,2N-2},
\)
and set
\[
 Y_N=B_{2N}^{[2]}-\widehat K_{2N}^{[2]},
 \qquad
 Z_N=B_{2N}^{[0]}-\widehat K_{2N}^{[0]}.
\]
Then the block equation \eqref{eq:block-matrix} becomes
\[
 \widetilde\cA_N
 \begin{pmatrix}
  V_{2N-1}^{\rm red}\\
  V_{2N}
 \end{pmatrix}
 =
 \begin{pmatrix}
  Y_N\\ Z_N
 \end{pmatrix}
 -t
 \begin{pmatrix}
  \cQ_Ne\\
  \cC_Ne
 \end{pmatrix}.
\]
By Proposition~\ref{prop:block}, \(\widetilde\cA_N\) is invertible.  Define
\begin{equation}\label{eq:phi-psi}
 \begin{pmatrix}
  \widehat{V_{2N-1}}\\
  \widehat{V_{2N}}
 \end{pmatrix}
 :=
 \widetilde\cA_N^{-1}
 \begin{pmatrix}
  Y_N\\ Z_N
 \end{pmatrix},
\qquad
 \begin{pmatrix}
  \Phi_N-e\\
  \Psi_N
 \end{pmatrix}
 :=
 -\widetilde\cA_N^{-1}
 \begin{pmatrix}
  \cQ_Ne\\
  \cC_Ne
 \end{pmatrix}.
\end{equation}
Equation~\eqref{eq:affine-line} follows directly.
Since \(\widetilde\cA_N\) is the restriction of \(\cA_N\), the definition
of \((\Phi_N-e,\Psi_N)\) also gives
\[
 \cA_N
 \begin{pmatrix}
  \Phi_N\\ \Psi_N
 \end{pmatrix}
 =0.
\]
Conversely, if \((F,G)\in\ker\cA_N\), write
\(F=F^{\rm red}+se\), where \(F^{\rm red}\in\widetilde{\mathsf O}_{2N-1}\).
Then
\[
 \widetilde\cA_N
 \begin{pmatrix}
  F^{\rm red}\\ G
 \end{pmatrix}
 =
 -s
 \begin{pmatrix}
  \cQ_Ne\\
  \cC_Ne
 \end{pmatrix}.
\]
The invertibility of \(\widetilde\cA_N\) implies
\((F,G)=s(\Phi_N,\Psi_N)\), proving \eqref{eq:block-kernel}.
The vector \((Y_N,Z_N)\), and hence the base point, is determined by lower degree jet $j^{2N-2}V$ and $B_{2N}^{[2]}$, $B_{2N}^{[0]}$.  The normalized kernel vector is
determined only by \(\cA_N\), which depends on
\(N,v_1,v_2\) and \(S_3\).  Formula~\eqref{eq:S3-formula} proves the
asserted dependence.
\end{proof}

\begin{theorem}\label{thm:two-layer}
Assume \(v_1/v_2\notin\mathbb Q\) and \(a_{30}\ne0\).  The layers
\(B^{[0]}\) and \(B^{[2]}\), the sign of \(a_{30}\), and all the
coefficients
\(
 a_{1,2k},\, k\geq1,
\)
determine the complete Taylor series of \(V\).
\end{theorem}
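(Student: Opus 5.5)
The plan is an induction on \(N\) in which the jet \(j^{2N}V\) is recovered from \(j^{2N-2}V\), the layers \(B^{[0]},B^{[2]}\), and the edge coefficient \(a_{1,2N-2}\).

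\emph{Low degrees first.} The frequencies \(v_1,v_2\), and hence \(V_2\) and \(H_2\), are fixed by the labeling.

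\emph{Degree three.} The only unknowns are \(a_{30}\) and \(a_{12}\), and \(a_{12}\) is among the prescribed coefficients (\(k=1\)). Formula \eqref{eq:b100-formula} for \(b_{1,0,0}\), a coefficient of \(B^{[2]}\), gives
\[
a_{30}^2=2v_1b_{1,0,0}+\frac{a_{12}^2v_2^2}{\delta}.
\]
Together with the prescribed sign of \(a_{30}\), this determines \(a_{30}\) uniquely. Thus \(V_3\), and by Lemma~\ref{lem:degree-three} also \(S_3\), are known.

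\emph{Degree four.} Equation \eqref{eq:initial-even-block} gives
\[
\cE_2V_4=B_4^{[0]}-\Pi\bigl(\tfrac12\{S_3,V_3\}\bigr).
\]
The right-hand side is known, and \(\cE_2\) is invertible by Proposition~\ref{prop:even-block} with \(N=2\). Hence \(V_4\) is determined.

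\emph{Inductive step, \(N\ge3\).} Assume \(j^{2N-2}V\) is known. By Lemma~\ref{lem:finite-step-polynomial}, the generators \(S_3,\dots,S_{2N-2}\) are then known polynomial expressions. By Lemma~\ref{lem:explicit-known-remainders}, so are \(K_{2N-1}\), \(K_{2N}\), and therefore \(\widehat K_{2N}^{[0]}\) and \(\widehat K_{2N}^{[2]}\). Proposition~\ref{prop:homological-block} then yields the linear system \eqref{eq:block-matrix} for \((V_{2N-1},V_{2N})\). Its right-hand side is known from \(B_{2N}^{[2]}\), \(B_{2N}^{[0]}\) and the lower jet.

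Since \(a_{30}\neq0\), Theorem~\ref{thm:block-affine-line} applies. The solution set is the affine line \eqref{eq:affine-line}, whose base point is determined by \(j^{2N-2}V\), \(B_{2N}^{[2]}\), \(B_{2N}^{[0]}\), and whose direction \((\Phi_N,\Psi_N)\) depends only on \(N,v_1,v_2,a_{30},a_{12}\), all already known. The parameter on this line is the edge coefficient \(a_{1,2N-2}\), which is prescribed. It remains to check that this parametrization is the true one, i.e.\ that the \(e_{N,N-1}\)-coordinate of \(V_{2N-1}\) equals the line parameter. This follows by construction: the base point has \(\widehat{V_{2N-1}}\in\widetilde{\mathsf O}_{2N-1}\), and \(\Phi_N-e_{N,N-1}\in\widetilde{\mathsf O}_{2N-1}\). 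Equivalently, \(\widetilde\cA_N\) is invertible, so \((V_{2N-1}^{\rm red},V_{2N})\) is the unique solution once \(a_{1,2N-2}\) is substituted. This determines \(j^{2N}V\) and closes the induction.

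Since every Taylor coefficient lies in some finite jet, the complete Taylor series is determined.

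\emph{Main obstacle.} The argument is mostly bookkeeping on top of Theorem~\ref{thm:block-affine-line}. The delicate point is that the recursion must be genuinely triangular: \(K_{2N}\) must not secretly involve \(V_{2N-1}\) or \(V_{2N}\), and the \(\hbar^2\)-data at degree \(2N\) must involve the unknowns only through \(\cQ_N\). Both are guaranteed by Lemma~\ref{lem:explicit-known-remainders} and Proposition~\ref{prop:homological-block}. The other point needing care is the degree-three step: there the sign of \(a_{30}\) is needed to resolve the square root, and \(a_{30}\neq 0\) (which holds by hypothesis) is needed to invoke Proposition~\ref{prop:odd-block} at every later step.
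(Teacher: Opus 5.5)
Your proposal is correct and follows the paper's proof essentially step for step. You recover $a_{30}$ from $b_{1,0,0}$ together with the sign, recover $V_4$ from the invertibility of $\cE_2$, and then induct using Theorem~\ref{thm:block-affine-line} with the prescribed edge coefficient $a_{1,2N-2}$ as the line parameter. Your extra check that this parameter equals the $e_{N,N-1}$-coordinate of $V_{2N-1}$ is already built into the paper's proof of that theorem, via the splitting $V_{2N-1}=V_{2N-1}^{\rm red}+a_{1,2N-2}e_{N,N-1}$.
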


\begin{proof}
The prescribed coefficients include \(a_{12}\).
Lemmas~\ref{lem:degree-three} and~\ref{lem:first-quantum} show that, after
fixing the sign of \(a_{30}\), the cubic term \(V_3\) and the generator
\(S_3\) are known.
The classical degree-$4$ equation \eqref{eq:initial-even-block} then
determines \(V_4\) by Proposition~\ref{prop:even-block}, and the non-resonant part
determines \(S_4\).

Assume that all \(V_m\) and \(S_m\) with \(m\le 2N-2\) are known.  The edge
coefficient \(a_{1,2N-2}\) is prescribed.  By
Theorem~\ref{thm:block-affine-line}, 
\[
 (V_{2N-1},V_{2N})
 =
 (\widehat{V_{2N-1}},\widehat{V_{2N}})
 +a_{1,2N-2}(\Phi_N,\Psi_N).
\]
Every quantity on the right-hand side is known, so this determines
\(V_{2N-1}\) and \(V_{2N}\).  Finally,
\eqref{eq:S-odd-solution} and the non-resonant part of
\eqref{eq:degree-even-general} determine \(S_{2N-1}\) and \(S_{2N}\).  This
closes the induction and determines the full Taylor series.
\end{proof}

The above theorem was first established by K.~Wang~\cite{Wang26}. Wang's recursive argument proves the result by eliminating Taylor coefficients degree by degree; our structural theorem \ref{thm:block-affine-line} instead isolates the underlying one-dimensional affine defect and identifies \(a_{1,2N-2}\) as its natural parameter for every \(N\ge3\).

We end this section by another important structural observation from Theorem \ref{thm:block-affine-line}.

\begin{proposition}\label{prop:rational-affine-dependence}
Fix \(N\geq3\) and the frequencies \(v_1,v_2\).  On \(a_{30}\ne0\),
the coefficients of the normalized direction \((\Phi_N,\Psi_N)\) are
rational functions of \(a_{30}\) and \(a_{12}\), while the coefficients
of the base point
\((\widehat {V_{2N-1}},\widehat {V_{2N}})\) are rational functions of
\(j^{2N-2}V\), \(B_{2N}^{[2]}(V)\), and \(B_{2N}^{[0]}(V)\).  In both cases,
the only parameter-dependent denominators are powers of \(a_{30}\).
\end{proposition}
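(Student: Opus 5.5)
The proof is Cramer's rule applied to the explicit block-triangular inverse of \(\widetilde\cA_N\), followed by a check that every other ingredient is polynomial. Throughout, \(v_1,v_2\) are fixed, so \(\delta=v_1^2-4v_2^2\) and the binomial constants \(c_j\) are fixed nonzero numbers.

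\textbf{Entries of the blocks.} By Lemma~\ref{lem:degree-three}, the coefficients of \(S_3\) are linear in \((a_{30},a_{12})\), with coefficients depending only on \(v_1,v_2\). Hence the matrices of \(\cQ_N=-\gamma_3\Pi\{S_3,\cdot\}_3\) and \(\cC_N=\Pi\{S_3,\cdot\}\) have entries that are linear, hence polynomial, in \((a_{30},a_{12})\). This matches the explicit formulas \eqref{eq:kappa}--\eqref{eq:lambda}. The matrix of \(\cE_N\) is the constant diagonal matrix of Proposition~\ref{prop:even-block}.

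\textbf{Inverting \(\widetilde\cA_N\).} Use the block form
\[
\widetilde\cA_N^{-1}=\begin{pmatrix}\cQt_N^{-1}&0\\-\cE_N^{-1}\widetilde\cC_N\cQt_N^{-1}&\cE_N^{-1}\end{pmatrix}.
\]
Here \(\cE_N^{-1}\) is a constant diagonal matrix. The matrix \(\cQt_N\) is upper triangular (Proposition~\ref{prop:odd-block}) with diagonal entries \(\kappa_{N,m}=\mu_{N,m}a_{30}\), where each \(\mu_{N,m}\) is a nonzero constant because \(n=N-m\ge2\). So \(\det\cQt_N=\mu\,a_{30}^{N-1}\) with \(\mu\neq0\) a constant. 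By the adjugate formula, \(\cQt_N^{-1}=\operatorname{adj}(\cQt_N)/\det\cQt_N\) has entries of the form polynomial in \((a_{30},a_{12})\) divided by a power of \(a_{30}\). Alternatively, back substitution along the bidiagonal structure gives the same conclusion and shows the denominators are exactly powers of \(a_{30}\). Therefore every entry of \(\widetilde\cA_N^{-1}\) lies in \(\R[a_{30},a_{12}][a_{30}^{-1}]\).

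\textbf{Direction vector.} By \eqref{eq:phi-psi},
\[
(\Phi_N-e,\Psi_N)=-\widetilde\cA_N^{-1}(\cQ_Ne,\cC_Ne).
\]
The vector \((\cQ_Ne,\cC_Ne)\) is polynomial in \((a_{30},a_{12})\). So the coefficients of \((\Phi_N,\Psi_N)\) are rational in \((a_{30},a_{12})\), with only powers of \(a_{30}\) in the denominators.

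\textbf{Base point.} Here \((\widehat{V_{2N-1}},\widehat{V_{2N}})=\widetilde\cA_N^{-1}(Y_N,Z_N)\), where \(Y_N=B_{2N}^{[2]}-\widehat K_{2N}^{[2]}\) and \(Z_N=B_{2N}^{[0]}-\widehat K_{2N}^{[0]}\). The main point to verify is that \(\widehat K_{2N}^{[2r]}=\Pi K_{2N}^{[2r]}\) is a polynomial in \(j^{2N-2}V\), with no hidden denominators from \(L^{-1}\). This follows from Lemma~\ref{lem:finite-step-polynomial}: \(S_3,\dots,S_{2N-2}\) are polynomial in \(j^{2N-2}V\). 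The explicit formula \eqref{eq:K-even-explicit} expresses \(K_{2N}\) as a finite sum of iterated Moyal brackets of these generators with the \(H_q\), \(q\le 2N-2\). The only divisions involved come from \(L^{-1}\) and are by \(\omega\)'s, which are fixed nonzero constants since \(v\) is fixed. Hence \(Y_N\) and \(Z_N\) are polynomial in \(j^{2N-2}V\) and affine in \(B_{2N}^{[2]},B_{2N}^{[0]}\). Applying \(\widetilde\cA_N^{-1}\) then gives rational functions of these data whose denominators are powers of \(a_{30}\), where \(a_{30}\) is itself a coordinate of the jet.
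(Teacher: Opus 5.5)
Your proposal is correct and follows essentially the same route as the paper: you show that $S_3$, the block entries, $(\cQ_N e_{N,N-1},\cC_N e_{N,N-1})$ and $\widehat K_{2N}^{[2r]}$ are polynomial, then invert the block lower triangular $\widetilde\cA_N$, whose determinant is a nonzero constant times $a_{30}^{N-1}$, and apply \eqref{eq:phi-psi}. The only cosmetic difference is that you write the block inverse explicitly, with $\cE_N^{-1}$ constant and $\cQt_N^{-1}$ obtained via the adjugate, whereas the paper simply computes $\det\widetilde\cA_N$.
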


\begin{proof}
For fixed nonresonant frequencies, \(L^{-1}\) and \(\Pi\) are fixed
linear maps, and every homogeneous Moyal operation is a finite
polynomial expression with fixed coefficients; hence none introduces
jet-dependent denominators.
Consequently, these operations preserve polynomial dependence on the
jet coordinates.  In particular, Lemmas~\ref{lem:finite-step-polynomial}
and~\ref{lem:explicit-known-remainders} show that the lower degree terms
\(\widehat K_{2N}^{[2]}\) and \(\widehat K_{2N}^{[0]}\) are polynomial
in \(j^{2N-2}V\).

Since \(S_3=-L^{-1}V_3\), its coefficients are linear in \(a_{30}\) and
\(a_{12}\).  Definitions~\eqref{eq:Q-def}--\eqref{eq:E-def} therefore
show that every entry of \(\widetilde\cA_N\), as well as every entry of
\((\cQ_Ne_{N,N-1},\cC_Ne_{N,N-1})\), is polynomial in \(a_{30}\) and
\(a_{12}\).  The matrix \(\widetilde\cA_N\) is block lower triangular.
By Proposition~\ref{prop:odd-block}, ~\ref{prop:even-block}, 
\(
 \det\widetilde\cA_N=\widetilde c_Na_{30}^{N-1},\) for some \(
 \widetilde c_N\ne0.
\)
Hence every entry of
\(\widetilde\cA_N^{-1}\) is rational in \(a_{30},a_{12}\), with a power
of \(a_{30}\) as its only parameter-dependent denominator.  
Also,
\[ Y_N=B_{2N}^{[2]}-\widehat K_{2N}^{[2]},
 \qquad
 Z_N=B_{2N}^{[0]}-\widehat K_{2N}^{[0]},
\]
so \((Y_N,Z_N)\) is polynomial in \(j^{2N-2}V\),
\(B_{2N}^{[2]}\), and \(B_{2N}^{[0]}\).  
We conclude by applying the defining formula \eqref{eq:phi-psi}.
\end{proof}

\section{Generic recovery of the higher edge coefficients}\label{sec:edge}

In this section, we reduce the infinite family of edge coefficients to finite-jet generic conditions and recover all higher edge coefficients once \(a_{12}\) and \(a_{14}\) are known.

For
\(\alpha,\beta\in\mathbb R\), set
\[
 \mathcal V_{v,\sigma}^{\alpha,\beta}
 :=\{V\in\mathcal V_v^\sigma:a_{12}(V)=\alpha,\ a_{14}(V)=\beta\},
\]
with its relative coefficientwise topology.

\begin{theorem}\label{thm:generic-edge}
Fix \(\sigma\in\{\pm1\}\), \(\alpha\neq0\), and \(\beta\).  There is a dense \(G_\delta\) subset
\(\mathcal G_{v,\sigma}^{\alpha,\beta}
\subset\mathcal V_{v,\sigma}^{\alpha,\beta}\) such that, for every
\(V\in\mathcal G_{v,\sigma}^{\alpha,\beta}\) and
\(W\in\mathcal V_{v,\sigma}^{\alpha,\beta}\),
\[
 (B^{[0]},B^{[2]},B^{[4]})(W)
 =(B^{[0]},B^{[2]},B^{[4]})(V)
 \quad\Longrightarrow\quad W=V.
\]
Moreover, these three layers and the fixed values
\(\alpha,\beta,\sigma\) determine every
\(V\in \mathcal G_{v,\sigma}^{\alpha,\beta}\) successively.
\end{theorem}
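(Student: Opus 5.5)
Within the fiber $\mathcal V_{v,\sigma}^{\alpha,\beta}$ I would proceed by induction on $N\ge 4$. The two-layer data already pin down $V_3$, $S_3$ (via $b_{1,0,0}$, $B_4^{[0]}$ and the fixed values $\alpha,\sigma$), then $V_4$, and then $(V_5,V_6)$ via Theorem~\ref{thm:block-affine-line} with $a_{14}=\beta$. Suppose $j^{2N-2}V$ is known for some $N\ge4$. Theorem~\ref{thm:block-affine-line} says the pair $(V_{2N-1},V_{2N})$ lies on a known affine line, parametrized by the single unknown $t=a_{1,2N-2}$. I will extract $t$ from the $\hbar^4$-coefficient of the next even degree, $B^{[4]}_{2N+2}$.

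The key structural step is to show that $B_{2N+2}^{[4]}$ depends on $t$ affinely, modulo terms fixed by $j^{2N-2}V$. Let me argue this first. Only $V_{2N-1},V_{2N}$ and the jet below enter degree $2N+2$ through brackets with $S_3,S_4,V_3,V_4$. In the $\hbar^4$-part of a resonant degree-$(2N+2)$ quantity, the new degree-$(2N+1)$ and $(2N+2)$ coefficients cannot contribute. The reasons are twofold. $V_{2N+2}$ is $\hbar$-free. $V_{2N+1}$ enters only through a single bracket with $S_3$, which yields an $\hbar^4$-term of degree $\ge 2N+2$ only via $\{\cdot,\cdot\}_5$ of two terms of total degree $2N+4$; that term lands in degree $2N$, not $2N+2$. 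Hence the degree-$(2N+2)$, $\hbar^4$ part involves $V_{2N+1}$ only if $\{S_3,V_{2N+1}\}_5$ is nonzero, and this vanishes by degree count. So I would check carefully which products can appear and confirm that no unknowns of degree $\ge 2N+1$ enter.

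Substituting the affine line then gives
\[
B_{2N+2}^{[4]}=\mathcal R_N\bigl(j^{2N-2}V,B^{[0]}_{2N},B^{[2]}_{2N}\bigr)+t\,\mathcal T_N+t^2\,\mathcal T'_N ,
\]
where the quadratic term $\mathcal T'_N$ arises only from brackets between two new pieces. Such brackets have degree $\ge 4N-4>2N+2$ when $N\ge4$, so $\mathcal T'_N=0$. Here $\mathcal T_N\in\mathsf N^{[4]}_{2N+2}$ is the coupling of the kernel direction $(\Phi_N,\Psi_N)$: it is obtained by feeding $(\Phi_N,\Psi_N)$, together with the induced generator pieces $-L^{-1}\Phi_N$ and the resulting $S_{2N}$-variation, into the terms of degree $2N+2$ that are linear in them and involve $V_3,V_4,S_3,S_4$ through $\{\cdot,\cdot\}_3$ and $\{\cdot,\cdot\}_5$. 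By Proposition~\ref{prop:rational-affine-dependence}, the coefficients of $\mathcal T_N$ are rational in $j^4V$ with denominators powers of $a_{30}$; fixing $\alpha$, they are polynomial-over-$a_{30}^k$ in $(a_{30},V_4)$, possibly also involving $j^{2N-2}V$. Whenever $\mathcal T_N\ne0$, $t$ is determined by one chosen coordinate, and injectivity plus successive recovery follow by induction.

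The genericity claim is then handled as follows. Set
\[
U_N=\{V:\mathcal T_N(j^{2N-2}V)\ne0\}\cap\mathcal V^{\alpha,\beta}_{v,\sigma}.
\]
This set is open, since it is defined by a finite jet and continuous in the cylinder topology. It is dense provided the polynomial defining $\mathcal T_N$ is not identically zero on the connected jet fiber. The set $\mathcal G=\bigcap_{N\ge4}U_N$ is then a dense $G_\delta$ by Baire.

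The main obstacle is exhibiting one jet with $\mathcal T_N\ne0$ for every $N$ simultaneously, or at least one jet per $N$. I would first try a special point, for instance with $V_4$ and all of $j^{2N-2}V$ beyond the cubic part set to zero, and with $a_{30}$ dominant. On that point I would compute the leading term of $\mathcal T_N$ as $a_{30}\to\infty$ or as a tuned quartic coefficient varies. The hope is that it reduces to an explicit bidiagonal-type coefficient, analogous to $\kappa_{N,m}$ and $\lambda_{N,m}$, built from $\partial_{x_1}^5$ and $\partial_{x_1}\partial_{x_2}^4$ of $e_{N,N-1}$, and that this coefficient is manifestly nonzero via the $c_j$. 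If needed, the edge monomial $x_1x_2^{2N-2}$ hit by $\partial_{x_2}^4$ gives a coefficient $\propto a_{12}^2(2N-2)(2N-3)(2N-4)(2N-5)c\ldots$, which is nonzero for $N\ge4$ because $\alpha\ne0$.
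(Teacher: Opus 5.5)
Your overall architecture matches the paper's. You induct on $N\ge4$ along the affine line of Theorem~\ref{thm:block-affine-line}. You use that $B^{[4]}_{2N+2}$ is affine in the edge parameter $t=a_{1,2N-2}$: quadratic terms have order $\ge4N-4>2N+2$, and $V_{2N+1},V_{2N+2}$ only enter the $\hbar^4$-part through fifth-order brackets with a cubic. You finish with a Baire intersection of the sets where the coupling is nonzero.

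The genuine gap is the step you flag yourself: you never show that your coupling vector $\mathcal T_N$ is not identically zero on the fiber $\mathcal V^{\alpha,\beta}_{v,\sigma}$. Without this the sets $U_N$ need not be dense, and the theorem says nothing beyond Theorem~\ref{thm:two-layer}. The heuristic you sketch does not close it, for three reasons.
\begin{enumerate}
\item[(i)] Your list of ingredients omits $S_5$. The coupling is $\Pi\mathcal T_V(\Phi_N,\Psi_N)$, and $\mathcal T_V$ contains $N_5\Phi_N=\gamma_5\{S_5,\Phi_N\}_5$, so it depends on $j^5V$, not only on $(a_{30},V_4)$.
\item[(ii)] Your test point, with everything above the cubic part set to zero, lies outside the fiber as soon as $\beta\neq0$.
\item[(iii)] The limit $a_{30}\to\infty$ is not a clean regime. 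Already $\Pi\{S_3,e_{N,N-1}\}$ has an $a_{30}\Omega_1\Omega_2^{N-1}$ term, so $\Psi_N=-\cE_N^{-1}\cC_N\Phi_N$ grows like $a_{30}$, and so does $M_3$. The $\mu^2$, $\mu\nu$ and $\nu^2$ parts of $\frac12M_3^2\Psi_N$, together with $N_5\Phi_N$, all contribute at order $a_{30}$ to the pure transverse coefficient. You have not shown that these contributions fail to cancel.
\end{enumerate}
Your closing formula ``$\propto a_{12}^2(2N-2)\cdots(2N-5)$'' is not derived from any specific term of \eqref{eq:L-operator}.

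The missing idea is to vary a coefficient on which the coupling depends in a fully controlled way. The direction $(\Phi_N,\Psi_N)$ depends only on $(a_{30},a_{12})$. Since $K_5$ depends only on $j^4V$, one has $\partial S_5/\partial a_{50}=-L^{-1}(x_1^5)$. Hence only $N_5\Phi_N$ sees $a_{50}$, and $d_N=\rho_N\mathcal T_V(\Phi_N,\Psi_N)$ is affine in $a_{50}$, which is a free coordinate on $\mathcal V^{\alpha,\beta}_{v,\sigma}$. Because $L^{-1}(x_1^5)$ involves only $(x_1,\xi_1)$ and each $e_{N,m}$ is $\xi$-free, only the component $e_{N,N-3}$ of $\Phi_N$ can produce an $\Omega_2^{N-3}$ term. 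This gives
\[
 \frac{\partial d_N}{\partial a_{50}}=-\frac{4}{v_1}\,c_{N-3}\,\phi^*_{N,N-3}\,\tau^2\neq0,
 \qquad \tau=\frac{\alpha v_2^2}{a_{30}\delta}.
\]
Nonvanishing uses $\alpha\neq0$, through $\phi_{N,N-3}=\phi^*_{N,N-3}\tau^2$ from the bidiagonal recurrence for $\ker\cQ_N$. The perturbation $V+\varepsilon x_1^5$ stays in the fiber, so $\{d_N\neq0\}$ is dense there. A nonzero $\Omega_2^{N-3}$-coefficient makes your vector $\mathcal T_N$ nonzero, so this plugs directly into your scheme.
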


For a formal symbol $A$, define the first three components of its Moyal
derivation by
\begin{equation}\label{eq:PMN}
 P_A(F)=\{A,F\},
 \qquad
 M_A(F)=-\gamma_3\{A,F\}_3,
 \qquad
 N_A(F)=\gamma_5\{A,F\}_5.
\end{equation}
Thus
\[
 D_A=P_A+\hbar^2M_A+\hbar^4N_A+O(\hbar^6).
\]
Given the first three components $S_3,S_4,S_5$ of the gauge fixed generator $S$ , write
\[
 P_j=P_{S_j},\quad  M_j=M_{S_j},\quad  N_j=N_{S_j}.
\]
Denote the partial sums of the potential and the corresponding
Hamiltonian by
\[
 V^{<N}:=V_2+V_3+\cdots+V_{2N-2},
 \quad
 H^{<N}:=H_{V^{<N}}=H_2+V_3+\cdots+V_{2N-2},
\]
and $V^{\geq N}:=V-V^{<N}$.

We first recall the following initial determination lemma.

\begin{lemma}\label{lem:initial-reconstruction}
Once $a_{12}$, $a_{14}$, and the sign of $a_{30}$ are fixed, the layers $B^{[0]}$ and
$B^{[2]}$ determine $V_m,S_m$ for \(m\le 6\).

\end{lemma}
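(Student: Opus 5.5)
The argument runs degree by degree through degree six, using only results already in Section~\ref{sec:formal} and Section~\ref{sec:two-layer}. The steps are: recover $V_3$ and $S_3$, then $V_4$ and $S_4$, then the pair $(V_5,V_6)$ via Theorem~\ref{thm:block-affine-line} with $N=3$, and finally $S_5,S_6$.

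\emph{Cubic degree.} The layer $B^{[0]}$ contains $H_2$, which gives the labeled frequencies $v$. With $a_{12}$ prescribed, formula \eqref{eq:b100-formula} expresses the known coefficient $b_{1,0,0}$ of $\hbar^2$ in $B_4$ through $a_{30}^2$:
\[
 a_{30}^2=2v_1 b_{1,0,0}+\frac{a_{12}^2v_2^2}{\delta}.
\]
The prescribed sign $\sigma$ then fixes $a_{30}$. This determines $V_3$, and $S_3$ follows from \eqref{eq:S3-formula}.

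\emph{Quartic degree.} The classical degree-four relation \eqref{eq:initial-even-block} gives
\[
 \cE_2V_4=B_4^{[0]}-\tfrac12\Pi\{S_3,V_3\}.
\]
By Proposition~\ref{prop:even-block} (the case $N=2$), $\cE_2$ is diagonal with nonzero entries, so $V_4$ is determined. Then $S_4=-L^{-1}(G_4-B_4)$, with $G_4=V_4+\tfrac12D_{S_3}V_3$ as in \eqref{eq:degree-four-eq}, is determined by Lemma~\ref{lem:first-quantum}. This step needs the full Moyal expansion of $D_{S_3}V_3$, including its $\hbar^2$ part, but every ingredient is already known.

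\emph{Degrees five and six.} Here $N=3$. Lemma~\ref{lem:explicit-known-remainders} shows that $K_5$ and $K_6$ depend only on $j^4V$, which is now known. Hence $\widehat K_6^{[0]}$ and $\widehat K_6^{[2]}$ are computable, and so are the right-hand sides $Y_3,Z_3$ of \eqref{eq:block-matrix}. Since $a_{30}\neq0$, Theorem~\ref{thm:block-affine-line} gives
\[
 (V_5,V_6)=(\widehat{V_5},\widehat{V_6})+a_{14}(\Phi_3,\Psi_3).
\]
The base point is determined by $j^4V$, $B_6^{[0]}$ and $B_6^{[2]}$. The direction depends only on $v$, $a_{30}$ and $a_{12}$. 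The edge coefficient $a_{1,4}=a_{14}$ is prescribed, so $V_5$ and $V_6$ are determined. Next, $S_5=-L^{-1}(V_5+K_5)$ by \eqref{eq:S-odd-solution}. Finally, Lemma~\ref{lem:degree-even-equation} gives $G_6$ explicitly in terms of known quantities, and $S_6=-L^{-1}(G_6-\Pi G_6)$.

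No step is a real obstacle, since each is a direct invocation of an earlier result. The one point to check is that the sign of $a_{30}$ is really fixed, that is, $a_{30}\neq0$. This is built into $\mathcal V_v^\sigma$ through the condition $\sigma a_{30}>0$. It ensures both the square-root extraction at the cubic degree and the invertibility of $\widetilde\cA_3$ used at degrees five and six (Proposition~\ref{prop:block}).
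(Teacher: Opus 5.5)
Your proposal is correct and follows the same route as the paper, whose proof just cites the induction in the proof of Theorem~\ref{thm:two-layer} carried through $N=3$. You write those steps out explicitly: $a_{30}$ from $b_{1,0,0}$ and $\sigma$, then $V_4$ by inverting $\cE_2$, then $(V_5,V_6)$ on the affine line of Theorem~\ref{thm:block-affine-line} with the prescribed edge coefficient $a_{14}$, and the generators $S_m$ from the homological equations.
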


\begin{proof}
This follows directly from the proof of Theorem~\ref{thm:two-layer} up to
$N=3$.
\end{proof}

\begin{lemma}\label{lem:kernel-vector}
Let \((\Phi_N,\Psi_N)\) be the normalized generator of
\(\ker\cA_N\) from Theorem~\ref{thm:block-affine-line}.  Then
\begin{equation}\label{eq:phi}
 \Phi_N=\sum_{m=0}^{N-1}\phi_{N,m}e_{N,m},
 \qquad
 \phi_{N,m}=\phi_{N,m}^*\tau^{N-1-m},
 \quad 0\leq m\leq N-1,
\end{equation}
where $\tau:=a_{12}v_2^2/(a_{30}\delta)$ and each
$\phi_{N,m}^*$ is a nonzero constant depending only on $N$ and $m$, with
$\phi_{N,N-1}^*=1$.
\end{lemma}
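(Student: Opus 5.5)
The kernel equation $\cA_N(\Phi_N,\Psi_N)=0$ has a first row, $\cQ_N\Phi_N=0$, that involves only $\Phi_N$. So I will obtain $\Phi_N$ from the bidiagonal matrix \eqref{eq:Q-matrix} of Proposition~\ref{prop:odd-block}, and ignore $\Psi_N$, which is then fixed by $\cE_N\Psi_N=-\cC_N\Phi_N$. The normalization $\Phi_N-e_{N,N-1}\in\widetilde{\mathsf O}_{2N-1}$ gives $\phi_{N,N-1}=1$. Row $m$ of \eqref{eq:Q-matrix}, for $0\le m\le N-2$, reads
\[
 \kappa_{N,m}\phi_{N,m}+\lambda_{N,m+1}\phi_{N,m+1}=0 .
\]
Since $a_{30}\neq0$, every $\kappa_{N,m}\ne0$, so this is a backward recursion:
\[
 \phi_{N,m}=-\frac{\lambda_{N,m+1}}{\kappa_{N,m}}\,\phi_{N,m+1},
 \qquad m=N-2,N-3,\dots,0 .
\]
It determines $\phi_{N,m}$ uniquely, consistently with the uniqueness in Theorem~\ref{thm:block-affine-line}.

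Next I compute the ratio using \eqref{eq:kappa} and \eqref{eq:lambda}. For $\lambda_{N,m+1}$ the relevant index is $n'=N-(m+1)=n-1$, where $n=N-m\ge2$. This gives
\[
 \lambda_{N,m+1}=-\frac{a_{12}v_2^2}{2v_1\delta}(2n-3)(2m+2)(2m+1)c_{n-2}c_m ,
\]
and therefore
\[
 -\frac{\lambda_{N,m+1}}{\kappa_{N,m}}
 =\frac{a_{12}v_2^2}{a_{30}\delta}\cdot\frac{3(2m+2)(2m+1)}{(2n-1)(2n-2)}
 =\tau\cdot\rho_{N,m},
 \qquad
 \rho_{N,m}:=\frac{3(2m+1)(2m+2)}{(2N-2m-1)(2N-2m-2)} .
\]
The factors $c_{n-2}c_m$, $(2n-3)$ and $v_1$ cancel. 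The ratio $\rho_{N,m}$ depends only on $N$ and $m$, and it is finite and nonzero because $n\ge2$.

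Iterating the recursion gives
\[
 \phi_{N,m}=\tau^{N-1-m}\prod_{j=m}^{N-2}\rho_{N,j},
\]
so I set $\phi^*_{N,m}:=\prod_{j=m}^{N-2}\rho_{N,j}$. This is a product of nonzero rationals, and it is the empty product $1$ when $m=N-1$. That is exactly \eqref{eq:phi}.

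The only real obstacle is bookkeeping: matching the column index of $\lambda$ (its $n$ is $N-m-1$) against the row index of $\kappa$, so that the $c$-factors really cancel. I will double-check that step against \eqref{eq:qformula} directly, by differentiating the monomials $x_1^{2n-1}x_2^{2m}$ and $x_1^{2n-3}x_2^{2m+2}$ and comparing their images under $\Pi$ via \eqref{eq:Pi-fNm}. Both images land in $\Omega_1^{n-2}\Omega_2^m$ with the common factor $c_{n-2}c_m$.
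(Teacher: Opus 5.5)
Your proposal is correct and matches the paper's proof: the row $\cQ_N\Phi_N=0$ of the bidiagonal matrix \eqref{eq:Q-matrix} gives the backward recursion $\kappa_{N,m}\phi_{N,m}+\lambda_{N,m+1}\phi_{N,m+1}=0$, which is iterated from $\phi_{N,N-1}=1$. Your explicit ratio $-\lambda_{N,m+1}/\kappa_{N,m}=3\tau(2m+1)(2m+2)/\bigl((2N-2m-1)(2N-2m-2)\bigr)$ is left implicit in the paper but checks out (it gives $\phi^*_{3,1}=6$ and $\phi^*_{3,0}=9/5$, matching a direct computation); rename your $\rho_{N,m}$, though, since $\rho_N$ already denotes the pure transverse coefficient in the paper.
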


\begin{proof} Since $(\Phi_N,\Psi_N)$ is a generator of the kernel of \(\cA_N\), it is determined up to a scalar multiple.  The normalization \(\phi_{N,N-1}=1\) fixes this scalar.
By Proposition~\ref{prop:odd-block}, solving 
\(\cQ_N\Phi_N=0\) gives a recurrence relation for the coefficients \(\phi_{N,j}\): 
\[
 \kappa_{N,j}\phi_{N,j}
 +\lambda_{N,j+1}\phi_{N,j+1}=0,\quad 0\leq j\leq N-2.
\]
Iterating this recurrence from $\phi_{N,N-1}=1$ proves the asserted
formula for $\phi_{N,m}$; the remaining factors are nonzero constants
depending only on $N$ and $m$.

\end{proof}

\begin{definition}\label{def:h4-transfer}
Given \(V\in \mathcal V_v\) and \(N\geq3\), define the \(\hbar^4\)-transfer operator
\(\mathcal T_V\) on
\(\mathsf O_{2N-1}\times\mathsf E_{2N}\) by
\begin{equation}\label{eq:T-operator}
 \mathcal T_V(F,G)
 =
 [\exp(D_{S_3(V)+S_4(V)+S_5(V)})(F+G)]_{2N+2}^{[4]}.
\end{equation}
where $S_3(V),S_4(V),S_5(V)$ are the corresponding degree components in the gauge-fixed normalized operator $S(V)$ for $H_V$.
\end{definition}

\begin{lemma}\label{lem:T-explicit}
For every \(N\geq3\), the \(\hbar^4\)-transfer operator $\mathcal T_V$
depends only on the five-jet $j^5 V$ and has the form
\begin{align}
 \mathcal T_V(F,G)
 ={}&N_5F+\frac12(M_3M_4+M_4M_3)F\notag\\
 &+\frac16(P_3M_3^2+M_3P_3M_3+M_3^2P_3)F
 +\frac12M_3^2G.
 \label{eq:L-operator}
\end{align}
Moreover, it is linear in
\((F,G)\): for \(c_1,c_2\in\mathbb R\) and
\((F_j,G_j)\in\mathsf O_{2N-1}\times\mathsf E_{2N}\),
\[
 \mathcal T_V(c_1F_1+c_2F_2,c_1G_1+c_2G_2)
 =c_1\mathcal T_V(F_1,G_1)+c_2\mathcal T_V(F_2,G_2).
\]
\end{lemma}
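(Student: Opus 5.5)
The plan is to expand the exponential in \eqref{eq:T-operator} term by term and sort the contributions by Weyl degree and by power of \(\hbar\). Write \(S=S_3+S_4+S_5\) and
\[
 \exp(D_S)(F+G)=\sum_{r\ge0}\frac1{r!}\sum_{j_1,\ldots,j_r\in\{3,4,5\}}D_{S_{j_1}}\cdots D_{S_{j_r}}(F+G).
\]
Each \(D_{S_j}\) raises Weyl degree by \(j-2\). Since \(\deg F=2N-1\) and \(\deg G=2N\), reaching degree \(2N+2\) forces \(\sum_\nu (j_\nu-2)=3\) when acting on \(F\), and \(\sum_\nu(j_\nu-2)=2\) when acting on \(G\).

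Hence the admissible words on \(F\) are \((5)\), \((3,4)\), \((4,3)\) and \((3,3,3)\), with weights \(1\), \(\frac12\), \(\frac12\) and \(\frac16\). On \(G\) the admissible words are \((4)\) and \((3,3)\), with weights \(1\) and \(\frac12\).

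Next I would extract the \(\hbar^4\)-coefficient, using \(D_A=P_A+\hbar^2M_A+\hbar^4N_A+O(\hbar^6)\) from \eqref{eq:PMN}. Here \(F\), \(G\) and \(S_3,S_4,S_5\) are treated as \(\hbar\)-independent. This is justified because \(S_3\) and \(S_4\) are built from \(V_3\), \(V_4\) and classical brackets with no \(\hbar\) entering at those degrees. Indeed, \(S_4=-L^{-1}(G_4-B_4)\), and the \(\hbar^2\)-part of \(G_4\) is a constant, which is resonant and hence killed by the projection. For \(S_5\) I need to check in the same way which \(\hbar\)-powers occur; a constant times \(\hbar^2\) would have degree 4, not 5, so \(S_5\) is classical.

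With all generators classical, the single-derivation term \(D_{S_5}F\) contributes \(N_5F\). For two derivations the \(\hbar^4\)-part is \(\frac12\) times \(M_3M_4+M_4M_3\) plus terms containing \(N\) and \(P\) in one factor, together with \(P_3N_4\)-type terms. For three derivations it consists of the words with two \(M\) and one \(P\), plus words with one \(N\) and two \(P\). On \(G\), the word \(D_{S_4}G\) contributes \(N_4G\), and \(\frac12D_{S_3}^2G\) contributes \(\frac12\bigl(M_3^2+P_3N_3+N_3P_3\bigr)G\).

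The main obstacle is showing that all the extra terms vanish or are absent from the displayed formula \eqref{eq:L-operator}. My first attempt would be a degree-counting argument in the Poisson variables. An \(N_A\)-term requires five derivatives, and \(\{A,\cdot\}_5\) vanishes unless \(A\) has degree at least \(5\). Likewise \(\{A,\cdot\}_3\) vanishes for \(\deg A<3\). So \(N_3\) and \(N_4\) vanish identically because \(S_3\) and \(S_4\) have degree below 5. This removes every term containing \(N_3\) or \(N_4\), leaving exactly the listed terms: \(N_5F\), \(\frac12(M_3M_4+M_4M_3)F\), the three \(P/M\) orderings in the triple word, and \(\frac12M_3^2G\). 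The corresponding \(\frac{1}{3!}\) coefficients produce the \(\frac16\).

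Linearity in \((F,G)\) is then immediate, since each \(P_j\), \(M_j\), \(N_j\) is a linear operator. Dependence only on \(j^5V\) follows from Lemma~\ref{lem:finite-step-polynomial}, which shows that \(S_3\), \(S_4\), \(S_5\) depend polynomially on \(j^5V\).
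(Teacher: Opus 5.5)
Your overall route is the same as the paper's. You enumerate the words with total degree shift \(3\) on \(F\) and \(2\) on \(G\), extract the \(\hbar^4\)-coefficient, and discard \(N_3\), \(N_4\) and \(D_{S_4}G\) because \(S_3,S_4\) are \(\hbar\)-independent of degree \(<5\). Your reason that \(S_4\) is classical (the \(\hbar^2\)-part of \(G_4\) is a resonant constant) is correct, and more explicit than the paper.

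\textbf{The gap.} The step that fails is the claim that \(S_5\) is classical. Since \(\deg\hbar=2\), \(\hbar^2\) has Weyl degree \(4\), so \(\hbar^2\) times a \emph{linear} polynomial has Weyl degree \(5\). Such terms do occur. The \(\hbar^2\)-coefficient of \(G_5\) contains terms such as \(M_3V_4\), \(M_4V_3\), \(M_3\{S_3,V_3\}\) and \(M_3B_4^{[0]}\). These are linear, hence odd and nonresonant, and they do not cancel in general: for \(V=V_2+a_{30}x_1^3\) they add up to a nonzero multiple of \(a_{30}^3x_1\). Thus \(S_5=S_5^{[0]}+\hbar^2S_5^{[2]}\) with \(S_5^{[2]}\) linear and generically nonzero. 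Your sentence that \(D_{S_5}F\) contributes \(N_5F\) ``with all generators classical'' therefore rests on a false premise.

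\textbf{The repair.} It is short, and it is exactly what the paper does. The extra piece \(\hbar^2S_5^{[2]}\) contributes \(\hbar^2\{S_5^{[2]},F\}\), which affects only the \(\hbar^2\)-coefficient. It also contributes \(-\gamma_3\hbar^4\{S_5^{[2]},F\}_3\), which vanishes because every third derivative of a linear polynomial is zero. Likewise \(\{S_5,F\}_5=\{S_5^{[0]},F\}_5\), so \(N_{S_5}F\) is unchanged. Hence \((D_{S_5}F)^{[4]}=N_5F\) still holds. No other word involves \(S_5\) at degree \(2N+2\), since acting on \(G\) it lands in degree \(2N+3\). With this correction the rest of your argument goes through, including linearity and the dependence on \(j^5V\) only.
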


\begin{proof}
The operator \(\mathcal T_V\) is defined by
\(S_3(V),S_4(V),S_5(V)\), which depend only on the five-jet
$j^5 V=(V_2,V_3,V_4,V_5)$ by
Lemma~\ref{lem:finite-step-polynomial}.
Write \(D_j=D_{S_j}\).  Since \(D_j\) raises Weyl degree by \(j-2\),
the degree-\((2N+2)\) part of \(\exp(D_{S_3+S_4+S_5})(F+G)\) is
\begin{align*}
 D_5F+\frac12(D_3D_4+D_4D_3)F+\frac16D_3^3F
+D_4G+\frac12D_3^2G.
\end{align*}
The generators \(S_3\) and \(S_4\) are \(\hbar\)-independent and have
degrees less than five.  Hence
\[
 D_3=P_3+\hbar^2M_3, \quad D_4=P_4+\hbar^2M_4,
\]
and \(D_4G\) has no \(\hbar^4\)-coefficient.  Moreover,
\((D_5F)^{[4]}=N_5F\).
Indeed, Weyl homogeneity gives
\(S_5=S_5^{[0]}+\hbar^2S_5^{[2]}\), where \(S_5^{[2]}\) is linear.  Therefore
\[
 (D_5F)^{[4]}
 =-\gamma_3\{S_5^{[2]},F\}_3+\gamma_5\{S_5^{[0]},F\}_5
 =\gamma_5\{S_5^{[0]},F\}_5=N_5F,
\]
since the third derivatives of \(S_5^{[2]}\) vanish.
For the remaining terms,
\begin{align*}
 (D_3D_4F)^{[4]}&=M_3M_4F,\quad (D_4D_3F)^{[4]}=M_4M_3F\\
 (D_3^3F)^{[4]}
 &=(P_3M_3^2+M_3P_3M_3+M_3^2P_3)F,\\
 (D_3^2G)^{[4]}&=M_3^2G.
\end{align*}
Taking the \(\hbar^4\)-coefficient proves \eqref{eq:L-operator}.  Since every
term on its right-hand side is linear in either \(F\) or \(G\), the asserted
linearity follows.
\end{proof}

\begin{lemma}\label{lem:direct-high-degree}
For \(N\geq4\), we have
\begin{equation*}
 [\mathcal B(V)-\mathcal B(V^{<N})]_{2N+2}
 =
 \Pi[\exp(D_{S_3+S_4+S_5})V^{\geq N}]_{2N+2}.
\end{equation*}
\end{lemma}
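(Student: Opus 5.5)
The plan is to compare the two normalizing generators directly. Write \(S=S(V)\) for the gauge-fixed generator of \(H_V\), and \(S'=S(V^{<N})\) for that of \(H^{<N}\). By Lemma~\ref{lem:finite-step-polynomial}, \(S_m\) depends only on \(j^mV\). Since \(V\) and \(V^{<N}\) share the jet \(j^{2N-2}\), we get \(S_m=S'_m\) for \(m\le 2N-2\); in particular \(S_3,S_4,S_5\) coincide, because \(2N-2\ge6\) for \(N\ge4\). So \(R:=S-S'=O(2N-1)\).

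Put \(X:=\exp(D_{S'})H_V=\mathcal B(V^{<N})+\exp(D_{S'})V^{\ge N}\), which uses \(H_V=H^{<N}+V^{\ge N}\) and Theorem~\ref{thm:qbnf-construction} applied to \(V^{<N}\). Then \(X\) is resonant through degree \(2N-2\), and all of its odd components of degree \(\le 2N-2\) vanish; in particular \(X_3=X_5=0\).

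\emph{Step 1 (factorization).} Using the Baker--Campbell--Hausdorff formula for the derivations \(D\) (\(D_{[A,B]}\)-type identities as in Appendix~\ref{app:moyal-identities}), I would write \(\exp(D_{S'+R})=\exp(D_{R'})\exp(D_{S'})\) modulo terms of degree \(>2N+2\). Here \(R'=R+(\text{nested brackets of }S'\text{ with }R)\), so \(R'=O(2N-1)\). Terms quadratic in \(R\) raise degree by at least \(2(2N-3)\), and \(2+2(2N-3)=4N-4>2N+2\) exactly when \(N\ge4\), so they drop out. Hence, through degree \(2N+2\),
\[
 \exp(D_S)H_V=X+D_{R'}X .
\]

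\emph{Step 2 (degree count).} \(D_{R'_m}\) raises degree by \(m-2\ge 2N-3\). The degree-\((2N+2)\) part of \(D_{R'}X\) can therefore only come from \(D_{R'_{2N+2}}X_2\), \(D_{R'_{2N+1}}X_3\), \(D_{R'_{2N}}X_4\) and \(D_{R'_{2N-1}}X_5\). Since \(X_3=X_5=0\), \(X_2=H_2\) and \(X_4=B_4(V^{<N})\), this part equals
\[
 L(R'_{2N+2})+D_{R'_{2N}}B_4(V^{<N}).
\]

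\emph{Step 3 (projection kills the correction).} Clearly \(\Pi L=0\). For the second term I would use torus equivariance. The Moyal bracket with a polynomial in \(\Omega_1,\Omega_2\) (and \(\hbar\)) preserves the exponent difference \(\alpha-\beta\) of \(z^\alpha\bar z^\beta\). Indeed, such polynomials are invariant under the \(T^2\)-action generated by \(H_2\)-type flows, and the Moyal product is invariant under linear symplectic maps. Hence \(\Pi D_A B=D_{\Pi A}B\) for resonant \(B\). Moreover resonant elements Moyal-commute, since they are Weyl symbols of functions of the commuting harmonic oscillators. So \(\Pi D_{R'_{2N}}B_4=0\).

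\emph{Step 4 (conclusion).} Taking \(\Pi\) of the degree-\((2N+2)\) part, and using that \(\exp(D_S)H_V\) is resonant, gives
\[
 \mathcal B(V)_{2N+2}=\mathcal B(V^{<N})_{2N+2}+\Pi[\exp(D_{S'})V^{\ge N}]_{2N+2}.
\]
In the last term \(V^{\ge N}\) has degree \(\ge 2N-1\), so only generators that raise the total degree by at most \(3\) contribute. These are \(S'_3,S'_4,S'_5\), which equal \(S_3,S_4,S_5\), and this is the claim.

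The main obstacle is Step 3, specifically the claim that \(\Pi D_{A}B_4=0\) even when \(A\) has a resonant part. That is where the torus-equivariance and commutativity of resonant symbols must be stated carefully. A secondary point is the bookkeeping in the BCH truncation of Step 1, where \(N\ge4\) is exactly what excludes both the quadratic-in-\(R\) terms and the contribution of \(X_5\).
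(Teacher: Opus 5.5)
Your proposal is correct and is essentially the paper's argument: conjugate first by the generator of \(V^{<N}\), then write \(\mathcal B(V)\) as a second conjugation by a generator of order \(\ge 2N-1\), count degrees (this is where \(N\ge4\) enters), and kill the surviving correction terms with \(\Pi L=0\) and \(\Pi D_A C=0\) for resonant \(C\) (Lemma~\ref{lem:resonant-moyal}). The only variation is that you obtain the second generator \(R'\) by BCH from the actual generator, using \(S_m=S'_m\) for \(m\le 2N-2\), whereas the paper normalizes \(\mathcal B(V^{<N})+R\) once more and invokes uniqueness of the QBNF.
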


\begin{proof}
Let \(\widetilde S\) be the full gauge-fixed generator normalizing
\(V^{<N}\).  Put
\(
 d=2N-1
\) and
\(
 R=\exp(D_{\widetilde S})V^{\geq N}
\).
Let $\operatorname{ord} F$ denote the lowest Weyl degree of a formal series $F$.

Since \(\operatorname{ord}(V^{\geq N})\geq d\) and
\(\operatorname{ord}\widetilde S\geq3\), we have
$\operatorname{ord}R\geq d$.  Moreover,
\(
 \exp(D_{\widetilde S})H=\mathcal B(V^{<N})+R.
\)
Normalize \(\mathcal B(V^{<N})+R\) degree by degree, and denote the resulting
gauge-fixed generator by $T$, with $\operatorname{ord}T\geq d$,
since
\(\mathcal B(V^{<N})\) is already in normal form and \(R\) has no components
below degree \(d\).  Normalizing first by \(\widetilde S\) and then by \(T\) gives a
normal form of \(H\); uniqueness of the QBNF therefore gives
\[
 \mathcal B(V)=\exp(D_T)(\mathcal B(V^{<N})+R).
\]
Expanding the exponential, we obtain
\begin{align*}
 \mathcal B(V)
 ={}\mathcal B(V^{<N})+R+D_T\mathcal B(V^{<N})
 +\sum_{j\geq1}\frac1{j!}D_T^jR
 +\sum_{j\geq2}\frac1{j!}D_T^j\mathcal B(V^{<N}).
\end{align*}
Since
\(\operatorname{ord}T\geq d\) and \(\operatorname{ord}R\geq d\), we get
\[
 \operatorname{ord}(D_T^jR)
 \geq d+j(d-2)\geq2d-2,\quad j\geq 1,
\]
\[
 \operatorname{ord}(D_T^j\mathcal B(V^{<N}))
 \geq2+j(d-2)\geq2d-2,\quad j\geq 2.
\]
It follows that, for every \(m<2d-2\),
\[
 [\mathcal B(V)-\mathcal B(V^{<N})]_m
 =[R+D_T\mathcal B(V^{<N})]_m.
\]
Applying $\Pi$ to both sides of this equality gives
\[
 [\mathcal B(V)-\mathcal B(V^{<N})]_m=\Pi[R]_m,
 \qquad m<2d-2.
\]
Here we used $\Pi D_T\mathcal B(V^{<N})=0$, which follows from
Lemma~\ref{lem:resonant-moyal}.
Finally, for \(N\geq4\), one has
\(2N+2<2d-2=4N-4\).
We conclude by observing that \(\widetilde S\) and \(S^{(N-1)}\) have the same components up to degree \(2N-2\), and that 
\(\operatorname{ord}(V^{\geq N})\geq2N-1\), so its degree-\((2N+2)\)
image under the
exponential can involve only \(S_3,S_4,S_5\).  Thus
\[
 [\exp(D_{\widetilde S})V^{\geq N}]_{2N+2}
 =
 [\exp(D_{S_3+S_4+S_5})V^{\geq N}]_{2N+2}.
\]
\end{proof}
\begin{proposition}\label{prop:shifted-equation}
For \(N\geq4\), set
\(
 R_N^{[4]}(V):=B_{2N+2}^{[4]}(V)-B_{2N+2}^{[4]}(V^{<N}).
\)
Then 
\begin{equation}\label{eq:shifted-h4}
 \Pi\mathcal T_V(V_{2N-1},V_{2N})=R_N^{[4]}(V).
\end{equation}

\end{proposition}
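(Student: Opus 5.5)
We start from Lemma~\ref{lem:direct-high-degree}, which for \(N\ge4\) gives
\[
 [\mathcal B(V)-\mathcal B(V^{<N})]_{2N+2}
 =\Pi[\exp(D_{S_3+S_4+S_5})V^{\geq N}]_{2N+2}.
\]
Taking the \(\hbar^4\)-coefficient of both sides turns the left-hand side into \(R_N^{[4]}(V)\). Here \(S_3,S_4,S_5\) are the components of the generator of \(V\). They coincide with those of \(V^{<N}\) because \(2N-2\ge6\), so they depend only on \(j^5V\). It therefore remains to identify the \(\hbar^4\)-coefficient of the degree-\((2N+2)\) part of \(\exp(D_{S_3+S_4+S_5})V^{\ge N}\) with \(\mathcal T_V(V_{2N-1},V_{2N})\).

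To do this we split \(V^{\ge N}=V_{2N-1}+V_{2N}+V_{2N+1}+V_{2N+2}+O(2N+3)\). The operator \(D_{S_j}\) raises Weyl degree by \(j-2\ge1\), so components of degree \(\ge2N+3\) do not contribute.

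\(V_{2N+2}\) contributes only itself. It is \(\hbar\)-independent, so it has no \(\hbar^4\)-part.

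\(V_{2N+1}\) contributes only \(D_{S_3}V_{2N+1}\), whose \(\hbar\)-expansion is \(P_3V_{2N+1}+\hbar^2M_3V_{2N+1}\), since \(S_3\) is cubic and \(\{S_3,\cdot\}_q=0\) for \(q\ge4\). This term also has no \(\hbar^4\)-coefficient.

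The remaining contributions come from \(V_{2N-1}\) and \(V_{2N}\). By definition they give
\[
 [\exp(D_{S_3+S_4+S_5})(V_{2N-1}+V_{2N})]^{[4]}_{2N+2}
 =\mathcal T_V(V_{2N-1},V_{2N}).
\]
Applying \(\Pi\), which commutes with extracting the \(\hbar^4\)-coefficient because it acts monomialwise, yields \eqref{eq:shifted-h4}.

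The main point to check carefully is the \(\hbar\)-bookkeeping. A term \(\hbar^{2r}\) of Weyl degree \(2N+2\) must have \(x,\xi\)-degree \(2N+2-2r\). We need to confirm that \(V_{2N+1}\) and \(V_{2N+2}\) cannot reach \(\hbar^4\), which the truncations \(D_3=P_3+\hbar^2M_3\) and the \(\hbar\)-independence of \(V\) settle.

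We also need to confirm that the \(\hbar^2\)-part \(S_5^{[2]}\) of \(S_5\) creates no extra contributions. This was already handled in Lemma~\ref{lem:T-explicit}, so no new argument is required there.
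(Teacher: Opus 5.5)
Your proposal is correct and follows essentially the same route as the paper's proof. You invoke Lemma~\ref{lem:direct-high-degree}, discard $V_{2N+2}+D_{S_3}V_{2N+1}$ because it has no $\hbar^4$-coefficient, and identify the remaining terms with $\mathcal T_V(V_{2N-1},V_{2N})$ directly from the definition of the transfer operator. Your closing remark about $S_5^{[2]}$ is not needed for this identity, since that bookkeeping is already contained in the definition of $\mathcal T_V$.
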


\begin{proof}
Lemma~\ref{lem:direct-high-degree} gives the exact degree-\(2N+2\)
comparison
\begin{equation}\label{eq:shifted-direct-comparison}
 [\mathcal B(V)-\mathcal B(V^{<N})]_{2N+2}
 =
 \Pi[\exp(D_{S_3+S_4+S_5})V^{\geq N}]_{2N+2}.
\end{equation}
Only \(V_{2N-1},V_{2N},V_{2N+1},V_{2N+2}\) can contribute on the
right-hand side.  The terms involving the last two are
\(V_{2N+2}+D_{S_3}V_{2N+1}\),
and neither has an \(\hbar^4\)-coefficient because \(S_3\) is cubic.
By \eqref{eq:T-operator}, the \(\hbar^4\)-coefficient of the remaining
terms is
\(\mathcal T_V(V_{2N-1},V_{2N})\).  Taking the \(\hbar^4\)-coefficient
in \eqref{eq:shifted-direct-comparison} proves
\eqref{eq:shifted-h4}.
\end{proof}

\medskip

Since $\mathcal T_V(V_{2N-1},V_{2N})$ is the $\hbar^4$-coefficient of a
term of Weyl degree $2N+2$, it has ordinary degree $2N-6$ in
$(x,\xi)$.  Hence $\Pi\mathcal T_V(V_{2N-1},V_{2N})$ is homogeneous of
degree $N-3$ in $\Omega_1,\Omega_2$.  In the fixed action coordinates,
define the
\emph{pure transverse coefficient} to single out  the coefficient of
\(\Omega_2^{N-3}\):
\[
 \rho_N(F)=[\Omega_2^{N-3}]\Pi F.
\]

\begin{definition}\label{def:edge-coefficient}
For \(N\geq4\) and \(V\in \mathcal V_v^\sigma\), define the edge function by
\begin{equation}\label{eq:dN-definition}
 d_N(V)
 :=\rho_N\bigl(
 \mathcal T_V(\Phi_N(V),\Psi_N(V))
 \bigr).
\end{equation}
\end{definition}

\begin{lemma}\label{lem:edge-affine-variation}
Let \(N\geq4\), and suppose the $(2N-2)$-jet $j^{2N-2}V$ has been determined.  Consider the affine family
\[
 (V_{2N-1}(t),V_{2N}(t))
 =
 (\widehat{V_{2N-1}},\widehat{V_{2N}})
 +t(\Phi_N,\Psi_N).
\]
Let
\[
 V(t)=V^{<N}+V_{2N-1}(t)+V_{2N}(t).
\]
and define
\[
 \mathfrak b_N(t):=\rho_N\!\left(B_{2N+2}^{[4]}(V(t))-B_{2N+2}^{[4]}(V^{<N})\right).
\]
Then
\begin{equation}\label{eq:edge-affine-variation}
 \mathfrak b_N(t)=\mathfrak b_N(0)+t\,d_N(V).
\end{equation}
\end{lemma}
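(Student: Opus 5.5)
The plan is to apply Proposition~\ref{prop:shifted-equation} to each member $V(t)$ of the affine family and then use the linearity of $\mathcal T$ from Lemma~\ref{lem:T-explicit}. The key observation is that all auxiliary objects in the shifted equation are independent of $t$.

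\emph{Step 1: the $t$-independent objects.} For $N\ge4$ we have $2N-1\ge7>5$. Hence $j^5V(t)=j^5V^{<N}=j^5V$, and by Lemma~\ref{lem:T-explicit} the transfer operator $\mathcal T_{V(t)}=\mathcal T_V$ does not depend on $t$. Also $V(t)^{<N}=V^{<N}$, because $V(t)$ agrees with $V^{<N}$ through degree $2N-2$. So the subtracted term $B_{2N+2}^{[4]}(V^{<N})$ in the definition of $R_N^{[4]}$ is the same as in $\mathfrak b_N(t)$. Finally, $(\Phi_N,\Psi_N)$ depends only on $N,v_1,v_2,a_{30},a_{12}$ by Theorem~\ref{thm:block-affine-line}, so it is also fixed along the family.

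\emph{Step 2: apply the shifted equation.} The degree-$(2N-1)$ and degree-$2N$ components of $V(t)$ are $V_{2N-1}(t)$ and $V_{2N}(t)$, and its components of degree $\ge 2N+1$ vanish. Proposition~\ref{prop:shifted-equation} applied to $V(t)$ then gives
\[
 B_{2N+2}^{[4]}(V(t))-B_{2N+2}^{[4]}(V^{<N})=\Pi\mathcal T_V\bigl(V_{2N-1}(t),V_{2N}(t)\bigr).
\]
Taking the $\Omega_2^{N-3}$ coefficient yields $\mathfrak b_N(t)=\rho_N(\mathcal T_V(V_{2N-1}(t),V_{2N}(t)))$.

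\emph{Step 3: linearity.} Since $\mathcal T_V$ is linear by Lemma~\ref{lem:T-explicit}, and $\Pi$ and $\rho_N$ are linear, we get
\[
 \mathfrak b_N(t)=\rho_N\bigl(\mathcal T_V(\widehat{V_{2N-1}},\widehat{V_{2N}})\bigr)+t\,\rho_N\bigl(\mathcal T_V(\Phi_N,\Psi_N)\bigr).
\]
Setting $t=0$ identifies the first term as $\mathfrak b_N(0)$, and the second coefficient is $d_N(V)$ by Definition~\ref{def:edge-coefficient}. Here $\rho_N$ applied to $\mathcal T_V$ is read as $[\Omega_2^{N-3}]\Pi\mathcal T_V$, consistent with the definition of $\rho_N$.

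\emph{Main obstacle.} The substantive point is checking that no $t$-dependence enters through the generators $S_3,S_4,S_5$ or through the kernel direction; this needs $N\ge4$. Everything else is bookkeeping. One should also confirm that Proposition~\ref{prop:shifted-equation} holds for arbitrary formal potentials in $\mathcal V_v$, including truncated ones with vanishing higher components, since its proof uses only Lemma~\ref{lem:direct-high-degree}.
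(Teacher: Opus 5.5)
Your proposal is correct and follows essentially the same route as the paper. Both arguments observe that for $N\ge4$ the family $V(t)$ has a fixed five-jet, so $\mathcal T_{V(t)}=\mathcal T_V$ and $d_N(V(t))=d_N(V)$; both then apply Proposition~\ref{prop:shifted-equation} to $V(t)$ and use linearity of $\mathcal T_V$. Your additional checks ($V(t)^{<N}=V^{<N}$, the $t$-independence of $(\Phi_N,\Psi_N)$, and $\rho_N\Pi=\rho_N$) are details the paper leaves implicit.
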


\begin{proof}
Since \(N\geq4\), the family \(V(t)\) has a fixed five-jet.  Hence
\(\mathcal T_{V(t)}\), and therefore \(d_N(V(t))\), is independent of
\(t\).

Proposition~\ref{prop:shifted-equation} gives
\[
 \mathfrak b_N(t)
 =
 \rho_N\mathcal T_{V(t)}(V_{2N-1}(t),V_{2N}(t)).
\]
Using the affine representation of the pair and the linearity of the
transfer operator, we obtain
\begin{align*}
 \mathfrak b_N(t)
 &=
 \rho_N\mathcal T_{V(t)}
   (\widehat{V_{2N-1}},\widehat{V_{2N}})
 +t\,\rho_N\mathcal T_{V(t)}(\Phi_N,\Psi_N)\\
 &=\mathfrak b_N(0)+t\,d_N.
\end{align*}

\end{proof}

\begin{corollary}\label{cor:scalar-edge-equation}
For $N\geq4$, the edge coefficient $a_{1,2N-2}$ satisfies
\begin{equation}\label{eq:scalar-edge}
 d_N(V)a_{1,2N-2}=z_N(V),
\end{equation}
where
\begin{equation}\label{eq:d-z}
 z_N(V)
 :=\rho_N(R_N^{[4]}(V))
 -\rho_N\bigl(
 \mathcal T_V(\widehat{V_{2N-1}},\widehat{V_{2N}})
 \bigr).
\end{equation}
In particular, $z_N(V)$ depends on the $(2N-2)$-jet $j^{2N-2}V$ and $\mathcal{B}^{[4]}(V)$.
\end{corollary}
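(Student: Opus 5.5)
The plan is to combine Theorem~\ref{thm:block-affine-line} with Proposition~\ref{prop:shifted-equation} and the linearity of the transfer operator (Lemma~\ref{lem:T-explicit}).

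First, write the true pair in affine form. Since $a_{30}\neq0$ on $\mathcal V_v^\sigma$, Theorem~\ref{thm:block-affine-line} gives
\[
 (V_{2N-1},V_{2N})=(\widehat{V_{2N-1}},\widehat{V_{2N}})+a_{1,2N-2}(\Phi_N,\Psi_N).
\]
Here the base point is fixed by $j^{2N-2}V$ and by $B^{[0]}_{2N}$ and $B^{[2]}_{2N}$. The direction depends only on $a_{30}$ and $a_{12}$.

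Next, apply $\rho_N$ to the identity \eqref{eq:shifted-h4} of Proposition~\ref{prop:shifted-equation}; this is valid because $N\ge4$. Then expand $\mathcal T_V$ using its linearity in $(F,G)$ from Lemma~\ref{lem:T-explicit}. The definition of $\rho_N$ already contains $\Pi$, so $\rho_N\Pi=\rho_N$. This gives
\[
 \rho_N(R_N^{[4]}(V))=\rho_N\mathcal T_V(\widehat{V_{2N-1}},\widehat{V_{2N}})+a_{1,2N-2}\,\rho_N\mathcal T_V(\Phi_N,\Psi_N).
\]
By Definition~\ref{def:edge-coefficient}, the last factor is exactly $d_N(V)$. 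Rearranging yields \eqref{eq:scalar-edge} with $z_N$ as in \eqref{eq:d-z}. Equivalently, this is Lemma~\ref{lem:edge-affine-variation} evaluated at $t=a_{1,2N-2}$, once we note that $\mathfrak b_N(0)$ is the first term on the right.

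The only step needing care is the dependence claim for $z_N$, which has three pieces.
\begin{itemize}
\item $\mathcal T_V$ depends only on $j^5V$ by Lemma~\ref{lem:T-explicit}. Since $5\le 2N-2$ for $N\ge4$, this is part of $j^{2N-2}V$.
\item The base point depends on $j^{2N-2}V$ and on $B^{[0]}$ and $B^{[2]}$.
\item $R_N^{[4]}(V)=B^{[4]}_{2N+2}(V)-B^{[4]}_{2N+2}(V^{<N})$. Here $V^{<N}$ is determined by $j^{2N-2}V$, so its QBNF is a polynomial in that jet by Lemma~\ref{lem:finite-step-polynomial}.
\end{itemize}
Hence $z_N$ is determined by $j^{2N-2}V$ together with the QBNF data. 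The $B^{[4]}$ dependence enters only through $B^{[4]}_{2N+2}(V)$, and the lower layers enter only through the base point.
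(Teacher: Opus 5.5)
Your proof is correct and is essentially the paper's. The paper evaluates Lemma~\ref{lem:edge-affine-variation} at $t=a_{1,2N-2}$, and that lemma is itself just Proposition~\ref{prop:shifted-equation} plus the linearity of $\mathcal T_V$, which you apply directly to $V$; this also sidesteps the implicit identification $B^{[4]}_{2N+2}(V(a_{1,2N-2}))=B^{[4]}_{2N+2}(V)$. Your dependence analysis is, if anything, more precise than the corollary's wording, since it records that $z_N$ also uses $B^{[0]}_{2N}$ and $B^{[2]}_{2N}$ through the base point.
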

\begin{proof}
Take $t=a_{1,2N-2}$ in Lemma~\ref{lem:edge-affine-variation} and use
$z_N(V)=\mathfrak b_N(a_{1,2N-2})-\mathfrak b_N(0)$.
\end{proof}


\begin{proposition}\label{prop:d-nonzero}
For every $N\geq4$ and \(V\in \mathcal V_v^\sigma\), the edge function $d_N(V)$ is a rational function
of the five-jet $j^5(V)$ that is not identically zero.
Moreover, with the notation of Lemma~\ref{lem:kernel-vector},
 \begin{align}
  \frac{\partial d_N(V)}{\partial a_{50}}
  ={}&-\frac4{v_1}c_{N-3}\phi_{N,N-3}^*\tau^2,\quad \,\,\tau=a_{12}v_2^2/(a_{30}\delta)
  \label{eq:d-derivative} \end{align}
where $c_{N-3}$ and $\phi_{N,N-3}$ are non zero constants given in \eqref{eq:cn-def} and \eqref{eq:phi} respectively.
Consequently, for each $\sigma\in\{\pm1\}$, the set
 \[
  \{V\in\mathcal V_v^\sigma:a_{12}(V)\neq0,\ d_N(V)\neq0\}
 \]
 is open and dense in $\mathcal V_v^\sigma$.

\end{proposition}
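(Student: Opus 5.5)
Rationality of \(d_N\) is structural.  By Lemma~\ref{lem:T-explicit}, \(\mathcal T_V\) is built from \(P_j,M_j,N_j\) for \(j=3,4,5\).  By Lemma~\ref{lem:finite-step-polynomial}, the generators \(S_3,S_4,S_5\) are polynomial in \(j^5V\), so each of these operators is polynomial in \(j^5V\).  By Proposition~\ref{prop:rational-affine-dependence} and Lemma~\ref{lem:kernel-vector}, \((\Phi_N,\Psi_N)\) is rational in \((a_{30},a_{12})\), with only powers of \(a_{30}\) in the denominators.  Composing these and applying the fixed linear functional \(\rho_N\circ\Pi\) shows that \(d_N\) is rational in \(j^5V\) and regular on \(a_{30}\neq0\).

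For the derivative, I will exploit linearity in \(a_{50}\).  Only \(S_5\) depends on \(a_{50}\).  Through the recursion, \(S_5=-L^{-1}(V_5+K_5)\) with \(K_5\) independent of \(V_5\), so \(\partial S_5/\partial a_{50}=-L^{-1}(x_1^5)\).  Neither \(S_3\), \(S_4\), \(\Phi_N\) nor \(\Psi_N\) depends on \(a_{50}\).  Hence
\[
\frac{\partial d_N}{\partial a_{50}}=\rho_N\,\Pi\,\gamma_5\{-L^{-1}(x_1^5),\Phi_N\}_5 .
\]
The key computation uses the fact that \(\Phi_N\) depends only on \(x\).  In \(\{A,\Phi_N\}_5\) only the terms \(\partial_\xi^\alpha A\,\partial_x^\alpha\Phi_N\) with \(|\alpha|=5\) survive.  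Since \(L^{-1}(x_1^5)\) involves only \(x_1,\xi_1\), only \(\alpha=(5,0)\) contributes, giving the constant \(\partial_{\xi_1}^5L^{-1}(x_1^5)\) times \(\partial_{x_1}^5\Phi_N\).  This constant is computed from Lemma~\ref{lem:L-inverse-monomial} with \((k,\ell)=(5,0)\); the \(\xi_1^5\) coefficient comes from the \(\operatorname{Im}\) parts of \(z_1^a\bar z_1^{5-a}\).

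Then \(\partial_{x_1}^5\Phi_N=\sum_m\phi_{N,m}\,\partial_{x_1}^5 e_{N,m}\), and \(\rho_N\) extracts the coefficient of \(\Omega_2^{N-3}\).  By \eqref{eq:Pi-fNm}, the projection \(\Pi(x_1^{2p}x_2^{2q})\) involves only \(\Omega_1^p\Omega_2^q\), so we need \(x_1^0x_2^{2N-6}\).  This forces \(2N-1-2m=5\), i.e.\ \(m=N-3\), which contributes \(5!\,c_{N-3}\,\phi^*_{N,N-3}\tau^2\).  Combining this with \(\gamma_5=1/1920\) and the \(\xi_1^5\) coefficient should yield \(-\tfrac{4}{v_1}c_{N-3}\phi^*_{N,N-3}\tau^2\).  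The main obstacle is tracking these numerical constants and signs correctly.  It also requires checking that no other term of \(\mathcal T_V\) involves \(S_5\), which follows from \eqref{eq:L-operator}.

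Finally, the open-dense claim.  On \(\mathcal V_v^\sigma\) we have \(a_{30}\ne0\), and whenever \(a_{12}\ne0\) also \(\tau\ne0\), so the derivative above is nonzero.  Because \(d_N\) is affine in \(a_{50}\), on each line in which only \(a_{50}\) varies it vanishes at most at one point.  The set in question is the preimage under the continuous projection \(j^5\) of an open set of the finite-dimensional jet space, hence open.  For density, fix a basic cylinder neighbourhood, which constrains finitely many coefficients to open sets.  First perturb \(a_{12}\) to be nonzero, then perturb \(a_{50}\) slightly to avoid the single zero of the affine function; both moves stay inside the neighbourhood.
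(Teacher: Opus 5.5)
Your proposal is correct and follows the same route as the paper. Only the $N_5\Phi_N$ term of $\mathcal T_V$ depends on $a_{50}$, the fifth bracket with $-L^{-1}(x_1^5)$ reduces to the single $\alpha=(5,0)$ term, and $\rho_N$ isolates $m=N-3$. The constant you left open is $\partial_{\xi_1}^5L^{-1}(x_1^5)=5!\cdot\frac{8}{15v_1}=\frac{64}{v_1}$, so $\gamma_5\cdot\bigl(-\frac{64}{v_1}\bigr)\cdot 5!=-\frac{4}{v_1}$ as claimed. Your density argument (first making $a_{12}\neq0$, then moving $a_{50}$ off the single zero of an affine function) is the paper's $V+\varepsilon x_1^5$ perturbation, stated slightly more carefully.
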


\begin{proof}
By Proposition~\ref{prop:rational-affine-dependence},
\((\Phi_N,\Psi_N)\) is rational in \(a_{30},a_{12}\), with only powers
of \(a_{30}\) in its denominators.  Since \(\mathcal T_V\) is polynomial
in \(j^5V\) by Lemma~\ref{lem:T-explicit},
definition~\eqref{eq:dN-definition} shows that \(d_N(V)\) is rational
in \(j^5V\) with no pole on \(a_{30}\ne0\).
Abbreviate \(d_N=d_N(V)\) and
\(\mathcal T=\mathcal T_V\).

By Lemma~\ref{lem:explicit-known-remainders}, $K_5$ depends only on the
 $j^4V$,
differentiating the degree-$5$ homological equation
\[
 K_5+V_5+L(S_5)=0
\]
with respect to $a_{50}$ therefore gives
\[
 x_1^5+L\left(\frac{\partial S_5}{\partial a_{50}}\right)=0.
\]
Thus
\[
 \frac{\partial S_5}{\partial a_{50}}=A_0,
 \qquad
 \text{where}\quad A_0=-L^{-1}(x_1^5).
\]
Recall that
\(
 d_N=\rho_N\mathcal T(\Phi_N,\Psi_N)
\).
In the explicit expression \eqref{eq:L-operator} for $\mathcal T$, since $S_3,S_4,\Phi_N$, and $\Psi_N$ are independent of $a_{50}$, the only term depending
on $a_{50}$ is
\(N_5\Phi_N=\gamma_5\{S_5,\Phi_N\}_5\).  Therefore
\begin{equation*}
 \frac{\partial d_N}{\partial a_{50}}
 =\gamma_5\rho_N\{A_0,\Phi_N\}_5.
\end{equation*}
Writing
\(
 \Phi_N=\sum_{m=0}^{N-1}\phi_{N,m}e_{N,m},
\)
observe that \(A_0\) depends only on \((x_1,\xi_1)\), whereas \(e_{N,m}\)
is independent of \(\xi\).  Thus the only term in the fifth
bidifferential that can be nonzero is
\[
 \{A_0,e_{N,m}\}_5
 =
 (\partial_{\xi_1}^5A_0)(\partial_{x_1}^5e_{N,m}).
\]
Since
\(
 e_{N,m}=x_1^{2N-2m-1}x_2^{2m}
\),
there are three cases.  If \(m>N-3\), then the \(x_1\)-degree is less
than five, so \(\partial_{x_1}^5e_{N,m}=0\).  If \(m<N-3\), then
\(
 \partial_{x_1}^5e_{N,m}\)
is a nonzero multiple of
 \(x_1^{2(N-m-3)}x_2^{2m}.
\)
Its resonant projection is a multiple of
\(\Omega_1^{N-m-3}\Omega_2^m\), which contains a positive power of
\(\Omega_1\) and therefore has zero \(\Omega_2^{N-3}\)-coefficient.
Only when \(m=N-3\) do the five derivatives remove the entire
\(x_1\)-factor, leaving a multiple of \(x_2^{2N-6}\), whose resonant
projection is a multiple of \(\Omega_2^{N-3}\).  Consequently,
\begin{equation}\label{eq:d-derivative-reduced}
 \frac{\partial d_N}{\partial a_{50}}
 =\gamma_5\phi_{N,N-3}
   \rho_N\{A_0,x_1^5x_2^{2N-6}\}_5.
\end{equation}

It remains to evaluate the single fifth-order bracket in
\eqref{eq:d-derivative-reduced}.  Solving $L(A_0)=-x_1^5$ gives
\[
 A_0=-\frac1{v_1}
 \left(x_1^4\xi_1+\frac43x_1^2\xi_1^3+\frac8{15}\xi_1^5\right),
 \qquad
 \partial_{\xi_1}^5A_0=-\frac{64}{v_1}.
\]
Hence
\[
 \{A_0,x_1^5x_2^{2N-6}\}_5
 =(\partial_{\xi_1}^5A_0)
   \partial_{x_1}^5(x_1^5x_2^{2N-6})
 =-\frac{64}{v_1}\,5!\,x_2^{2N-6}.
\]
By the definition of $\rho_N$, we have
\(
 \rho_N(x_2^{2N-6}) =c_{N-3}.
\)
Substitution into \eqref{eq:d-derivative-reduced} now yields
\[
 \begin{aligned}
 \frac{\partial d_N}{\partial a_{50}}
 &=
 \gamma_5\phi_{N,N-3}
 \left(-\frac{64}{v_1}\right)5!\,
 c_{N-3}
 =-\frac4{v_1}c_{N-3}\phi_{N,N-3}.
\end{aligned}
\]
By Lemma~\ref{lem:kernel-vector},
$\phi_{N,N-3}=\phi_{N,N-3}^*\tau^2$.  Substitution into the preceding
formula gives \eqref{eq:d-derivative}.
Finally, let $V_{\varepsilon}=V+\varepsilon x_1^5$, then $d_N(V_{\varepsilon})=d_N(V)+\varepsilon m_N(V)$, where \(
 m_N(V):=\frac{\partial d_N}{\partial a_{50}}(V) 
\) is a nonzero constant. Thus arbitary small perturbation can make $d_N(V)$ non zero, this proves the density. Openness follows from the fact that $d_N(V)$ is rational with no poles.

\end{proof}

\begin{proof}[Proof of Theorem~\ref{thm:generic-edge}]
Define
\[
 \mathcal O_N^{\alpha,\beta}
 :=
 \{V\in\mathcal V_{v,\sigma}^{\alpha,\beta}:d_N(V)\neq0\},\qquad 
 \mathcal G_{v,\sigma}^{\alpha,\beta}
 :=
 \bigcap_{N\geq4}\mathcal O_N^{\alpha,\beta}.
\]
The set $\mathcal O_N^{\alpha,\beta}$ is open and dense. For any $V\in\mathcal G_{v,\sigma}^{\alpha,\beta}$, the edge coefficient $d_N(V)$ is nonzero for all $N\geq4$.
The supplied coefficients $a_{12},a_{14}$ and the fixed sign of $a_{30}$, together with
$B^{[0]}$ and $B^{[2]}$, determine the six-jet $j^6V$ by
Lemma~\ref{lem:initial-reconstruction}.  Suppose inductively that
the $(2N-2)$-jet $j^{2N-2}V$, where $N\geq4$, has been determined. Since
$d_N\neq0$, that edge equation \eqref{eq:scalar-edge} determines $a_{1,2N-2}$, and then \eqref{eq:affine-line} determines
$V_{2N-1}$ and $V_{2N}$.  Thus the induction determines all of $V$.

For any $W\in\mathcal V_{v,\sigma}^{\alpha,\beta}$, if
\((B^{[0]},B^{[2]},B^{[4]})(W)=(B^{[0]},B^{[2]},B^{[4]})(V)\), then
\(j^6W=j^6V\).  Since \(d_N\) depends
only on the five-jet, $d_N(W)=d_N(V)\neq0$ for all $N\geq4$.  The same induction applied to \(W\) produces the same
sequence of coefficients as for \(V\).  Hence \(W=V\).


Consequently,
\(
 \mathcal G_{v,\sigma}^{\alpha,\beta}
\)
is a dense \(G_\delta\) subset that meets the requirements of the theorem.

\end{proof}

\section{Determination of the initial edge coefficient
\texorpdfstring{$a_{14}$}{a14}}\label{sec:recover-a14}

Throughout this section, the nonresonant frequencies
$v=(v_1,v_2)$ and \(\sigma\in\{\pm1\}\) are fixed.  Except in the statement and proof of
Theorem~\ref{thm:recover-a14}, the constructions are made on
$\mathcal V_v^\sigma$, without prescribing the value of $a_{12}$.

We prove that the first three QBNF
layers $B^{[0]},B^{[2]},B^{[4]}$ generically determine $a_{14}$ and
then the whole Taylor series.

The argument has a simple structure.  The degree-six equations leave
one affine parameter, namely $a_{14}$.  The coefficient $B^{[4]}_8$
reduces this affine line to at most two candidates.  The full
two-component action polynomial $B^{[4]}_{10}$ then gives a compatibility
equation, which generically rejects the false candidate.

For $\alpha\neq 0$, define the fixed coefficient slice
\[
\mathcal V_{v,\sigma}^{\alpha}
 :=\{V\in\mathcal V_v^\sigma:a_{12}(V)=\alpha\}.
\]
We give this fixed coefficient slice its relative cylinder topology. 

\begin{theorem}\label{thm:recover-a14}
Fix \(\sigma\in\{\pm1\}\) and $\alpha\neq0$.  There is a dense
\(G_\delta\) subset
\(
 \mathcal G_{v,\sigma}^{\alpha}
 \subset\mathcal V_{v,\sigma}^{\alpha}
\)
such that, for every \(V\in\mathcal G_{v,\sigma}^{\alpha}\) and
\(W\in\mathcal V_{v,\sigma}^{\alpha}\),
\[
 (B^{[0]},B^{[2]},B^{[4]})(W)
 =(B^{[0]},B^{[2]},B^{[4]})(V)
 \quad\Longrightarrow\quad W=V.
\]
Moreover, these three layers and the fixed values \(\alpha,\sigma\)
determine every \(V\in\mathcal G_{v,\sigma}^{\alpha}\) successively.
\end{theorem}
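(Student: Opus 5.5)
On the fixed slice $a_{12}=\alpha$, the plan is to reduce Theorem~\ref{thm:recover-a14} to Theorem~\ref{thm:generic-edge}. It suffices to show that, generically, the three layers single out $a_{14}$. Once $a_{14}$ is known, the fiber $\mathcal V_{v,\sigma}^{\alpha,a_{14}}$ is fixed and the edge recursion of Section~\ref{sec:edge} finishes the reconstruction.

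\textbf{Step 1: reduce to the line in $a_{14}$.} With $\alpha$ and $\sigma$ fixed, Lemmas~\ref{lem:degree-three} and~\ref{lem:first-quantum} and the degree-four equation \eqref{eq:initial-even-block} determine $j^4V$ from $B^{[0]},B^{[2]}$. By Theorem~\ref{thm:block-affine-line} with $N=3$, the pair $(V_5,V_6)$ then lies on the affine line
\[
(\widehat{V_5},\widehat{V_6})+t(\Phi_3,\Psi_3),\qquad t=a_{14}.
\]
Call the resulting candidate $V(t)$ for short. Its $6$-jet is a polynomial function of $t$, and so is $S_5(t)$.

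\textbf{Step 2: use $B_8^{[4]}$.} I will examine the coefficient $B_8^{[4]}$, which is a scalar multiple of $\hbar^4\Omega$-degree one, so it has two components $\Omega_1,\Omega_2$. Lemma~\ref{lem:direct-high-degree} needs $N\ge4$, so it does not apply here. Instead I will expand directly via \eqref{eq:exponential} at degree $8$. The unknown $t$ enters through $V_5,V_6,S_5,S_6$. Terms such as $N_5V_5$ are linear in $t$. Terms such as $\{S_5,V_5\}_5$ and $D_{S_5}D_{S_5}H_2$ are quadratic in $t$, since $S_5$ is affine in $t$.

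So each component of $B_8^{[4]}$ is a polynomial of degree at most two in $t$, with coefficients polynomial in $j^4V$ and $1/a_{30}$. For one component, say $\rho$, I will compute the $t^2$ coefficient $q(j^4V)$ and show it is not identically zero. The natural tool is to vary a coefficient such as $a_{30}$ or $a_{40}$, in the spirit of Proposition~\ref{prop:d-nonzero}; failing that, I will exhibit the linear coefficient instead. On the open dense set where the relevant coefficient is nonzero, the equation $B_8^{[4]}(V(t))=B_8^{[4]}(V)$ has at most two roots $t$. One root is the true $a_{14}(V)$; call the other $t'$, which is an algebraic function of $j^6V$ when it is real.

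\textbf{Step 3: reject the false branch with $B_{10}^{[4]}$.} If $t'$ exists, Theorem~\ref{thm:block-affine-line} with $N=4$ says the candidate with $a_{14}=t'$ reproduces $B_8^{[0]},B_8^{[2]}$ along a line in $a_{16}$. Compatibility with $B_{10}^{[4]}$ has two components. One component fixes $a_{16}$ through the edge equation \eqref{eq:scalar-edge}, generically because $d_4\neq0$. The other component gives a residual polynomial relation
\[
\Delta\bigl(j^{8}V\bigr)=0.
\]
This relation must hold on the false branch, and it involves $j^8V$ and hence $a_{70},a_{52},\dots$ of $V$.

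I then show $\Delta\not\equiv0$ by checking its dependence on one top coefficient, for example $a_{70}$. Changing $a_{70}$ of $V$ shifts $B_{10}^{[4]}(V)$ by an explicit resonant term of the kind in Proposition~\ref{prop:d-nonzero}. The false branch absorbs that shift only through $a_{16}$, which moves the single edge-determined component, so the other component changes. The exceptional set $\{\Delta=0\}\cup\{\text{discriminant/leading coefficient}=0\}$ is then a proper real-algebraic set in the finite jet $j^{10}V$, hence closed nowhere dense.

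\textbf{Step 4: assemble the generic set.} I will intersect this open dense set with the sets $\mathcal O_N$ from Theorem~\ref{thm:generic-edge}, giving a countable intersection and so a dense $G_\delta$. Given $W$ with the same three layers, Steps 1--3 force $a_{14}(W)=a_{14}(V)$. Theorem~\ref{thm:generic-edge}, applied on the fiber $\beta=a_{14}(V)$, then gives $W=V$.

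\textbf{Main obstacle.} The hard part is Step 3: proving $\Delta$ is not identically zero. The difficulty is that $t'$ is only implicitly defined, possibly through a square root. I would handle this by a perturbation argument at a convenient base jet, such as $a_{40}=a_{22}=a_{04}=0$ with other simple values, where the quadratic in Step 2 can be solved explicitly and the sensitivity of $\Delta$ to $a_{70}$ computed by hand.
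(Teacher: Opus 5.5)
Your architecture is the paper's. You use the affine line in $t=a_{14}$, a quadratic $p_{3,V}(t)$ coming from $B_8^{[4]}$, and elimination of $a_{16}$ between the two components of $B_{10}^{[4]}$. That elimination is the paper's compatibility determinant $\Xi_V(t)=\det(e_V(t),d_V(t))$ evaluated at the conjugate root. A few details are off, though harmless:
\begin{itemize}
\item $B_8^{[4]}$ is a single scalar, since $\hbar^4$ already has Weyl degree $8$.
\item Its $t^2$-coefficient $\kappa_2=-\frac{\gamma_5}{2}\{L^{-1}\Phi_3,\Phi_3\}_5$ depends only on $(a_{30},a_{12})$, so varying $a_{40}$ is useless. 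The paper lets $|a_{30}|\to\infty$ within the sign class.
\item No square root occurs. The known real root $a_{14}(V)$ gives the other one as $t^\sharp=-\kappa_1/\kappa_2-a_{14}(V)$, which is rational in $j^6V$.
\end{itemize}
So the obstacle you flag at the end is not the real one.

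The genuine gap is the nondegeneracy in Step 3, where both your mechanism and your genericity hypothesis fail.

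\emph{First}, the false branch does not absorb the $a_{70}$-shift only through $a_{16}$. The false candidate is rebuilt from $B_8^{[0]},B_8^{[2]}$ of $V$, and these move too. Since $x_1^7=e_{4,0}\in\widetilde{\mathsf O}_7$, its base point $\widehat{V_7}(t)$ shifts by exactly $\varepsilon x_1^7$ for every $t$. Hence almost all of $\varepsilon\Pi\mathcal T^{(t^*)}(x_1^7,0)$ is absorbed by the false candidate's own $V_7$. What survives is only $\varepsilon\Pi\bigl(\mathcal T^{(t^*)}-\mathcal T^{(t^\sharp)}\bigr)(x_1^7,0)=\varepsilon\frac{42\phi_{3,0}^*\tau^2}{v_1}(t^\sharp-t^*)\Omega_1$. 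This term comes solely from the $t$-dependence of $S_5$ inside $N_5$. It is nonzero only because $a_{12}\neq0$ and $t^\sharp\neq t^*$, and it is a pure $\Omega_1$-vector.

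\emph{Second}, solving the $\Omega_2$-component for $a_{16}$ on the false branch requires $d_{V,\Omega_2}(t^\sharp)=d_4(V^{<4}(t^\sharp))\neq0$. This is the edge coefficient of the \emph{false} branch, not $d_4(V)$; the two differ because $d_4$ depends on $V_5$. Proposition~\ref{prop:d-nonzero} applied to $V$ says nothing about it. Where it vanishes, the $\Omega_2$-component imposes $e_{V,\Omega_2}(t^\sharp)=0$, which the $x_1^7$-perturbation cannot touch. In the paper's division-free $\Xi_V$ the same quantity appears as the factor $d_{V,\Omega_2}(t_V^\sharp)$ in \eqref{eq:chi-sharp-derivative}.

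The missing idea is Lemma~\ref{lem:conjugate-root-d}. Define $V\mapsto V^\sharp$ by moving $(V_5,V_6)$ to the conjugate root on the same affine line and keeping all other homogeneous terms. This is a continuous involution of $\{\kappa_2\neq0\}$ with $d_{V,\Omega_2}(t_V^\sharp)=d_4(V^\sharp)$. So perturbing $V^\sharp$ by $\varepsilon x_1^5$ and mapping back makes this coefficient nonzero densely. Only then does the local $x_1^7$-perturbation give $\chi^\sharp\neq0$ densely on each fixed slice $a_{12}=\alpha$. A single hand computation at one base jet would in any case have to be uniform in $\alpha$.
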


\begin{lemma}\label{lem:direct-high-degree-N3}
Given a potential $V\in \mathcal V_v^\sigma$, let \(S^\circ\) be the
gauge-fixed generator normalizing \(H_{V^{<3}}\).
Then
\begin{equation}\label{eq:direct-high-degree-N3}
 [\mathcal B(V)-\mathcal B(V^{<3})]_8
 =
 \Pi[\exp(D_{S^\circ})V^{\geq3}]_8
 -\frac12\Pi D_{L^{-1}V_5}V_5.
\end{equation}
\end{lemma}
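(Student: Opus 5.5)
The plan is to repeat the argument of Lemma~\ref{lem:direct-high-degree} with $N=3$, so $d=5$. This time we keep the terms that the order count no longer removes at degree $8=2d-2$. Let $S^\circ$ normalize $H_{V^{<3}}=H_2+V_3+V_4$ and put $R:=\exp(D_{S^\circ})V^{\geq3}$. Then $\operatorname{ord}R\ge5$ and $\exp(D_{S^\circ})H=\mathcal B(V^{<3})+R$. Let $T$ be the gauge-fixed generator normalizing $\mathcal B(V^{<3})+R$. Since the first non-normal term is $R_5$, we get $\operatorname{ord}T\ge5$. Uniqueness of the QBNF (Appendix~\ref{app:moyal-identities}) gives $\mathcal B(V)=\exp(D_T)(\mathcal B(V^{<3})+R)$.

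Next expand the exponential and sort terms by order. The terms $D_T^jR$ with $j\ge2$ have order at least $5+2\cdot3=11$. The terms $D_T^j\mathcal B(V^{<3})$ with $j\ge3$ have order at least $2+9=11$. Both therefore drop out at degree $8$. Among the remaining terms, $D_TR$ and $\frac12D_T^2\mathcal B(V^{<3})$ each have order at least $8$. Their degree-$8$ parts involve only the lowest pieces: $T_5$, $R_5$, and the quadratic part $H_2$ of $\mathcal B(V^{<3})$. The degree-$8$ part of $D_T\mathcal B(V^{<3})$ has vanishing resonant projection by Lemma~\ref{lem:resonant-moyal}, exactly as before. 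Applying $\Pi$ then gives
\[
 [\mathcal B(V)-\mathcal B(V^{<3})]_8=\Pi R_8+\Pi D_{T_5}R_5+\tfrac12\Pi D_{T_5}^2H_2 .
\]

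To evaluate the correction, identify $T_5$ and $R_5$. Since $S^\circ$ has order at least $3$ and $V^{\ge3}$ starts in degree $5$, we have $R_5=V_5$. The degree-$5$ homological equation for $\mathcal B(V^{<3})+R$ reads $V_5+L(T_5)=0$, so $T_5=-L^{-1}V_5$. Using $D_{T_5}H_2=L(T_5)=-V_5$, we get $\tfrac12D_{T_5}^2H_2=-\tfrac12D_{T_5}V_5$. The two correction terms then combine to $\tfrac12\Pi D_{T_5}V_5=-\tfrac12\Pi D_{L^{-1}V_5}V_5$, which is the claimed formula \eqref{eq:direct-high-degree-N3}.

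The main obstacle is bookkeeping at the borderline degree $8$. One must make sure that no other piece contributes: $T_6$ enters only through $D_{T_6}H_2=L(T_6)$, which is killed by $\Pi$, and no other combination reaches degree $8$. One must also track signs carefully, namely the direction of $D_T$ and $L$ and the convention $L^{-1}$ nonresonant. A last check is that the exponential in $\exp(D_{S^\circ})V^{\ge3}$ is full, so no truncation to $S_3,S_4,S_5$ is needed here.
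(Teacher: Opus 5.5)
Your proposal is correct and follows the paper's proof essentially step for step. Both normalize $\mathcal B(V^{<3})+R$ by a gauge-fixed $T$ with $\operatorname{ord}T\ge5$, keep the borderline degree-$8$ terms $\Pi D_{T_5}R_5+\frac12\Pi D_{T_5}^2H_2$, and then use $R_5=V_5$ and $T_5=-L^{-1}V_5$. One small correction to your bookkeeping remark: at degree $8$ the generator $T_6$ enters through $D_{T_6}[\mathcal B(V^{<3})]_4$, and $T_8$ through $L(T_8)$, not through $D_{T_6}H_2$; both terms already lie in $[D_T\mathcal B(V^{<3})]_8$ and are annihilated by $\Pi$ via Lemma~\ref{lem:resonant-moyal}, so the argument is unaffected.
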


\begin{proof}
Put $
 R:=\exp(D_{S^\circ})V^{\geq3}.
$
As in the proof of Lemma~\ref{lem:direct-high-degree}, normalize
\(\mathcal B(V^{<3})+R\) by an additional gauge-fixed generator $
 T=T_5+T_6+\cdots.$
 Expanding
\(
 \mathcal B(V)=\exp(D_T)(\mathcal B(V^{<3})+R)
\)
at degree $8$, and then applying \(\Pi\), gives
\begin{equation}\label{eq:N3-degree-eight-expansion}
 [\mathcal B(V)-\mathcal B(V^{<3})]_8
 =
 \Pi[R]_8
 +\Pi D_{T_5}R_5
 +\frac12\Pi D_{T_5}^2H_2.
\end{equation}
where we used \(\Pi D_T\mathcal B(V^{<3})=0\).

The degree-$5$ equation for the Hamiltonian $R$ gives
\(
 D_{T_5}H_2=-R_5=-V_5
\),
hence
$T_5=-L^{-1}V_5$.
It also implies
$D_{T_5}^2H_2=-D_{T_5}R_5$.
Substitution into \eqref{eq:N3-degree-eight-expansion} yields
\[
 [\mathcal B(V)-\mathcal B(V^{<3})]_8
 =
 \Pi[R]_8+\frac12\Pi D_{T_5}V_5
 =
 \Pi[R]_8-\frac12\Pi D_{L^{-1}V_5}V_5.
\]
\end{proof}
For \(V\in\mathcal V_v^\sigma\),
applying Theorem~\ref{thm:block-affine-line} with \(N=3\) to \(H_V\),
write
\[
 (V_5,V_6)
 =(\widehat{V_5}(V),\widehat{V_6}(V))
 +a_{14}(V)(\Phi_3(V),\Psi_3(V)).
\]
Set
\[
 V_5(t):=\widehat{V_5}(V)+t\Phi_3(V),\qquad
 V_6(t):=\widehat{V_6}(V)+t\Psi_3(V).
\]

\begin{definition}
For any $V\in \mathcal V_v^\sigma$, let
\[
 V^{<4}(t):=V^{<3}+V_5(t)+V_6(t),
 \qquad
 H^{<4}(t):=H_{V^{<4}(t)}
\]
and define 
\begin{equation}\label{eq:Pquadratic}
 p_{3,V}(t):=B^{[4]}_8(V^{<4}(t))-B^{[4]}_8(V).
\end{equation}
\end{definition}

\begin{proposition}\label{prop:quadratic-a14} 
The polynomial \(p_{3,V}(t)\) is determined by $j^6V$, has degree at most two, and satisfies
\[
 p_{3,V}(a_{14}(V))=0.
\]
Denote its quadratic coefficient by \(\kappa_2(V)\).  Then
\begin{equation}\label{eq:kappa2}
 \kappa_2(V)
 =-\frac{\gamma_5}{2}\{L^{-1}\Phi_3(V),\Phi_3(V)\}_5, 
\end{equation}
which depends only on \(a_{12}(V)\) and \(a_{30}(V)\).
\end{proposition}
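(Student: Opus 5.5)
The plan is to compute $B_8^{[4]}$ of both $V^{<4}(t)$ and $V$ through Lemma~\ref{lem:direct-high-degree-N3}, and to compare the two expansions term by term. The key observation is that $V^{<3}=V_2+V_3+V_4$ is common to $V$ and to every member of the family $V^{<4}(t)$. Hence the generator $S^\circ$ normalizing $H_{V^{<3}}$ and the normal form $\mathcal B(V^{<3})$ are the same in all cases, and both depend only on $j^4V$.

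\textbf{Step 1 (structure in $t$).} First apply Lemma~\ref{lem:direct-high-degree-N3} to the potential $V^{<4}(t)$, for which $V^{\ge3}=V_5(t)+V_6(t)$. This gives
\[
 B_8(V^{<4}(t))=B_8(V^{<3})+\Pi[\exp(D_{S^\circ})(V_5(t)+V_6(t))]_8-\tfrac12\Pi D_{L^{-1}V_5(t)}V_5(t).
\]
\begin{itemize}[leftmargin=*]
\item The first term is independent of $t$.
\item The second term is affine in $t$, since $V_5(t),V_6(t)$ are affine in $t$ and $\exp(D_{S^\circ})$ is linear.
\item The third term is quadratic in $t$, since $(F,G)\mapsto D_{L^{-1}F}G$ is bilinear.
\end{itemize}
So $t\mapsto B_8^{[4]}(V^{<4}(t))$ is a polynomial of degree at most two. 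Its coefficients involve only $j^4V$, the base point $(\widehat{V_5},\widehat{V_6})$ and the direction $(\Phi_3,\Psi_3)$. By Theorem~\ref{thm:block-affine-line} with $N=3$, these last quantities are determined by $j^4V$ and $B_6^{[0]},B_6^{[2]}$, and those layers are in turn functions of $j^6V$ by Lemma~\ref{lem:finite-step-polynomial}.

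\textbf{Step 2 (the constant $B_8^{[4]}(V)$).} Next apply the same lemma to $V$ itself, so that now $V^{\ge3}$ also contains $V_7,V_8,\dots$. In degree $8$, the only terms involving $V_7$ or $V_8$ are $V_8$ and $D_{S_3}V_7$. Here $S_3$ is the cubic component of $S^\circ$, and $V_9,V_{10},\dots$ do not enter degree $8$ at all.
\begin{itemize}[leftmargin=*]
\item $V_8$ carries no power of $\hbar$.
\item $D_{S_3}V_7$ involves brackets $\{S_3,V_7\}_q$ only up to $q=3$, since $S_3$ is cubic. It therefore has no $\hbar^4$-coefficient.
\end{itemize}
The correction term $-\frac12\Pi D_{L^{-1}V_5}V_5$ involves only $V_5$. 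Consequently $B_8^{[4]}(V)=B_8^{[4]}(V^{<4}(a_{14}(V)))$, because $V^{<4}(a_{14}(V))$ and $V$ share the $6$-jet by the affine-line representation. This proves two things at once: $p_{3,V}$ is determined by $j^6V$, and $p_{3,V}(a_{14}(V))=0$. I expect this step to be the main point needing care. One must check that no hidden $\hbar^4$-contribution from $V_7,V_8$ arises through higher components of $S^\circ$; this holds because any additional generator factor raises the degree beyond $8$.

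\textbf{Step 3 (quadratic coefficient).} The $t^2$-term comes only from $-\frac12\Pi D_{L^{-1}\Phi_3}\Phi_3$. By \eqref{eq:moyal-f}, its $\hbar^4$-part is $-\frac{\gamma_5}{2}\Pi\{L^{-1}\Phi_3,\Phi_3\}_5$. The bidifferential has ordinary degree $5+5-10=0$, so it is a constant and $\Pi$ acts trivially on it, which yields \eqref{eq:kappa2}. Finally, Theorem~\ref{thm:block-affine-line} shows that $\Phi_3$ depends only on $v_1,v_2,a_{30},a_{12}$. Since $L^{-1}$ is a fixed linear map, $\kappa_2(V)$ depends only on $a_{30}$ and $a_{12}$.
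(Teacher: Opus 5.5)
Your proposal is correct and is essentially the paper's own argument. The paper likewise notes that $V_7,V_8$ enter degree eight only through $V_8+D_{S_3}V_7$, which has no $\hbar^4$-part; this gives both the dependence of $p_{3,V}$ on $j^6V$ alone and the root $p_{3,V}(a_{14}(V))=0$. It then applies Lemma~\ref{lem:direct-high-degree-N3} to $V^{<4}(t)$, splitting into the affine term $\Pi\mathcal T_{V^{<3}}(V_5(t),V_6(t))$ and the quadratic fifth-bracket term. Your degree-counting remark that $\{L^{-1}\Phi_3,\Phi_3\}_5$ is a constant, so that $\Pi$ may be dropped in \eqref{eq:kappa2}, is a detail the paper leaves implicit.
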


\begin{proof}
Fix \(V\), and abbreviate 
\(a_{14}=a_{14}(V)\),
\(\Phi_3=\Phi_3(V)\), and \(p_3=p_{3,V}\).  Put
\[
 \mathcal T^\circ:=\mathcal T_{V^{<3}},
 \qquad
 R_3^{[4]}
 :=B_8^{[4]}(V)-B_8^{[4]}(V^{<3}).
\]
The coefficient \(B^{[4]}_8\) depends only on the six-jet \(j^6V\).  Indeed, in the degree-eight expansion, \(V_8\)
occurs without a Moyal derivation and \(V_7\) occurs only through
\(D_{S_3}V_7\); neither term has an \(\hbar^4\)-coefficient because
\(S_3\) is cubic.  Since
\(
 (V_5(a_{14}),V_6(a_{14}))=(V_5,V_6),
\)
the potentials \(V^{<4}(a_{14})\) and \(V\) have the same degree-six
jet.  It follows immediately that
\(
 p_3(a_{14})=0.
\)

Applying Lemma~\ref{lem:direct-high-degree-N3} to \(V^{<4}(t)\) gives
\begin{align*}
 [\mathcal B(V^{<4}(t))-\mathcal B(V^{<3})]_8
 =\Pi[\exp(D_{S^\circ})(V_5(t)+V_6(t))]_8
  -\frac12\Pi D_{L^{-1}V_5(t)}V_5(t).
\end{align*}
Extracting the $\hbar^4$-coefficient yields
\begin{equation}\label{eq:base-B8}
 B^{[4]}_8(V^{<4}(t))-B^{[4]}_8(V^{<3})
 =\Pi\mathcal T^\circ(V_5(t),V_6(t))
  -\frac{\gamma_5}{2}\{L^{-1}V_5(t),V_5(t)\}_5.
\end{equation}
Subtracting \(R_3^{[4]}\) from \eqref{eq:base-B8} gives
\begin{equation}\label{eq:p3-full}
 p_3(t)
 =\Pi\mathcal T^\circ(V_5(t),V_6(t))
 -\frac{\gamma_5}{2}\{L^{-1}V_5(t),V_5(t)\}_5
 -R_3^{[4]}.
\end{equation}
The first term is linear in \(t\), whereas the fifth-bracket term is
quadratic.  Hence \(p_3\) has degree at most two, and its quadratic
coefficient is \eqref{eq:kappa2}. 
\end{proof}

\begin{lemma}\label{lem:kappa2-nonzero}
For \(V\in\mathcal V_v^\sigma\),
the quadratic coefficient \(\kappa_2(V)\) of \(p_{3,V}\) is a rational
function of \(a_{30}(V)\), with \(a_{12}(V)\) as a parameter, and
has no pole when \(\sigma a_{30}(V)>0\).  Moreover,
\begin{equation}\label{eq:kappa-limit}
 \lim_{\substack{|a_{30}|\to\infty\\ \sigma a_{30}>0}}
 \kappa_2(V)
 =-\frac{18v_2^4}
 {v_1\delta(v_1^2-16v_2^2)}\neq0.
\end{equation}
Consequently,
\[
 \{V\in\mathcal V_v^\sigma:\kappa_2(V)\neq0\}
\]
is open and dense in \(\mathcal V_v^\sigma\).
\end{lemma}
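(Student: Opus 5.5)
We start from formula \eqref{eq:kappa2} and compute $\kappa_2(V)$ explicitly as a function of $a_{30}$ and $a_{12}$. For $N=3$, Lemma~\ref{lem:kernel-vector} gives
\[
 \Phi_3=\phi_{3,0}^*\tau^2e_{3,0}+\phi_{3,1}^*\tau e_{3,1}+e_{3,2},
 \qquad e_{3,0}=x_1^5,\ e_{3,1}=x_1^3x_2^2,\ e_{3,2}=x_1x_2^4,
\]
with $\tau=a_{12}v_2^2/(a_{30}\delta)$. Thus $\Phi_3$ is a polynomial in $\tau$ with constant coefficients. Since $L^{-1}$ is a fixed linear map, the bidifferential $\{L^{-1}\Phi_3,\Phi_3\}_5$ is a constant, with no $x,\xi$ dependence, because both entries are quintic. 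It is therefore a quadratic form in the three coefficients of $\Phi_3$, and hence a polynomial of degree at most four in $\tau$. So $\kappa_2$ is a polynomial in $1/a_{30}$ with $a_{12}$ as a parameter. In particular it is rational in $a_{30}$, and its only possible pole is at $a_{30}=0$, which is excluded on $\sigma a_{30}>0$. It depends only on $(a_{30},a_{12})$, as already noted in Proposition~\ref{prop:quadratic-a14}.

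For the limit, we note that $\tau\to0$ as $|a_{30}|\to\infty$ with $a_{12}$ fixed. Then $\Phi_3\to e_{3,2}=x_1x_2^4$, and
\[
 \lim\kappa_2=-\frac{\gamma_5}{2}\{L^{-1}(x_1x_2^4),x_1x_2^4\}_5 .
\]
Since $x_1x_2^4$ is $\xi$-independent, only the $\partial_\xi^5$ derivatives of $L^{-1}(x_1x_2^4)$ contribute. The term with $\partial_{\xi_1}\partial_{\xi_2}^4$ appears with multinomial weight $5$, and it pairs with $\partial_{x_1}\partial_{x_2}^4(x_1x_2^4)=24$. We compute $L^{-1}(x_1x_2^4)$ by Lemma~\ref{lem:L-inverse-monomial} with $(k,\ell)=(1,4)$. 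The frequencies that occur are $\omega=\pm v_1+(2b-4)v_2$, namely $v_1$, $v_1\pm2v_2$ and $v_1\pm4v_2$. We then extract the coefficient of $\xi_1\xi_2^4$ from $\operatorname{Im}W_{a,b}$. Combining the partial fractions over $v_1$, $v_1^2-4v_2^2=\delta$ and $v_1^2-16v_2^2$ should produce the stated value $-18v_2^4/(v_1\delta(v_1^2-16v_2^2))$. This value is nonzero because nonresonance gives $v_1\neq 4v_2$ and $v_1\neq 2v_2$.

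The main obstacle is this partial-fraction bookkeeping: we must get the sign of each $\operatorname{Im}$ coefficient right so that the nonzero constant appears. As a check, we would first verify the analogous degree-three computation of $\partial_{\xi_1}\partial_{\xi_2}^2$ in \eqref{eq:S3-formula}, which follows the same pattern.

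For density and openness, we argue as follows. Fix any $V$. The function $\kappa_2$ is a polynomial in $s=1/a_{30}$ (degree $\le4$, $a_{12}$ fixed) whose constant term is nonzero by \eqref{eq:kappa-limit}. It therefore has at most four zeros in $s$. Hence an arbitrarily small perturbation of $a_{30}$ alone, keeping the sign $\sigma$, makes $\kappa_2\neq0$, which gives density. Openness follows because $\kappa_2$ is continuous on $\sigma a_{30}>0$, and it depends only on finitely many coordinates, which are continuous in the cylinder topology.
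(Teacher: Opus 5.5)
Your proposal is correct and follows the paper's proof essentially step for step: rationality via Lemma~\ref{lem:kernel-vector}, the limit $\tau\to0$ forcing $\Phi_3\to x_1x_2^4$, the single $\partial_{\xi_1}\partial_{\xi_2}^4$ term with weight $5$ paired against $24$, and density by perturbing $a_{30}$ within its sign component. Your remark that $\kappa_2$ is a polynomial of degree at most four in $\tau$ is a slightly sharper form of the paper's rationality claim.

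The bookkeeping you defer is easy if you sum over the representatives $a=1$, $b=0,\dots,4$, since each conjugate pair contributes equally. The $\xi_1\xi_2^4$-coefficient of $\operatorname{Im}W_{1,b}$ is $(-1)^b$, so the relevant sum is $\frac1{16}\sum_b\binom4b\frac{(-1)^b}{v_1+(2b-4)v_2}$. This is a fourth finite difference of $1/x$ with step $2v_2$, and it equals $\frac{24v_2^4}{v_1\delta(v_1^2-16v_2^2)}$. Hence $\partial_{\xi_1}\partial_{\xi_2}^4L^{-1}(x_1x_2^4)=\frac{576v_2^4}{v_1\delta(v_1^2-16v_2^2)}$, which is exactly the value the paper uses.
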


\begin{proof}
By Lemma~\ref{lem:kernel-vector},
\[
 \Phi_3=\phi_{3,0}^*\tau^2x_1^5
 +\phi_{3,1}^*\tau x_1^3x_2^2+x_1x_2^4,
 \qquad \tau=\frac{a_{12}v_2^2}{a_{30}\delta},
\]
where $\phi_{3,0}^*$ and $\phi_{3,1}^*$ are nonzero constants.
Proposition~\ref{prop:rational-affine-dependence} and
\eqref{eq:kappa2} give the asserted rational dependence and absence of
poles.  As $|a_{30}|\to\infty$ within the fixed sign component, with the remaining
coefficients fixed,
$\tau\to0$ and therefore
\[
 \Phi_3\longrightarrow f:=x_1x_2^4.
\]
Apply Lemma~\ref{lem:L-inverse-monomial} with
\((k,\ell)=(1,4)\) and a direct computation gives
\begin{align*}
 \partial_{\xi_1}\partial_{\xi_2}^4L^{-1}f
 &=
 \frac{576v_2^4}
 {v_1\delta(v_1^2-16v_2^2)}.
\end{align*}
Hence
\[
 -\frac{1}{2}\gamma_5\{L^{-1}f,f\}_5
 =-\frac{1}{2}\gamma_5\binom51
 \bigl(\partial_{\xi_1}\partial_{\xi_2}^4L^{-1}f\bigr)
 \bigl(\partial_{x_1}\partial_{x_2}^4f\bigr)
 =-\frac{18v_2^4}
 {v_1\delta(v_1^2-16v_2^2)}.
\]
Substitution into \eqref{eq:kappa2} proves
\eqref{eq:kappa-limit}.  Thus, for every fixed value of $a_{12}$,
$\kappa_2$ is a nonzero rational function of $a_{30}$ with no pole on
the half-line $\{\sigma a_{30}>0\}$.  Its zero set on this half-line is
therefore finite.  
This proves density in $\mathcal V_v^\sigma$;
openness follows from continuity there.
\end{proof}

\bigskip



For \(V\in \mathcal V_v^\sigma\) and \(t\in\mathbb R\), let
\(\mathcal T_V^{(t)}=\mathcal T_{V^{<4}(t)}\) be its
\(\hbar^4\)-transfer operator, \((\widehat{V_7}(t),\widehat{V_8}(t))\) be the base point given by
Theorem~\ref{thm:block-affine-line} for \(H^{<4}(t)\), and
\((\Phi_4(V),\Psi_4(V))\) be the normalized direction.  Define
\begin{align*}
 R_{4,V}^{[4]}(t)
 &:=B^{[4]}_{10}(V)-B^{[4]}_{10}(V^{<4}(t)),
\\
 d_V(t)&:=\Pi\mathcal T_V^{(t)}(\Phi_4(V),\Psi_4(V)),
\\
 e_V(t)&:=R_{4,V}^{[4]}(t)
 -\Pi\mathcal T_V^{(t)}
 \bigl(\widehat{V_7}(t),\widehat{V_8}(t)\bigr).
\end{align*}
Both \(d_V(t)\) and \(e_V(t)\) belong to
\(\spanop\{\Omega_1,\Omega_2\}\).  Write them in the corresponding basis:
\[
 d_V(t)=d_{V,\Omega_1}(t)\Omega_1+d_{V,\Omega_2}(t)\Omega_2,
 \qquad
 e_V(t)=e_{V,\Omega_1}(t)\Omega_1+e_{V,\Omega_2}(t)\Omega_2.
\]
In particular,
\begin{equation}\label{eq:d-V-Omega2}
 d_{V,\Omega_2}(t)
 =d_4(V^{<4}(t))
 =\rho_4\mathcal T_V^{(t)}(\Phi_4(V),\Psi_4(V)).
\end{equation}
Define the compatibility determinant
\[
 \Xi_V(t)
 :=
 \det\bigl(e_V(t),d_V(t)\bigr)
 =
 e_{V,\Omega_1}(t)d_{V,\Omega_2}(t)
 -e_{V,\Omega_2}(t)d_{V,\Omega_1}(t).
\]

\begin{lemma}\label{lem:compatibility-polynomiality}
For every \(V\in\mathcal V_v^\sigma\), \(\Xi_V(t)\) is a polynomial in
\(t\).  Its coefficients depend only on \(j^8V\) and, for fixed \(v\),
depend rationally on this jet with no poles on
\(\{a_{30}\neq0\}\); in particular, they depend continuously on
\(V\in\mathcal V_v^\sigma\).
\end{lemma}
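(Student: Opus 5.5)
We will establish polynomiality in \(t\) by analyzing separately each ingredient entering \(d_V(t)\) and \(e_V(t)\), and then use that \(\Xi_V\) is a bilinear (determinant) expression in their components.

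\emph{The quantities \(d_V(t)\) and \(\mathcal T_V^{(t)}\).} The family \(V^{<4}(t)=V^{<3}+V_5(t)+V_6(t)\) has fixed four-jet, and its degree-five part \(V_5(t)=\widehat{V_5}(V)+t\Phi_3(V)\) is affine in \(t\). By Lemma~\ref{lem:finite-step-polynomial}, \(S_3,S_4\) are then independent of \(t\), while \(S_5\) is polynomial in \(j^5V^{<4}(t)\). Since \(S_5=-L^{-1}(V_5+K_5)\) with \(K_5\) depending only on \(j^4\), \(S_5\) is in fact affine in \(t\). The explicit formula \eqref{eq:L-operator} of Lemma~\ref{lem:T-explicit} shows that \(\mathcal T_V^{(t)}\) depends on \(S_5\) only through \(N_5\), which is linear in \(S_5\). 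Hence \(\mathcal T_V^{(t)}\) is affine in \(t\) as a linear operator. The direction \((\Phi_4,\Psi_4)\) depends only on \(a_{30},a_{12}\) (Theorem~\ref{thm:block-affine-line}), so it is independent of \(t\). Consequently \(d_V(t)\) is affine in \(t\), with coefficients rational in \(j^5V\) whose only denominators are powers of \(a_{30}\) (Proposition~\ref{prop:rational-affine-dependence}).

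\emph{The quantity \(e_V(t)\).} Next we treat \(e_V(t)\). The quantities \(B^{[2]}_8,B^{[0]}_8\) and \(B_{10}^{[4]}\) of \(V^{<4}(t)\), as well as \(\widehat K_8^{[2r]}\), are polynomial in the jet of \(V^{<4}(t)\) (Lemmas~\ref{lem:finite-step-polynomial} and \ref{lem:explicit-known-remainders}). That jet is polynomial (indeed affine) in \(t\), with coefficients determined by \(j^6V\), \(B_6(V)\), and hence by \(j^6V\). Thus \(B^{[4]}_{10}(V^{<4}(t))\) is a polynomial in \(t\).

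For the base point \((\widehat{V_7}(t),\widehat{V_8}(t))\) we note a subtlety: it must be computed from \(j^6V^{<4}(t)\) together with the QBNF data. We take this data to be \(B_8^{[2]}(V),B_8^{[0]}(V)\), i.e.\ the data of \(V\), which depends on \(j^8V\). By \eqref{eq:phi-psi}, \((\widehat{V_7},\widehat{V_8})(t)=\widetilde\cA_4^{-1}(Y_4(t),Z_4(t))\). Here \(\widetilde\cA_4\) depends only on \(a_{30},a_{12}\), so it is \(t\)-independent with determinant \(\widetilde c_4a_{30}^{3}\). The vector \((Y_4,Z_4)(t)=B_8^{[2r]}(V)-\widehat K_8^{[2r]}(V^{<4}(t))\) is polynomial in \(t\). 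Therefore the base point is polynomial in \(t\), and applying the affine operator \(\Pi\mathcal T_V^{(t)}\) keeps it polynomial. Together with \(B_{10}^{[4]}(V)\), which depends on \(j^8V\) (degree-nine and degree-ten terms carry no \(\hbar^4\), as in the proof of Proposition~\ref{prop:quadratic-a14}), this shows that \(e_V(t)\) is polynomial in \(t\), with coefficients rational in \(j^8V\) and denominators powers of \(a_{30}\).

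\emph{Conclusion and main obstacle.} Finally, \(\Xi_V(t)=e_{V,\Omega_1}d_{V,\Omega_2}-e_{V,\Omega_2}d_{V,\Omega_1}\) is a polynomial in \(t\) whose coefficients are rational in \(j^8V\) with no poles on \(a_{30}\neq0\). Continuity in the cylinder topology follows because these coefficients factor through the continuous projection \(j^8\). The main obstacle is the bookkeeping in the base-point term: one must check that every lower-jet remainder \(\widehat K_8\) and \(\widehat K_{10}\) entering through \(\mathcal B(V^{<4}(t))\) is genuinely polynomial in \(t\), and that no inverse other than \(\widetilde\cA_4^{-1}\) (and the fixed \(L^{-1}\)) appears. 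We would verify this directly from \eqref{eq:K-even-explicit}.
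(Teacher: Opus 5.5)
Your proposal is correct and follows essentially the same route as the paper. Both arguments use the polynomial dependence of \(\widehat K_8^{[2r]}(t)\) and \(B_{10}^{[4]}(V^{<4}(t))\) along the affine \(N=3\) family, the \(t\)-independent inverse \(\widetilde{\mathcal A}_4^{-1}\) applied to the fixed data \(B_8^{[2]}(V),B_8^{[0]}(V)\) for the base point, the \(t\)-independence of \((\Phi_4,\Psi_4)\), and Proposition~\ref{prop:rational-affine-dependence} for rational dependence on \(j^8V\) with only powers of \(a_{30}\) in the denominators. Your extra observation that \(\mathcal T_V^{(t)}\), and hence \(d_V(t)\), is affine in \(t\) (since only \(N_5\) depends on \(t\)) is a harmless sharpening of the paper's claim that these are polynomial in \(t\).
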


\begin{proof}
Since \((V_5(t),V_6(t))\) is affine in \(t\), Lemmas
\ref{lem:finite-step-polynomial} and~\ref{lem:explicit-known-remainders}
show that \(\widehat K_8^{[2]}(t)\) and \(\widehat K_8^{[0]}(t)\) are
polynomial in \(t\).  The \(N=4\) base-point formula applies the
\(t\)-independent inverse \(\widetilde\cA_4^{-1}\) to
\((B_8^{[2]}-\widehat K_8^{[2]}(t),
B_8^{[0]}-\widehat K_8^{[0]}(t))\); hence
\((\widehat V_7(t),\widehat V_8(t))\) is polynomial in \(t\).
Lemma~\ref{lem:T-explicit} and~\ref{lem:finite-step-polynomial} show that
\(\mathcal T_V^{(t)}\) and \(B_{10}^{[4]}(V^{<4}(t))\) are polynomial in
\(t\).  Since \((\Phi_4,\Psi_4)\) is independent of \(t\), the definitions
give polynomial \(\Xi_V(t)=\det(e_V(t),d_V(t))\).  Finally,
Proposition~\ref{prop:shifted-equation} with \(N=4\) shows that
\(B_{10}^{[4]}(V)\), and hence \(\Xi_V\), depends only on \(j^8V\).
Proposition~\ref{prop:rational-affine-dependence} gives the stated
rational dependence, and its denominator statement gives continuity
on \(\mathcal V_v^\sigma\).
\end{proof}

\begin{proposition}\label{prop:candidate-compatibility}
For every \(V\in \mathcal V_v^\sigma\), we have
\begin{equation}\label{eq:Xi-zero}
 \Xi_V(a_{14}(V))=0.
\end{equation}
\end{proposition}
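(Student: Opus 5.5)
The plan is to show that at \(t=a_{14}(V)\) the vector \(e_V(t)\) is a scalar multiple of \(d_V(t)\), the scalar being the edge coefficient \(a_{16}(V)\) of degree \(7\). Then the \(2\times2\) determinant \(\Xi_V(a_{14}(V))\) vanishes. This is the \(N=4\) analogue of Corollary~\ref{cor:scalar-edge-equation}. The difference is that we keep the full two-component action polynomial in \(\operatorname{span}\{\Omega_1,\Omega_2\}\) rather than only its \(\Omega_2\)-coefficient.

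First I would identify every \(t\)-dependent object at \(t=a_{14}:=a_{14}(V)\) with the corresponding object of \(V\) itself. Since \((V_5(a_{14}),V_6(a_{14}))=(V_5,V_6)\) by Theorem~\ref{thm:block-affine-line} with \(N=3\), we have \(V^{<4}(a_{14})=V^{<4}\), so \(j^6V^{<4}(a_{14})=j^6V\). Consequently:
\begin{itemize}
\item By Lemma~\ref{lem:T-explicit}, \(\mathcal T_V^{(a_{14})}=\mathcal T_{V^{<4}}=\mathcal T_V\), because the transfer operator depends only on the five-jet.
\item The remainders \(\widehat K_8^{[0]},\widehat K_8^{[2]}\) computed from \(j^6V^{<4}(a_{14})\) agree with those of \(V\). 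Both base points are built from the same \(B_8^{[2]}(V),B_8^{[0]}(V)\). Hence \((\widehat{V_7}(a_{14}),\widehat{V_8}(a_{14}))\) coincides with the base point \((\widehat{V_7},\widehat{V_8})\) attached to \(V\) by Theorem~\ref{thm:block-affine-line} with \(N=4\).
\item The direction \((\Phi_4,\Psi_4)\) depends only on \(a_{30},a_{12}\), so it is the same.
\end{itemize}
I expect this bookkeeping to be the main point needing care. One must check that the base point for \(H^{<4}(t)\) really is formed with the QBNF data \(B_8^{[0]},B_8^{[2]}\) of \(V\) and the lower remainders of \(V^{<4}(t)\), as in the proof of Lemma~\ref{lem:compatibility-polynomiality}. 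Only then does the specialization at \(t=a_{14}\) reproduce the base point of \(V\) exactly.

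Next I would apply Proposition~\ref{prop:shifted-equation} with \(N=4\) to \(V\):
\[
\Pi\mathcal T_V(V_7,V_8)=B_{10}^{[4]}(V)-B_{10}^{[4]}(V^{<4})=R_{4,V}^{[4]}(a_{14}).
\]
Theorem~\ref{thm:block-affine-line} gives \((V_7,V_8)=(\widehat{V_7},\widehat{V_8})+a_{16}(\Phi_4,\Psi_4)\). Substituting this and using the linearity in Lemma~\ref{lem:T-explicit}, we obtain
\[
R_{4,V}^{[4]}(a_{14})=\Pi\mathcal T_V(\widehat{V_7},\widehat{V_8})+a_{16}\,\Pi\mathcal T_V(\Phi_4,\Psi_4).
\]
By the identifications above, this says exactly that \(e_V(a_{14})=a_{16}\,d_V(a_{14})\) as elements of \(\operatorname{span}\{\Omega_1,\Omega_2\}\).

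Finally, comparing coefficients gives \(e_{V,\Omega_i}(a_{14})=a_{16}\,d_{V,\Omega_i}(a_{14})\) for \(i=1,2\). Therefore
\[
\Xi_V(a_{14})=a_{16}\bigl(d_{V,\Omega_1}d_{V,\Omega_2}-d_{V,\Omega_2}d_{V,\Omega_1}\bigr)(a_{14})=0,
\]
which proves \eqref{eq:Xi-zero}.
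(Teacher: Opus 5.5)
Your proposal is correct and is essentially the paper's proof. The paper likewise applies Proposition~\ref{prop:shifted-equation} with \(N=4\) to \(H_V\) and the truncation \(H^{<4}(a_{14}(V))\), substitutes \((V_7,V_8)=(\widehat{V_7}(a_{14}),\widehat{V_8}(a_{14}))+a_{16}(\Phi_4,\Psi_4)\), and uses linearity of the transfer operator to get \(e_V(a_{14})=a_{16}\,d_V(a_{14})\), so the determinant vanishes. Your explicit check that the \(t\)-dependent base point is built from the fixed data \(B_8^{[0]}(V),B_8^{[2]}(V)\), and hence reduces to the base point of \(V\) at \(t=a_{14}\), is a step the paper uses without stating it.
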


\begin{proof}
Write \(H=H_V\) and
\(
 t_V^*=a_{14}(V),
 \,
 s_*=a_{16}(V)
\).  Then
\[
 (V_5,V_6)=(V_5(t_V^*),V_6(t_V^*)),\quad  (V_7,V_8)
 =(\widehat{V_7}(t_V^*),\widehat{V_8}(t_V^*))
  +s_*(\Phi_4,\Psi_4).
\]

Apply Proposition~\ref{prop:shifted-equation} with \(N=4\) to \(H\) and
the truncation \(H^{<4}(t_V^*)\).  Substituting the preceding affine
representation of \((V_7,V_8)\) gives
\[
 \Pi\mathcal T^{(t_V^*)}
 \bigl(\widehat{V_7}(t_V^*)+s_*\Phi_4,
       \widehat{V_8}(t_V^*)+s_*\Psi_4\bigr)
 =R_4^{[4]}(t_V^*).
\]
By the linearity of \(\mathcal T^{(t_V^*)}\), this is equivalent to
\[
 R_4^{[4]}(t_V^*)
 -\Pi\mathcal T^{(t_V^*)}
   (\widehat{V_7}(t_V^*),\widehat{V_8}(t_V^*))
 =s_*\Pi\mathcal T^{(t_V^*)}(\Phi_4,\Psi_4).
\]
The two sides are precisely \(e(t_V^*)\) and \(s_*d(t_V^*)\), respectively.
Consequently,
\[
 \Xi(t_V^*)
 =\det\bigl(e(t_V^*),d(t_V^*)\bigr)
 =s_*\det\bigl(d(t_V^*),d(t_V^*)\bigr)
 =0.
\]
Since \(t_V^*=a_{14}(V)\), this is \eqref{eq:Xi-zero}.
\end{proof}

By Lemma~\ref{lem:compatibility-polynomiality} and
Proposition~\ref{prop:candidate-compatibility}, the polynomial factor
theorem gives a unique polynomial \(\chi_V\) such that
\[
 \Xi_V(t)=(t-a_{14}(V))\chi_V(t).
\]

\begin{definition}\label{def:normalized-compatibility}
For \(V\in \mathcal V_v^\sigma\) with
\(\kappa_2(V)\neq0\),
put \(t_V^*=a_{14}(V)\), and let \(t_V^\sharp\) be the other root of
\(p_{3,V}\), counted with multiplicity.
Define the conjugate-root obstruction by
\[
 \chi^\sharp(V):=\chi_V(t_V^\sharp).
\]
\end{definition}

\begin{lemma}\label{lem:a70-transversality}
Let \(V\in \mathcal V_v^\sigma\) satisfy
\(\kappa_2(V)\neq0\).  Then \(\chi^{\sharp}(V)\) depends rationally on
\(j^8V\) and has no poles on the stated domain.  Furthermore,
\begin{equation}\label{eq:chi-sharp-derivative}
 \left.\frac{\dd}{\dd\varepsilon}\right|_{\varepsilon=0}
 \chi^\sharp(V+\varepsilon x_1^7)
 =\frac{42\phi_{3,0}^*\tau^2}{v_1}
   d_{V,\Omega_2}(t_V^\sharp).
\end{equation}
where $\tau=a_{12}(V)v_2^2/(a_{30}(V)\delta)$ and
$\phi_{3,0}^*$ is the nonzero constant in \eqref{eq:phi}.
\end{lemma}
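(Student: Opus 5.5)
The plan is to exploit the fact that the perturbation \(V\mapsto V_\varepsilon:=V+\varepsilon x_1^7\) changes only the coefficient \(a_{70}\), and hence only \(V_7\). I will track which ingredients of \(\Xi_V(t)\) move with \(\varepsilon\); only one of them, \(e_V\), will turn out to vary.

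\emph{Rationality.} By Vieta, \(t_V^\sharp=-\kappa_1(V)/\kappa_2(V)-a_{14}(V)\), where \(\kappa_1\) is the linear coefficient of \(p_{3,V}\). By Proposition~\ref{prop:quadratic-a14} this is rational in \(j^6V\), with only \(\kappa_2\) and powers of \(a_{30}\) as denominators. The coefficients of \(\chi_V\) are obtained from those of \(\Xi_V\) by synthetic division by \(t-a_{14}(V)\), so no new denominators appear. Lemma~\ref{lem:compatibility-polynomiality} then gives rational dependence on \(j^8V\), with no pole on the domain where \(\kappa_2\neq0\) and \(a_{30}\neq0\).

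\emph{The variation.} Several quantities are independent of \(\varepsilon\):
\begin{itemize}
\item \(p_{3,V}\), hence \(t^*_V\) and \(t^\sharp_V\), since they depend only on \(j^6V\);
\item \((\Phi_4,\Psi_4)\), which depends only on \(a_{30},a_{12}\);
\item \(\mathcal T_V^{(t)}\), which depends only on \(j^5V^{<4}(t)\);
\item consequently \(d_V(t)\).
\end{itemize}
The base point \((\widehat{V_7}(t),\widehat{V_8}(t))\) does move, because \(B_8^{[2]},B_8^{[0]}\) of \(V_\varepsilon\) pick up \(\varepsilon(\cQ_4x_1^7,\cC_4x_1^7)\) by \eqref{eq:block-matrix}. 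Since \(x_1^7=e_{4,0}\in\widetilde{\mathsf O}_7\), applying \(\widetilde\cA_4^{-1}\) shows that the base point shifts by exactly \(\varepsilon(x_1^7,0)\), for every \(t\).

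Likewise, Proposition~\ref{prop:shifted-equation} at \(N=4\) shows that \(B_{10}^{[4]}(V_\varepsilon)\) shifts by \(\varepsilon\,\Pi\mathcal T_V^{(t^*)}(x_1^7,0)\). Hence, using linearity of \(\mathcal T\) (Lemma~\ref{lem:T-explicit}),
\[
 \partial_\varepsilon e_V(t)=\Pi\bigl(\mathcal T_V^{(t^*)}-\mathcal T_V^{(t)}\bigr)(x_1^7,0).
\]

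\emph{Computing the difference.} In \eqref{eq:L-operator}, \(S_3\) and \(S_4\) do not depend on \(t\). The generator \(S_5(t)=-L^{-1}(V_5(t)+K_5)\) has \(K_5\) depending only on \(j^4V\), so \(S_5(t^*)-S_5(t)=-(t^*-t)L^{-1}\Phi_3\). Only the \(N_5\) term therefore contributes:
\[
 \partial_\varepsilon e_V(t)=-(t^*-t)\,\gamma_5\,\Pi\{L^{-1}\Phi_3,x_1^7\}_5 .
\]
Because \(x_1^7\) depends only on \(x_1\), only \(\partial_{\xi_1}^5\) of \(L^{-1}\Phi_3\) enters. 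By Lemma~\ref{lem:kernel-vector}, this comes only from the \(\phi_{3,0}^*\tau^2x_1^5\) component. By the computation of \(A_0\) in Proposition~\ref{prop:d-nonzero}, it equals \(\phi_{3,0}^*\tau^2\cdot 64/v_1\). Using \(\partial_{x_1}^5x_1^7=2520x_1^2\) and \(\Pi x_1^2=\tfrac12\Omega_1\), I get
\[
 \partial_\varepsilon e_V(t)=-(t-t^*)\,\frac{42\phi_{3,0}^*\tau^2}{v_1}\,\Omega_1 .
\]
This vanishes at \(t=t^*\), consistent with Proposition~\ref{prop:candidate-compatibility}.

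\emph{Conclusion.} Since \(d_V\) is \(\varepsilon\)-independent,
\[
 \partial_\varepsilon\Xi_V(t)=\det\bigl(\partial_\varepsilon e_V(t),d_V(t)\bigr)
 =-(t-t^*)\frac{42\phi_{3,0}^*\tau^2}{v_1}\,d_{V,\Omega_2}(t).
\]
The root \(t^*=a_{14}\) is also \(\varepsilon\)-independent. Dividing by \(t-t^*\) therefore gives \(\partial_\varepsilon\chi_V(t)\) up to the sign convention for \(\det\). Evaluating at the fixed point \(t^\sharp\) yields \eqref{eq:chi-sharp-derivative}.

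The main obstacle is bookkeeping: I must verify that the base point shifts by exactly \((x_1^7,0)\), and that no other term of \(\mathcal T\) or \(K_8\) sees \(a_{70}\). The sign in the determinant convention must also be checked against \(\Xi=e_{\Omega_1}d_{\Omega_2}-e_{\Omega_2}d_{\Omega_1}\).
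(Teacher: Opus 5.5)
Your argument follows the paper's proof step for step. You freeze \(j^6V\), so that \(p_{3,V}\), its roots, \((\Phi_4,\Psi_4)\), \(\mathcal T_V^{(t)}\) and hence \(d_V(t)\) are unchanged. You show that the base point \((\widehat{V_7}(t),\widehat{V_8}(t))\) shifts by exactly \(\varepsilon(x_1^7,0)\), since \(x_1^7=e_{4,0}\in\widetilde{\mathsf O}_7\), while \(B_{10}^{[4]}\) shifts by \(\varepsilon\,\Pi\mathcal T_V^{(t^*)}(x_1^7,0)\). You isolate the \(N_5\) term via \(S_5(t^*)-S_5(t)=(t-t^*)L^{-1}\Phi_3\), and you divide \(\partial_\varepsilon\Xi_V\) by \(t-t^*\). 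The rationality part and the bookkeeping you flag as the main obstacle are all correct.

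The one actual error is a sign slip in your last computational line, and it is not a matter of convention. Your line \(\partial_\varepsilon e_V(t)=-(t^*-t)\,\gamma_5\,\Pi\{L^{-1}\Phi_3,x_1^7\}_5\) is right. Your own factors give \(\gamma_5\Pi\{L^{-1}\Phi_3,x_1^7\}_5=\frac{42\phi_{3,0}^*\tau^2}{v_1}\Omega_1\). Hence \(\partial_\varepsilon e_V(t)=+(t-t^*)\frac{42\phi_{3,0}^*\tau^2}{v_1}\Omega_1\), not \(-(t-t^*)\cdots\).

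As written, your proof therefore produces the negative of \eqref{eq:chi-sharp-derivative}. The hedge ``up to the sign convention for \(\det\)'' cannot absorb this. \(\Xi_V\) is defined as \(e_{V,\Omega_1}d_{V,\Omega_2}-e_{V,\Omega_2}d_{V,\Omega_1}\), so \(\det(c\,\Omega_1,d_V(t))=c\,d_{V,\Omega_2}(t)\), with no freedom.

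With the slip corrected, \(\partial_\varepsilon\Xi_V(t)=(t-t^*)\frac{42\phi_{3,0}^*\tau^2}{v_1}d_{V,\Omega_2}(t)\). Dividing by \(t-t^*\) and evaluating at the fixed root \(t_V^\sharp\) gives exactly the stated formula. Only nonvanishing of this derivative is used later, but the lemma asserts the identity with its sign.
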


\begin{proof}
Proposition~\ref{prop:rational-affine-dependence} and
Lemma~\ref{lem:finite-step-polynomial} give the rational dependence of
the coefficients of \(p_{3,V}\), while
Lemma~\ref{lem:compatibility-polynomiality} and the monic division
\(\Xi_V(t)=(t-a_{14}(V))\chi_V(t)\) give the same for \(\chi_V\).
Since
\[
 t_V^\sharp=-\frac{\kappa_1(V)}{\kappa_2(V)}-a_{14}(V),
\]
the asserted rationality of \(\chi^\sharp(V)=\chi_V(t_V^\sharp)\)
follows on the stated domain; its only additional denominator is
\(\kappa_2(V)\), which is nonzero there.

Fix \(V\in\mathcal V_v^\sigma\) satisfying the stated
conditions.
Put \(H=H_V\), and set
\[
 V_{\varepsilon}=V+\varepsilon x_1^7,
 \qquad
 H_{\varepsilon}=H_{V_{\varepsilon}}.
\]
The perturbation does not change
$V_3,\ldots,V_6$ or $B^{[4]}_8$.
Thus \(p_{3,V_\varepsilon}=p_{3,V}\), so its two roots remain
\(t_V^*=a_{14}(V)\) and \(t_V^\sharp\), and
\(d_{V_\varepsilon}(t)=d_V(t)\).  We write these fixed quantities simply
as \(p_3(t)\) and \(d(t)\) for the remainder of the proof, and put
\(\Xi_\varepsilon=\Xi_{V_\varepsilon}\) and
\(\chi_\varepsilon=\chi_{V_\varepsilon}\).

Differentiating the affine line equation \eqref{eq:affine-line} gives
\[
 \left.\frac{\dd}{\dd\varepsilon}\right|_{0}
 \widehat{V_{7,\varepsilon}}(t)=x_1^7,
 \qquad
 \left.\frac{\dd}{\dd\varepsilon}\right|_{0}
 \widehat{V_{8,\varepsilon}}(t)=0.
\]
Since the perturbation does not change the six-jet, the Hamiltonian \(H^{<4}(t)\) is independent of \(\varepsilon\).
Therefore
\[
 R_{4,\varepsilon}^{[4]}(t)-R_4^{[4]}(t)
 =
 B_{10}^{[4]}(V_{\varepsilon})-B_{10}^{[4]}(V).
\]
 Applying \eqref{eq:shifted-h4} to
\(H_{\varepsilon}\) and \(H\), with the same base
\(H^{<4}(t_V^*)\), and subtracting gives
\[
 B_{10}^{[4]}(V_{\varepsilon})-B_{10}^{[4]}(V)
 =
 \varepsilon\Pi\mathcal T^{(t_V^*)}(x_1^7,0).
\]
Consequently, for every \(t\),
\[
 R_{4,\varepsilon}^{[4]}(t)
 =
 R_4^{[4]}(t)
 +\varepsilon\Pi\mathcal T^{(t_V^*)}(x_1^7,0).
\]
By definition of \(e_\varepsilon(t)\),
\begin{equation}\label{eq:e-variation}
 \left.\frac{\dd}{\dd\varepsilon}\right|_{0}e_\varepsilon(t)
 =\Pi\bigl(\mathcal T^{(t_V^*)}-\mathcal T^{(t)}\bigr)(x_1^7,0).
\end{equation}
Only the $N_5$ term in $\mathcal T^{(t)}$ depends on $t$.  Indeed, the
degree-five homological equation gives
\[
 S_5(t)=-L^{-1}\bigl(V_5(t)+K_5\bigr),
\]
where $K_5$ is independent of $t$.  Hence
\[
 \bigl(\mathcal T^{(t_V^*)}-\mathcal T^{(t)}\bigr)(x_1^7,0)
 =\gamma_5(t-t_V^*)\{L^{-1}\Phi_3,x_1^7\}_5.
\]
Since \(x_1^7\) depends only on \(x_1\), one has
\[
\{L^{-1}\Phi_3,x_1^7\}_5=(\partial_{\xi_1}^5L^{-1}\Phi_3)(\partial_{x_1}^5x_1^7).
\]As \(L^{-1}\) preserves the degree in each oscillator pair, the \(x_1^3x_2^2\)- and \(x_1x_2^4\)-components of \(L^{-1}\Phi_3\) are annihilated by \(\partial_{\xi_1}^5\); hence only the \(x_1^5\)-component contributes.
Using
\[
 \partial_{\xi_1}^5L^{-1}(x_1^5)=\frac{64}{v_1},
 \qquad
 \partial_{x_1}^5x_1^7=2520x_1^2,
 \qquad
 \Pi(x_1^2)=\frac12\Omega_1,
\]
we obtain
\[
 \gamma_5\Pi\{L^{-1}\Phi_3,x_1^7\}_5
 =\frac{42\phi_{3,0}^*\tau^2}{v_1}\Omega_1.
\]
Substitution in \eqref{eq:e-variation} therefore gives
\[
 \left.\frac{\dd}{\dd\varepsilon}\right|_{0}e_\varepsilon(t)
 =\frac{42\phi_{3,0}^*\tau^2}{v_1}(t-t_V^*)\Omega_1.
\]
Since $d(t)$ is fixed, the determinant of the coefficient vectors of
$\Omega_1$ and $d(t)$ equals $d_{\Omega_2}(t)$.  It follows that
\[
 \left.\frac{\dd}{\dd\varepsilon}\right|_{0}\Xi_\varepsilon(t)
 =\frac{42\phi_{3,0}^*\tau^2}{v_1}(t-t_V^*)d_{\Omega_2}(t).
\]
Since \(t_V^*\) and $t^\sharp$ is fixed and
\(\Xi_\varepsilon(t)=(t-t_V^*)\chi_\varepsilon(t)\), division
by \(t-t_V^*\) gives
 \eqref{eq:chi-sharp-derivative}.
\end{proof}

\begin{lemma}\label{lem:conjugate-root-d}
The set of potentials \(V\in\mathcal V_v^\sigma\) satisfying
\[
 \kappa_2(V)\neq0,\qquad
 d_{V,\Omega_2}(t_V^\sharp)\neq0
\]
is open and dense in \(\mathcal V_v^\sigma\).
\end{lemma}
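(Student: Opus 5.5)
The plan is to treat openness and density separately, working throughout on the open set
\[
 U:=\{V\in\mathcal V_v^\sigma:\ a_{12}(V)\neq0,\ \kappa_2(V)\neq0\},
\]
which is open and dense in \(\mathcal V_v^\sigma\) by Lemma~\ref{lem:kappa2-nonzero} and the openness of \(a_{12}\neq0\).

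\textbf{Openness.} On \(U\) the root is
\[
 t_V^\sharp=-\kappa_1(V)/\kappa_2(V)-a_{14}(V).
\]
By Proposition~\ref{prop:quadratic-a14} and Proposition~\ref{prop:rational-affine-dependence}, the coefficients of \(p_{3,V}\) are rational in \(j^6V\) with denominators powers of \(a_{30}\). Hence \(t_V^\sharp\) is a rational function of \(j^6V\) with no pole on \(U\).

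By \eqref{eq:d-V-Omega2} and the proof of Lemma~\ref{lem:compatibility-polynomiality}, \(d_{V,\Omega_2}(t)=d_4(V^{<4}(t))\) is a polynomial in \(t\). Its coefficients are rational in \(j^6V\) without poles on \(\{a_{30}\ne0\}\), because \(V^{<4}(t)\) has five-jet affine in \(t\) and \(d_4\) is rational in the five-jet (Proposition~\ref{prop:d-nonzero}). Therefore
\[
 G(V):=d_{V,\Omega_2}(t_V^\sharp)
\]
is a rational function of \(j^6V\), continuous on \(U\), and \(\{V\in U:G(V)\ne0\}\) is open in the cylinder topology.

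\textbf{Density.} Since \(G\) is rational in finitely many coordinates, it suffices to show that \(G\) does not vanish identically on any nonempty open subset of \(U\). A rational function vanishing on an open set vanishes on the whole connected component in jet space. So it is enough to show that for each \(V\in U\) some one-parameter perturbation \(\varepsilon\mapsto G(V_\varepsilon)\) is nonconstant, or nonzero, near \(\varepsilon=0\).

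I would use \(V_\varepsilon=V+\varepsilon x_1^5\), mirroring Proposition~\ref{prop:d-nonzero}. This perturbation does not alter \(j^4V\), \(a_{12}\), \(a_{30}\), hence leaves \(\Phi_3,\Psi_3,\Phi_4,\Psi_4\) and \(\kappa_2\) unchanged. It shifts the family \(V_5(t)\) by \(\varepsilon x_1^5\) while \(V_6(t)\) and \(a_{14}\) are unchanged. Differentiating,
\[
 \frac{\dd}{\dd\varepsilon}G(V_\varepsilon)\Big|_0
 = m_4+\partial_t d_{V,\Omega_2}(t_V^\sharp)\,\dot t^\sharp,
 \qquad m_4=-\frac4{v_1}c_1\phi^*_{4,1}\tau^2\neq0,
\]
by \eqref{eq:d-derivative}. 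Here \(\dot t^\sharp\) is computed from the variation of \(p_3\). The quadratic coefficient \(\kappa_2\) is fixed, and the root \(a_{14}\) is fixed. So only \(\kappa_1\) moves, and by \eqref{eq:p3-full} that variation comes from the \(t\)-linear cross term
\[
 -\gamma_5\{L^{-1}x_1^5,\Phi_3\}_5
\]
together with the \(a_{50}\)-dependence of \(\mathcal T^\circ\) (the latter is in fact absent, since \(\mathcal T^\circ\) involves \(S_5\) of \(V^{<3}\) only). This gives an explicit \(\dot t^\sharp\), proportional to \(\phi^*_{3,0}\tau^2/\kappa_2\) times a constant.

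\textbf{Main obstacle.} The difficulty is excluding cancellation between \(m_4\) and \(\partial_td_{V,\Omega_2}(t^\sharp)\,\dot t^\sharp\). I would first try to avoid it with a second independent direction that moves \(d_4\) but not \(p_3\), namely a perturbation of \(V_4\) or \(V_6\) that leaves \(B^{[4]}_8\)-data and hence \(t^\sharp\) fixed. If none is available, I would fall back on the following: if \(G\equiv0\) on an open set, then both \(G\) and its \(a_{50}\)-derivative vanish there. Combining this with the dependence on a second coefficient, for instance \(a_{32}\), which enters \(d_4\) through \(S_5\) but enters \(p_3\) differently, yields a linear system whose determinant I expect to be a nonzero rational function. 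To check that it is nonzero, I would use the asymptotic regime \(|a_{30}|\to\infty\) with \(\sigma a_{30}>0\), as in Lemma~\ref{lem:kappa2-nonzero}.
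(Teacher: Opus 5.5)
You do not establish density. Your openness argument is fine. The set in the lemma also contains points with $a_{12}=0$, so you need continuity of $G$ on all of $\{\kappa_2\neq0\}$, but your rationality argument gives that verbatim. Your formula for the derivative of $G$ along $x_1^5$ is also correct.

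The gap is that you never exclude $m_4+\partial_t d_{V,\Omega_2}(t_V^\sharp)\,\dot t^\sharp=0$, and the remedies you list do not close it.
\begin{itemize}
\item Perturbing $V_6$ alone, $V_6\mapsto V_6+\varepsilon E$ with $E\in\mathsf E_6$, changes nothing. It only shifts $\widehat{V_6}$ by $\varepsilon E$, so the five-jet of $V^{<4}(t)$ is unchanged, and with it the function $t\mapsto d_{V,\Omega_2}(t)$. It adds to $B_8^{[4]}(V^{<4}(t))$ only the $t$-independent term $\varepsilon\Pi\mathcal T^\circ(0,E)$, which cancels in $p_{3,V}$. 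Hence neither $t_V^\sharp$ nor $G(V)$ moves.
\item The $a_{32}$-determinant is only conjectured.
\item The limit $|a_{30}|\to\infty$ is not a free check. Both $m_4$ and $\dot t^\sharp$ carry the factor $\tau^2\to0$, so you would still have to show that an uncomputed frequency-dependent constant is nonzero.
\end{itemize}

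The missing idea is to perturb along the kernel direction $\eta=(\Phi_3,\Psi_3)$ of $\mathcal A_3$, as in Lemma~\ref{lem:p3-simple-root-perturbation}.
\begin{itemize}
\item Replacing $(V_5,V_6)$ by $(V_5,V_6)+\varepsilon\eta$ preserves $V_3$, $V_4$, $B_6^{[2]}$ and $B_6^{[0]}$. It therefore preserves the whole family $V^{<4}(t)$, the function $t\mapsto d_{V,\Omega_2}(t)$, and $F(t)=B_8^{[4]}(V^{<4}(t))$.
\item Meanwhile $a_{14}\mapsto a_{14}+\varepsilon$. Since $p_{3,V}(t)=F(t)-F(a_{14}(V))$, the conjugate root becomes $t_V^\sharp-\varepsilon$.
\item So the derivative of $G$ along $\eta$ is $-\partial_t d_{V,\Omega_2}(t_V^\sharp)$. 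If this vanishes, your $x_1^5$-derivative reduces to $m_4\neq0$.
\item Hence $G$ is nowhere locally constant on the dense set $\{a_{12}\kappa_2\neq0\}$, which gives density.
\end{itemize}

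The paper packages the same idea as the continuous involution $V\mapsto V^\sharp$ of $\{\kappa_2\neq0\}$. It moves $(V_5,V_6)$ to the conjugate-root point of the affine line, so that $d_{V,\Omega_2}(t_V^\sharp)=d_4(V^\sharp)$, and density reduces to Proposition~\ref{prop:d-nonzero} applied at $V^\sharp$. Concretely, $(V^\sharp+\varepsilon x_1^5)^\sharp=V+\varepsilon(x_1^5+c\,\eta)$ for a suitable constant $c$. This perturbation freezes $t^\sharp$ and moves $G$ with slope exactly $m_4$. It is precisely the direction \emph{moving $d_4$ but not $t^\sharp$} that you were looking for, but it is built from $x_1^5$ and $\eta$ rather than from $V_4$ or $V_6$.
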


\begin{proof}
Let \(\mathcal U:=\{V\in\mathcal V_v^\sigma:\kappa_2(V)\neq0\}\), which is
open and dense by Lemma~\ref{lem:kappa2-nonzero}.
Fix \(V\in\mathcal U\), and put
\(t_V^*=a_{14}(V)\).  Define \(V^\sharp\) by leaving every homogeneous
term  other than \(V_5,V_6\) unchanged in $V$ and replacing
\(
 (V_5,V_6)
 =(\widehat{V_5}(V)+t_V^*\Phi_3(V),
   \widehat{V_6}(V)+t_V^*\Psi_3(V))
\)
by
\[
 (V_5^\sharp,V_6^\sharp)
 =(\widehat{V_5}(V)+t_V^\sharp\Phi_3(V),
   \widehat{V_6}(V)+t_V^\sharp\Psi_3(V)).
\]
It follows by definition of \(V^\sharp\) that
\(
 (V^\sharp)^{<4}(t)=V^{<4}(t).
\)
Since \(t_V^\sharp\) is a root of \(p_{3,V}\), \eqref{eq:Pquadratic}
gives
\(
 B_8^{[4]}(V^{<4}(t_V^\sharp))=B_8^{[4]}(V).
\)
The potentials \(V^\sharp\) and \(V^{<4}(t_V^\sharp)\) have the same
six-jet, and \(B_8^{[4]}\) depends only on this jet.
Therefore
\(
 B_8^{[4]}(V^\sharp)
 =B_8^{[4]}(V).
\)
Thus $p_{3,V^\sharp}(t)=p_{3,V}(t)$, so $t_V^*$ and \(t_V^\sharp\) are
the two roots of \(p_{3,V^\sharp}\).  It follows that
\((V^\sharp)^\sharp=V\).  Write
\[
 p_{3,V}(t)=\kappa_2(V)t^2+\kappa_1(V)t+\kappa_0(V).
\]
Since \(t_V^*\) is one root and \(\kappa_2(V)\neq0\), the other root is given by formula
\[
 t_V^\sharp=-\frac{\kappa_1(V)}{\kappa_2(V)}-t_V^*.
\]
The coefficients of \(p_{3,V}\), the root \(t_V^*=a_{14}(V)\), and the
affine-family data depend continuously on \(V\in\mathcal U\).  Hence the
formula above and the definition of \(V^\sharp\) show that
\(V\mapsto V^\sharp\) is a continuous involution of \(\mathcal U\).

Equation~\eqref{eq:d-V-Omega2} and the definition of \(V^\sharp\) now
give
\[
 d_{V,\Omega_2}(t_V^\sharp)
 =d_4(V^{<4}(t_V^\sharp))
 =d_4(V^\sharp).
\]
Consequently, \(V\mapsto d_{V,\Omega_2}(t_V^\sharp)\) is continuous
on \(\mathcal U\), and its nonzero set is open there.

To prove density, it suffices to consider \(V\in\mathcal U\) with
\(a_{12}(V)\neq0\).  Set
\[
 W_\varepsilon:=V^\sharp+\varepsilon x_1^5,
 \qquad
 V_\varepsilon:=W_\varepsilon^\sharp.
\]
The perturbation does not change \(\kappa_2\), so
\(W_\varepsilon\in\mathcal U\), and the continuity and involutivity of
the sharp map imply
\[
 V_\varepsilon\longrightarrow(V^\sharp)^\sharp=V
 \qquad\text{as }\varepsilon\longrightarrow0.
\]
Moreover, \((V_\varepsilon)^\sharp=W_\varepsilon\), and hence the
identity proved above gives
\[
 d_{V_\varepsilon,\Omega_2}(t_{V_\varepsilon}^\sharp)
 =d_4(W_\varepsilon)
 =d_4(V^\sharp)+\varepsilon m_4,
 \qquad
 m_4:=\frac{\partial d_4(V^\sharp)}{\partial a_{50}}\neq0.
\]
Thus every
neighborhood of \(V\) contains some \(V_\varepsilon\) for which
\(d_{V_\varepsilon,\Omega_2}(t_{V_\varepsilon}^\sharp)\neq0\).  The
condition is therefore dense.

\end{proof}

\begin{lemma}\label{lem:normalized-compatibility-generic}
The set
\[
 \{V\in\mathcal V_v^\sigma:
   \kappa_2(V)\neq0,\ \chi^\sharp(V)\neq0\}
\]
is open and dense in \(\mathcal V_v^\sigma\).
\end{lemma}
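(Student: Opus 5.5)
The argument combines Lemma~\ref{lem:kappa2-nonzero}, Lemma~\ref{lem:conjugate-root-d} and the transversality formula of Lemma~\ref{lem:a70-transversality}. Let
\[
 \mathcal U:=\{V\in\mathcal V_v^\sigma:\kappa_2(V)\neq0\},\qquad
 \mathcal U':=\{V\in\mathcal U:\ d_{V,\Omega_2}(t_V^\sharp)\neq0\}.
\]
Both sets are open and dense in \(\mathcal V_v^\sigma\) by Lemmas~\ref{lem:kappa2-nonzero} and~\ref{lem:conjugate-root-d}. The target set is \(\{V\in\mathcal U:\chi^\sharp(V)\neq0\}\). We prove openness and density separately.

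\emph{Openness.} By Lemma~\ref{lem:a70-transversality}, \(\chi^\sharp\) is a rational function of \(j^8V\) with no poles on \(\mathcal U\). It is therefore continuous on the open set \(\mathcal U\), since the cylinder topology makes the finite-jet projections continuous. Hence its nonvanishing set is open in \(\mathcal U\), and so open in \(\mathcal V_v^\sigma\).

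\emph{Density.} Since \(\mathcal U'\) is dense, and the set \(\{a_{12}\neq0\}\) is open and dense, their intersection is dense. It therefore suffices to approximate an arbitrary \(V\in\mathcal U'\) with \(a_{12}(V)\neq0\). We use the one-parameter family \(V_\varepsilon=V+\varepsilon x_1^7\), which converges to \(V\) in the cylinder topology.

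This perturbation leaves the six-jet unchanged. Hence \(\kappa_2\), \(p_{3,V}\), the roots \(t_V^*,t_V^\sharp\) and \(d_V(t)\) are all independent of \(\varepsilon\), as noted in the proof of Lemma~\ref{lem:a70-transversality}. In particular \(V_\varepsilon\in\mathcal U'\) for every \(\varepsilon\).

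The function \(\varepsilon\mapsto\chi^\sharp(V_\varepsilon)\) is polynomial, indeed affine, in \(\varepsilon\). This is because \(\Xi_{V_\varepsilon}(t)\) depends affinely on \(\varepsilon\): only \(e_\varepsilon\) varies, and it varies linearly, through \(R^{[4]}_{4,\varepsilon}\) and \(\widehat{V_{7,\varepsilon}}\). By \eqref{eq:chi-sharp-derivative}, its derivative at \(\varepsilon=0\) equals
\[
 \frac{42\phi_{3,0}^*\tau^2}{v_1}\,d_{V,\Omega_2}(t_V^\sharp).
\]
This quantity is nonzero: \(\phi_{3,0}^*\neq0\), \(\tau\neq0\) because \(a_{12}\neq0\), and \(d_{V,\Omega_2}(t_V^\sharp)\neq0\) because \(V\in\mathcal U'\). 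An affine function of \(\varepsilon\) with nonzero slope vanishes for at most one value of \(\varepsilon\). Therefore \(\chi^\sharp(V_\varepsilon)\neq0\) for all sufficiently small \(\varepsilon\neq0\), and every neighbourhood of \(V\) meets the target set.

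The main point to check is the first-order argument itself. We verify that the derivative identity really yields nonvanishing for small \(\varepsilon\neq0\), rather than only a statement at \(\varepsilon=0\). This is secured by the exact affine dependence in \(\varepsilon\) described above. In any case, analyticity in \(\varepsilon\) together with a nonzero derivative already suffices, since the zeros near \(0\) are then isolated.
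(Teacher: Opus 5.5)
Your proof is correct and follows the paper's argument: openness comes from the continuity of $\chi^\sharp$ on $\{\kappa_2\neq0\}$, and density comes from first landing in $\{a_{12}\neq0\}\cap\{\kappa_2\neq0,\ d_{V,\Omega_2}(t_V^\sharp)\neq0\}$ via Lemma~\ref{lem:conjugate-root-d} and then perturbing along $x_1^7$ using \eqref{eq:chi-sharp-derivative}. You also observe that $\chi^\sharp(V+\varepsilon x_1^7)$ is exactly affine in $\varepsilon$, because only $e_\varepsilon$ varies while $t_V^*$, $t_V^\sharp$ and $d_V$ stay fixed; this correctly spells out the step the paper leaves implicit.
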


\begin{proof}
Openness follows from Lemma~\ref{lem:kappa2-nonzero} and the continuity of \(\chi^\sharp\) on \(\{\kappa_2\neq0\}\), while density follows from Lemmas~\ref{lem:conjugate-root-d} and~\ref{lem:a70-transversality}, together with the open density of \(\{a_{12}\neq0\}\), by first arranging \(a_{12}(V)d_{V,\Omega_2}(t_V^\sharp)\neq0\) and then perturbing in the \(x_1^7\)-direction to make \(\chi^\sharp(V)\neq0\).
\end{proof}

\begin{proof}[Proof of Theorem~\ref{thm:recover-a14}]
Define
\[
 \mathcal G_{v,\sigma}^{\alpha}
 :=\{V\in\mathcal V_{v,\sigma}^{\alpha}:\kappa_2(V)\neq0,\ \chi^\sharp(V)\neq0\}
 \cap\bigcap_{N\ge4}\{d_N(V)\neq0\}.
\]
This set is a dense \(G_\delta\) subset of
\(\mathcal V_{v,\sigma}^{\alpha}\).  Indeed,
the perturbations in
Lemmas~\ref{lem:kappa2-nonzero}--\ref{lem:normalized-compatibility-generic}
preserve $a_{12}=\alpha$ and can be chosen to preserve the sign of
$a_{30}$; 
and Proposition~\ref{prop:d-nonzero} shows that each
\(\{d_N\neq0\}\), \(N\ge4\), is open and dense there. 

Let \(V\in\mathcal G_{v,\sigma}^{\alpha}\), and suppose that
\(W\in\mathcal V_{v,\sigma}^{\alpha}\) has the same
\((B^{[0]},B^{[2]},B^{[4]})\) as \(V\).  The common degree-four data,
\(a_{12}=\alpha\), and the prescribed sign of \(a_{30}\) determine the
same \(V_3,V_4\).  Theorem~\ref{thm:block-affine-line}, applied to the
common \(B_6^{[2]},B_6^{[0]}\), then gives the same affine family
\[
 (V_5(t),V_6(t))
 = (\widehat V_5,\widehat V_6)+t(\Phi_3,\Psi_3).
\]
The common \(B_8^{[4]}\) gives
\(p_{3,V}=p_{3,W}\), while the common lower-layer data and
\(B_{10}^{[4]}\) give \(\Xi_V=\Xi_W\).

Because \(\kappa_2(V)\neq0\), the common polynomial \(p_{3,V}=p_{3,W}\)
has roots \(t_V^*=a_{14}(V)\) and \(t_V^\sharp\), counted with
multiplicity, and \(a_{14}(W)\) is one of them.  If the roots coincide,
then \(a_{14}(W)=a_{14}(V)\).  If they are distinct,
Proposition~\ref{prop:candidate-compatibility} gives
\(\Xi_V(a_{14}(W))=\Xi_W(a_{14}(W))=0\), whereas
\[
 \Xi_V(t_V^\sharp)
 =(t_V^\sharp-t_V^*)\chi^\sharp(V)\neq0.
\]
Thus \(a_{14}(W)=a_{14}(V)\), so the degree-six jets of \(V\) and \(W\)
are the same.

For \(N\ge4\), assume inductively that the lower terms agree.
Corollary~\ref{cor:scalar-edge-equation} then gives the same scalar edge
equation for both potentials.  Since \(d_N\) depends only on their
common five-jet and \(d_N(V)\neq0\), it determines the same edge
parameter; Theorem~\ref{thm:block-affine-line} then determines the same
pair \((V_{2N-1},V_{2N})\).  Hence \(W=V\) by induction.  The same
steps give the asserted successive determination.
\end{proof}

We end this section with a lemma that will be used later.

\begin{lemma}\label{lem:p3-simple-root-perturbation}
For every \(\alpha\neq0\), the set
\[
 \left\{V\in\mathcal V_{v,\sigma}^{\alpha}:
 \kappa_2(V)\neq0,\quad
 p_{3,V}'(a_{14}(V))\neq0\right\}
\]
is open and dense in \(\mathcal V_{v,\sigma}^{\alpha}\).
\end{lemma}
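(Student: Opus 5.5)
The plan is to reduce the simple-root condition to a statement about one fixed quadratic polynomial, and then to move \(a_{14}\) along the affine family of Theorem~\ref{thm:block-affine-line} with \(N=3\).

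\textbf{Openness.} By Proposition~\ref{prop:quadratic-a14}, \(p_{3,V}\) is determined by \(j^6V\). Proposition~\ref{prop:rational-affine-dependence} and Lemma~\ref{lem:finite-step-polynomial} show that its coefficients depend rationally on \(j^6V\), with only powers of \(a_{30}\) as denominators. Hence they are continuous on \(\mathcal V_{v,\sigma}^{\alpha}\). The coordinate \(a_{14}(V)\) is also continuous. Therefore \(V\mapsto p_{3,V}'(a_{14}(V))\) is continuous, and the set in the statement is the intersection of two open sets; here \(\{\kappa_2\ne0\}\) is open by Lemma~\ref{lem:kappa2-nonzero}. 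Note also that \(p_{3,V}'(a_{14}(V))=\kappa_2(V)\,(t_V^*-t_V^\sharp)\). So, given \(\kappa_2\ne0\), the second condition just says that the two roots are distinct.

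\textbf{Density, step 1.} Fix \(V\) in the slice. By Lemma~\ref{lem:kappa2-nonzero}, \(\kappa_2\) is a nonzero rational function of \(a_{30}\) alone once \(a_{12}=\alpha\) is fixed. A small change of \(a_{30}\), within its sign component, therefore puts us in \(\{\kappa_2\ne0\}\) without leaving \(\mathcal V_{v,\sigma}^{\alpha}\). We may thus assume \(\kappa_2(V)\ne0\).

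\textbf{Density, step 2.} For \(s\in\R\), define \(V_s\) from \(V\) by keeping every homogeneous term except \(V_5,V_6\), and replacing \((V_5,V_6)\) by
\[
(V_5,V_6)+s(\Phi_3,\Psi_3)=(\widehat{V_5}+(a_{14}+s)\Phi_3,\ \widehat{V_6}+(a_{14}+s)\Psi_3).
\]
Then \(V_s\to V\) coefficientwise, \(a_{12},a_{30}\) are unchanged, and \(a_{14}(V_s)=a_{14}(V)+s\) because \(\Phi_3\) has edge coefficient \(1\).

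Since \((\Phi_3,\Psi_3)\in\ker\cA_3\), Proposition~\ref{prop:homological-block} shows that \(B_6^{[0]}\) and \(B_6^{[2]}\) are unchanged, and \(V_3,V_4\) are unchanged as well. Hence \(V_s\) has the same base point \((\widehat{V_5},\widehat{V_6})\), the same direction, and so the same family \(V^{<4}(t)\) as \(V\).

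Put \(q(t):=B_8^{[4]}(V^{<4}(t))\). By the proof of Proposition~\ref{prop:quadratic-a14}, \(q\) is a quadratic with leading coefficient \(\kappa_2(V)\). Because \(B_8^{[4]}\) depends only on the six-jet, we get \(B_8^{[4]}(V_s)=q(a_{14}+s)\), and therefore
\[
p_{3,V_s}(t)=q(t)-q(a_{14}+s),\qquad p_{3,V_s}'(a_{14}(V_s))=q'(a_{14}+s).
\]
Now \(q'\) is affine with slope \(2\kappa_2(V)\ne0\), so it vanishes for exactly one value of \(s\). Choosing \(s\) small and different from that value gives points of the set arbitrarily close to \(V\).

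\textbf{Expected obstacle.} The only delicate point is checking that the perturbation really leaves the affine family \(V^{<4}(t)\) unchanged. This requires that the base point in Theorem~\ref{thm:block-affine-line} depends on \(V\) only through \(j^4V\) and \(B_6^{[0]},B_6^{[2]}\), which are invariant under moves along \(\ker\cA_3\). That dependence is exactly what Theorem~\ref{thm:block-affine-line} states, so I expect this step to go through directly.
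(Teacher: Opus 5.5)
Your proposal is correct and follows the paper's proof essentially step for step. Openness comes from continuity on the slice. For density, a small sign-preserving change of \(a_{30}\) gives \(\kappa_2\neq0\), and a shift along the kernel direction \((\Phi_3,\Psi_3)\) of \(\cA_3\) then leaves the family \(V^{<4}(t)\) unchanged, so the condition becomes the nonvanishing of the nonconstant affine function \(q'(a_{14}+s)\).
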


\begin{proof}
Openness follows from continuity on the fixed coefficient slice.  By
Lemma~\ref{lem:kappa2-nonzero}, an arbitrarily small perturbation of
\(a_{30}\), preserving its sign and \(a_{12}=\alpha\), makes
\(\kappa_2(V)\neq0\).  Fix such a \(V\), and put
\(t_0=a_{14}(V)\).  For \(\varepsilon\in\mathbb R\), let
\(V_\varepsilon\) be obtained by replacing
\((V_5,V_6)\) with
\((V_5,V_6)+\varepsilon(\Phi_3,\Psi_3)\), leaving all other
homogeneous terms fixed.  The normalization
\(\phi_{3,2}=1\) means that the \(x_1x_2^4\)-coefficient of
\(\Phi_3\) is one.  Hence
\(
 a_{14}(V_\varepsilon)=t_0+\varepsilon.
\)
Moreover, \((\Phi_3,\Psi_3)\in\ker\cA_3\), so the perturbation preserves
\(B_6^{[2]},B_6^{[0]}\).  It also leaves \(V_3,V_4\) fixed.  Therefore
the affine base point and direction determined by
Theorem~\ref{thm:block-affine-line} are unchanged, and
\(
 V_\varepsilon^{<4}(t)=V^{<4}(t).
\)

Set
\(
 F(t):=B_8^{[4]}(V^{<4}(t)).
\)
By Proposition~\ref{prop:quadratic-a14}, \(F\) is quadratic with
leading coefficient \(\kappa_2(V)\neq0\).  Since
\(j^6V_\varepsilon=j^6(V^{<4}(t_0+\varepsilon))\) and
\(B_8^{[4]}\) depends only on the six-jet,
\[
 B_8^{[4]}(V_\varepsilon)=F(t_0+\varepsilon).
\]
Thus, by \eqref{eq:Pquadratic},
\(
 p_{3,V_\varepsilon}(t)=F(t)-F(t_0+\varepsilon),
\)
and consequently
\[
 p_{3,V_\varepsilon}'(a_{14}(V_\varepsilon))
 =F'(t_0+\varepsilon).
\]
The derivative \(F'\) is a nonzero affine polynomial, so the right-hand
side vanishes for at most one value of \(\varepsilon\).  Choosing
\(\varepsilon\) arbitrarily small and different from that value proves
density.
\end{proof}

\section{Separation and determination of the initial cubic pair}\label{sec:initial-pair}

We now fix only the nonresonant frequencies; neither the
orientation nor the coefficients \(a_{12}\) and \(a_{14}\) are prescribed.
Because the QBNF is invariant under \(V(x)\mapsto V(-x)\), the cubic pair
$(a_{30},a_{12})$ can be determined at best up to sign.

Write \(J_{10}:=J_{10,v}\).  Thus \(J_{10}\) is the space of
\(\mathbb Z_2\)-symmetric real ten-jets
\[
 X=X(V)=(V_2,V_3,\ldots,V_{10}).
\]
Since \(V_2\) is fixed by \(v\), only \(V_3,\ldots,V_{10}\) vary.
Thus \(J_{10}\) is identified with \(\R^{32}\):
\[
 \dim J_{10}=(2+3)+(3+4)+(4+5)+(5+6)=32.
\]
\begin{definition}
Define the polynomial map $\cI_{10}:J_{10}\to\R^{30}$ by
\begin{align}
 \cI_{10}(X):=\bigl(&B^{[2]}_4,B^{[0]}_4;
 B^{[2]}_6,B^{[0]}_6,B^{[4]}_8;
 \notag\\
 &B^{[2]}_8,B^{[0]}_8,\rho_4(B^{[4]}_{10});
 B^{[2]}_{10},B^{[0]}_{10}\bigr)(X).
 \label{eq:I10}
\end{align}
\end{definition}
Indeed, the four groups in the image contain
\[
 (1+3)+(2+4+1)+(3+5+1)+(4+6)=30
\]
scalar coefficients.  Proposition~\ref{prop:shifted-equation} with
$N=4$ shows that
$B^{[4]}_{10}$ depends only on the eight-jet $j^8V$, so
\eqref{eq:I10} is well-defined on $J_{10}$.
\subsection{Separation of the cubic pair}

We separate the cubic pair \((a_{30},a_{12})\) by the following
finite dimensional argument.  Once the map \(\cI_{10}\) is shown to have
generic rank \(30\), its generic
fibers are two-dimensional, and the incidence relation
\(\cI_{10}(X_*)=\cI_{10}(X)\) has dimension \(34\).  When the
cubic pairs of \(X_*\) and \(X\) are neither equal nor negatives of one another,
the three components of
\(B_{12}^{[4]}(X_*)-B_{12}^{[4]}(X)\) vary independently.  Their
vanishing cuts the incidence relation to dimension \(31\).  Since
\(31<32\), its projection to the reference jet is nowhere dense.

\begin{lemma}\label{lem:generic-fiber-I10}
The map
\[
 \cI_{10}:J_{10}\longrightarrow\R^{30}
\]
has generic rank $30$.  More precisely, the set
\begin{equation}\label{eq:Ufib}
 \begin{aligned}
 \cU_{\mathrm{fib}}:=\bigl\{X(V)\in J_{10}:\;&
 a_{30}(V)\neq0,\quad a_{12}(V)\neq0,\quad
 \kappa_2(V)\neq0,\\
 &p_{3,V}'(a_{14}(V))\neq0,\quad d_4(V)\neq0\bigr\}
 \end{aligned}
\end{equation}
is open and dense, and \(\cI_{10}\) is a submersion at every point of
\(\cU_{\mathrm{fib}}\).
\end{lemma}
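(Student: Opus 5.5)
The plan is to prove openness and density of \(\cU_{\mathrm{fib}}\) first, and then to show that \(\dd\cI_{10}\) has rank \(30\) at each of its points by exhibiting a triangular structure in the Jacobian.

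\emph{Openness and density.} Each defining condition is the nonvanishing of a function that is rational in the ten-jet with no poles on \(\{a_{30}\neq0\}\) (or, for the last two conditions, on \(\{\kappa_2\neq0\}\)), so \(\cU_{\mathrm{fib}}\) is open. For density we add the conditions one at a time. First, the set \(\{a_{30}a_{12}\neq0\}\) is open and dense. Second, Lemma~\ref{lem:kappa2-nonzero} shows that perturbing \(a_{30}\) alone gives \(\kappa_2\neq0\). Third, Lemma~\ref{lem:p3-simple-root-perturbation} shows that a perturbation along \((\Phi_3,\Psi_3)\) gives \(p_{3,V}'(a_{14})\neq0\) without changing \(V_3,V_4\). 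Fourth, Proposition~\ref{prop:d-nonzero} with \(N=4\) shows that adding \(\varepsilon x_1^5\) gives \(d_4\neq0\). This last perturbation changes \(a_{50}\), hence \(V_5\) and possibly \(p_{3,V}\), so it must be taken small enough that the previous open conditions survive.

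\emph{Submersion.} Write \(X\) in the coordinates \((a_{30},a_{12};V_4;V_5,V_6;V_7,V_8;V_9,V_{10})\), and order the \(30\) target components in the four groups of \eqref{eq:I10}. The Jacobian should be block lower triangular, with each group of outputs depending only on the jet up to the corresponding degree. We check that each diagonal block is surjective.

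\emph{Group 1} (\(\R^5\to\R^4\), outputs \(B_4^{[2]},B_4^{[0]}\)). By \eqref{eq:initial-even-block} and Proposition~\ref{prop:even-block}, \(V_4\mapsto B_4^{[0]}\) is an isomorphism onto \(\R^3\). The scalar \(B_4^{[2]}=b_{1,0,0}\) has derivative \(a_{30}/v_1\neq0\) in \(a_{30}\) by \eqref{eq:b100-formula}. Hence the block has rank \(4\).

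\emph{Group 2} (\(\R^7\to\R^7\), variables \(V_5,V_6\)). Proposition~\ref{prop:block} shows that \(\widetilde\cA_3\) maps \((V_5^{\rm red},V_6)\) isomorphically onto \((B_6^{[2]},B_6^{[0]})\). The remaining direction \((\Phi_3,\Psi_3)\) is in \(\ker\cA_3\), and by the computation in Lemma~\ref{lem:p3-simple-root-perturbation} the derivative of \(B_8^{[4]}\) along it equals \(F'(a_{14})=p_{3,V}'(a_{14})\neq0\). Changing basis to \(\{\widetilde{\mathsf O}_5\oplus\mathsf E_6,(\Phi_3,\Psi_3)\}\) makes this block triangular with nonzero diagonal, so it is invertible.

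\emph{Group 3} (\(\R^9\to\R^9\), variables \(V_7,V_8\)). The argument is identical with \(N=4\). The reduced block \(\widetilde\cA_4\) is an isomorphism, and the kernel direction \((\Phi_4,\Psi_4)\) moves \(\rho_4(B_{10}^{[4]})\) at rate \(d_4(V)\neq0\) by Lemma~\ref{lem:edge-affine-variation}.

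\emph{Group 4} (\(\R^{11}\to\R^{10}\), variables \(V_9,V_{10}\)). Theorem~\ref{thm:block-affine-line} with \(N=5\) applies: \(\widetilde\cA_5\) is invertible, so \((V_9^{\rm red},V_{10})\mapsto(B_{10}^{[2]},B_{10}^{[0]})\) is onto.

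Together these give rank \(4+7+9+10=30\).

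\emph{Main obstacle.} The delicate point is justifying the triangular structure. It requires that the earlier outputs do not depend on the later variables. This holds because \(B_{2k}^{[0]},B_{2k}^{[2]}\) depend on \(j^{2k}V\) (Lemma~\ref{lem:finite-step-polynomial}), \(B_8^{[4]}\) depends on \(j^6V\), and \(B_{10}^{[4]}\) depends on \(j^8V\) (Proposition~\ref{prop:shifted-equation}). The derivative formulas must also be read as partial derivatives with the lower jet held fixed. In Group 2 this is where Proposition~\ref{prop:quadratic-a14} enters: holding \(j^4V\) fixed, moving along \((\Phi_3,\Psi_3)\) traces exactly the family \(V^{<4}(t)\), whose \(B_8^{[4]}\) has \(t\)-derivative \(p_{3,V}'\).
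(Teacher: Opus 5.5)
Your proposal is correct and follows the paper's proof essentially step by step. You use the same block lower-triangular Jacobian with diagonal blocks of sizes $4\times5$, $7\times7$, $9\times9$ and $10\times11$, and the same pairing of the kernel directions $(\Phi_N,\Psi_N)$ against $p_{3,V}'(a_{14}(V))$ and $d_4(V)$ (your change of basis is equivalent to the paper's ``extra row raises the rank iff $q\eta_N^{\mathsf T}\neq0$''), together with the invertibility of $\widetilde\cA_5$; your density argument just makes explicit the perturbation order that the paper leaves to the reader.
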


\begin{proof}
Fix \(X=X(V)\in\cU_{\mathrm{fib}}\), and write
\[
 X_1=(V_3,V_4),\quad X_2=(V_5,V_6),\quad
 X_3=(V_7,V_8),\quad X_4=(V_9,V_{10}),
\]
and let $Y_1,\ldots,Y_4$ denote the four groups of target coordinates in
\eqref{eq:I10}.  Weyl-degree triangularity gives
\[
 D\cI_{10}=(D_{ij})_{1\leq i,j\leq4},
 \qquad D_{ij}=0\quad\text{for }j>i.
\]
For \(N=3,4,5\), let
\[
 A_N:=[\mathcal A_N]
 =\begin{pmatrix}
   [\mathcal Q_N]&0\\
   [\mathcal C_N]&[\mathcal E_N]
  \end{pmatrix},
\]
where the brackets denote matrices in the monomial bases fixed above.
Define the row covectors
\[
 q_3:=D_{(V_5,V_6)}B_8^{[4]},
 \qquad
 q_4:=D_{(V_7,V_8)}\bigl(\rho_4(B_{10}^{[4]})\bigr),
\]
with all lower blocks held fixed.  For \(D_{11}\), order the columns as
\((a_{30},a_{12};V_4)\) and the rows as
\((B_4^{[2]};B_4^{[0]})\).  The four diagonal blocks are
\[
 \begin{split}
 D_{11}
 &=\begin{pmatrix}
   \dfrac{a_{30}}{v_1}&
   -\dfrac{a_{12}v_2^2}{v_1\delta}&0_{1\times3}\\
   \left[D_{V_3}\left(\dfrac12\Pi\{S_3,V_3\}\right)\right]
   & [\mathcal E_2]
  \end{pmatrix},\\
 D_{22}&=\begin{pmatrix}A_3\\ q_3\end{pmatrix},
 \qquad
 D_{33}=\begin{pmatrix}A_4\\ q_4\end{pmatrix},
 \qquad
 D_{44}=A_5.
 \end{split}
\]
Their respective sizes are \(4\times5\), \(7\times7\), \(9\times9\),
and \(10\times11\).  It therefore suffices to prove that each has full
row rank.

For $D_{11}$, choose the column corresponding to $a_{30}$ and the three
columns corresponding to $V_4$.  Equations~\eqref{eq:b100-formula}
and~\eqref{eq:initial-even-block} give an invertible square minor
with diagonal blocks \((a_{30}/v_1,[\mathcal E_2])\), hence 
 $\rank D_{11}=4$.


Consider next $D_{22}$ and $D_{33}$, and put
$\eta_N=(\Phi_N,\Psi_N)$ for $N=3,4$.  Proposition~\ref{prop:homological-block}
and Theorem~\ref{thm:block-affine-line} give directly
\[
 \ker A_N=\operatorname{span}\{\eta_N\},
 \qquad \rank A_N=2N.
\]
Thus an additional row $q$ raises the rank by one precisely when
$q\eta_N^{\mathsf T}\neq0$.

Let \((V_{2N-1}(t),V_{2N}(t))=(\widehat{V_{2N-1}}+t\Phi_N,\widehat{V_{2N}}+t\Psi_N)\), and define
\[
 V^{<{N+1}}(t)=V^{<N}+V_{2N-1}(t)+V_{2N}(t).
\]
For $D_{22}$, the chain rule gives
\[
 q_3\eta_3^{\mathsf T}
 =\left.\frac{d}{dt}B_8^{[4]}(V^{<4}(t))
   \right|_{t=a_{14}(V)}
 =p_{3,V}^{\prime}(a_{14}(V)),
\]
where the second equality follows from \eqref{eq:Pquadratic}.
Thus \(\rank D_{22}=7\) whenever
\(p_{3,V}^{\prime}(a_{14}(V))\neq0\).

For $D_{33}$, the chain rule similarly gives
\[
 q_4\eta_4^{\mathsf T}
 =\left.\frac{d}{dt}\rho_4\bigl(B_{10}^{[4]}(V^{<5}(t))\bigr)
   \right|_{t=a_{16}}.
\]
Proposition~\ref{prop:shifted-equation}, applied with $N=4$ to $V^{<5}(t)$, gives
\[
 \rho_4\bigl(B_{10}^{[4]}(V^{<5}(t))-B_{10}^{[4]}(V^{<4})\bigr)
 =\rho_4\mathcal T_V(V_7(t),V_8(t)).
\]
Since $\mathcal T_V$ is linear, differentiating the preceding equality and
using \eqref{eq:dN-definition} gives
\[
 q_4\eta_4^{\mathsf T}
 =\rho_4\mathcal T_V(\Phi_4,\Psi_4)=d_4(V).
\]
Consequently, $\rank D_{33}=9$ whenever $d_4(V)\neq0$.

Finally, restrict the columns of $D_{44}$ to
$\widetilde{\mathsf O}_9\oplus\mathsf E_{10}$.  The resulting square
matrix is $[\widetilde\cA_5]$, which is invertible by
Proposition~\ref{prop:block}.  Therefore $\rank D_{44}=10$.

We conclude that
\begin{equation}\label{eq:rank-I10}
 \rank D\cI_{10}=4+7+9+10=30
\end{equation}
at every point of \(\cU_{\mathrm{fib}}\).  All the quantities in the
nonvanishing conditions in \eqref{eq:Ufib} are continuous on their
common domain \(\{a_{30}\neq0\}\), so \(\cU_{\mathrm{fib}}\) is open
in \(J_{10}\).  Density of $\cU_{\mathrm{fib}}$ follows easily from Lemma~\ref{lem:p3-simple-root-perturbation} and Proposition~\ref{prop:d-nonzero}.


The rank computation proves that \(\cI_{10}\) is a submersion throughout
\(\cU_{\mathrm{fib}}\).
\end{proof}

\begin{remark}\label{rem:augmented-I10}
On \(\cU_{\mathrm{fib}}\), the map
\[
 (\cI_{10},a_{12},a_{30}):J_{10}\longrightarrow\R^{32}
\]
has rank \(31\), while
\[
 (\cI_{10},a_{12},a_{18}):J_{10}\longrightarrow\R^{32}
\]
is a local diffeomorphism.  Relative to the preceding block
decomposition, the new diagonal blocks are respectively
\[
 \widehat D_{11}:=
 \begin{pmatrix}D_{11}\\D_{X_1}a_{12}\end{pmatrix},
 \qquad
 \widehat D_{44}:=
 \begin{pmatrix}D_{44}\\D_{X_4}a_{18}\end{pmatrix}.
\]
The first is invertible by the same minor argument used for \(D_{11}\),
and \(D_{X_1}a_{30}\) adds no further rank.  The second is invertible
because
\(\ker D_{44}=\operatorname{span}\{(\Phi_5,\Psi_5)\}\) and the
normalization gives \(D_{X_4}a_{18}(\Phi_5,\Psi_5)=1\).
\end{remark}

\begin{lemma}\label{lem:M3-explicit}
For $X\in J_{10}$, the restriction of the operator
$M_3(X)=M_{S_3(X)}$ to potential polynomials is
\begin{equation}\label{eq:M3X}
 M_3(X)
 =\mu(X)\partial_{x_1}^3
  +\nu(X)\partial_{x_1}\partial_{x_2}^2,
 \qquad \text{where}\,\,\,
 \mu(X):=\frac{a_{30}(X)}{6v_1},
 \,
 \nu(X):=-\frac{a_{12}(X)v_2^2}{2v_1\delta}.
\end{equation}
\end{lemma}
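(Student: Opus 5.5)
The plan is to compute $M_3(X)F=-\gamma_3\{S_3,F\}_3$ directly for a potential polynomial $F=F(x)$, that is, a polynomial independent of $\xi$. This is the same computation as in the proof of Proposition~\ref{prop:odd-block}, now stated without the projection $\Pi$.

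\emph{Which terms survive.} Start from the expansion of $\Lambda^3$. Since $F$ does not depend on $\xi$, every term with a $\partial_\eta$ acting on the second factor vanishes. Hence only the $\beta=0$ terms remain:
\[
 \{S_3,F\}_3=\sum_{|\alpha|=3}\frac{3!}{\alpha!}\,(\partial_\xi^\alpha S_3)\,(\partial_x^\alpha F).
\]
$S_3$ is cubic, so each $\partial_\xi^\alpha S_3$ with $|\alpha|=3$ is a constant. It is the coefficient of $\xi^\alpha$ in $S_3$, multiplied by $\alpha!$.

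\emph{Reading off the coefficients.} Formula \eqref{eq:S3-formula} shows that the only monomials of $S_3$ that are purely cubic in $\xi$ are $\xi_1^3$, with coefficient $-\tfrac{2a_{30}}{3v_1}$, and $\xi_1\xi_2^2$, with coefficient $\tfrac{2a_{12}v_2^2}{v_1\delta}$. The remaining terms $x_1^2\xi_1$, $\xi_1x_2^2$ and $x_1x_2\xi_2$ have fewer than three $\xi$'s, so they are killed by three $\xi$-derivatives. This gives:
\begin{itemize}
\item $\partial_{\xi_1}^3S_3=-4a_{30}/v_1$, with multinomial weight $1$;
\item $\partial_{\xi_1}\partial_{\xi_2}^2S_3=4a_{12}v_2^2/(v_1\delta)$, with multinomial weight $3!/(1!\,2!)=3$.
\end{itemize}

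\emph{Assembling.} Multiplying by $-\gamma_3=-1/24$ gives
\[
 M_3F=\frac{a_{30}}{6v_1}\partial_{x_1}^3F-\frac{a_{12}v_2^2}{2v_1\delta}\partial_{x_1}\partial_{x_2}^2F,
\]
which is \eqref{eq:M3X}. These two derivative values and weights agree with those used in \eqref{eq:qformula}, where the same expression appears before projection, so that serves as a consistency check.

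There is no real obstacle here; the only step needing care is the sign and multinomial-weight bookkeeping in the $\Lambda^3$ expansion.
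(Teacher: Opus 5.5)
Your proposal is correct and follows the same approach as the paper: both compute $M_3F=-\gamma_3\{S_3,F\}_3$ directly from \eqref{eq:S3-formula}, using that $F$ is $\xi$-independent, so only the pure $\xi$-cubic part of $S_3$ contributes. Your explicit derivative values and multinomial weights ($-4a_{30}/v_1$ with weight $1$, and $4a_{12}v_2^2/(v_1\delta)$ with weight $3$) are exactly the bookkeeping the paper leaves implicit, and they agree with \eqref{eq:qformula}.
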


\begin{proof}
Let $F=F(x_1,x_2)$ be a potential polynomial.  By the definition of
$M_A$ in \eqref{eq:PMN},
\[
M_{S_3}(F)=\bigl(D_{S_3(X)}F\bigr)^{[2]}
 =-\gamma_3\{S_3(X),F\}_3.
\]
Since $F$ is independent of $\xi$, formula~\eqref{eq:S3-formula} gives
\[
 -\gamma_3\{S_3(X),F\}_3
 =\left(
   \frac{a_{30}(X)}{6v_1}\partial_{x_1}^3
   -\frac{a_{12}(X)v_2^2}{2v_1\delta}
    \partial_{x_1}\partial_{x_2}^2
  \right)F.
\]
This is exactly \eqref{eq:M3X}.
\end{proof}

\begin{lemma}\label{lem:rank-three}
Let \(X_*,X\in J_{10}\) and assume
\[
 (a_{30}(X),a_{12}(X))
\neq \pm(a_{30}(X_*),a_{12}(X_*)).
\]
Then the map
\begin{equation}\label{eq:TXX}
 \mathsf E_{10}\longrightarrow\mathsf N_{12}^{[4]}=\spanop\{\Omega_1^2,\Omega_1\Omega_2,\Omega_2^2\},
 \qquad
 G\longmapsto\frac12\Pi(M_3(X_*)^2-M_3(X)^2)G,
\end{equation}
is surjective.
\end{lemma}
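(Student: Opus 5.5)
The plan is to compute the operator in \eqref{eq:TXX} explicitly on the monomial basis \(f_{5,m}=x_1^{10-2m}x_2^{2m}\), \(0\le m\le5\), of \(\mathsf E_{10}\). We then show that a suitable \(3\times3\) minor is triangular with nonzero diagonal.

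First, by Lemma~\ref{lem:M3-explicit}, on potential polynomials \(M_3(X)=\mu\partial_{x_1}^3+\nu\partial_{x_1}\partial_{x_2}^2\) is a constant-coefficient operator that again produces potential polynomials. Hence \(M_3(X)^2=\mu^2\partial_{x_1}^6+2\mu\nu\,\partial_{x_1}^4\partial_{x_2}^2+\nu^2\partial_{x_1}^2\partial_{x_2}^4\). Subtracting the analogous expression for \(X_*\), the map in \eqref{eq:TXX} is \(G\mapsto\frac12\Pi\mathcal D G\), where
\[
 \mathcal D=\alpha\,\partial_{x_1}^6+\beta\,\partial_{x_1}^4\partial_{x_2}^2+\gamma\,\partial_{x_1}^2\partial_{x_2}^4,
\]
with \(\alpha=\mu_*^2-\mu^2\), \(\beta=2(\mu_*\nu_*-\mu\nu)\), \(\gamma=\nu_*^2-\nu^2\). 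The operator \(\mathcal D\) maps \(\mathsf E_{10}\) into \(\mathsf E_4=\spanop\{x_1^4,x_1^2x_2^2,x_2^4\}\). By Proposition~\ref{prop:even-block} with \(N=2\), \(\Pi=\cE_2\) restricted to \(\mathsf E_4\) is a diagonal isomorphism onto \(\mathsf N_{12}^{[4]}\). It therefore suffices to prove that \(\mathcal D:\mathsf E_{10}\to\mathsf E_4\) is onto.

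Second, we show \((\alpha,\beta,\gamma)\neq0\). Since \(\mu,\nu\) are nonzero multiples of \(a_{30},a_{12}\), the condition \(\alpha=0\) means \(a_{30}=s\,a_{30}(X_*)\), and \(\gamma=0\) means \(a_{12}=s'\,a_{12}(X_*)\), for signs \(s,s'\in\{\pm1\}\). If both hold, then \(\beta=0\) reads \((1-ss')\,a_{30}(X_*)a_{12}(X_*)=0\).
\begin{itemize}
\item If \(a_{30}(X_*)a_{12}(X_*)\neq0\), this forces \(s=s'\), so \((a_{30},a_{12})(X)=\pm(a_{30},a_{12})(X_*)\), which is excluded.
\item If one of \(a_{30}(X_*)\) or \(a_{12}(X_*)\) vanishes, the corresponding coefficient of \(X\) vanishes too. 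Choosing the free sign appropriately again gives \((a_{30},a_{12})(X)=\pm(a_{30},a_{12})(X_*)\), which is excluded.
\end{itemize}
This case analysis is the only place where the hypothesis enters, and it is the step requiring the most care.

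Third, we exhibit the surjectivity. Write \(g_k=x_1^{4-2k}x_2^{2k}\) for \(k=0,1,2\). The three terms of \(\mathcal D f_{5,m}\) give a positive multiple of \(\alpha\) times \(g_m\), of \(\beta\) times \(g_{m-1}\), and of \(\gamma\) times \(g_{m-2}\), whenever the relevant exponents allow differentiation. Thus the \(3\times6\) matrix is banded, with entries only in rows \(k\in\{m,m-1,m-2\}\). We split into three cases.
\begin{itemize}
\item If \(\alpha\neq0\), take columns \(m=0,1,2\). The \(3\times3\) minor is triangular with diagonal entries nonzero multiples of \(\alpha\): \(\partial_{x_1}^6\) acts on \(x_1^{10-2m}\) with \(10-2m\ge6\).
\item If \(\alpha=0\) and \(\beta\neq0\), take columns \(m=1,2,3\). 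The diagonal entries are nonzero multiples of \(\beta\), since the \(x_1\)-degree is at least \(4\) and the \(x_2\)-degree at least \(2\).
\item If \(\alpha=\beta=0\), take columns \(m=2,3,4\). The diagonal entries are nonzero multiples of \(\gamma\), since the \(x_1\)-degree is at least \(2\) and the \(x_2\)-degree at least \(4\).
\end{itemize}
In each case the minor is invertible, so \(\mathcal D\) is onto \(\mathsf E_4\). Composing with the isomorphism \(\Pi|_{\mathsf E_4}\) and the factor \(\frac12\) proves surjectivity of \eqref{eq:TXX}.
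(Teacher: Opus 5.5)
Your proof is correct. It agrees with the paper in two steps:
\begin{itemize}
\item the reduction to surjectivity of the constant-coefficient operator $\mathcal D=P(\partial_x)\colon\mathsf E_{10}\to\mathsf E_4$, followed by the isomorphism $\mathcal E_2=\Pi|_{\mathsf E_4}$;
\item the check that $(\alpha,\beta,\gamma)\neq0$, where your case analysis (including the degenerate case $a_{30}(X_*)a_{12}(X_*)=0$) spells out what the paper compresses into one sentence.
\end{itemize}

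You differ from the paper in how you prove surjectivity of $\mathcal D$. The paper uses the Fischer inner product $\langle f,g\rangle_{\mathrm F}=f(\partial_x)g|_{x=0}$, under which $P(\partial_x)$ is adjoint to multiplication by $P$. Since $P$ is even, multiplication by $P$ maps $\mathsf E_4$ into $\mathsf E_{10}$. It is injective because polynomials have no zero divisors, so $P(\partial_x)$ is onto. You instead write the banded $3\times6$ matrix in the monomial bases. You then choose three consecutive columns according to the first nonzero coefficient among $\alpha,\beta,\gamma$, which gives a triangular minor with nonzero diagonal; your exponent ranges for the three cases are all correct.

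The duality argument needs no cases and works unchanged for any nonzero constant-coefficient operator between spaces of homogeneous polynomials. It would therefore survive a change in the form of $M_3$. Your argument is fully elementary and shows slightly more: it exhibits an explicit three-dimensional monomial subspace of $\mathsf E_{10}$ on which the map is already an isomorphism. The price is a three-way case split and the bookkeeping of which derivatives survive on each monomial.
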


\begin{proof}
Set
\[
 (\mu_*,\nu_*):=(\mu(X_*),\nu(X_*)),
 \qquad
 (\mu,\nu):=(\mu(X),\nu(X)).
\]
Expanding the constant-coefficient operator
$T:=M_3(X_*)^2-M_3(X)^2$ gives
\[
 T=(\mu_*^2-\mu^2)\partial_{x_1}^6
   +2(\mu_*\nu_*-\mu\nu)\partial_{x_1}^4\partial_{x_2}^2
   +(\nu_*^2-\nu^2)\partial_{x_1}^2\partial_{x_2}^4.
\]
Associate with $T$ the degree-six polynomial
\begin{align*}
 P(x_1,x_2)
 ={}&(\mu_*^2-\mu^2)x_1^6
 +2(\mu_*\nu_*-\mu\nu)x_1^4x_2^2
 \\
 &+(\nu_*^2-\nu^2)x_1^2x_2^4,
\end{align*}
so that \(T=P(\partial_x)\).  We first show that \(P\neq0\).  If
\(P=0\), comparison of its three coefficients gives
\[
 \mu_*^2=\mu^2,
 \qquad
 \mu_*\nu_*=\mu\nu,
 \qquad
 \nu_*^2=\nu^2.
\]
This implies 
\((\mu,\nu)=\pm(\mu_*,\nu_*)\), contrary to
the hypothesis.

For homogeneous real polynomials of a fixed degree, consider the Fischer
inner product
\begin{equation*}
 \langle f,g\rangle_{\mathrm F}
 :=\left.f(\partial_x)g(x)\right|_{x=0},
\qquad
 \partial_x=(\partial_{x_1},\partial_{x_2}).
\end{equation*}
Equivalently,
\[
 \langle x^\alpha,x^\beta\rangle_{\mathrm F}
 =\alpha!\,\delta_{\alpha\beta}.
\]
Thus the inner product is positive definite and differentiation is
adjoint to multiplication:
\[
\langle\partial_{x_j}f,g\rangle_{\mathrm F}
 =\langle f,x_jg\rangle_{\mathrm F}.
\]
Because $P$ contains only even powers, multiplication by $P$ maps
$\mathsf E_4$ to $\mathsf E_{10}$.  Repeated use of the adjoint identity
shows that the adjoint of
$T=P(\partial_x):\mathsf E_{10}\to\mathsf E_4$ is multiplication by
$P$:
\begin{equation*}
 \langle TG,H\rangle_{\mathrm F}
 =\langle G,PH\rangle_{\mathrm F},
 \qquad G\in\mathsf E_{10},\ H\in\mathsf E_4.
\end{equation*}
Since $P\neq0$ and the polynomial ring has no zero divisors, the map
$H\mapsto PH$ is injective.  Thus the adjoint of $T$ is injective,  which implies that
$T:\mathsf E_{10}\to\mathsf E_4$ is surjective.  Finally,
Proposition~\ref{prop:even-block} says that
\[
 \cE_2=\Pi|_{\mathsf E_4}:\mathsf E_4\to\mathsf N_4^{[0]}
\]
is an isomorphism.  As coefficient spaces,
\[
 \mathsf N_4^{[0]}=\mathsf N_{12}^{[4]}
 =\spanop\{\Omega_1^2,\Omega_1\Omega_2,\Omega_2^2\}.
\]
Composing \(T\) with this isomorphism and multiplying by \(1/2\) proves
the surjectivity in \eqref{eq:TXX}.
\end{proof}

\begin{lemma}\label{lem:separating-submersion}
Define
\begin{equation}\label{eq:separating-incidence}
\begin{aligned}
 \cR:=\{(X_*,X)\in\cU_{\mathrm{fib}}\times J_{10}:\;&
 \cI_{10}(X_*)=\cI_{10}(X),\\
 &(a_{30}(X),a_{12}(X))\neq 
 \pm(a_{30}(X_*),a_{12}(X_*))\bigr\}.
\end{aligned}
\end{equation}
Then $\cR$ is a smooth manifold of dimension $34$, and the map
\[
 \Theta:\cR\longrightarrow\mathsf N_{12}^{[4]},
 \qquad
 \Theta(X_*,X)=B^{[4]}_{12}(X_*)-B^{[4]}_{12}(X),
\]
is a submersion.
\end{lemma}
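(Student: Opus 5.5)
We first show that \(\cR\) is a smooth \(34\)-dimensional manifold. Consider the map
\[
 \Delta:\cU_{\mathrm{fib}}\times J_{10}\to\R^{30},\qquad \Delta(X_*,X)=\cI_{10}(X_*)-\cI_{10}(X).
\]
By Lemma~\ref{lem:generic-fiber-I10}, \(\cI_{10}\) is a submersion on \(\cU_{\mathrm{fib}}\). The partial differential \(D_{X_*}\Delta\) therefore already has rank \(30\) at every point, so \(\Delta\) is a submersion. Its zero set is thus a smooth submanifold of dimension \(64-30=34\). The condition on the cubic pairs is open, so \(\cR\) is an open subset of this zero set, and the dimension claim follows.

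For the submersion property of \(\Theta\), we fix \((X_*,X)\in\cR\) and look for tangent vectors to \(\cR\) whose image under \(D\Theta\) spans \(\mathsf N_{12}^{[4]}\). The idea is to perturb only the top blocks \(V_9,V_{10}\) of \emph{both} jets, and to do so along the one-dimensional kernel directions \((\Phi_5,\Psi_5)\) of \(\cA_5\). The trouble is that these kernel directions alone give only two parameters, whereas three are needed. We therefore enlarge the family. Take a perturbation \(\delta X=(0,0,0,(F,G))\) of \(X\) with \((F,G)\in\mathsf O_9\oplus\mathsf E_{10}\), and a perturbation \(\delta X_*=(0,0,0,(F_*,G_*))\) of \(X_*\). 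By triangularity, the only components of \(\cI_{10}\) that change are \(B_{10}^{[2]},B_{10}^{[0]}\), with differentials \(A_5(X)\) and \(A_5(X_*)\) respectively; here the matrix of \(\cA_5\) depends only on the cubic pair. Tangency to \(\cR\) is therefore the single condition \(A_5(X_*)(F_*,G_*)=A_5(X)(F,G)\).

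Next we compute the effect on \(B_{12}^{[4]}\). By Proposition~\ref{prop:shifted-equation} with \(N=5\) (for \(N\ge4\) the formula is exact), we have \(D B_{12}^{[4]}\cdot(F,G)=\Pi\mathcal T_X(F,G)\). Hence
\[
 D\Theta(\delta X_*,\delta X)=\Pi\mathcal T_{X_*}(F_*,G_*)-\Pi\mathcal T_X(F,G).
\]
Now choose \(F=F_*=0\). Tangency then reads \(\Pi G_*=\Pi G\), that is \(\cE_5G_*=\cE_5G\). Since \(\cE_5\) is an isomorphism by Proposition~\ref{prop:even-block}, this forces \(G_*=G\). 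With this choice, Lemma~\ref{lem:T-explicit} shows that only the \(\tfrac12M_3^2G\) term survives, and we get
\[
 D\Theta=\tfrac12\Pi\bigl(M_3(X_*)^2-M_3(X)^2\bigr)G.
\]
Here \(M_3\) restricted to potential polynomials is given by Lemma~\ref{lem:M3-explicit}. By Lemma~\ref{lem:rank-three}, as \(G\) ranges over \(\mathsf E_{10}\) this map is onto \(\mathsf N_{12}^{[4]}\). The hypothesis of that lemma holds precisely because of the defining inequality in \(\cR\).

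The step we expect to need the most care is justifying that the pair \((0,G)\), applied simultaneously to both jets, is really tangent to \(\cR\). The issue is whether \(\cI_{10}\) at degree ten sees \(V_{10}\) only through \(\cE_5\), with no \(\hbar^2\) contribution. This follows from Proposition~\ref{prop:homological-block}: the \(\mathsf E_{10}\) column of \(\cA_5\) has zero upper block, and \(V_{10}\) does not enter \(\widehat K_{10}\). We also need that the perturbation leaves \(X_*\) inside the open set \(\cU_{\mathrm{fib}}\). This holds because \(\cU_{\mathrm{fib}}\) is open, and in any case its defining conditions involve only \(j^8\).
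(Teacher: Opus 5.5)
Your proposal is correct and is essentially the paper's proof. The same submersion $(X_*,X)\mapsto\cI_{10}(X_*)-\cI_{10}(X)$ gives the $34$-dimensional manifold structure. Your enlarged top-block family specializes, via $\cE_5G_*=\cE_5G$, to exactly the paper's simultaneous variation $(G,G)$ of $V_{10}$, whose image $\tfrac12\Pi\bigl(M_3(X_*)^2-M_3(X)^2\bigr)G$ fills $\mathsf N_{12}^{[4]}$ by Lemma~\ref{lem:rank-three}.

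The one item the paper adds is that $\cR\neq\varnothing$. This follows from Remark~\ref{rem:augmented-I10}: $a_{12}$ is nonconstant on local $\cI_{10}$-fibers through points of $\cU_{\mathrm{fib}}$. This makes the dimension statement non-vacuous, although the later use in Proposition~\ref{prop:separate-cubic} does not depend on it.
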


\begin{proof}
Lemma~\ref{lem:generic-fiber-I10} shows that \(\cI_{10}\) is a
submersion at \(X_*\).  Hence the map
\[
 \cU_{\mathrm{fib}}\times J_{10}\longrightarrow\R^{30},
 \qquad
 (X_*,X)\longmapsto\cI_{10}(X_*)-\cI_{10}(X),
\]
is a submersion, since its derivative in the first variable is
surjective.  Its zero set has dimension $2\cdot32-30=34$, and
 $\cR$ is an open subset of this zero set. By Remark 6.3, the map \(X\mapsto(a_{30}(X),a_{12}(X))\) is nonconstant on every local \(\mathcal I_{10}\)-fiber through \(X_*\in\mathcal U_{\mathrm{fib}}\), this proves
  that \(\mathcal R\neq\varnothing\).  Therefore
\(
 \dim\cR=34.
\)

Fix $G\in\mathsf E_{10}$.  For $Y\in\{X_*,X\}$, let $Y_s$ be obtained
from $Y$ by replacing $V_{10}(Y)$ with $V_{10}(Y)+sG$.
Proposition~\ref{prop:shifted-equation} with $N=5$ gives
\[
 B^{[4]}_{12}(Y_s)-B^{[4]}_{12}(V^{<5}(Y))
 =
 \Pi\mathcal T_Y\bigl(V_9(Y),V_{10}(Y)+sG\bigr).
\]
Here $V^{<5}(Y):=V_2(Y)+V_3(Y)+\cdots+V_8(Y)$ is independent of $s$.
By \eqref{eq:L-operator}, 
\[
 \left.\frac{d}{ds}B^{[4]}_{12}(Y_s)\right|_{s=0}
 =\frac12\Pi M_3(Y)^2G.
\]

Apply this variation simultaneously to $\cI_{10}(X_*)$ and $\cI_{10}(X)$.
Equation~\eqref{eq:block-matrix} and
Proposition~\ref{prop:shifted-equation} with $N=4$ show that only
$B^{[0]}_{10}$ among the components of $\cI_{10}$ depends on
$V_{10}$, and its variation is $\cE_5G$.  Thus both values of
$\cI_{10}$ undergo the same change.  Since this variation preserves the cubic coefficients and
$\cU_{\mathrm{fib}}$ is open, the resulting curve remains in $\cR$
for all sufficiently small parameters, so $(G,G)$ is tangent to $\cR$.
Moreover,
\[
 D\Theta_{(X_*,X)}(G,G) =\frac12\Pi(M_3(X_*)^2-M_3(X)^2)G.
\]
Lemma~\ref{lem:rank-three} shows that the right-hand side, as $G$
varies, fills $\mathsf N_{12}^{[4]}$.  Hence $\Theta$ is a submersion.
\end{proof}

\begin{proposition}\label{prop:separate-cubic}
There is a dense \(G_\delta\) subset
$\cU_{\mathrm{sep}}\subset J_{10}$ with the following
property.  If $X_*\in\cU_{\mathrm{sep}}$ and
$X\in J_{10}$ satisfy
\begin{equation}\label{eq:same-finite-data}
 \cI_{10}(X)=\cI_{10}(X_*),
 \qquad
 B^{[4]}_{12}(X)=B^{[4]}_{12}(X_*),
\end{equation}
then
\begin{equation}\label{eq:same-cubic-pair}
 (a_{30}(X),a_{12}(X))
 =\pm(a_{30}(X_*),a_{12}(X_*)).
\end{equation}
\end{proposition}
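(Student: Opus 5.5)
The argument is a dimension count on the incidence manifold \(\cR\) of Lemma~\ref{lem:separating-submersion}. First I need to know that \(B^{[4]}_{12}\) is a polynomial function on \(J_{10}\). Proposition~\ref{prop:shifted-equation} with \(N=5\), together with Lemma~\ref{lem:T-explicit}, writes \(B^{[4]}_{12}(V)\) as \(B^{[4]}_{12}(V^{<5})+\Pi\mathcal T_V(V_9,V_{10})\). Hence it depends only on \(j^{10}V\), and polynomially by Lemma~\ref{lem:finite-step-polynomial}. So \(\Theta\) is a smooth (indeed polynomial) map on \(\cR\).

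Consider the bad set
\[
 \cZ:=\Theta^{-1}(0)\subset\cR .
\]
Lemma~\ref{lem:separating-submersion} says \(\Theta\) is a submersion onto the three-dimensional space \(\mathsf N^{[4]}_{12}\). Therefore \(\cZ\) is either empty or a smooth submanifold of dimension \(34-3=31\).

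Let \(\pi_1:\cZ\to J_{10}\), \((X_*,X)\mapsto X_*\), be the projection to the reference jet. It is a smooth map from a \(31\)-manifold into \(\R^{32}\). By Sard's theorem (or simply since the image of a smooth map from a lower-dimensional second-countable manifold has measure zero), \(\pi_1(\cZ)\) is a countable union of compact sets of measure zero.

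More precisely, exhaust \(\cZ\) by countably many compact pieces \(K_j\). Each \(\pi_1(K_j)\) is compact and has measure zero, hence has empty interior; so it is closed and nowhere dense. Then set
\[
 \cU_{\mathrm{sep}}:=\cU_{\mathrm{fib}}\setminus\bigcup_j\pi_1(K_j)=\bigcap_j\bigl(\cU_{\mathrm{fib}}\setminus\pi_1(K_j)\bigr).
\]
Each term is open, since \(\cU_{\mathrm{fib}}\) is open and \(\pi_1(K_j)\) is closed. Each term is also dense, because \(\cU_{\mathrm{fib}}\) is open dense by Lemma~\ref{lem:generic-fiber-I10} and removing a closed nowhere dense set preserves open density. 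Since \(J_{10}\cong\R^{32}\) is Baire, \(\cU_{\mathrm{sep}}\) is a dense \(G_\delta\).

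To verify the property, take \(X_*\in\cU_{\mathrm{sep}}\) and \(X\) satisfying \eqref{eq:same-finite-data}, and suppose \eqref{eq:same-cubic-pair} fails. Then \((X_*,X)\in\cR\), because \(X_*\in\cU_{\mathrm{fib}}\), the \(\cI_{10}\) values agree, and the cubic pairs are not \(\pm\)-equal. Also \(\Theta(X_*,X)=0\), so \((X_*,X)\in\cZ\) and \(X_*\in\pi_1(\cZ)\), which is a contradiction.

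The only delicate point is the topology of \(\cZ\). A 31-dimensional manifold need not have a closed image, which is why I use compact exhaustion rather than claiming \(\pi_1(\cZ)\) is closed. The real work, namely that \(\cR\) is a 34-manifold and \(\Theta\) is a submersion on it, is already contained in Lemma~\ref{lem:separating-submersion}. If \(\cZ\) is empty, the statement holds trivially with \(\cU_{\mathrm{sep}}=\cU_{\mathrm{fib}}\).
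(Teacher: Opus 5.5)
Your proposal is correct and follows essentially the same route as the paper. Both arguments cut the $34$-dimensional incidence manifold $\cR$ by the submersion $\Theta$ to get $\cZ$ of dimension at most $31$, show that $\pi_1(\cZ)\subset J_{10}$ has measure zero, and use a compact exhaustion of $\cZ$ to remove countably many closed nowhere dense sets from $\cU_{\mathrm{fib}}$. Your explicit check that $B^{[4]}_{12}$ is a well-defined polynomial on $J_{10}$ is a point the paper leaves implicit.
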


\begin{proof}
By Lemma~\ref{lem:separating-submersion}, $\cR$ is a
$34$ dimensional smooth manifold and
$\Theta:\cR\to\mathsf N_{12}^{[4]}$ is a submersion.  Hence
\(
 \cZ:=\Theta^{-1}(0)
\)
is either empty or a smooth submanifold of \(\cR\) of codimension
\(3\).  In either case,
\(
 \dim\cZ\leq34-3=31.
\)

Consider the projection onto the first jet,
\[
 \pi_1:\cZ\longrightarrow J_{10},
 \qquad
 \pi_1(X_*,X)=X_*.
\]
Since
\(
 \dim\cZ<\dim J_{10},
\)
 every point of \(\cZ\) is critical for \(\pi_1\).  Sard's theorem therefore
shows that \(\pi_1(\cZ)\) has measure zero in \(J_{10}\).  As a
finite-dimensional smooth embedded submanifold of Euclidean space, \(\cZ\)
admits a compact exhaustion \(\cZ=\bigcup_{j\geq1}K_j\).  For each \(j\),
the set \(\pi_1(K_j)\) is compact and hence closed in \(J_{10}\); since it
is contained in the measure-zero set \(\pi_1(\cZ)\), it has empty interior
and is therefore nowhere dense.  Thus
\(J_{10}\setminus\pi_1(\cZ)\) is a dense \(G_\delta\) set.  Set
\begin{equation*}
 \cU_{\mathrm{sep}}
 :=\cU_{\mathrm{fib}}
   \setminus\pi_1(\cZ).
\end{equation*}
Lemma~\ref{lem:generic-fiber-I10} shows that $\cU_{\mathrm{fib}}$ is
open and dense.  Thus \(\cU_{\mathrm{sep}}\) is a dense \(G_\delta\) subset
of \(J_{10}\).  If
\(X_*\in\cU_{\mathrm{sep}}\) and a jet \(X\) satisfying
\eqref{eq:same-finite-data} had a cubic pair not equal, up to an overall
sign, to that of \(X_*\), then \((X_*,X)\in\cZ\), contradicting
\(X_*\notin\pi_1(\cZ)\).  Therefore
\eqref{eq:same-cubic-pair} holds.
\end{proof}

\begin{remark}
Alternatively, the semialgebraicity of the sets and maps above, as well as the
inequality \(\dim\cZ\leq31<32=\dim J_{10}\) show that
\(\cU_{\mathrm{fib}}\setminus\overline{\pi_1(\cZ)}\) is open and dense and
has the same separation property.
\end{remark}

\subsection{Global determination from finite compatibility}

We now encode the finite separation argument by a polynomial compatibility
system.  Write a supplied low-order datum in the coordinates of
\eqref{eq:I10} as
\[
 \mathbf Y=\bigl(q,Y_2^{[0]};
 Y_3^{[2]},Y_3^{[0]},\mathsf b_8;
 Y_4^{[2]},Y_4^{[0]},\mathsf b_{10};
 Y_5^{[2]},Y_5^{[0]}\bigr),
\]
where
\[
 q=b_{1,0,0},\qquad
 \mathsf b_8=B_8^{[4]},\qquad
 \mathsf b_{10}=\rho_4(B_{10}^{[4]}).
\]
The remaining datum is written
\[
 \mathbf Z:=B_{12}^{[4]}(X_*)=Z_{20}\Omega_1^2
 +Z_{11}\Omega_1\Omega_2+Z_{02}\Omega_2^2.
\]
\begin{definition}
We say that the pair $(\mathbf Y,\mathbf Z)$ is \emph{admissible} if there
exists a jet
\(X_*\in\cU_{\mathrm{sep}}\subset J_{10}\) such that
\begin{equation}\label{eq:admissible}
 \cI_{10}(X_*)=\mathbf Y,
 \qquad
 B_{12}^{[4]}(X_*)=\mathbf Z. 
\end{equation}     
\end{definition}

Given an admissible pair $(\mathbf Y,\mathbf Z)$, let $X\in J_{10}$ be a
jet such that $\cI_{10}(X)=\mathbf Y$ and
$B_{12}^{[4]}(X)=\mathbf Z$.  Denote the cubic pair of \(X\) by
\[
 a:=a_{30}(X),\qquad s:=a_{12}(X).
\]
For $N=2$, the $\hbar^2$ part of the degree-four equation
$B_4^{[2]}(X)=q$ gives
\begin{equation}\label{eq:global-cubic-conic}
 \mathsf Q(a,s)
 :=a^2-\frac{v_2^2}{\delta}s^2-2v_1q=0.
\end{equation}
The $\hbar^0$ part of the degree-four equation
$B_4^{[0]}(X)=Y_2^{[0]}$ gives
\begin{equation}\label{eq:finite-initial-quartic}
 V_4=\mathcal E_2^{-1}\left(
 Y_2^{[0]}-\frac12\Pi\{S_3,V_3\}\right).
\end{equation}
Here
\begin{equation}\label{eq:finite-initial-cubic}
 V_3=ax_1^3+sx_1x_2^2,\qquad S_3=-L^{-1}V_3.
\end{equation}

For \(N=3,4,5\), the corresponding components of \(\mathbf Y\) are
\((Y_N^{[2]},Y_N^{[0]})=(B_{2N}^{[2]}(X),B_{2N}^{[0]}(X))\).
Once the lower-order terms have been determined, the block equation
\eqref{eq:block-matrix} and the invertibility of
\(\widetilde{\mathcal A}_N(a,s)\) determine the zero-edge solution
\begin{equation}\label{eq:finite-zero-edge}
 \begin{pmatrix}\widehat{V_{2N-1}}\\
 \widehat{V_{2N}}\end{pmatrix}
 =\widetilde{\mathcal A}_N(a,s)^{-1}
 \begin{pmatrix}
 Y_N^{[2]}-\widehat K_{2N}^{[2]}\\
 Y_N^{[0]}-\widehat K_{2N}^{[0]}
 \end{pmatrix},
 \qquad N=3,4,5.
\end{equation}
Here \(\widehat K_{2N}^{[2]}\) and
\(\widehat K_{2N}^{[0]}\) are determined by the already constructed
$(2N-2)$-jet.  Theorem~\ref{thm:block-affine-line} then gives the
complete solution by adding the normalized generator
$(\Phi_N(a,s),\Psi_N(a,s))$ of \(\ker\mathcal A_N\).

More explicitly, for \(N=3\), the equations
\[
 B_6^{[2]}(X)=Y_3^{[2]},\qquad
 B_6^{[0]}(X)=Y_3^{[0]}
\]
give \((\widehat{V_5},\widehat{V_6})\) through
\eqref{eq:finite-zero-edge}.  Writing \(t:=a_{14}(X)\), the full
solution is
\begin{equation}\label{eq:finite-N3-block}
 (V_5,V_6)
 =(\widehat{V_5},\widehat{V_6})
 +t(\Phi_3,\Psi_3).
\end{equation}
The remaining
component in this group of \(\mathbf Y\) is
\(B_8^{[4]}(X)=\mathsf b_8\).  Define
\begin{equation}\label{eq:finite-p3-definition}
 V^{<4}
 :=V_2+V_3+V_4+V_5+V_6,
 \qquad
 p_3(a,s;t)
 :=B_8^{[4]}(V^{<4})-\mathsf b_8.
\end{equation}
Since \(B_8^{[4]}\) depends only on the induced six-jet
\(j^6X=(V_2,\ldots,V_6)\), the equation
\(B_8^{[4]}(X)=\mathsf b_8\) is equivalent to \(p_3(a,s;t)=0\).
Proposition~\ref{prop:quadratic-a14} shows that \(p_3\) is quadratic
in \(t\).  Together with the degree-four equation, this gives
\begin{equation}\label{eq:global-p3-equation}
 \mathsf Q(a,s)=0,\qquad
 p_3(a,s;t)=0.
\end{equation}

For \(N=4\), the equations
\[
 B_8^{[2]}(X)=Y_4^{[2]},\qquad
 B_8^{[0]}(X)=Y_4^{[0]}
\]
give \((\widehat{V_7},\widehat{V_8})\) through
\eqref{eq:finite-zero-edge}.  The full block solution has the form
\begin{equation}\label{eq:finite-N4-block}
 (V_7,V_8)
 =(\widehat{V_7},\widehat{V_8})
 +r(\Phi_4,\Psi_4),
 \qquad r:=a_{16}(X).
\end{equation}
The remaining component in the \(N=4\) group is
\(\rho_4(B_{10}^{[4]}(X))=\mathsf b_{10}\).  By
\eqref{eq:d-z}, its dependence on \(a_{16}\) is
\[
 \rho_4(B_{10}^{[4]}(X))
 =\rho_4B_{10}^{[4]}
 (V^{<4}+\widehat{V_7}+\widehat{V_8})
 +r d_4(a,s,t),
\]
where
\[
 d_4(a,s,t)
 =d_4(V^{<4})
 =\rho_4\mathcal T_{V^{<4}}(\Phi_4,\Psi_4).
\]
Thus the equation
\(\rho_4(B_{10}^{[4]}(X))=\mathsf b_{10}\) is equivalent to
\begin{equation}\label{eq:finite-a16}
 \gamma_4(a,s,t,r)
 :=\mathsf b_{10}-\rho_4B_{10}^{[4]}
 (V^{<4}+\widehat{V_7}+\widehat{V_8})
 -r d_4(a,s,t)=0.
\end{equation}

For \(N=5\), the last two components of \(\mathbf Y\) are
\[
 B_{10}^{[2]}(X)=Y_5^{[2]},\qquad
 B_{10}^{[0]}(X)=Y_5^{[0]}.
\]
They determine \((\widehat{V_9},\widehat{V_{10}})\) through
\eqref{eq:finite-zero-edge}.  Writing \(u:=a_{18}(X)\), the complete
solution of this block is
\begin{equation}\label{eq:finite-N5-block}
 (V_9,V_{10})
 =(\widehat{V_9},\widehat{V_{10}})
 +u(\Phi_5,\Psi_5).
\end{equation}
Thus every component of \(\mathcal I_{10}(X)=\mathbf Y\) has been
used, and the resulting jet will be denoted by \(X(a,s,t,r,u)\).

It remains to impose \(B_{12}^{[4]}(X)=\mathbf Z\).  Put
\begin{equation}\label{eq:finite-CD-definition}
 \begin{split}
 \mathsf C(a,s,t,r)&:=B_{12}^{[4]}(X(a,s,t,r,0)),\\
 \mathsf D(a,s,t,r)&:=B_{12}^{[4]}(X(a,s,t,r,1))-\mathsf C(a,s,t,r).
 \end{split}
\end{equation}
Proposition~\ref{prop:shifted-equation} and the linearity of
\(\mathcal T_{V^{<5}}\) give, with
\(V^{<5}:=V_2+V_3+\cdots+V_8\),
\begin{equation}\label{eq:finite-CD-transfer}
 \begin{split}
 \mathsf C
 &=B_{12}^{[4]}(V^{<5})
   +\Pi\mathcal T_{V^{<5}}(\widehat{V_9},\widehat{V_{10}}),\\
 \mathsf D&=\Pi\mathcal T_{V^{<5}}(\Phi_5,\Psi_5),
 \end{split}
\end{equation}
and hence
\begin{equation}\label{eq:global-B12-affine}
 B_{12}^{[4]}(X(a,s,t,r,u))
 =\mathsf C(a,s,t,r)+u\mathsf D(a,s,t,r).
\end{equation}
Writing
\begin{equation}\label{eq:finite-CD-components}
 \begin{split}
 \mathsf C
 &=\mathsf c_{20}\Omega_1^2
   +\mathsf c_{11}\Omega_1\Omega_2+\mathsf c_{02}\Omega_2^2,\\
 \mathsf D
 &=\mathsf d_{20}\Omega_1^2
   +\mathsf d_{11}\Omega_1\Omega_2+\mathsf d_{02}\Omega_2^2,\\
 \mathsf R_{\alpha}&:=Z_{\alpha}-\mathsf c_{\alpha},
 \qquad \alpha\in\{20,11,02\},
 \end{split}
\end{equation}
the equality \(B_{12}^{[4]}(X)=\mathbf Z\) gives, coefficient by
coefficient,
\begin{equation}\label{eq:global-u-recovery}
 \begin{split}
 \gamma_{20}(a,s,t,r,u)&:=\mathsf R_{20}-u\mathsf d_{20}=0,\\
 \gamma_{11}(a,s,t,r,u)&:=\mathsf R_{11}-u\mathsf d_{11}=0,\\
 \gamma_{02}(a,s,t,r,u)&:=\mathsf R_{02}-u\mathsf d_{02}=0.
 \end{split}
\end{equation}
The edge identity gives \(\mathsf d_{02}=d_5\), but no nonvanishing
condition is needed to state these equations.  Together with
\eqref{eq:finite-a16}, they give the division-free compatibility equations
\begin{equation}\label{eq:global-compatibility-equations}
 \gamma_4=\gamma_{20}=\gamma_{11}=\gamma_{02}=0.
\end{equation}

\begin{lemma}\label{lem:rational-compatibility}
Given admissible data \((\mathbf Y,\mathbf Z)\), there exist polynomials
\(P_3,\Gamma_4,\Gamma_{20},\Gamma_{11},\Gamma_{02}\), with coefficients
determined by the data, and nonnegative integers \(m_3,n_4,n_{\alpha}\) such that
\begin{equation*}
 \begin{aligned}
 p_3(a,s;t)=\frac{P_3(a,s,t)}{a^{m_3}},\quad
 \gamma_4=\frac{\Gamma_4}{a^{n_4}},\quad
 \gamma_{\alpha}=\frac{\Gamma_{\alpha}}{a^{n_{\alpha}}},
 \qquad \alpha\in\{20,11,02\}.
 \end{aligned}
\end{equation*}
These functions have no poles on \(a\neq0\).
\end{lemma}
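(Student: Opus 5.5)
The argument tracks denominators through the recursive construction of \(X(a,s,t,r,u)\). The claim is that every quantity built from the data and from \((a,s,t,r,u)\) is polynomial in \((s,t,r,u)\) and Laurent-polynomial in \(a\), with only powers of \(a\) in the denominator. Clearing these denominators then gives the polynomials \(P_3,\Gamma_4,\Gamma_\alpha\).

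\textbf{Base case.} The cubic term \(V_3=ax_1^3+sx_1x_2^2\) and \(S_3=-L^{-1}V_3\) are linear in \((a,s)\), because \(L^{-1}\) is a fixed linear map for fixed \(v\). Hence \(\frac12\Pi\{S_3,V_3\}\) is polynomial in \((a,s)\). Since \(\mathcal E_2^{-1}\) is a fixed diagonal matrix with nonzero constant entries (Proposition~\ref{prop:even-block}), \(V_4\) is polynomial in \((a,s)\) and the data.

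\textbf{Induction for \(N=3,4,5\).} Assume the \((2N-2)\)-jet has coefficients in \(\mathbb R[\mathbf Y][a,a^{-1},s,t,\dots]\). By Lemmas~\ref{lem:finite-step-polynomial} and~\ref{lem:explicit-known-remainders}, the generators \(S_m\) (\(m\le 2N-2\)) and the remainders \(\widehat K_{2N}^{[0]},\widehat K_{2N}^{[2]}\) are polynomial in that jet. This holds because Moyal brackets, \(\Pi\) and \(L^{-1}\) introduce no jet-dependent denominators, so they stay in the same ring.

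Proposition~\ref{prop:rational-affine-dependence} gives \(\det\widetilde{\mathcal A}_N=\widetilde c_N a^{N-1}\) with \(\widetilde c_N\neq0\). Cramer's rule then shows that \(\widetilde{\mathcal A}_N(a,s)^{-1}\) has entries in \(\mathbb R[a,a^{-1},s]\). Consequently the zero-edge solution \eqref{eq:finite-zero-edge} and the kernel generator \((\Phi_N,\Psi_N)\) from \eqref{eq:phi-psi} lie in the ring. The full blocks \eqref{eq:finite-N3-block}, \eqref{eq:finite-N4-block} and \eqref{eq:finite-N5-block} are affine in \(t\), \(r\) and \(u\) respectively, so they lie in the ring as well. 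The induction closes, and every coefficient of \(X(a,s,t,r,u)\) lies in \(\mathbb R[\mathbf Y][a,a^{-1},s,t,r,u]\).

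\textbf{The compatibility functions.} We have \(p_3=B_8^{[4]}(V^{<4})-\mathsf b_8\), where \(B_8^{[4]}\) is a polynomial in the six-jet (Lemma~\ref{lem:finite-step-polynomial}). Likewise \(\rho_4B_{10}^{[4]}(V^{<4}+\widehat V_7+\widehat V_8)\) and \(d_4=\rho_4\mathcal T_{V^{<4}}(\Phi_4,\Psi_4)\) are polynomial expressions in ring elements: \(\mathcal T\) is polynomial in \(j^5\) by Lemma~\ref{lem:T-explicit}. The same holds for \(\mathsf C\) and \(\mathsf D\) through \eqref{eq:finite-CD-transfer}. Hence \(p_3,\gamma_4,\gamma_{20},\gamma_{11},\gamma_{02}\) all lie in the ring.

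Each is a finite sum of terms \(c\,a^{k}s^\cdot t^\cdot r^\cdot u^\cdot\) with \(k\in\mathbb Z\). Multiplying by \(a^{m}\), where \(m\) is the largest negative exponent occurring (or \(0\) if none), yields a polynomial \(P_3\), respectively \(\Gamma_4,\Gamma_\alpha\). Its coefficients are polynomial in the data \((\mathbf Y,\mathbf Z)\), and the data are fixed. The quotient \(P/a^m\) is evidently finite whenever \(a\neq0\), so these functions have no poles there.

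\textbf{Main obstacle.} The only step needing care is confirming that no hidden denominator other than \(a\) arises. The divisions \(1/\delta\), \(1/v_j\), and the small divisors \(\omega\) in \(L^{-1}\) are fixed constants once \(v\) is fixed, so they do not count as jet-dependent. The block inverses contribute only powers of \(a\), via the determinant formula. Finally, admissibility is used only to fix the data; the construction itself is formal and does not depend on it.
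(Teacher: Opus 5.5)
Your proposal is correct and takes the same route as the paper. The paper simply cites Proposition~\ref{prop:rational-affine-dependence}; your argument unpacks that proposition (Cramer's rule with $\det\widetilde{\mathcal A}_N=\widetilde c_N a^{N-1}$, and polynomiality of $\widehat K_{2N}$, $\mathcal T$ and $B^{[4]}$ in the jet), iterates it over $N=3,4,5$, and then clears powers of $a$, making explicit the inductive composition the paper leaves implicit.
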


\begin{proof}
 It is a direct consequence of  Proposition~\ref{prop:rational-affine-dependence}.
\end{proof}

\begin{definition}
Given an admissible pair \((\mathbf Y,\mathbf Z)\), define its finite
compatibility polynomial system by
\begin{equation}
\label{eq:global-compatibility-polynomial}
\begin{aligned}
\mathcal P_{\mathbf Y,\mathbf Z}(a,s,t,r,u):=\{&
\mathsf Q(a,s),P_3(a,s;t),\Gamma_4(a,s,t,r),\\
&\Gamma_{20}(a,s,t,r,u),\Gamma_{11}(a,s,t,r,u),
\Gamma_{02}(a,s,t,r,u)\}.
\end{aligned}
\end{equation}
\end{definition}

\begin{lemma}\label{prop:finite-back-substitution}
Fix admissible data \((\mathbf Y,\mathbf Z)\) and a sign
\(\sigma\).  The construction \eqref{eq:finite-initial-cubic}--
\eqref{eq:global-u-recovery} gives a bijection between the real common
zeros \((a,s,t,r,u)\) of
\(\mathcal P_{\mathbf Y,\mathbf Z}\) satisfying \(\sigma a>0\) and the
ten-jets \(X\)
satisfying
\begin{equation}\label{eq:finite-back-substitution}
 \mathcal I_{10}(X)=\mathbf Y,
 \qquad
 B_{12}^{[4]}(X)=\mathbf Z,
 \qquad
 \sigma a_{30}(X)>0,
\end{equation}
where
\[
 (a,s,t,r,u)
 =(a_{30}(X),a_{12}(X),a_{14}(X),a_{16}(X),a_{18}(X)).
\]
\end{lemma}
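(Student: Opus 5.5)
The plan is to construct the two maps explicitly and check that they are mutually inverse. The forward map sends a real common zero \((a,s,t,r,u)\) with \(\sigma a>0\) to the jet \(X=X(a,s,t,r,u)\) built by back-substitution. The backward map sends a jet \(X\) satisfying \eqref{eq:finite-back-substitution} to its coordinates \((a_{30},a_{12},a_{14},a_{16},a_{18})(X)\).

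\emph{Step 1: the backward map.} Let \(X\) satisfy \eqref{eq:finite-back-substitution}, and set \((a,s,t,r,u)\) to be its five coordinates. By \eqref{eq:b100-formula}, the equation \(B_4^{[2]}(X)=q\) is exactly \(\mathsf Q(a,s)=0\). Next we apply Proposition~\ref{prop:homological-block} and Theorem~\ref{thm:block-affine-line} inductively for \(N=3,4,5\). Since \(a\neq0\), each \(\widetilde{\mathcal A}_N(a,s)\) is invertible. Hence \(V_4\) is given by \eqref{eq:finite-initial-quartic}, and each pair \((V_{2N-1},V_{2N})\) is given by \eqref{eq:finite-N3-block}, \eqref{eq:finite-N4-block} and \eqref{eq:finite-N5-block}, with edge parameters \(t,r,u\). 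Here we use that the normalized direction has edge coefficient \(1\) and that the base point has edge coefficient \(0\), because \(\widehat V_{2N-1}\in\widetilde{\mathsf O}_{2N-1}\). This proves \(X=X(a,s,t,r,u)\).

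The remaining equations are checked as follows. The identity \(B_8^{[4]}(X)=\mathsf b_8\) gives \(p_3=0\), because \(B_8^{[4]}\) depends only on \(j^6X\) (Proposition~\ref{prop:quadratic-a14}). The identity \(\rho_4B_{10}^{[4]}(X)=\mathsf b_{10}\) gives \(\gamma_4=0\) by \eqref{eq:d-z} and Corollary~\ref{cor:scalar-edge-equation}. The identity \(B_{12}^{[4]}(X)=\mathbf Z\) gives \(\gamma_\alpha=0\) by \eqref{eq:global-B12-affine}. Finally, since \(a\neq0\), Lemma~\ref{lem:rational-compatibility} converts the vanishing of these rational functions into the vanishing of the polynomials \(P_3,\Gamma_4,\Gamma_\alpha\).

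\emph{Step 2: the forward map.} Conversely, take a real common zero with \(\sigma a>0\). Because \(a\neq0\), the polynomial equations are equivalent to \(p_3=\gamma_4=\gamma_\alpha=0\), since each is the corresponding rational equation multiplied by a power of \(a\). Define \(X=X(a,s,t,r,u)\) by the back-substitution formulas. We then verify each component of \eqref{eq:finite-back-substitution}.

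\begin{itemize}
\item \(B_4^{[2]}(X)=q\) follows from \(\mathsf Q=0\).
\item \(B_4^{[0]}(X)=Y_2^{[0]}\) follows by construction of \(V_4\) through \eqref{eq:initial-even-block}.
\item For \(N=3,4,5\), the identities \((B_{2N}^{[2]},B_{2N}^{[0]})(X)=(Y_N^{[2]},Y_N^{[0]})\) follow from \eqref{eq:block-matrix}. The key points are that \(\widetilde{\mathcal A}_N\) applied to the base point reproduces the right-hand side, that the kernel direction contributes nothing, and that \(\widehat K_{2N}\) is computed from the same lower jet of \(X\).
\item The \(\hbar^4\) components follow by reversing the equivalences used in Step~1, namely \eqref{eq:finite-p3-definition}, \eqref{eq:finite-a16} and \eqref{eq:global-u-recovery}.
\end{itemize}

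Reading off coefficients then returns \((a,s,t,r,u)\): the cubic pair is \((a,s)\) by \eqref{eq:finite-initial-cubic}, and the edge coefficients are \(t,r,u\) by the normalization.

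\emph{Step 3: bijectivity.} The two maps are mutually inverse. One composite is the identity by the coefficient read-off in Step~2, the other by the reconstruction identity \(X=X(a,s,t,r,u)\) from Step~1.

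\emph{Main obstacle.} The delicate point is ensuring that the auxiliary quantities \(\widehat K_{2N}^{[2r]}\), \(\mathcal T_{V^{<N}}\), \(\mathsf C\) and \(\mathsf D\) are always evaluated on the jet under construction, not on the reference jet \(X_*\). This matters because the data \((\mathbf Y,\mathbf Z)\) are fixed while the lower jet varies with \((a,s,t,r)\). I would make this explicit by defining everything inductively as functions of \((a,s,t,r,u)\). The same care is needed for the quantities in \eqref{eq:finite-CD-definition}, where the base point at level \(5\) depends on \((a,s,t,r)\). Admissibility is used only to ensure that the data come from the image, so that the formulas make sense. It plays no role in the equivalences themselves.
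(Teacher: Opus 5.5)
Your proposal is correct and follows essentially the same route as the paper. In one direction, the paper reads off $(a_{30},a_{12},a_{14},a_{16},a_{18})(X)$. In the other, it uses $a\neq0$ to invert $\mathcal E_2$ and $\widetilde{\mathcal A}_N$, back-substitutes to build $X(a,s,t,r,u)$, and checks the equations; your explicit remark that $\widehat K_{2N}^{[2r]}$, the transfer operators, and $\mathsf C,\mathsf D$ must be evaluated on the jet under construction is what the paper's terse argument relies on implicitly.
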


\begin{proof}
By the preceding construction, each ten-jet \(X\) satisfying
\eqref{eq:finite-back-substitution} determines the real common zero
\[
 (a,s,t,r,u)
 =(a_{30}(X),a_{12}(X),a_{14}(X),a_{16}(X),a_{18}(X))
\]
of \(\mathcal P_{\mathbf Y,\mathbf Z}\).

Conversely, let \((a,s,t,r,u)\) be a real common zero of
\(\mathcal P_{\mathbf Y,\mathbf Z}\) satisfying \(\sigma a>0\).
Since \(a\neq0\), the maps \(\mathcal E_2\) and
\(\widetilde{\mathcal A}_N\), \(N=3,4,5\), are invertible.  Formulas
\eqref{eq:finite-initial-cubic}--\eqref{eq:global-u-recovery}
determine a unique ten-jet \(X=X(a,s,t,r,u)\).  The degree-four
equations, the three block equations, \(p_3=0\), and
\(\gamma_4=0\) give \(\mathcal I_{10}(X)=\mathbf Y\), while
\(\gamma_{20}=\gamma_{11}=\gamma_{02}=0\) gives
\(B_{12}^{[4]}(X)=\mathbf Z\).  Moreover,
\[
 (a_{30}(X),a_{12}(X),a_{14}(X),a_{16}(X),a_{18}(X))
 =(a,s,t,r,u),
\]
so \(\sigma a_{30}(X)>0\).  The two constructions are therefore
inverse to each other.
\end{proof}

For admissible data \((\mathbf Y,\mathbf Z)\) and
\(\sigma\in\{\pm1\}\), define
\begin{equation}\label{eq:cubic-coordinate-projection}
 \mathcal S_{\mathbf Y,\mathbf Z}^{\sigma}
 :=\left\{s\in\mathbb R:
 \begin{array}{c}
 \text{there exists }(a,t,r,u)\in\mathbb R^4\text{ such that}\\
 \mathcal P_{\mathbf Y,\mathbf Z}(a,s,t,r,u)=0
 \text{ and }\sigma a>0
 \end{array}
 \right\}.
\end{equation}
Here \(\mathcal P_{\mathbf Y,\mathbf Z}=0\) means that all six
polynomials in \eqref{eq:global-compatibility-polynomial} vanish.

\begin{proposition}\label{prop:recover-a12}
Let admissible data \((\mathbf Y,\mathbf Z)\) be realized by
\(X_*\) as in \eqref{eq:admissible}, and assume
\(\sigma a_{30}(X_*)>0\).  Then
\begin{equation}\label{eq:a12-singleton}
 \mathcal S_{\mathbf Y,\mathbf Z}^{\sigma}
 =\{a_{12}(X_*)\}.
\end{equation}
Consequently, the data determine \(a_{12}(X_*)\) uniquely.
\end{proposition}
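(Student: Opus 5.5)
The plan is to prove the two inclusions in \eqref{eq:a12-singleton} separately, using Lemma~\ref{prop:finite-back-substitution} to pass between zeros of $\mathcal P_{\mathbf Y,\mathbf Z}$ and ten-jets, and Proposition~\ref{prop:separate-cubic} to compare the jets obtained.

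For the inclusion $a_{12}(X_*)\in\mathcal S^\sigma_{\mathbf Y,\mathbf Z}$, apply Lemma~\ref{prop:finite-back-substitution} in the direction jet $\to$ zero with $X=X_*$. By \eqref{eq:admissible}, $X_*$ satisfies \eqref{eq:finite-back-substitution}, including $\sigma a_{30}(X_*)>0$. Hence
\[
 (a_{30},a_{12},a_{14},a_{16},a_{18})(X_*)
\]
is a real common zero of $\mathcal P_{\mathbf Y,\mathbf Z}$ with $\sigma a>0$. Its $s$-coordinate is $a_{12}(X_*)$, so $a_{12}(X_*)$ lies in the set. The only thing to check is that the polynomials $P_3,\Gamma_4,\Gamma_\alpha$ vanish exactly when $p_3,\gamma_4,\gamma_\alpha$ do. This holds because Lemma~\ref{lem:rational-compatibility} shows the two differ only by powers of $a\neq0$.

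For the reverse inclusion, let $s\in\mathcal S^\sigma_{\mathbf Y,\mathbf Z}$ with witness $(a,t,r,u)$ and $\sigma a>0$. By the zero $\to$ jet direction of Lemma~\ref{prop:finite-back-substitution}, using again that $a\neq0$ makes the rational and polynomial forms equivalent, there is a ten-jet $X=X(a,s,t,r,u)$ with
\[
 \cI_{10}(X)=\mathbf Y=\cI_{10}(X_*),\qquad B_{12}^{[4]}(X)=\mathbf Z=B_{12}^{[4]}(X_*),
\]
and $a_{30}(X)=a$, $a_{12}(X)=s$. Since $X_*\in\cU_{\mathrm{sep}}$, Proposition~\ref{prop:separate-cubic} gives $(a,s)=\pm(a_{30}(X_*),a_{12}(X_*))$. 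The minus sign would give $\sigma a=-\sigma a_{30}(X_*)<0$, contradicting $\sigma a>0$. So the sign is $+$ and $s=a_{12}(X_*)$.

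The main point needing care is the well-definedness of the system. The polynomials in $\mathcal P_{\mathbf Y,\mathbf Z}$ are built from the data alone via \eqref{eq:finite-zero-edge}, so they do not depend on the choice of realizing jet $X_*$. The statement of Proposition~\ref{prop:separate-cubic} applies to arbitrary $X\in J_{10}$, with no genericity required of $X$; this is what lets the false witness $X$ be ruled out. The uniqueness consequence then follows because the set $\mathcal S^\sigma_{\mathbf Y,\mathbf Z}$ is defined purely from $(\mathbf Y,\mathbf Z,\sigma)$.
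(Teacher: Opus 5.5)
Your proof is correct and follows the paper's argument essentially verbatim: the coordinates of $X_*$ give a common zero of $\mathcal P_{\mathbf Y,\mathbf Z}$, and conversely any witness $(a,t,r,u)$ yields, via Lemma~\ref{prop:finite-back-substitution}, a ten-jet sharing $\cI_{10}$ and $B_{12}^{[4]}$ with $X_*$, so Proposition~\ref{prop:separate-cubic} and the sign condition $\sigma a>0$ force $s=a_{12}(X_*)$. Your added remarks, on clearing the powers of $a\neq0$ and on $\mathcal P_{\mathbf Y,\mathbf Z}$ depending only on the data, are accurate and simply make explicit what the paper leaves implicit.
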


\begin{proof}
The coordinates of \(X_*\) give a real common zero of
\(\mathcal P_{\mathbf Y,\mathbf Z}\), so
\(a_{12}(X_*)\in\mathcal S_{\mathbf Y,\mathbf Z}^{\sigma}\).
Conversely, if \(s\in\mathcal S_{\mathbf Y,\mathbf Z}^{\sigma}\),
choose a corresponding \((a,t,r,u)\).  Proposition~\ref{prop:finite-back-substitution}
produces a ten-jet \(X\) satisfying \eqref{eq:same-finite-data} and
\(\sigma a_{30}(X)>0\).  Proposition~\ref{prop:separate-cubic} gives
\((a_{30}(X),a_{12}(X))=\pm(a_{30}(X_*),a_{12}(X_*))\).  The sign
condition excludes the minus alternative, and hence
\(s=a_{12}(X_*)\), proving \eqref{eq:a12-singleton}.
\end{proof}

\subsection{Proof of the main theorem}

\begin{proof}[Proof of Theorem~\ref{thm:main}]
For the fixed sign \(\sigma\), set
\[
 \mathfrak F_{10,v}^\sigma
 :=\{X\in J_{10,v}^\sigma:
 a_{12}(X)\neq0,\ \kappa_2(X)\neq0,\ \chi^\sharp(X)\neq0\}.
\]
By Lemma~\ref{lem:normalized-compatibility-generic}, the open density of
\(\{a_{12}\neq0\}\), and the finite-jet dependence established in
Lemmas~\ref{lem:kappa2-nonzero} and~\ref{lem:a70-transversality},
the inverse image of \(\mathfrak F_{10,v}^\sigma\) under \(j^{10}\) is
open and dense in \(\mathcal V_v^\sigma\).
Since \(j^{10}\) is a continuous open surjection by the definition of the
cylinder topology, \(\mathfrak F_{10,v}^\sigma\) is open and dense in
\(J_{10,v}^\sigma\).
Set
\begin{equation*}
 \mathfrak G_{10,v}^\sigma
 :=\mathfrak F_{10,v}^\sigma
 \cap\mathcal U_{\mathrm{sep}}
 \cap\bigcap_{N\ge4}\{d_N\neq0\}.
\end{equation*}
The separation factor $\mathcal U_{\mathrm{sep}}$ is a dense \(G_\delta\) set by
Proposition~\ref{prop:separate-cubic}, and
Proposition~\ref{prop:d-nonzero} shows that each set
\(\{a_{12}\neq0,\ d_N\neq0\}\) is open and dense in
$J_{10,v}^\sigma$.  It follows that
\(\mathfrak G_{10,v}^\sigma\) is a dense \(G_\delta\) subset of
\(J_{10,v}^\sigma\).  Define its inverse image
\[
 \mathcal G_v^\sigma
 :=(j^{10})^{-1}(\mathfrak G_{10,v}^\sigma).
\]
Since \(j^{10}\) is continuous and open, \(\mathcal G_v^\sigma\) is a
dense \(G_\delta\) subset of \(\mathcal V_v^\sigma\).

Now let \(V\in\mathcal G_v^\sigma\), and let
\(W\in\mathcal V_v^\sigma\) have the same
\((B^{[0]},B^{[2]},B^{[4]})\).  Their ten-jets have the same
\(\mathcal I_{10}\)-datum and the same \(B_{12}^{[4]}\); hence
Proposition~\ref{prop:separate-cubic} gives
\[
 (a_{30}(W),a_{12}(W))
 =\pm(a_{30}(V),a_{12}(V)).
\]
Since \(V,W\in\mathcal V_v^\sigma\), their \(a_{30}\)-coefficients have
the same nonzero sign; hence the minus alternative is impossible.  Thus
\(V\) and \(W\) lie in the same fixed-\(a_{12}\) space, while the
conditions defining \(\mathcal G_v^\sigma\) place \(V\) in the generic set
\(\mathcal G_{v,\sigma}^{a_{12}(V)}\) of
Theorem~\ref{thm:recover-a14}.  That theorem yields \(W=V\).

For the determination of the Taylor coefficients, extract
\((\mathbf Y,\mathbf Z)\) from the QBNF data and form
\(\mathcal P_{\mathbf Y,\mathbf Z}\).  Proposition~\ref{prop:recover-a12}
determines \(s=a_{12}(V)\) uniquely.
The equation \(\mathsf Q=0\), together with the prescribed sign, then gives $a_{30}$.
With \(a_{30}\) and \(a_{12}\) now determined,
Theorem~\ref{thm:recover-a14} determines \(a_{14}\) and then the remaining
homogeneous terms successively.  This proves the determination assertion.
\end{proof}

\begin{proof}[Proof of Theorem~\ref{thm:main-mod-symmetry}]
Let \(\mathfrak G_{10,v}^+\) be the set given by
Theorem~\ref{thm:main} for \(\sigma=+1\), and set
\[
 \mathfrak G_{10,v}
 :=\mathfrak G_{10,v}^+\cup\iota(\mathfrak G_{10,v}^+).
\]
This is an \(\iota\)-invariant dense \(G_\delta\) set in \(J_{10,v}\).
If \(V,W\in\mathcal V_v\) have the same first three QBNF layers and
\(j^{10}V\in\mathfrak G_{10,v}\), use the \(\iota\)-invariance of the
QBNF to assume that \(j^{10}V\in\mathfrak G_{10,v}^+\).
By Proposition~\ref{prop:separate-cubic}, either \(W\) or \(\iota W\)
lies in \(\mathcal V_v^+\).  Theorem~\ref{thm:main} then gives
\(W=V\) or \(W=\iota V\), and its determination assertion gives the
remaining conclusions.
\end{proof}

\begin{proof}[Proof of Theorem~\ref{thm:analytic-genericity}]
Let \(\mathfrak G_{10,v}^\sigma\) be the dense \(G_\delta\) subset
constructed in the proof of Theorem~\ref{thm:main}, and set
\[
 \mathcal G_v^\sigma
 :=(j^{10})^{-1}(\mathfrak G_{10,v}^\sigma).
\]
By Proposition~\ref{prop:analytic-jet-topology}, \(\mathcal G_v^\sigma\)
is a dense \(G_\delta\) subset of \(\mathcal A_v^\sigma\).

If \(V\in\mathcal G_v^\sigma\) and \(W\in\mathcal A_v^\sigma\) have
the same first three QBNF layers, then Theorem~\ref{thm:main} identifies
their Taylor series. Since \(V\) is real analytic on the connected domain \(\mathbb R^2\), this Taylor series determines \(V\) globally.
\end{proof}

\begin{proof}[Proof of Theorem~\ref{thm:main-spectral}]
Set
\[
 \mathcal G_v^{\mathrm{trap}}
 :=(j^{10})^{-1}(\mathfrak G_{10,v}).
\]
Proposition~\ref{prop:analytic-trapping-jet-topology} shows that
\(\mathcal G_v^{\mathrm{trap}}\) is a dense \(G_\delta\) subset of
\(\mathcal A_v^{\mathrm{trap}}\).  It is \(\iota\)-invariant because
\(\mathfrak G_{10,v}\) is \(\iota\)-invariant and \(j^{10}\) commutes
with \(\iota\).

Let \(V\in\mathcal G_v^{\mathrm{trap}}\) and
\(W\in\mathcal A_v^{\mathrm{trap}}\), and suppose that
\(
 \operatorname{Spec}_{\mathrm{sc}}(P_\hbar^V)
 =\operatorname{Spec}_{\mathrm{sc}}(P_\hbar^W).
\)
The spectral determination of the QBNF implies that \(V\) and \(W\) have
the same complete QBNF, we conclude by applying the sign-free form of
Theorem~\ref{thm:analytic-genericity}, similar to Theorem \ref{thm:main-mod-symmetry}.

Finally, if
\(
 \operatorname{Spec}_{\delta}(P_\hbar^V)
 =\operatorname{Spec}_{\delta}(P_\hbar^W)
\)
for all sufficiently small \(\hbar>0\), the observation preceding the
theorem gives equality of the semiclassical spectra, and the same
conclusions follow.
\end{proof}
\bigskip

\appendix

\section{Genericity via finite-jet maps}
\label{app:analytic-topology}






We use the standard holomorphic-germ topology described in
Kriegl--Michor~\cite[Sections~3.1, 3.3, and 3.10--3.12]{KM90}.  Let
\(\mathcal H(\R^2\subset\C^2)\) be the space of holomorphic germs along
\(\R^2\), and let \(\mathcal N\) be the collection of the complex
neighborhoods of $\mathbb R^2$.  For each \(W\in\mathcal N\), equip
\(\mathcal O(W)\) with the compact-open topology and let
\[
 q_W:\mathcal O(W)\longrightarrow\mathcal H(\R^2\subset\C^2)
\]
be the germ map.  The germ space carries the finest locally convex topology
for which every \(q_W\) is continuous.  In particular, compact-open convergence on a fixed \(W\)
implies convergence of the corresponding germs.

We identify \(C^\omega(\R^2;\R)\) with the conjugation-invariant real
subspace of \(\mathcal H(\R^2\subset\C^2)\) and give it the relative
topology.  Let \(\mathcal A_v\) be the subspace consisting of potentials
whose Taylor series at the origin has the form \eqref{eq:Vm-expansion}, and set
\[
 \mathcal A_v^\sigma
 :=\{V\in\mathcal A_v:\sigma a_{30}(V)>0\},
 \qquad \sigma\in\{\pm1\}.
\]
Both spaces carry the relative topology just described.

\begin{proposition}\label{prop:analytic-jet-topology}
For every \(m\ge3\) and \(\sigma\in\{\pm1\}\), the jet map
\[
 j^m:\mathcal A_v^\sigma\longrightarrow J_{m,v}^\sigma
\]
is continuous, open, and surjective. Consequently, if
\(\mathfrak G\subset J_{m,v}^\sigma\) is dense \(G_\delta\), then
\((j^m)^{-1}(\mathfrak G)\) is dense \(G_\delta\) in
\(\mathcal A_v^\sigma\).
\end{proposition}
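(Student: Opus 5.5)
We prove the three properties of \(j^m\) separately and then deduce the \(G_\delta\) statement. \(J_{m,v}^\sigma\) is an open subset of a finite-dimensional affine space, identified with an open subset of some \(\R^{d}\) through the coefficients \(a_{j,2k}\), \(3\le j+2k\le m\).

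\emph{Continuity.} Each coefficient \(a_{j,2k}(V)=\partial_x^\alpha V(0)/\alpha!\) is a continuous linear functional on \(\mathcal H(\R^2\subset\C^2)\). Indeed, by the universal property of the final locally convex topology, it suffices to check continuity on each \(\mathcal O(W)\). There the Cauchy estimates on a small polydisc around \(0\) contained in \(W\) bound \(|\partial^\alpha V(0)|\) by a compact-open seminorm. Restricting to the real subspace \(\mathcal A_v^\sigma\) gives continuity of \(j^m\).

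\emph{Surjectivity and openness.} These rest on one construction: the \emph{polynomial translations} \(V\mapsto V+P\), where \(P\) is a \(\mathbb Z_2\)-symmetric polynomial of degree at most \(m\) with no terms of degree \(\le2\). Such a \(P\) is entire, so it defines a germ, and adding it does not change the quadratic part \(V_2\), which stays fixed by \(v\). It also preserves evenness in \(x_2\).
\begin{itemize}
\item For surjectivity, given \(X\in J^\sigma_{m,v}\), the polynomial potential \(V_2+V_3+\cdots+V_m\) read off from \(X\) lies in \(\mathcal A_v^\sigma\) and maps to \(X\).
\item For openness, let \(U\ni V\) be open in \(\mathcal A_v^\sigma\). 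The map \(p\mapsto V+P(p)\), from the coefficient vector \(p\in\R^d\) into the germ space, is continuous: it is affine and finite-dimensional, and it factors through \(\mathcal O(\C^2)\) with its compact-open topology, where convergence in \(p\) is uniform on compacts. Hence \(V+P(p)\in U\) for \(p\) in a small ball \(B\). Since \(j^m(V+P(p))=j^mV+p\), the image \(j^m(U)\) contains the neighborhood \(j^mV+B\).
\end{itemize}
One detail needs care: we must remain in \(\mathcal A_v^\sigma\), that is, keep \(\sigma a_{30}>0\). This holds for small \(p\) because the condition is open.

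\emph{The \(G_\delta\) consequence.} Write \(\mathfrak G=\bigcap_n O_n\) with each \(O_n\) open and dense. Then \((j^m)^{-1}(\mathfrak G)=\bigcap_n (j^m)^{-1}(O_n)\).
\begin{itemize}
\item Each \((j^m)^{-1}(O_n)\) is open, by continuity.
\item Each \((j^m)^{-1}(O_n)\) is dense: a nonempty open \(U\) has open image \(j^m(U)\), which meets \(O_n\), so \(U\) meets \((j^m)^{-1}(O_n)\).
\end{itemize}
Density of the whole intersection is a separate issue. If \(\mathcal A_v^\sigma\) were known to be Baire, it would follow at once, but that is unclear in the germ topology. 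So we instead use the polynomial translations once more. Given a nonempty open \(U\ni V\), the translates \(V+P(p)\) with \(p\in B\) lie in \(U\) and have jets filling the open set \(j^mV+B\). Since \(\mathfrak G\) is dense in the finite-dimensional space \(J^\sigma_{m,v}\), some \(p\in B\) has \(j^mV+p\in\mathfrak G\), and then \(V+P(p)\in U\cap(j^m)^{-1}(\mathfrak G)\). This argument requires only density of \(\mathfrak G\), so it sidesteps the Baire issue entirely.

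The main obstacle is the topological justification that finite-dimensional polynomial perturbations move continuously in the germ topology, and that the coefficient functionals are continuous there. Both reduce to the final-topology universal property together with Cauchy estimates.
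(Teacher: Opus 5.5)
Your proposal is correct and follows essentially the same route as the paper's proof. You use Cauchy estimates for continuity of the coefficient functionals, the polynomial potential $V_2+X_3+\cdots+X_m$ for surjectivity, and continuity of finite-dimensional polynomial translations through $\mathcal O(\mathbb C^2)$ for openness. You also spell out the dense $G_\delta$ consequence, which the paper leaves implicit. Your argument there is right and needs no Baire property: an open map pulls back any dense set to a dense set.
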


\begin{proof}
For every multi-index \(\alpha\) and every \(W\in\mathcal N\), the Cauchy
estimate on a closed polydisc \(\overline D_r\Subset W\) gives
\[
 |\partial^\alpha F(0)|
 \le \alpha!r^{-|\alpha|}
 \sup_{Z\in\overline D_r}|F(Z)|.
\]
Hence derivative evaluation, and therefore \(j^m\), is continuous.  Write
\[
 J^\circ_{m,v}:=J_{m,v}-V_2
\]
for the vector space obtained by removing the fixed quadratic part.  Its
elements are tuples \(Y=(Y_3,\ldots,Y_m)\) of admissible homogeneous
polynomials.  Define
\[
 s_m:J^\circ_{m,v}\longrightarrow C^\omega(\R^2;\R),
 \qquad s_m(Y):=Y_3+\cdots+Y_m.
\]
The map \(s_m\) is continuous: coefficient convergence in the finite
dimensional space \(J^\circ_{m,v}\) gives compact-open convergence of the
corresponding polynomials in \(\mathcal O(\C^2)\), hence, by the continuity
of \(q_W\) with \(W=\C^2\), convergence of their germs in
\(C^\omega(\R^2;\R)\).  Moreover,
\[
 j^m\bigl(V+s_m(Y)\bigr)=j^mV+Y.
\]
Indeed, every \(X=(V_2,X_3,\ldots,X_m)\in J_{m,v}^\sigma\) is realized
by the polynomial \(V_2+X_3+\cdots+X_m\).
Thus \(j^m\) is surjective.  If \(U\subset\mathcal A_v^\sigma\) is open
and \(V\in U\), the map \(Y\mapsto V+s_m(Y)\) is continuous and sends
zero to \(V\).  Since \(\mathcal A_v^\sigma\) is open in \(\mathcal A_v\),
there is a neighborhood \(B\) of zero in \(J^\circ_{m,v}\) such that
\(V+s_m(B)\subset U\).  Hence
\[
 j^mV+B=j^m\bigl(V+s_m(B)\bigr)\subset j^m(U).
\]
Thus \(j^mV\) is an interior point of \(j^m(U)\).  Since \(V\in U\) is
arbitrary, \(j^m\) is open.

\end{proof}

Let \(\mathcal A_v^{\mathrm{trap}}\subset\mathcal A_v\) be the class of
potentials for which the origin is the unique global minimum and for which
\(V^{-1}([0,\varepsilon])\) is compact for some \(\varepsilon>0\).  We give
\(\mathcal A_v^{\mathrm{trap}}\) the relative topology inherited from
\(\mathcal A_v\).

\begin{proposition}\label{prop:analytic-trapping-jet-topology}
For every \(m\ge3\), the restricted jet map
\[
 j^m:\mathcal A_v^{\mathrm{trap}}\longrightarrow J_{m,v}
\]
is continuous and open.  Consequently, if \(\mathfrak G\subset J_{m,v}\) is dense
\(G_\delta\), then \((j^m)^{-1}(\mathfrak G)\) is dense \(G_\delta\) in
\(\mathcal A_v^{\mathrm{trap}}\).
\end{proposition}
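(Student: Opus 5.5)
The plan is to imitate the proof of Proposition~\ref{prop:analytic-jet-topology}, with one extra step: the perturbation used to show openness must stay inside the trapping class. Continuity is immediate, because \(\mathcal A_v^{\mathrm{trap}}\) carries the relative topology and \(j^m\) is continuous on \(\mathcal A_v\) by the Cauchy estimates. The consequence about dense \(G_\delta\) sets will then follow formally. If \(\mathfrak G=\bigcap_k U_k\) with \(U_k\) open and dense in \(J_{m,v}\), each \((j^m)^{-1}(U_k)\) is open by continuity. It is also dense: for any nonempty open \(O\subset\mathcal A_v^{\mathrm{trap}}\), the set \(j^m(O)\) is a nonempty open set by openness, so it meets \(U_k\). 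Note that surjectivity is not needed for this.

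For openness, fix \(V\in U\subset\mathcal A_v^{\mathrm{trap}}\) with \(U\) open. As before, consider \(Y\mapsto V+s_m(Y)\), but with \(s_m(Y)\) replaced by a cut-off version. Polynomial perturbations \(Y_3+\cdots+Y_m\) can destroy trapping at infinity and can create new global minima away from the origin. So I would use instead
\[
 \tilde s_m(Y):=e^{-|x|^2}\,(Y_3+\cdots+Y_m)\,e^{|x|^2}\big|_{\text{truncated}},
\]
more precisely \(\tilde s_m(Y):=e^{-|x|^2}P_Y(x)\). Here \(P_Y\) is the unique polynomial, with no terms of degree \(\le 2\) and even in \(x_2\), chosen so that the \(m\)-jet of \(e^{-|x|^2}P_Y\) at \(0\) equals \((0,0,Y_3,\dots,Y_m)\). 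The map \(Y\mapsto P_Y\) is a linear triangular inversion, and \(e^{-|x|^2}\) is even with constant term \(1\) and no degree-\(1\) or degree-\(2\) cross terms in its expansion. The function \(e^{-(z_1^2+z_2^2)}P_Y(z)\) is entire, and it depends continuously on \(Y\) in \(\mathcal O(\C^2)\), hence in the germ topology. It also preserves the \(\mathbb Z_2\)-symmetry and the vanishing of the \(2\)-jet, since \(V_2\) stays fixed. Therefore \(j^m(V+\tilde s_m(Y))=j^mV+Y\).

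The main obstacle is checking that \(V+\tilde s_m(Y)\in\mathcal A_v^{\mathrm{trap}}\) for all small \(Y\). I would argue this as follows. Since \(P_Y\) has order \(\ge3\), there is a constant \(C\) with \(|e^{-|x|^2}P_Y(x)|\le C|Y|\,|x|^3e^{-|x|^2/2}\) on \(\R^2\). Near the origin, \(V\ge c|x|^2\) on some ball \(|x|\le r_0\), by nondegeneracy. Therefore \(V+\tilde s_m(Y)\ge (c-C|Y|r_0)|x|^2>0\) there, for \(x\ne0\) and \(Y\) small.

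Next, fix \(\varepsilon\) with \(V^{-1}([0,\varepsilon])\) compact. On the complement of the ball, the origin being the unique global minimum together with compactness of the sublevel set gives \(\eta>0\) such that \(V\ge\min(\eta,\varepsilon)\) for \(|x|\ge r_0\). The perturbation is uniformly bounded by \(C'|Y|\). So for small \(Y\) the perturbed potential is \(\ge \tfrac12\min(\eta,\varepsilon)>0\) outside the ball. Hence the origin remains the unique global minimum. Moreover, the sublevel set at level \(\tfrac12\min(\eta,\varepsilon)\) is contained in \(V^{-1}([0,\varepsilon])\) once \(C'|Y|<\tfrac12\min(\eta,\varepsilon)\), so it is closed and bounded, hence compact. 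This gives a neighborhood \(B\) of \(0\) in \(J^\circ_{m,v}\) with \(V+\tilde s_m(B)\subset \mathcal A_v^{\mathrm{trap}}\). Shrinking \(B\) using continuity of \(Y\mapsto V+\tilde s_m(Y)\) gives \(V+\tilde s_m(B)\subset U\). Then \(j^mV+B\subset j^m(U)\), which proves openness.
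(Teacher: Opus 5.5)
Your proof is correct and matches the paper's argument step for step: the same Gaussian-damped perturbation $P_{Y,m}(x)e^{-|x|^2}$, where $P_{Y,m}$ is the degree-$\le m$ truncation of $(Y_3+\cdots+Y_m)e^{|x|^2}$, the same near-origin and far-field lower bounds, and the same shrinking of the neighborhood so that it lands in $U$. One small tightening is needed in the consequence: showing that each $(j^m)^{-1}(U_k)$ is dense gives density of their intersection only in a Baire space, which $\mathcal A_v^{\mathrm{trap}}$ is not known to be, so apply your openness argument to $\mathfrak G$ directly instead, since $j^m(O)$ is nonempty and open and therefore meets the dense set $\mathfrak G$.
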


\begin{proof}
Continuity follows from the Cauchy-estimate argument in the proof of
Proposition~\ref{prop:analytic-jet-topology}.
Write \(J^\circ_{m,v}:=J_{m,v}-V_2\), and let
\(Y=(Y_3,\ldots,Y_m)\in J^\circ_{m,v}\).  Set
\[
 P_{Y,m}(Z)
 :=\left[(Y_3(Z)+\cdots+Y_m(Z))
          e^{Z_1^2+Z_2^2}\right]_{\le m},
\]
where \([\,\cdot\,]_{\le m}\) denotes the Taylor polynomial at the origin
of total degree at most \(m\), and define
\[
 s_m^{\mathrm{dec}}(Y)(x)
 :=P_{Y,m}(x)e^{-(x_1^2+x_2^2)}.
\]
By the definition of \(P_{Y,m}\),
\[
 P_{Y,m}(Z)e^{-(Z_1^2+Z_2^2)}
 =Y_3(Z)+\cdots+Y_m(Z)+O(|Z|^{m+1}).
\]
This is a real-valued entire function on the real locus and has the required
reflection symmetry.  The map \(Y\mapsto s_m^{\mathrm{dec}}(Y)\) is linear
and continuous into \(C^\omega(\R^2;\R)\): coefficient convergence in its
finite dimensional domain gives compact-open convergence of the
corresponding entire functions, and hence convergence of their holomorphic
germs.  By construction,
\[
 j^m\bigl(V+s_m^{\mathrm{dec}}(Y)\bigr)=j^mV+Y
 \qquad (V\in\mathcal A_v).
\]

Fix any norm \(\|\cdot\|\) on \(J^\circ_{m,v}\).  Since \(P_{Y,m}\) depends
linearly on \(Y\), there is a constant \(C_m>0\) such that
\[
 \sup_{x\in\R^2}|s_m^{\mathrm{dec}}(Y)(x)|
 \le C_m\|Y\|,
 \qquad
 |s_m^{\mathrm{dec}}(Y)(x)|
 \le C_m\|Y\|\,|x|^3\quad (|x|\le1).
\]
Indeed, the coefficients of \(P_{Y,m}\) are bounded by \(C\|Y\|\).
Since \(P_{Y,m}\) has degree at most \(m\), the boundedness of
\(r^k e^{-r^2}\) on \([0,\infty)\), for \(3\le k\le m\), gives the first
estimate.  Since \(P_{Y,m}\) vanishes to order at least three and
\(r^k e^{-r^2}\le r^3\) for \(0\le r\le1\), the second estimate follows.
Let \(V\in\mathcal A_v^{\mathrm{trap}}\).  The positive definite quadratic
part of \(V\), the uniqueness of its global minimum, and the compactness of
a low sublevel set give constants \(r\in(0,1)\) and \(c,\eta>0\) such that
\[
 V(x)\ge c|x|^2\quad (|x|\le r),
 \qquad
 V(x)\ge\eta\quad (|x|\ge r).
\]
For \(\|Y\|\) sufficiently small, the preceding estimates imply
\[
 V(x)+s_m^{\mathrm{dec}}(Y)(x)
 \ge \frac{c}{2}|x|^2\quad (|x|\le r),
\]
and
\[
 V(x)+s_m^{\mathrm{dec}}(Y)(x)
 \ge \frac{\eta}{2}\quad (|x|\ge r).
\]
The perturbation vanishes to order at least three at the origin, so the
quadratic part is unchanged.  Hence the origin remains the unique
nondegenerate global minimum, and the inverse image of
\([0,\eta/4]\) is a closed subset of the closed ball
\(\{|x|\le r\}\), and hence is compact.  Thus there is a neighborhood
\(B_V\) of zero in
\(J^\circ_{m,v}\) such that
\[
 V+s_m^{\mathrm{dec}}(B_V)\subset\mathcal A_v^{\mathrm{trap}}.
\]

Let \(U\subset\mathcal A_v^{\mathrm{trap}}\) be open and let \(V\in U\).
Write \(U=O\cap\mathcal A_v^{\mathrm{trap}}\) for some open
\(O\subset\mathcal A_v\).  By continuity, after shrinking a neighborhood
\(B\subset B_V\) of zero in \(J^\circ_{m,v}\), we have
\[
 V+s_m^{\mathrm{dec}}(B)\subset U.
\]
Consequently,
\[
 j^mV+B=j^m\bigl(V+s_m^{\mathrm{dec}}(B)\bigr)\subset j^m(U).
\]
Thus every point of \(j^m(U)\) is interior, and the restricted jet map is
open.

\end{proof}

\section{QBNF invariance and Moyal identities}\label{app:moyal-identities}
\subsection{
 Uniqueness of QBNF with fixed gauge}
\begin{proof}[Proof of uniqueness in Theorem~\ref{thm:qbnf-construction}]
Suppose
\(
        \exp(D_S)H=B,\, \exp(D_{\widetilde S})H=\widetilde B
\)
are two formal normalizations, with \(B\) and \(\widetilde B\) resonant.  Since
\(
        \widetilde B
        =\exp(D_{\widetilde S})\exp(-D_S)B,
\)
and the derivations \(D_A\) form a filtered Lie algebra,
\([D_A,D_C]=D_{D_A C}\), the BCH logarithm of
\(\exp(D_{\widetilde S})\exp(-D_S)\) is well defined formally.  Thus there is
a formal generator \(U=U_3+U_4+\cdots\) such that
\(
        \widetilde B=\exp(D_U)B.
\)
Write \(U_m=U_m^{\rm res}+U_m^\perp\) according to the splitting
\eqref{eq:splitting}.  If all \(U_m^\perp\) vanish, then \(U\) is resonant.
Lemma~\ref{lem:resonant-moyal} shows that the resonant algebra is
Moyal-commutative.  Hence \(D_U B=0\), and \(\widetilde B=B\).

It remains to exclude the first non-resonant component.  Let \(m\) be the
smallest degree for which \(U_m^\perp\ne0\).  All lower degree components of \(U\) are
resonant and therefore commute with \(B\).  In degree \(m\), the only term in
\(\exp(D_U)B\) containing \(U_m^\perp\) is
\(
        D_{U_m^\perp}H_2=L(U_m^\perp).
\)
All other terms involving \(U_m^\perp\) have degree \(>m\).  Thus
\(
        [\widetilde B-B]_m=L(U_m^\perp).
\)
The left-hand side is resonant, while the right-hand side is non-resonant and
nonzero since \(L\) is invertible on the non-resonant subspace.  This
contradiction proves that no such \(m\) exists.  Therefore the resonant normal
form \(B\) is unique.
\end{proof}

\subsection{ Invariance of QBNF}
\begin{proposition}\label{prop:qbnf-parity}
Assume \(v_1/v_2\notin\mathbb Q\).  Let
\(
 V(x)=\sum_{|\alpha|\ge3}a_\alpha x^\alpha
\)
be an arbitrary real formal potential, and set
\[
 H_V(x,\xi):=H_2(x,\xi)+V(x),
 \qquad
 H_2(x,\xi):=\frac12\bigl(v_1(x_1^2+\xi_1^2)
                         +v_2(x_2^2+\xi_2^2)\bigr).
\]
Denote \(V^-(x):=V(-x)\), then
\begin{equation}\label{eq:qbnf-parity}
 \mathcal B(V^-)=\mathcal B(V).
\end{equation}
\end{proposition}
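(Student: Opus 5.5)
The plan is to conjugate by the linear symplectic map $\varphi(x,\xi)=(-x,-\xi)$ and then invoke uniqueness of the normal form. Write $\varphi^*F:=F\circ\varphi$. Since $H_2$ is quadratic, $\varphi^*H_2=H_2$, and therefore
\[
\varphi^*H_V=H_2+V(-x)=H_{V^-}.
\]
The map $\varphi^*$ is linear with constant coefficients, and $\varphi$ is symplectic. Hence $\varphi^*$ commutes with the bidifferentials, $\{\varphi^*A,\varphi^*B\}_q=\varphi^*\{A,B\}_q$. To see this, note that each derivative $\partial_{x_j}$ or $\partial_{\xi_j}$ picks up a sign $-1$ under $\varphi$. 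In $\{A,B\}_q$ there are exactly $q$ derivatives acting on $A$ and $q$ acting on $B$, so the total sign is $(-1)^{2q}=1$. It follows that $\varphi^*$ is an automorphism of the Moyal product $*$, with $\hbar$ left untouched, and consequently
\[
\varphi^*D_S F=D_{\varphi^*S}\,\varphi^*F .
\]

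Next I would apply $\varphi^*$ to the normalization $\exp(D_S)H_V=\mathcal B(V)$ from Theorem~\ref{thm:qbnf-construction}. This gives
\[
\exp(D_{\varphi^*S})H_{V^-}=\varphi^*\mathcal B(V).
\]
Two facts need checking here. First, $\varphi^*\mathcal B(V)=\mathcal B(V)$, because $\Omega_j=x_j^2+\xi_j^2$ is even under $\varphi$. Second, $\varphi^*$ preserves Weyl degree and commutes with $L$ and $\Pi$, since $z_j\mapsto -z_j$ sends each monomial $z^\alpha\bar z^\beta$ to a multiple of itself. So $\varphi^*S$ still satisfies the gauge $\Pi(\varphi^*S)_m=0$ and has order at least $3$.

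Thus $H_{V^-}$ is normalized by the admissible generator $\varphi^*S$ to the resonant series $\mathcal B(V)$. The uniqueness part of Theorem~\ref{thm:qbnf-construction}, proved in Appendix~\ref{app:moyal-identities}, says any two resonant normal forms obtained this way coincide. Therefore $\mathcal B(V^-)=\mathcal B(V)$.

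The main point needing care is that the uniqueness argument there uses only the nonresonance of $v$ and the triangular structure of the normalization. It never uses the $\mathbb Z_2$-symmetry, so it applies to arbitrary formal potentials, which is the generality the proposition asserts. The remaining verification, the sign count showing $\varphi^*$ commutes with $\{\cdot,\cdot\}_q$, is routine.
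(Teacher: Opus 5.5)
Your proposal is correct and matches the paper's proof essentially step by step. The paper also pulls back by $\iota(x,\xi)=(-x,-\xi)$ and uses the sign cancellation in $\{A,B\}_m$ to get Moyal covariance, hence $\iota^*\exp(D_S)A=\exp(D_{\iota^*S})\iota^*A$. It then checks that $\iota^*$ commutes with $\Pi$ and $L$ and fixes the resonant algebra, so the gauge is preserved, and concludes by uniqueness of the QBNF.
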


\begin{proof}
Let
\[
 \iota(x,\xi)=(-x,-\xi),
 \qquad
 (\iota^*A)(x,\xi;\hbar):=A(-x,-\xi;\hbar).
\]
For every multi-index \(\gamma\), the chain rule gives
\[
 \partial^\gamma(\iota^*A)
 =(-1)^{|\gamma|}\iota^*(\partial^\gamma A).
\]
Each term in \(\{A,B\}_m\) contains exactly \(m\) derivatives of \(A\)
and \(m\) derivatives of \(B\).  The two signs therefore cancel, and
\begin{equation}\label{eq:iota-higher-bracket-covariance}
 \{\iota^*A,\iota^*B\}_m
 =\iota^*\{A,B\}_m.
\end{equation}
The Moyal expansion implies
\[
 \iota^*(A*B)=(\iota^*A)*(\iota^*B),
\]
and hence
\begin{equation}\label{eq:iota-derivation-covariance}
 \iota^*(D_SA)=D_{\iota^*S}(\iota^*A),
 \qquad
 \iota^*\exp(D_S)A
 =\exp(D_{\iota^*S})(\iota^*A).
\end{equation}

The involution fixes \(H_2\), \(\hbar\), and the actions
\(\Omega_1,\Omega_2\).  In complex coordinates,
\[
 \iota^*(\hbar^r z^\alpha\bar z^\beta)
 =(-1)^{|\alpha|+|\beta|}
  \hbar^r z^\alpha\bar z^\beta.
\]
Therefore
\begin{equation}\label{eq:iota-Pi-covariance}
 \Pi\iota^*=\iota^*\Pi,
 \qquad
 \iota^*|_{\operatorname{range}\Pi}=\operatorname{Id}.
\end{equation}
Moreover, \(L\iota^*=\iota^*L\), because \(\iota^*H_2=H_2\) and
\eqref{eq:iota-higher-bracket-covariance} holds for \(m=1\).  Since
\(\iota^*\) preserves the nonresonant subspace, uniqueness of the
nonresonant solution of the homological equation gives
\[
 L^{-1}\iota^*=\iota^*L^{-1}.
\]
Thus the gauge-fixed QBNF recursion is equivariant under \(\iota^*\).

Let \(S\) be the gauge-fixed generator in
Theorem~\ref{thm:qbnf-construction}.  Applying \(\iota^*\) to
\[
 \exp(D_S)H_V=\mathcal B(V), \quad \Pi S=0,
\]
and using \eqref{eq:iota-derivation-covariance} and
\eqref{eq:iota-Pi-covariance}, we obtain
\[
 \exp(D_{\iota^*S})H_{V^-}=\mathcal B(V),
 \qquad \Pi(\iota^*S)=0.
\]
Here we used that \(\iota^*\) is the identity on the resonant algebra.
Uniqueness in Theorem~\ref{thm:qbnf-construction} proves
\eqref{eq:qbnf-parity}.

\end{proof}




\subsection{Some Moyal product identities}
\begin{lemma}\label{lem:moyal-associativity}
The Moyal product \eqref{eq:moyal-product} is associative.
\end{lemma}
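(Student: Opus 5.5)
We need to prove associativity of the Moyal product defined by \(A*B=\exp(\frac{\hbar}{2i}\Lambda)A(x,\xi)B(y,\eta)|_{\text{diag}}\) on formal series.

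The plan is to reduce everything to a statement about bidifferential operators with constant coefficients acting on a triple tensor product, where the exponentials commute and combine.

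First, write each phase-space point as \(X=(x,\xi)\in\R^4\), and introduce the constant antisymmetric bilinear form \(\omega(\partial_X,\partial_Y)=\sum_j(\partial_{\xi_j}\partial_{y_j}-\partial_{x_j}\partial_{\eta_j})\), so that \(\Lambda=\omega(\partial_X,\partial_Y)\). Then
\[
 A*B=\bigl.e^{c\,\omega(\partial_X,\partial_Y)}A(X)B(Y)\bigr|_{Y=X},\qquad c=\tfrac{\hbar}{2i}.
\]
Since all coefficients of \(\omega\) are constant, this identity and everything below hold term by term in the Weyl grading. Only finitely many terms contribute to each fixed Weyl degree and each fixed power of \(\hbar\), so all manipulations of the exponentials are legitimate identities of formal series.

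Next, introduce three independent copies \(X,Y,Z\) and the operators \(\omega_{XY},\omega_{XZ},\omega_{YZ}\), all with constant coefficients and hence pairwise commuting. For the left product, apply the definition twice. Set \(C(X)=(A*B)(X)\); then \((A*B)*D=e^{c\,\omega(\partial_X,\partial_Z)}C(X)D(Z)|_{Z=X}\). The key point is the chain rule for restriction to the diagonal: for \(F(X,Y)\),
\[
 \partial_X\bigl(F(X,Y)|_{Y=X}\bigr)=\bigl.(\partial_X+\partial_Y)F\bigr|_{Y=X}.
\]
Substituting \(C(X)=e^{c\,\omega_{XY}}A(X)B(Y)|_{Y=X}\) therefore gives
\[
 (A*B)*D=\bigl.e^{c(\omega_{XZ}+\omega_{YZ})}e^{c\,\omega_{XY}}A(X)B(Y)D(Z)\bigr|_{X=Y=Z}.
\]
Symmetrically, for the right product,
\[
 A*(B*D)=\bigl.e^{c(\omega_{XY}+\omega_{XZ})}e^{c\,\omega_{YZ}}ABD\bigr|_{X=Y=Z}.
\]
The three operators commute, so both expressions equal \(e^{c(\omega_{XY}+\omega_{XZ}+\omega_{YZ})}ABD\) restricted to the triple diagonal. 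This proves associativity.

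The only step needing care is justifying the chain-rule/restriction identity at the level of the exponential series. I would verify it on monomials \(A=X^\alpha\), \(B=Y^\beta\), \(D=Z^\gamma\), where all series are finite polynomials in \(c\), and then extend by linearity and the Weyl-degree filtration to formal series.

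An alternative, if one prefers to avoid that bookkeeping, is to cite \cite[Theorem~4.11]{Zwo12}: \(*\) is the symbol of composition of Weyl quantizations for polynomial symbols, and operator composition is associative. The polynomial case then extends degreewise to formal series.
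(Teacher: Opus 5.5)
Your proof is correct and follows essentially the same route as the paper. The paper also takes three independent copies of phase space with pairwise commuting constant-coefficient operators \(\Lambda_{rs}\), and uses the chain rule for restriction to the diagonal to show that both bracketings equal \(m_3\exp\bigl(\frac{\hbar}{2i}(\Lambda_{12}+\Lambda_{13}+\Lambda_{23})\bigr)A_1B_2C_3\). Your explicit identity \(\partial_X(F|_{Y=X})=((\partial_X+\partial_Y)F)|_{Y=X}\), together with the degreewise finiteness remark, simply spells out the step the paper leaves to ``the chain rule''.
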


\begin{proof}
Let \(X^{(r)}=(x^{(r)},\xi^{(r)})\), \(r=1,2,3\), be independent copies
of the coordinates.  Set
\[
 \Lambda_{rs}:=\sum_{j=1}^2
 \left(
 \partial_{\xi_j^{(r)}}\partial_{x_j^{(s)}}
 -\partial_{x_j^{(r)}}\partial_{\xi_j^{(s)}}
 \right),
 \qquad 1\leq r<s\leq3,
\]
and, for a symbol \(F=F(X^{(1)},X^{(2)},X^{(3)})\), define its
restriction to the full diagonal by
\(
 (m_3F)(X):=F(X,X,X).
\)
Writing \(A_r=A(X^{(r)})\), and similarly for \(B_r,C_r\), the chain rule
shows that both \((A*B)*C\) and \(A*(B*C)\) are equal to
\begin{align*}
 m_3\exp\left(\frac{\hbar}{2i}
   (\Lambda_{12}+\Lambda_{13}+\Lambda_{23})\right)A_1B_2C_3.
\end{align*}
Indeed, the three constant-coefficient operators \(\Lambda_{rs}\) commute
pairwise.  This proves the claim. 
\end{proof}


\begin{lemma}\label{lem:resonant-moyal}
Let \(C\in\cW\) be resonant.  Then, for every \(A\in\cW\),
\begin{equation}\label{eq:average-star}
 \Pi(A*C)=(\Pi A)*C,\qquad
 \Pi(C*A)=C*(\Pi A).
\end{equation}
The resonant subalgebra is commutative for the Moyal product.  Consequently,
\(
\Pi D_A C=0.
\)
\end{lemma}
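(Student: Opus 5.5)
We must prove Lemma \ref{lem:resonant-moyal}: for resonant \(C\), \(\Pi(A*C)=(\Pi A)*C\) and \(\Pi(C*A)=C*(\Pi A)\); the resonant subalgebra is Moyal-commutative; hence \(\Pi D_AC=0\).

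The plan is to characterize \(\Pi\) as an averaging operator over the torus flow generated by the quadratic Hamiltonians, and to use that this flow acts by Moyal automorphisms. For \(\theta=(\theta_1,\theta_2)\in\mathbb T^2\), let \(R_\theta\) be the linear symplectic map rotating \((x_j,\xi_j)\) by angle \(\theta_j\). Then \(z_j\mapsto e^{i\theta_j}z_j\), so
\[
 R_\theta^*(\hbar^{2r}z^\alpha\bar z^\beta)=e^{i\langle\theta,\alpha-\beta\rangle}\hbar^{2r}z^\alpha\bar z^\beta .
\]
Integrating monomial by monomial (each Weyl degree is finite dimensional), we get
\[
 \Pi F=\frac1{(2\pi)^2}\int_{\mathbb T^2}R_\theta^*F\,d\theta ,
\]
because the average kills exactly the monomials with \(\alpha\neq\beta\). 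Resonant \(C\) then satisfies \(R_\theta^*C=C\) for all \(\theta\). Note that the nonresonance of \(v\) is not needed here: the torus average works regardless.

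The key step is that \(R_\theta^*(A*B)=(R_\theta^*A)*(R_\theta^*B)\). Since \(R_\theta\) is a linear symplectic map, the bidifferential operator \(\Lambda\) is invariant under the simultaneous linear change of variables \((x,\xi)\), \((y,\eta)\mapsto R_\theta\). Indeed, \(\Lambda\) is the symplectic pairing of the two gradients, and a linear symplectic map preserves that pairing. Hence each \(\{A,B\}_q\) is covariant, exactly as in the proof of Proposition \ref{prop:qbnf-parity} for \(\iota\), and so is the whole Moyal expansion. This covariance is the main point to verify carefully, but it is a direct chain-rule computation.

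With covariance in hand, averaging gives \(\Pi(A*C)=\frac1{(2\pi)^2}\int (R_\theta^*A)*C\,d\theta=(\Pi A)*C\), using bilinearity and the fact that the integral commutes with the degreewise finite Moyal sum. The identity \(\Pi(C*A)=C*(\Pi A)\) follows in the same way.

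For commutativity, I will show directly that \(\{\Omega^a,\Omega^b\}_q\) is symmetric in the two factors, that is, \(\{C,C'\}_q=\{C',C\}_q\). Since \(\{C,C'\}_q=(-1)^q\{C',C\}_q\) in general, this forces the odd brackets to vanish. The cleanest route is to observe that resonant symbols are functions of \(\Omega_1,\Omega_2\) alone and are separately invariant under each rotation \(R_{\theta_j}\). One can then reduce to one degree of freedom, since the \(\Lambda\) splits as a sum over \(j\) of commuting pieces acting on separate variables. In one degree of freedom, invariance under the reflection \((x,\xi)\mapsto(x,-\xi)\), which is antisymplectic and preserves \(\Omega\), reverses the sign of \(\Lambda\). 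Hence \(\{f(\Omega),g(\Omega)\}_q=(-1)^q\{f,g\}_q\), so the odd-\(q\) terms vanish, and \(C*C'=C'*C\) because only odd \(q\) enter the commutator.

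Finally, \(D_AC=\frac i\hbar(A*C-C*A)\). Applying \(\Pi\) and the first part gives \(\frac i\hbar\bigl((\Pi A)*C-C*(\Pi A)\bigr)\). This vanishes by commutativity, since \(\Pi A\) is resonant.
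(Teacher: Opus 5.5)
Your averaging argument and your final step \(\Pi D_AC=0\) follow the paper: \(\Pi\) is the \(\mathbb T^2\)-average of the rotations \(R_\theta\), the Moyal product is covariant because the rotations preserve the symplectic pairing in \(\Lambda\), and \(C\) is \(R_\theta\)-invariant.

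One correction in that part. The torus average is always the projection onto monomials with \(\alpha=\beta\). But \(\Pi\) is the projection onto \(\ker L\), and the two agree only because \(v_1/v_2\notin\mathbb Q\). This is exactly where the paper uses nonresonance, and it is genuinely needed. For \(v_1=2v_2\), both \(z_1\bar z_2^2\) and \(\bar z_1 z_2^2\) lie in \(\ker L\) but have nonzero Poisson bracket, so the commutativity assertion would fail. Your remark that nonresonance is not needed should be dropped.

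The real difference is in commutativity. The paper computes \(\Omega*\Omega^k=\Omega^k*\Omega=\Omega^{k+1}-\hbar^2k^2\Omega^{k-1}\). From this it infers that ordinary powers of each action are Moyal polynomials in it, so the resonant algebra is Moyal-generated by the commuting pair \(\Omega_1,\Omega_2\). You instead use parity under the antisymplectic reflection \(\kappa(x,\xi)=(x,-\xi)\).

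As written, there is a missing step. The reflection only gives \(\kappa^*\{f,g\}_q=(-1)^q\{f,g\}_q\), not \(\{f,g\}_q=(-1)^q\{f,g\}_q\). To close this, note that \(\{f,g\}_q\) is itself \(R_\theta\)-invariant, by the covariance you already proved. It is therefore a function of \(\Omega\), hence \(\kappa\)-invariant. Equivalently: for odd \(q\), each term of \(\{f,g\}_q\) carries \(q\) derivatives in \(\xi\) spread over two \(\xi\)-even factors, so the bracket is odd in \(\xi\). Being also a function of \(\Omega\), it must vanish.

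With that line added, your route is correct and more conceptual than the paper's. It shows directly that every odd bidifferential between resonant symbols vanishes, with no computation, and for arbitrary functions of the actions. The paper's recursion is more hands-on, but it yields extra structure: ordinary and Moyal polynomials in \(\Omega_j\) span the same algebra over \(\mathbb R[\hbar]\). That identification is what is used when the symbol-level QBNF is rewritten in terms of the quantized oscillator actions.
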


\begin{proof}
We define the rotations used for the resonant projection.  For
\(\theta=(\theta_1,\theta_2)\in\mathbb T^2\), let \(R_\theta\) act on a
symbol \(A\) by
\begin{equation}\label{eq:appendix-Rtheta-definition}
 (R_\theta A)(x,\xi):=A(x^\theta,\xi^\theta),\qquad
 \begin{pmatrix}x_j^\theta\\ \xi_j^\theta\end{pmatrix}
 =
 \begin{pmatrix}
  \cos\theta_j&-\sin\theta_j\\
  \sin\theta_j& \cos\theta_j
 \end{pmatrix}
 \begin{pmatrix}x_j\\ \xi_j\end{pmatrix},
 \quad j=1,2.
\end{equation}
Thus \(R_\theta\) is the pullback by a real linear phase-space rotation.
In particular,
\[R_\theta z_j=e^{i\theta_j}z_j,
\qquad
R_\theta\bar z_j=e^{-i\theta_j}\bar z_j.\]


For completeness, let \(u_j=(x_j,\xi_j)\), \(v_j=(y_j,\eta_j)\), and
\(J=\left(\begin{smallmatrix}0&-1\\1&0\end{smallmatrix}\right)\).  Then
\[
 \Lambda(AB)=\sum_{j=1}^2
 (\nabla_{u_j}A)^{\mathsf T}J\nabla_{v_j}B.
\]
Writing \(Q_{\theta_j}\) for the rotation matrix in
\eqref{eq:appendix-Rtheta-definition}, we have
\(Q_{\theta_j}^{\mathsf T}JQ_{\theta_j}=J\); hence the chain rule gives
\[
 \Lambda\bigl((R_\theta A)(x,\xi)(R_\theta B)(y,\eta)\bigr)
 =
 R_\theta^{(1)}R_\theta^{(2)}
 \Lambda\bigl(A(x,\xi)B(y,\eta)\bigr),
\]
where \(R_\theta^{(1)}\) rotates only \((x,\xi)\), while
\(R_\theta^{(2)}\) rotates only \((y,\eta)\).
The same identity holds for every power of \(\Lambda\).  Substituting it
into \eqref{eq:moyal-product} and restricting to the diagonal proves
\begin{equation}\label{eq:appendix-moyal-covariance}
 R_\theta(A*B)=(R_\theta A)*(R_\theta B).
\end{equation}

For a monomial one has
\[
 R_\theta
 \bigl(\hbar^r z^\alpha\bar z^\beta\bigr)
 =
 e^{i\langle\theta,\alpha-\beta\rangle}
 \hbar^r z^\alpha\bar z^\beta.
\]
Averaging this identity over \(\mathbb T^2\) gives zero unless
\(\alpha=\beta\).  Since \(v_1/v_2\notin\mathbb Q\), these are precisely
the resonant monomials.  Therefore $\Pi$ is the torus average of the rotations \(R_\theta\):
\begin{equation}\label{eq:appendix-Pi-average}
 \Pi A=\frac1{(2\pi)^2}
 \int_{\mathbb T^2}R_\theta A\,\dd\theta.
\end{equation}
If \(C\) is resonant, then \(R_\theta C=C\).  Combining
\eqref{eq:appendix-moyal-covariance} and
\eqref{eq:appendix-Pi-average} gives \eqref{eq:average-star}.

It remains to prove that the resonant algebra is commutative.  Every resonant symbol is a formal series in
\(\hbar,\Omega_1,\Omega_2\). Notice that,
\(
 \Omega_1*\Omega_2=\Omega_1\Omega_2=\Omega_2*\Omega_1
\)
because the two actions involve disjoint variables.  For a single action
\(\Omega=x^2+\xi^2\), the Moyal formula gives, for \(k\geq1\),
\[
 \Omega*\Omega^k
 =\Omega^k*\Omega
 =\Omega^{k+1}-\hbar^2k^2\Omega^{k-1}.
\]
This recursion shows inductively that every ordinary power of \(\Omega\)
is a polynomial in \(\Omega\) under the Moyal product.  Thus every
resonant symbol is a formal Moyal series in the commuting elements
\(\Omega_1,\Omega_2\), and the resonant algebra is commutative.

Finally, \eqref{eq:average-star} gives
\[
 \Pi[A,C]_* =(\Pi A)*C-C*(\Pi A) =[\Pi A,C]_* =0.
\]
Multiplication by \(i/\hbar\) gives \(\Pi D_A C=0\), completing the
proof.
\end{proof}

\begin{lemma}\label{lem:higher-bracket-Leibniz}
Let \(A\) and \(B\) be \(\hbar\)-independent symbols.  For every \(q\geq1\),
\begin{equation}\label{eq:higher-bracket-Leibniz}
 L\{A,B\}_q
 =
 \{L(A),B\}_q+\{A,L(B)\}_q.
\end{equation}
\end{lemma}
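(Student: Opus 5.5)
The identity will follow from the fact that $L$ is a derivation coming from a quadratic Hamiltonian, whose Hamiltonian flow is a linear symplectic map. The natural route is to differentiate the rotation covariance of the higher brackets, already used in the proof of Lemma~\ref{lem:resonant-moyal}. Define the one-parameter group $\Phi_t$ of linear symplectic maps generated by $H_2$, namely $\Phi_t=R_{\theta(t)}$ with $\theta(t)=(v_1t,v_2t)$ up to the sign fixed by the convention \eqref{eq:poisson}. For any $\hbar$-independent symbol $A$ we then have
\[
 \frac{\dd}{\dd t}\Big|_{t=0}\Phi_t^*A=\pm\{A,H_2\}=\pm L(A),
\]
with the same sign for every $A$, since this is just the chain rule for the flow of the linear vector field $X_{H_2}$.

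The key step is covariance of every bidifferential $\{\cdot,\cdot\}_q$ under linear symplectic maps. In the appendix it was shown that $\Lambda$ satisfies
\[
 \Lambda\bigl((R_\theta A)(x,\xi)(R_\theta B)(y,\eta)\bigr)
 =R_\theta^{(1)}R_\theta^{(2)}\Lambda\bigl(A(x,\xi)B(y,\eta)\bigr),
\]
because $Q_{\theta_j}^{\mathsf T}JQ_{\theta_j}=J$. Iterating this $q$ times and restricting to the diagonal gives
\[
 \{R_\theta A,R_\theta B\}_q=R_\theta\{A,B\}_q
\]
for all $q\ge1$ and all $\theta\in\mathbb T^2$. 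In particular this holds along the curve $\theta(t)$.

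Differentiating at $t=0$ then gives the Leibniz rule. Since $\{\cdot,\cdot\}_q$ is bilinear with constant coefficients, it commutes with $\frac{\dd}{\dd t}$, so
\[
 \{L(A),B\}_q+\{A,L(B)\}_q=L\{A,B\}_q,
\]
where the common sign cancels on both sides. Everything is formal: on each homogeneous component the maps are polynomial in $t$, so differentiation is legitimate degree by degree. This is also where the hypothesis that $A$ and $B$ are $\hbar$-independent is used, since it keeps the argument inside the classical symbols, on which $R_\theta$ acts purely geometrically. Even without it, $\hbar$ is inert under $R_\theta$.

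The only point requiring care is checking that the flow of $H_2$ under convention \eqref{eq:poisson} really is a product of the rotations $R_\theta$ with $\theta_j$ proportional to $v_jt$. This is immediate from $\{z_j,H_2\}=iv_jz_j$: it gives $\frac{\dd}{\dd t}z_j=\pm iv_jz_j$, so $z_j\mapsto e^{\pm iv_jt}z_j$, which is exactly $R_\theta$. As an alternative fallback, one could verify \eqref{eq:higher-bracket-Leibniz} directly on monomials $z^\alpha\bar z^\beta$. The bracket $\{\cdot,\cdot\}_q$ written in complex coordinates is a sum of terms $\partial_{z}^{\gamma}\partial_{\bar z}^{\gamma'}A\,\partial_{\bar z}^{\gamma}\partial_z^{\gamma'}B$ up to constants, and these preserve total charge $\langle v,\alpha-\beta\rangle$ additively. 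Since $L$ multiplies by $i$ times that charge, the Leibniz identity follows.
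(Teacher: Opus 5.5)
Your argument is correct, but it takes a different route from the paper. The paper argues algebraically. Since $H_2$ is quadratic, $L(F)=\frac{i}{\hbar}[F,H_2]_*$ exactly, and associativity of $*$ (Lemma~\ref{lem:moyal-associativity}) makes this inner commutator a derivation, so $L(A*B)=L(A)*B+A*L(B)$. Expanding both sides with \eqref{eq:moyal-product} and comparing the coefficients of $\hbar^q$ gives \eqref{eq:higher-bracket-Leibniz}. That comparison is exactly where the $\hbar$-independence of $A$, $B$ (and hence of $L(A)$, $L(B)$) is used; otherwise brackets of different orders $q$ would mix.

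You instead differentiate the symplectic covariance $\{R_\theta A,R_\theta B\}_q=R_\theta\{A,B\}_q$ from the proof of Lemma~\ref{lem:resonant-moyal} along $\theta(t)=(v_1t,v_2t)$. The sign is in fact $+$, since $\left.\frac{d}{dt}\right|_{t=0}R_{\theta(t)}A=\sum_j v_j(x_j\partial_{\xi_j}-\xi_j\partial_{x_j})A=\{A,H_2\}$. Your route avoids associativity altogether and does not need the $\hbar$-independence hypothesis. It also makes the geometric reason visible: each $\{\cdot,\cdot\}_q$ is built from $J$, which the linear flow of $H_2$ preserves. The paper's route is shorter once associativity is available and stays inside the Moyal calculus used elsewhere.

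One small slip: on a fixed homogeneous component the entries of $R_{\theta(t)}$ are trigonometric polynomials in $t$, not polynomials. They are still smooth, so the differentiation is unaffected.

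Your charge-counting fallback is also valid. In complex coordinates $\Lambda=2i\sum_j\bigl(\partial_{z_j}\otimes\partial_{\bar z_j}-\partial_{\bar z_j}\otimes\partial_{z_j}\bigr)$. Hence every term of $\{z^\alpha\bar z^\beta,z^{\alpha'}\bar z^{\beta'}\}_q$ has charge $(\alpha-\beta)+(\alpha'-\beta')$, and $L$ acts on it by $i\langle v,\cdot\rangle$ of that charge.
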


\begin{proof}
Associativity of the Moyal product gives
\begin{equation}\label{eq:appendix-L-star-derivation}
 L(A*B)=L(A)*B+A*L(B).
\end{equation}
Comparing the coefficients of $\hbar^q$ on the two sides of
\eqref{eq:appendix-L-star-derivation}, using the Moyal expansion
\eqref{eq:moyal-product}, proves \eqref{eq:higher-bracket-Leibniz}.
\end{proof}

\end{document}